\documentclass[10pt]{article} 

\usepackage[usenames,dvipsnames]{xcolor}

\RequirePackage[OT1]{fontenc}
\usepackage{underscore}
\usepackage[portrait,margin=3.5cm]{geometry}
\usepackage{mathrsfs}
\usepackage[colorlinks,citecolor=blue,urlcolor=blue,linkcolor=blue,linktocpage=true]{hyperref}
\usepackage{amssymb,amsthm,amsfonts,amsbsy,latexsym}
\usepackage[textsize=small]{todonotes}       
\usepackage{graphicx}
\usepackage[english]{babel}
\usepackage{subfigure}
\usepackage{appendix}

\usepackage{enumerate}
\newenvironment{enumeratei}{\begin{enumerate}[\upshape (i)]}{\end{enumerate}}

\textwidth 5.5 true in        
\usepackage{color}              
\usepackage{graphicx}
\usepackage[]{amsmath}
\usepackage{amsthm}
\usepackage{amssymb}
\usepackage{hyperref}
\usepackage{comment}
\usepackage{microtype}

\usepackage[numbers]{natbib}

\definecolor{MyDarkblue}{rgb}{0,0.08,0.50}
\definecolor{Brickred}{rgb}{0.65,0.08,0}

\newtheorem{theorem}{Theorem}[section]
\newtheorem{lemma}[theorem]{Lemma}
\newtheorem{proposition}[theorem]{Proposition}
\newtheorem{corollary}[theorem]{Corollary}

\newtheorem{definition}[theorem]{Definition}
\newtheorem{notation}[theorem]{Notation}

\newtheorem{remark}[theorem]{Remark}

\newtheorem{example}[theorem]{Example}



\newcommand{\E}[1]{\mathbb{E}\left[#1\right]}
\newcommand{\Ev}{\mathbb{E}}

\newcommand{\set}[1]{\left\{#1\right\}}

\newcommand{\s}[2]{\sum_{#1}^{#2}}








\newcommand{\eqn}[1]{\begin{equation} #1 \end{equation}}

\newcommand{\e}{{\mathrm e}}

\setcounter{secnumdepth}{3} 
\setcounter{tocdepth}{2}    
\numberwithin{equation}{section}


\newcommand{\N}{\mathbb{N}}

\newcommand{\indicator}[1]{1_{\set{#1}}}
\newcommand{\equalsd}{\overset{d}{=}}





\newcommand*{\be}{\begin{equation}}
\newcommand*{\ee}{\end{equation}}
\newcommand*{\ba}{\begin{aligned}}
\newcommand*{\ea}{\end{aligned}}
\newcommand*{\barr}{\begin{array}{c}}
\newcommand*{\earr}{\end{array}}

\def\namedlabel#1#2{\begingroup
    #2%
    \def\@currentlabel{#2}%
    \phantomsection\label{#1}\endgroup
}


\newcommand{\argmin}[1]{\underset{#1}{\operatorname{arg}\,\operatorname{min}}\;}
\newcommand{\minun}[1]{\underset{#1}{\operatorname{min}}\;}
\newcommand{\Dn}{D_n}
\newcommand{\Dne}{D_n^e}
\renewcommand{\P}[1]{\mathbb{P}\left(#1\right)}
\newcommand{\leqd}{\overset{d}{\leq}}
\newcommand{\geqd}{\overset{d}{\geq}}
\newcommand{\HG}{(h,G)}
\newcommand{\HGa}{(h_{\alpha},G)}
\newcommand{\HaG}{(h_{\alpha},G)}
\newcommand{\HaLG}{(h^L_{\alpha},G)}
\newcommand{\Ha}{h_{\alpha}}
\newcommand{\HaL}{h_{\alpha}^L}
\newcommand{\HGCf}{(h,G,C)^{f}}
\newcommand{\HaGCf}{(h_{\alpha},G,C)^{f}}
\newcommand{\HaLGCf}{(h^L_{\alpha},G,C)^{f}}
\newcommand{\lr}[1]{\left(#1\right)}

\newcommand{\HGIf}{(h,G,I)_{f}}

\newcommand{\HaLGIf}{(h^L_{\alpha},G,I)_{f}}
\newcommand{\HaLGIb}{(h^L_{\alpha},G,I)_{b}}

\newcommand{\HGCb}{(h,G,C)^{b}}

\newcommand{\TIf}{T_f^{(h,G,I)}}
\newcommand{\TIb}{T_b^{(h,G,I)}}
\newcommand{\TG}{T_{(h,G)}}
\newcommand{\TCf}{ T^f_{(h,G,C)}}
\newcommand{\TCb}{ T^b_{(h,G,C)}}

\newcommand{\HGIb}{(h,G,I)_{b}}
\newcommand{\HaGIb}{(h_{\alpha},G,I)_b}

\newcommand{\ADBPs}{age-dependent branching processes}

\newcommand{\BPs}{branching processes}

\begin{document}

\begin{titlepage}
\newcommand{\HRule}{\rule{\linewidth}{0.75mm}}
\center

{TECHNISCHE UNIVERSITEIT EINDHOVEN \\
Department of Mathematics and Computer Science}\\ [0.4cm]

\HRule \\[0.5cm]
{\huge \bfseries \rmfamily Explosiveness of Age-Dependent Branching Processes with Contagious and Incubation Periods}\\[0.4cm]
\HRule \\[2.5cm]
{\Large \bfseries \sffamily Master's Thesis}\\[2.5cm]

{\LARGE \bfseries \rmfamily Lennart Gulikers}\\
{\large November 3, 2014}\\[8 cm]

\begin{flushleft}
\emph{Supervisors:} \\
Dr. J\'{u}lia Komj\'{a}thy \\
Prof. Dr. Remco van der Hofstad
\end{flushleft}

\vfill
\end{titlepage}
\newpage\null\thispagestyle{empty}\newpage

\section*{Abstract}
We study explosiveness of various age-dependent branching processes capable of describing the early stages of an epidemic-spread. The spread is considered from a twofold perspective: propagating from an individual through the population, the so-called forward process, and its infection trail towards an individual, the backward process. Our analysis leans strongly on two techniques: an analytical study of fixed point equations and a probabilistic method based on stochastic domination arguments.
\\
For the classical age-dependent branching process $\HG$, where the offspring has probability generating function $h$ and all individuals have life-lengths independently picked from a distribution $G$, we focus on the setting $h = \HaL$, with $L$ a function varying slowly at infinity and $\alpha \in (0,1)$. Here, $h^L_{\alpha}(s) = 1 - (1-s)^{\alpha} L(\frac{1}{1-s}),$ as $s \to 1$. The first main result in this thesis is that for a fixed $G$, the process $\HaLG$ explodes either for all $\alpha \in (0,1)$ or for no $\alpha \in (0,1)$, regardless of $L$. 
\\
We then add contagious periods to all individuals and let their offspring survive only if their life-length is smaller than the contagious period of their mother, hereby constituting a forward process. It turns out that an explosive process $\HaLG$, as above, stays explosive when adding a non-zero contagious period. This is the second main result in the underlying thesis. 
\\
We extend this setting to backward processes with contagious periods.
\\
Further, we consider processes with incubation periods during which an individual has already contracted the disease but is not able \emph{yet} to infect her acquaintances.  We let these incubation periods follow a distribution $I$. In the forward process $\HaLGIf$, every individual possesses an incubation period and only her offspring with life-time larger than this period survives. In the backward process $\HaLGIb$, individuals survive only if their life-time exceeds their \emph{own} incubation period. These two processes are the content of the third main result that we establish: under a mild condition on $G$ and $I$, explosiveness of both $\HG$ and $(h,I)$ is necessary and \emph{sufficient} for processes $\HaLGIf$ and $\HaLGIb$ to explode.
\\ 
We obtain the fourth main result by comparing forward to backward processes: the explosion time in the former stochastically dominates the explosion time in the latter. 
 
\newpage\null\thispagestyle{empty}\newpage

\vspace*{5 cm}
\noindent { \large \sffamily ``Whatever an education is, it should make you a unique individual, not a conformist; it should furnish you with an original spirit with which to tackle the big challenges; it should allow you to find values which will be your road map through life; it should make you spiritually rich, a person who loves whatever you are doing, wherever you are, whomever you are with; it should teach you what is important: how to live and how to die.''\\ }
\begin{flushright}
John Taylor Gatto, \textit{Dumbing Us Down}\footnote{Quotation taken from \cite{Gat92}.}
\end{flushright}

\newpage\null\thispagestyle{empty}\newpage

 \tableofcontents 

\newpage\null\thispagestyle{empty}\newpage

\section*{Acknowledgements}
The model in this thesis and part of the problem formulation are due to my supervisors Julia Komj\'{a}thy and Remco van der Hofdstad. In particular, ideas presented in \citep{Bh14} and references therein. Further, van der Hofstad's book \citep{Ho14} has been an indispensable source.\vspace{0.3cm}
\\Thanks are due to Onno Boxma, who introduced me to research in probability theory in the context of a honors project.
\\
I wish to thank Sem Borst for supervising me during another honors project. 
\\
I offer my sincere gratitude to my supervisor Remco van der Hofdstad for our fruitful discussions. I am also indebted to him for funding my visit to Vancouver.
\\
I would like to thank J\'{u}lia Komj\'{a}thy for being my supervisor.  I thank her and her husband Tim Hulshof for their warm support during my stay in Vancouver.
\\
Finally, I want to thank my father.

\newpage\null\thispagestyle{empty}\newpage

\section*{Introduction}
\addcontentsline{toc}{section}{Introduction}

In this thesis we model and analyse the  \emph{early} stages of an epidemic spread by studying age-dependent branching processes. 
\\
We shall first present an overview on age-dependent processes when the number of offspring of any given individual has infinite expectation. Having defined these processes, we introduce the model of epidemic spreads that is of our interest and point out a natural connection to  age-dependent branching processes. We then motivate the need for studying an infinite mean in those branching processes by considering random graph models as an appropriate tool for the study of epidemics on a \emph{finite} population.
\\
Because age-dependent branching processes are extensively studied throughout literature, we shall not try to give a complete overview of the field. Rather, we mention the papers that are key to the investigation in the underlying thesis. 
\\
Grey \cite{Grey74} wrote in 1974 one of the earlier papers on age-dependent branching processes. He considered the \emph{explosiveness} problem of the  following Bellman-Harris process. An initial ancestor has a random life-time and upon death she produces a random number of offspring with  probability generating function $h$. Their life story (life length and number of children they give birth to) is independent and has the same distribution as that of the initial ancestor, and so on for their children. All life-lengths are picked from some (cumulative) distribution function $G$. All individuals in this process, that we shall denote by $\HG$, reproduce independently of one another. Explosiveness is now the phenomenon that an infinite amount of individuals are born in a finite amount of time. Grey \cite{Grey74} uses an analytical approach to characterize explosiveness: he derives a fixed point equation for the generating function of the total size of the alive population at any time $t$, and studies the number as well as behaviour of its solutions. 
He concludes his paper by presenting some examples; most notable, a process where the offspring  probability generating function $h$ corresponds to a random variable $D$ that satisfies a power law, that is there are $\alpha \in (0,1)$, $c, C > 0$ and a function $\widehat{L}$ varying slowly at infinity such that
\be \frac{c \widehat{L}(x)}{x^{\alpha }} \leq \P{D \geq x} \leq \frac{C \widehat{L}(x) }{x^{\alpha }}, \label{eq::power} \ee
for large $x$.
Roughly speaking, whether the process $\HG$ is explosive or not depends on the behaviour of $G$ (i.e., it's flatness) around the origin. Keeping this in mind, Grey makes a successful attempt to localize the borderline in terms of life-length distribution functions that together with $h$ are just not flat enough to prevent the process from exploding. He puts forward two distribution functions that are extremely flat around the origin: one such that the process is explosive and another which ensures that the alive population is finite at all time (i.e., a conservative or non-explosive process). We shall improve on this boundary and demonstrate that, in fact, explosiveness of those processes is independent of the particular choice of $\alpha \in (0,1)$: for a fixed $G$ the process explodes either for all $\alpha$ or is conservative for all choices of $\alpha$. 
\\
It is worth noting that Markovian age-dependent branching processes, that is processes with exponential life-times, are well-understood, see for instance \cite{Har63}. Such a process is explosive if and only if $h'(1)=\infty$ (i.e., infinite expectation for the offspring) and
\[ \int_{1-\epsilon}^1 \frac{\mathrm d s}{s-h(s)} < \infty, \]
for suitably small $\epsilon > 0$.
\\
Since then, various attempts have been made to characterize explosive branching processes. For instance, Sevastyanov  \cite{Sev70} proved a necessary condition for explosiveness of age-dependent branching processes. He demonstrated that if there exist $\theta, \epsilon > 0$ such that
\[ \int_{0}^{\epsilon} G^{\leftarrow}\lr{\frac{1+\theta}{w(y)}} \frac{\mathrm d y}{y} < \infty, \]
where $w$ is defined for $y \geq 0$ by $y \cdot w(y) = 1 - h(1-y)$ and $G^{\leftarrow}$ is the inverse function of $G$, then the process $\HG$ is explosive. 
Inspired by this result, he suggests that the condition
\be \int_{0}^{\epsilon} G^{\leftarrow}\lr{\frac{1}{w(y)}} \frac{\mathrm d y}{y} = \infty, \label{eq::cond_sevastyanov} \ee
for all suitably small $\epsilon > 0$, might very well be necessary and sufficient for a process to be conservative. 
\\
In 1987, Vatutin \cite{Vat87} gives a counterexample that disproves necessity and sufficiency of \eqref{eq::cond_sevastyanov} for explosiveness. Remarkably, he presents a distribution function $G$ defined for small $t \geq 0$ by $G(t) = \e^{-1/t}$ as an example for which \eqref{eq::cond_sevastyanov} is satisfied for all probability generating functions $h$. However, he notes that there exists a $h$ such that the process $\HG$ is conservative and thereby violating the conjecture. 
\\
In 2013, Amini et al. presented in \cite{AmDeGrNe13} a necessary and sufficient condition for explosiveness covering a wide range of processes, namely those for which the number of offspring is an independent copy of some random variable $D$ that is plump, i.e., for some $\epsilon > 0$,
\begin{equation}
\mathbb{P}(D \geq m^{1+\epsilon}) \geq \frac{1}{m},
\end{equation}
for all sufficiently large $m$.
Their paper falls within the broader context of branching random walks, a quite different approach that we shall describe later on. 
They reasoned that in an explosive process one finds in almost every realization an exploding path: an infinite path with the property that the sum of the edge-lengths along it is finite. Obviously, the sum of life-lengths along every single path to infinity is always less than or equal to the sum over the minimal life-length in every generation. We call a process min-summable if the latter sum is almost surely finite, which is a necessary condition for explosion. Remarkably, although seemingly much weaker, they prove that this event is also sufficient for a plump process to explode. They provide a constructive proof: given that a process is min-summable, they construct an algorithm that almost surely finds an exploding path. This results in an easily verified if and only if condition that we recite in Theorem \ref{th::AGE_PLUMP}.
\\
To fully employ the last result, it is desirable to have estimates on the growth of generation sizes. Davies shows in \cite{Dav78} that for an ordinary branching process, its growth settles to an almost deterministic course after some initial random development. Loosely speaking, if the distribution $D$ behaves as in \eqref{eq::power} for some $\alpha \in (0,1)$, then upon denoting the size of generation $n$ by $Z_n$, we have for large $n$: $Z_n + 1 \simeq \e^{W/ \alpha^n}$, where $W$ is a random variable with exponential tail behaviour and $\P{W=0}$ equals the extinction probability of the process. We shall combine the aforementioned results in \cite{AmDeGrNe13} and \cite{Dav78} to extend the theory presented by Grey in \cite{Grey74}.
\\
We now briefly discuss non-negative branching random walks on $\mathbb{R}^+$ as they are described in \cite{AmDeGrNe13}. The process starts with one particle at the origin, that jumps to the right according to some displacement distribution function $G$, at which instant it gives birth to an offspring with probability generating function $h$. The process is then repeated: all particles in a particular generation independently  take a random-length jump to the right according to the distribution function $G$ and then give birth to the individuals in the next generation. We let $M_n$ be the distance from the origin of the leftmost particle in generation  $n \in \mathbb{N}$ and define $M_n = \infty$ if there are no particles in generation $n$. Explosion is the event that $\displaystyle \lim_{n \to \infty} M_n = V < \infty$ with positive probability, i.e., an infinite amount of particles stays within a distance $V < \infty$ from the origin. 
\\
Let us write $D$ for the number of offspring of a single individual with probability generating function $h$ in a non-negative branching random walk. It turns out that $\E{D} \in (1,\infty)$ ensures the existence of a constant $\gamma$ such that, conditional on survival,
\[ \frac{M_n}{n} \overset{\text{a.s.}}\to \gamma, \quad \quad \mbox{as } n \to \infty, \]
see Hammersley \cite{Ham74}, Kingman \cite{Kin75} and Biggins \cite{Big90}. Thus, $M_n = \gamma n + o(n)$ for large $n$, and, hence, explosiveness necessitates $\gamma$ to be $0$. 
\\
Consider now $H:=  \E{D} G(0)$, with $\E{D} \geq 1$ and note that there are three cases of interest: $H<1, H=1$  and $H>1$. Amini et al. infer that explosion does not occur for $H<1$. They additionally point out that, due to a theorem of Dekking and Horst \cite{DeHo91}, the random variables $(M_n)_n$ converge almost surely to a finite random variable $M$, whenever $H>1$. They ultimately reason that the case $\E{D} = \infty$  and $G(0)=0$ is the most interesting, and it is precisely this setting that we shall be principally concerned with. 
\\
We shall  explain shortly how \ADBPs $  $ may be employed to study epidemic spreads. 
They are interesting tools in their own right to describe the initial phase of an epidemic. Moreover, we will show that they naturally arise when studying epidemic spreads on a random graph. 
\\
But, first, we define the rules of infection that we have in mind (based on ideas in \citep{Bh14}). We consider a very large population of individuals subject to a disease and make the following assumptions regarding the transmission of the disease:
\begin{itemize}
\item an individual $i$ \emph{may} infect another individual $j$ at one and only one specific instant \emph{after} she herself has contracted the disease. We call this period the \emph{infection-time} $X(i,j)$. We shall assume that the variables $\lr{X(i,j)}_{i,j}$ are i.i.d. with a general continuous distribution function $G$;
\item an infectious contact results only in the infection of an individual if she has not been infected earlier; 
\item an individual $i$ may possibly infect $D_i$ others that we call her children or offspring;
\item infection might be prevented from happening because a person is only able to transmit the disease during a \emph{contagious period}. Thus, we give each node $i$ an independent weight $\tau^C(i) \sim C$;
\item each individual $i$ additionally has an \emph{incubation period} $\tau_I(i)$ that is independently chosen from some distribution function $I$. This time captures the period during which an individual is not \emph{yet} able to spread the disease;
\item all random variables are non-negative. 
\end{itemize}
From the preceding considerations it is obvious that individual $i $ infects $j$ if and only if
\be \tau_I(i) \leq X(i,j) \leq \tau^C(i). \label{eq::infection_rule} \ee
The connection to the above \ADBPs $  $ is made in the following way (again, see \citep{Bh14}):
We select one individual at random that we shall call the root, and then  artificially start the epidemic by designating the root to be infected. 
Every individual $i$ in the branching process gives birth to $D_i$ susceptible individuals. Each of those has a life-time that we identify with the infection-time introduced above. Each individual furthermore possesses both an incubation and contagious period and an infectious contact with each of her children does \emph{or} does not result in an infection according to the rule \eqref{eq::infection_rule}. During the initial spread of the epidemic, the number of susceptible individuals constitutes almost the total population and any alteration of it remains initially small in comparison with the total number of individuals. Hence, if the population does not exhibit an underlying structure, such as the presence of cliques of people, then the early stages of the epidemic are well approximated with an age-dependent branching process. 
\\
We now discuss some literature on mathematical models of epidemics. Already in the early fifties, scientists in the field used branching processes to model such spreads \cite{Whi55}. Those models often assumed the population to have very large or infinite size. It was in the late fifties and early sixties that random graphs were introduced \cite{Gil59}. These are powerful tools to model realistic networks where the population has a large, but finite size $n$.  The \ADBPs $  $ with \emph{infinite} mean will naturally occur in the particular random graph model that we have in mind. That model allows for edge-weights and has the property that the local neighbourhood of a vertex has a tree-like structure. 
\\
We shall now briefly introduce random graphs.
One could argue that the field was initiated by Erd\H{o}s and R\'enyi when they studied the random graph model that is named after them, \cite{ErRe59,ErRe60}. In their model, the graph has $n$ vertices and edges between any two vertices are independently present with probability $p$. 
\\
We discuss next the configuration model (\citep{Bh14}, \citep{Bo01} or \citep{Ho14}), in which the vertices follow a power-law degree sequence. Measurements from real-life networks show that in many of them, the number of vertices with degree $k$ falls of as an inverse power of $k$, \cite{BaAl99,EbHoMi02}, i.e., a power-law. In a follow-up paper we study the time it takes for the infection to spread from one uniformly picked person $U_n \in [n]$ to another $V_n \in [n]$ on the configuration model, $CM_n(\widehat{{\underline{d}}}_n)$. This is a random-graph on $n$ nodes with degrees given by $\widehat{{\underline{d}}}_n = (\widehat{{d}}_1,\ldots,\widehat{{d}}_n)$.
We use $[n] = \{1,\ldots,n\}$ to denote the collectino of nodes. To encode the degree of each node $i$, we  associate a total of $\widehat{{d}}_i$ half edges to it, where all $\widehat{{d}}_i$ are independent copies of some random variable $\widehat{D}$. The distribution of $\widehat{D}$ follows a power law, that is, there are constants $\widehat c_1,\widehat C_2  > 0$ and $\alpha \in (0,1)$ such that
\be \frac{\widehat c_1 }{x^{\alpha + 1}} \leq \P{\widehat{D} \geq x} \leq \frac{\widehat C_2 }{x^{\alpha + 1}}, \label{eq::intro_power} \ee
for large $x$. Let us denote the total amount of half-edges in the graph by $\mathcal{L}_n$ and increase one of the degrees by one in case $\mathcal{L}_n$ is odd. Henceforth, we may assume that $\mathcal{L}_n $ is an even number. To construct the graph, we join the half-edges uniformly at random to each other.  We give each half-edge (attached to vertex $i \in [n]$) a weight $X(i,j)$ indicating the time needed to infect the neighbour $j$ connecting to this half-edge. In addition we give each node $i$ a contagious period $\tau^C(i)$ and an incubation period $\tau_I(i)$. The infection spreads then according to the rule set out above \eqref{eq::infection_rule}. The main result of the follow-up paper of this thesis is that 
\be 
d(U_n, V_n) \overset{d} \to V_f + V_b \mbox{ as } n \to \infty,
\label{eq::INF_TIME}
\ee
where $d(u,v)$ indicates the time needed for $u \in [n]$ to infect $v \in [n]$ and
 $V_f$ and $V_b$ are \emph{explosion times} intrinsically coupled to \emph{forward}, (respectively) \emph{backward} branching process that we shall introduce shortly. 
These two types of \BPs $  $ are described in \cite{BaRe13} and \citep{Ho14}. The forward process describes how the epidemic spreads in its early stages from an individual \emph{forward} through the population. The backward process in its turn approximates the process leading to a potential infection of a randomly chosen individual, see below for further details. 
\\
During the initial phase, individuals with a high degree are more likely to be infected first. Thus, the infection, in its early stages, is biased to spread towards high-degreed vertices, resulting in a \emph{different} offspring distribution $D$. It can be shown (\citep{Ho14}) that for both processes the probability that an individual gives birth to an offspring of size $j$ is \emph{approximated} by
\be \P{D=j} = \frac{ (j+1) \P{\widehat{D} = j+1} }{\E{\widehat{D}}}.   \ee
It may be calculated from \eqref{eq::intro_power} that the offspring distribution satisfies again a power-law:
\be \frac{c_1}{x^{\alpha }} \leq \P{D \geq x} \leq \frac{C_2}{x^{\alpha }},\ee
for some $c_1, C_2 > 0$ and all sufficiently large $x$. Note that $\alpha$ is the \emph{same} as above, it is an intrinsic property of the model.
We emphasize that $\alpha \in (0,1)$ throughout this thesis and therefore $\widehat{D}$ has finite expectation, but $D$ has \emph{infinite} expectation. 
\\
Equation \eqref{eq::INF_TIME} clearly demonstrates the strong dependence of the epidemic spread on the underlying branching processes. The rest of this thesis therefore intends to be a more detailed study of various age-dependent processes. 
\\
Most, if not all, literature on age-dependent branching processes deals with a setting where individuals enjoy complete independence from one another.  In this sense, our model is different and we study it not in the least for its mathematical interest. We shall, however, restrict our attention to the case where there is \emph{either} a contagious period \emph{or} an incubation period, thereby excluding the case of both being mutually present. The \emph{main} question that this thesis aims to answer is formulated as follows: given an \emph{explosive} process $\HG$, is it possible to stop it from being explosive by adding either an incubation period or a contagious period? And, in case of an affirmative answer, what conditions must be imposed on those periods to ensure conservativeness of the resulting process?

\subsection*{Structure of this thesis}
\addcontentsline{toc}{subsection}{Structure of this thesis}
The first chapter starts with introducing necessary terminology and notation. It furthermore gives a more formal definition of the models followed by a presentation of the main results of this thesis. 
\\
The second chapter is fully devoted to classical age-dependent branching processes:
the only parameters are the offspring distribution and the infection-time. Our attention is thus restricted to a setting with infinite contagious periods and zero incubation times.
\\In the third chapter we study \emph{forward} age-dependent branching processes with contagious periods. These model the early stage of the forward spread, without the presence of an incubation period.
\\Chapter four deals with \emph{backward} age-dependent branching processes with contagious periods. Their purpose is to model the process leading to the infection of an individual when the spread of a disease is subject to contagious periods.
\\Chapter five compares the forward and backward processes with contagious periods. Interestingly, in terms of explosiveness, both processes exhibit similar behaviour.
\\In chapter six we deal with \emph{forward} age-dependent branching processes with incubation periods. We shall study how the epidemic spreads forward in time under the influence of an incubation period.
\\Chapter seven treats \emph{backward} age-dependent branching processes with incubation periods.
\\Finally, in chapter eight we shall compare backward to forward processes with incubation  periods. A detailed analysis will show that, in a wide class of processes, explosiveness of the forward process may be deduced from explosiveness in the backward process.  

\newpage

\section{Models and Main Results}

\subsection{Preliminary Definitions}
Here we present necessary terminology and definitions that we shall make frequent use of.

\begin{definition}
By a random variable $D$ that follows a \emph{heavy-tailed power law} (\citep{Ho14}) we mean that $D$ satisfies, for some  $c,C > 0, \alpha \in (0,1)$,
\begin{equation}
\frac{c}{x^{\alpha}} \leq \mathbb{P}(D > x) \leq \frac{C}{x^{\alpha}}, 
\label{eq::HEAVY_TAIL}
\end{equation}
for all sufficiently large $x$.
\end{definition} 
\begin{definition}
For any $\alpha \in (0,1)$, the function $\Ha: [0,1] \to [0,1]$ is defined for $s \in [0,1]$ by
\be
h_{\alpha}(s) = 1 - (1-s)^{\alpha}.
\label{eq::Ha}
\ee 
\end{definition}
\noindent
We show later that $\Ha$ is the probability generating function of a random variable $D$ that satisfies \eqref{eq::HEAVY_TAIL}. 
\begin{definition}
A random variable $D$ is said to have a \emph{plump} distribution (\citep{AmDeGrNe13}) if, for some $\epsilon > 0$,
\begin{equation}
\mathbb{P}(D \geq m^{1+\epsilon}) \geq \frac{1}{m},
\label{eq::AGE_PLUMP}
\end{equation}
for all sufficiently large $m$.
\end{definition}

\begin{definition}
By an \emph{i.i.d.} sequence of random variables we shall mean a sequence of random variables that are mutually independent and identically distributed. 
\end{definition}

\begin{notation}
For any function $f$, we denote its \emph{inverse} function by $f^{\leftarrow}$.
\end{notation}

\begin{notation}
For any event $A$, we denote its \emph{complement}  by $\bar{A}$.
\end{notation}

\begin{notation}
For any event $A$, we denote its \emph{indicator} function  by $\indicator{A}$.
\end{notation}

\begin{notation}
For any random variable $Y$ and distribution function $F$, the notation $Y \sim F$ means that $Y$ has \emph{distribution} function $F$.
\end{notation}

\begin{notation}
For any two random variables $Y$ and $Z$, the notation $Y \overset{d}= Z$ means that $Y$ is \emph{equal in distribution} to $Z$.
\end{notation}

\begin{notation}
For any two random variables $Y$ and $Z$, the notation $Y \overset{d} \leq Z$ means that $Y$ is \emph{stochastically dominated} by $Z$.
\end{notation}

\subsection{Age-Dependent Branching Processes}
Let $h$ be a probability generating function and $G$ the distribution function of a non-negative random variable. The first model considers an age-dependent branching process that is defined as follows:
every individual $i$ has a random life-length $X_i \sim G$. An individual produces an offspring of random size $D_i$ with probability generating function $h: s \mapsto \E{s^{D_i}}$ exactly when she dies.
We assume that all life-lengths and family sizes are independent of each other. 
The process starts with a single individual that is called the \emph{root} and has life-length $X_0$.
We use $t$ as a running parameter for the time and distinguish between two cases:
\begin{itemize}
\item the delayed case: the time $t=0$ corresponds to the birth of the root; it is precisely the setting studied by Grey in \cite{Grey74}. We denote this process by $(h,G)^{\text{del}}$;
\item the usual case: the time $t=0$ corresponds to the death of the first individual. We denote this process by $(h,G)$.
\end{itemize}
Note that the number $N(t)$ of  \emph{alive} individuals at time $t \geq 0$ in the usual case  equals the number $N^{\text{del}}(t)$  of individuals in the delayed case at time $t + X_0$. Having defined the process, we shall want to know when it is explosive.

\subsubsection{Explosiveness}
Denote by $\tau_n$ the birth-time of the $n$-th individual. It may happen that there exists some random variable $V < \infty$ such that with positive probability $\tau_n \leq V$ for    all $n \geq 0$, in which case we speak of an explosive process (\cite{Grey74}):
\begin{definition}
We say that a process $(h,G)$ is explosive if there is a $t > 0$ such that $\P{N(t) = \infty} > 0$.  
Similarly, by saying that a process $(h,G)^{\text{del}}$ is explosive we mean that there is a $t > 0$ such that $\P{N^{\text{del}}(t) = \infty} > 0$. 
We call a process conservative if it is not explosive.
\label{def::AGE_explosiveness}
\end{definition}
\noindent Note that the process $(h,G)$ is explosive if and only if $(h,G)^{\text{del}}$ is. Hence, as our primary interest is on answering the explosiveness question, we restrict our attention to  the usual case. 

\subsubsection{Results}
Later we demonstrate that $\Ha$ as defined in \eqref{eq::Ha} is the probability generating function of  a distribution $D$ that follows a heavy-tailed power law, conform \eqref{eq::HEAVY_TAIL}. It is precisely this probability generating function that Grey studies in \citep{Grey74}. Moreover, he proved that to every offspring distribution $D$ with infinite expectation there exist two special life-length distributions. On the one hand a  distribution that together with $D$ constitutes an explosive process, although it puts zero mass at the origin. On the other hand a life-length distribution $G$, such that $G(t)>0$ for all $t>0$, that forms together with $D$ a conservative process. Grey validates these theorems for $\Ha$ by presenting two functions that are extremely flat around the origin: $G_{\ell,\beta}$ defined for fixed $\ell,\beta > 0$ and small $t \geq 0$ by $G_{\ell,\beta}(t) = \exp \lr{-\frac{ \mathcal{\ell} }{t^{\beta}}}$ and $\widehat{G}_{k,\gamma}$ given for fixed $\gamma,k>0$ and small $t \geq 0$ by $\widehat{G}_{k,\gamma}(t) = \exp\lr{ -\exp\lr{\frac{k}{t^{\gamma}}}}$.
The process $(\Ha,G_{\ell,\beta})$ is demonstrated to be explosive for all $\alpha \in (0,1)$ and $\ell,\beta > 0$, though $(\Ha,\widehat{G}_{k,1})$ is conservative whenever $\alpha \e^k > 1$. The latter examples raises the presumption that explosiveness of a process $\HaG$ strongly depends on the particular value of $\alpha$.
However, we shall demonstrate that, regardless of the distribution $G$, explosiveness of $\HaG$, does \emph{not} depend on the value of $\alpha \in (0,1)$:
\begin{theorem}
Let $G$ be the distribution function of a non-negative random variable. 
Then, the process $\HaG$ is either explosive for all $\alpha \in (0,1)$ or conservative for all $\alpha \in (0,1)$.
\label{th::AGE_allAlpha}
\end{theorem}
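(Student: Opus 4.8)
The plan is to reduce the claim to a comparison between the two generating functions $h_\alpha$ and $h_\beta$ for distinct $\alpha,\beta\in(0,1)$, using the characterization of explosiveness via exploding paths / min-summability that is available through Theorem~\ref{th::AGE_PLUMP} (both $h_\alpha$ and $h_\beta$ correspond to plump offspring distributions, since a heavy-tailed power law with exponent $\alpha\in(0,1)$ satisfies \eqref{eq::AGE_PLUMP}). So it suffices to show: if the process $(h_\alpha,G)$ is explosive, then so is $(h_\beta,G)$ for every $\beta\in(0,1)$. By symmetry in $\alpha,\beta$ this gives the dichotomy. The life-length distribution $G$ is held fixed throughout, so the only thing that changes is the offspring law.

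The first step is to record the precise behaviour of $h_\alpha$ near $s=1$: from \eqref{eq::Ha}, $1-h_\alpha(s)=(1-s)^\alpha$, so in terms of the function $w_\alpha(y)$ with $y\,w_\alpha(y)=1-h_\alpha(1-y)$ one gets $w_\alpha(y)=y^{\alpha-1}$. The second step is to connect $\alpha$ to the growth of generation sizes: by Davies' theorem \cite{Dav78} (quoted in the introduction), for $(h_\alpha,\cdot)$ the generation sizes satisfy $Z_n+1\approx \e^{W_\alpha/\alpha^n}$, i.e.\ $\log Z_n$ grows like $\alpha^{-n}$ up to a random multiplicative constant with exponential tails. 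The key structural observation is that the \emph{only} feature of $\alpha$ that controls explosiveness is this double-exponential growth rate, and changing $\alpha$ only changes the base of the inner exponent from $1/\alpha$ to $1/\beta$ — it does not change whether the minimal life-length in generation $n$ is summable, because that depends on how $G$ behaves near $0$ relative to the \emph{reciprocal} of the generation size, and a generation of size $\e^{c\alpha^{-n}}$ versus $\e^{c\beta^{-n}}$ only shifts which ``time-scale'' of $G$ near the origin is being probed, by a change of variables in $n$.

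Concretely, the third step is the following coupling/monotonicity argument. Suppose $\alpha<\beta$ and $(h_\alpha,G)$ is explosive; I want $(h_\beta,G)$ explosive. One route: show directly that $h_\beta$ dominates $h_\alpha$ in the convex/stochastic order relevant here, namely $h_\beta(s)\le h_\alpha(s)$ for $s\in[0,1)$ (since $(1-s)^\beta\le(1-s)^\alpha$ when $\alpha<\beta$), which means the offspring distribution for $\beta$ stochastically dominates that for $\alpha$; hence the whole branching process for $\beta$ can be coupled to contain a copy of the one for $\alpha$, and an exploding path in the latter is an exploding path in the former. This settles ``$\alpha$ explosive $\Rightarrow\beta$ explosive'' for $\beta>\alpha$ essentially for free. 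The genuinely hard direction is the reverse: $\beta$ explosive $\Rightarrow$ $\alpha$ explosive for $\alpha<\beta$, i.e.\ \emph{shrinking} the offspring cannot destroy explosion. Here stochastic domination runs the wrong way, and one must use the min-summability criterion quantitatively: express the ``explosion path'' algorithm of \cite{AmDeGrNe13} and show that the sum $\sum_n G^{\leftarrow}(\text{something}(n))$ it requires to converge, transforms under $\alpha\mapsto\beta$ by a reparametrization $n\mapsto$ (roughly) $n\cdot\log\alpha/\log\beta$ that preserves summability, because the summand is controlled by $G^{\leftarrow}$ evaluated at a quantity that is doubly-exponentially small in $n$ in both cases and the set of such quantities is comparable up to re-indexing.

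I expect the main obstacle to be exactly this second direction: making rigorous that ``$G$ near $0$ is flat enough to beat growth rate $1/\beta$'' implies ``$G$ near $0$ is flat enough to beat growth rate $1/\alpha$'' for $\alpha<\beta$. This requires a careful application of the Amini--Devroye--Griffiths--Neininger exploding-path construction together with Davies' generation-size asymptotics, handling the random constant $W$ (which has exponential tails, hence is harmlessly absorbed) and verifying that the relevant series $\sum_n G^{\leftarrow}\!\big(1/Z_n^{(\alpha)}\big)$ and $\sum_n G^{\leftarrow}\!\big(1/Z_n^{(\beta)}\big)$ are simultaneously finite or infinite — the crux being a summation-by-substitution estimate that trades the faster decay of $1/Z_n^{(\beta)}$ for a coarser index spacing. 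Everything else (plumpness of $h_\alpha$, the reduction to comparing two $\alpha$'s, the easy direction via stochastic domination, passing between the usual and delayed processes) is routine given the results already cited.
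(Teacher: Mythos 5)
Your overall architecture matches the paper's: reduce explosiveness to min-summability of $\sum_n \min(X_1^n,\dots,X^n_{\e^{1/\alpha^n}})$ via plumpness (Theorem \ref{th::AGE_PLUMP}) and Davies' growth theorem — this is exactly Proposition \ref{thm::ALPHA_GROWTH} — then handle one implication by the p.g.f.\ comparison theorem and the other by re-indexing the doubly-exponential series. However, you have the direction of the domination backwards, and it is not a cosmetic slip. From $(1-s)^{\beta}\le(1-s)^{\alpha}$ for $\alpha<\beta$ you must conclude $h_{\beta}(s)\ge h_{\alpha}(s)$, not $\le$; correspondingly the offspring law with the \emph{smaller} exponent $\alpha$ is the stochastically larger one (heavier tail $x^{-\alpha}$), so the $\alpha$-process contains a coupled copy of the $\beta$-process, not the other way round. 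Hence the implication that comes for free from Theorem \ref{thm::AGE_COMP_h} is ``$(h_{\beta},G)$ explosive $\Rightarrow(h_{\alpha},G)$ explosive for $\alpha<\beta$'' (enlarging the offspring), while the genuinely hard implication is ``$(h_{\alpha},G)$ explosive $\Rightarrow(h_{\beta},G)$ explosive for $\beta>\alpha$'' (shrinking the offspring) — precisely the one you declare settled by coupling. As written, your easy half rests on an invalid coupling, and the hard half is deferred to the series estimate; since you do assert the two series are \emph{simultaneously} finite or infinite, the plan can still close, but only if that equivalence is actually proved in the nontrivial direction.

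That nontrivial direction is the content of the paper's Lemma \ref{lm::AGE_EXPL_IND_ALPHA}, and your ``coarser index spacing'' remark is the right idea but is left unexecuted. The paper's execution: set $f(n)=\e^{1/\alpha^n}$ and $f_p(n)=\e^{1/\alpha^{n/p}}$ (so $\alpha^{1/p}$ plays the role of $\beta$), observe $f_p(pn)=f(n)$, split $\sum_k\min(X_1^k,\dots,X^k_{f_p(k)})$ into blocks of $p$ consecutive indices, bound every term of the $l$-th block by a minimum over only $f(l)$ of the i.i.d.\ variables, and conclude the whole series is dominated in distribution by a sum of $p$ i.i.d.\ copies of the a.s.\ finite series for $\alpha$; an arbitrary $\beta\in(\alpha,1)$ is then reached by choosing $p$ with $\alpha^{1/p}\ge\beta$ and one further application of Theorem \ref{thm::AGE_COMP_h}. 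You would also need to justify replacing the random constant $W$ of Davies' theorem by a fixed constant in the exponent (the paper does this inside the proof of Proposition \ref{thm::ALPHA_GROWTH} by conditioning on $W\in[l,l+1)$ and absorbing $l$ into a finite index shift), which you wave at but do not supply. In short: right road map, but the step you mark ``essentially for free'' is wrong as stated, and the step you mark as the main obstacle is the theorem's actual content and still requires the block/re-indexing argument.
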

\noindent
We extend this theorem to include all processes $(h^L_{\alpha},G)$, where 
\be h^L_{\alpha}(s) = 1 - (1-s)^{\alpha} L(\frac{1}{1-s}), \quad \mbox{as } s \to 1,  \label{eq::HaL}\ee
with $L$  a function varying slowly at infinity and $0<\alpha<1$:
\begin{theorem}
Let $G$ be the distribution function of a non-negative random variable. Let $L$ be a function varying slowly at infinity. 
Then, the process $(h^L_{\alpha},G)$ is either explosive for all $\alpha \in (0,1)$ or conservative for all $\alpha \in (0,1)$.
\label{th::AGE_allAlphaL}
\end{theorem}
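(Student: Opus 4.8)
The plan is to deduce Theorem~\ref{th::AGE_allAlphaL} from Theorem~\ref{th::AGE_allAlpha} by sandwiching $h^L_\alpha$ between $h_{\alpha-\delta}$ and $h_{\alpha+\delta}$ for a small $\delta>0$, using the explosiveness criterion for plump processes (Theorem~\ref{th::AGE_PLUMP}) and the generation-size estimate of Davies~\cite{Dav78} to absorb the slowly varying factor $L$. Write $D$ for the offspring variable attached to $h^L_\alpha$. The first step is to translate the prescribed behaviour $1-h^L_\alpha(s)\sim(1-s)^\alpha L(1/(1-s))$ near $s=1$ into tail behaviour: by a Tauberian theorem of Karamata type, $\P{D>x}$ is regularly varying at infinity with index $-\alpha$, say $\P{D>x}=x^{-\alpha}\widetilde L(x)$ with $\widetilde L$ slowly varying. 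Since $\alpha<1$, such a $D$ is plump and has infinite mean, so Theorem~\ref{th::AGE_PLUMP} applies to $(h^L_\alpha,G)$, as it does to $(h_\beta,G)$ for every $\beta\in(0,1)$.

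Fix $\delta>0$ with $[\alpha-\delta,\alpha+\delta]\subset(0,1)$ and let $D_\beta$ be the offspring variable of $h_\beta$, so $\P{D_\beta>x}$ is comparable to $x^{-\beta}$. Potter's bounds for $\widetilde L$, combined with the gap between the exponents $\alpha\pm\delta$ and $\alpha\pm\delta/2$ (which absorbs all implicit constants), yield an $x_0$ with
\[
\P{D_{\alpha+\delta}>x}\ \le\ \P{D>x}\ \le\ \P{D_{\alpha-\delta}>x}\qquad(x\ge x_0).
\]
Modifying $D_{\alpha+\delta}$ below $x_0$ (collapsing the mass on $\{0,1,\dots,x_0\}$ to $0$) and $D_{\alpha-\delta}$ below $x_0$ (enlarging its small-value part so as to dominate $D$ there) produces variables $\widehat D_{\alpha\pm\delta}$ with $\widehat D_{\alpha+\delta}\leqd D\leqd\widehat D_{\alpha-\delta}$ and with unchanged tails, hence still plump and of infinite mean. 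Because stochastic domination of the offspring law lets one embed one age-dependent branching tree inside another on the same life-lengths, $N(t)$ is monotone in the offspring law, whence
\[
(\widehat h_{\alpha+\delta},G)\ \text{explosive}\ \Longrightarrow\ (h^L_\alpha,G)\ \text{explosive}\ \Longrightarrow\ (\widehat h_{\alpha-\delta},G)\ \text{explosive}.
\]
Granting that the surgery below $x_0$ leaves explosiveness unchanged — so that $(\widehat h_{\alpha\pm\delta},G)$ is explosive iff $(h_{\alpha\pm\delta},G)$ is — Theorem~\ref{th::AGE_allAlpha} makes the two outer processes explosive simultaneously, and the displayed implications then force $(h^L_\alpha,G)$ into that same common status. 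As this holds for every $\alpha\in(0,1)$, the theorem follows.

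The main obstacle is the insensitivity statement just invoked — equivalently, the irrelevance of the slowly varying factor $L$ — and this is where \cite{AmDeGrNe13} and \cite{Dav78} must be combined. By Theorem~\ref{th::AGE_PLUMP}, explosiveness of a plump process is equivalent to min-summability, $\sum_n\min\{X_i:i\in\text{generation }n\}<\infty$ a.s.; by Davies~\cite{Dav78}, any infinite-mean process whose offspring tail has index $-\beta$ satisfies $\log(Z_n+1)=W\beta^{-n}(1+o(1))$ a.s., with $\{W=0\}$ the extinction event and $W$ positive otherwise. Conditionally on $Z_n$ the generation-$n$ life-lengths are i.i.d.\ $G$, so a Borel--Cantelli analysis of $\min\{X_i:i\in\text{generation }n\}$, which is of order $G^{\leftarrow}(1/Z_n)$, shows that on $\{W>0\}$ min-summability amounts to $\sum_n G^{\leftarrow}(\e^{-W\beta^{-n}})<\infty$; by a Cauchy-condensation comparison this condition is insensitive to the value of $W$ and to the $o(1)$ term, hence the same for $\widehat D_{\alpha\pm\delta}$ as for $D_{\alpha\pm\delta}$. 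One must work on $\{W>0\}$ throughout, since the surgery may render extinction possible, but explosiveness asks only for $\P{N(t)=\infty}>0$. Indeed this is a reprise of the proof of Theorem~\ref{th::AGE_allAlpha}, which identifies ``$(h,G)$ explosive'' with ``$\sum_n G^{\leftarrow}(\e^{-c\alpha^{-n}})<\infty$ for one, equivalently every, $c>0$'': the latter is visibly free of $\alpha$ and of the slowly varying part, so one may alternatively obtain Theorem~\ref{th::AGE_allAlphaL} by running that argument with $\widetilde L$ replacing the constant.
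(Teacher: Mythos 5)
Your proof is correct and rests on the same skeleton as the paper's --- sandwich $h^L_{\alpha}$ between $h_{\alpha-\delta}$ and $h_{\alpha+\delta}$ via Potter's bounds and then invoke Theorem~\ref{th::AGE_allAlpha} --- but you execute the comparison step quite differently. The paper applies Potter's theorem directly to the factor $L(\frac{1}{1-s})$ in the definition \eqref{eq::HaL}, obtaining $h_{\alpha-\epsilon}(s)\leq h^L_{\alpha}(s)\leq h_{\alpha+\epsilon}(s)$ for $s$ near $1$, and then cites the comparison theorem for offspring generating functions (Theorem~\ref{thm::AGE_COMP_h}), which needs only this one-sided inequality of generating functions near $s=1$ and says nothing about the offspring laws at small values. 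You instead pass to tail probabilities via a Karamata Tauberian theorem, which yields the comparison $\P{D_{\alpha+\delta}>x}\leq\P{D>x}\leq\P{D_{\alpha-\delta}>x}$ only for $x\geq x_0$; to upgrade this to genuine stochastic domination you must modify the laws below $x_0$, couple the trees, and then separately argue that the surgery does not alter explosiveness. Your justification of that last point --- unchanged tail, hence the same Davies growth rate up to the value of $W$, hence the same min-summability criterion --- is sound, but it amounts to re-deriving Proposition~\ref{thm::ALPHA_GROWTH}, which you could simply have applied to both the modified and unmodified laws, since both satisfy \eqref{eq::HEAVY_TAIL} with the same exponent. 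In short, your route is valid and buys a more probabilistic mechanism (tree embedding rather than manipulation of the fixed point equation), at the cost of the Tauberian step, the surgery, and the surgery-insensitivity argument, all of which Theorem~\ref{thm::AGE_COMP_h} makes unnecessary; that economy is exactly what the paper's two-line proof exploits.
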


\noindent
These theorems are based on the following proposition that is interesting in its own right because it provides a much easier tool to check if a process is explosive or not:
\begin{proposition}
Let $h$ be the probability generating function of a distribution $D$ that satisfies (\ref{eq::HEAVY_TAIL}) for some  $c,C > 0, \alpha \in (0,1)$  and all sufficiently large $x$.
Let $G$ be the distribution function of a non-negative random variable. Let $(X_i^j)_{i,j \geq 0}$ be an i.i.d. sequence of random variables with distribution $G$ and furthermore independent of everything else. 
Under these assumptions, the process $\HG$ is explosive if and only if
\begin{equation}
\mathbb{P} \left( \sum_{n=1}^{\infty} \min (X_1^n , \ldots , X_{\e^{1 / \alpha^n}}^n ) < \infty  \right) = 1.
\label{eq::MIN_SUMMABLE_EQUIVALENT}
\end{equation}
\label{thm::ALPHA_GROWTH}
\end{proposition}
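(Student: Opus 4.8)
The plan is to combine the min‑summability criterion of Amini, Devroye, Griffiths and Nees (Theorem~\ref{th::AGE_PLUMP}) with the growth asymptotics of Davies~\cite{Dav78}, and then to eliminate the spurious random constant coming out of those asymptotics by a short re‑indexing argument. As a preliminary, a variable $D$ obeying \eqref{eq::HEAVY_TAIL} is plump: $\P{D\ge m^{1+\eps}}\ge c\,m^{-\alpha(1+\eps)}$ for large $m$, and since $\alpha<1$ one may fix $\eps>0$ with $\alpha(1+\eps)<1$, so that $c\,m^{1-\alpha(1+\eps)}\ge1$ for large $m$, i.e.\ \eqref{eq::AGE_PLUMP} holds. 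Hence Theorem~\ref{th::AGE_PLUMP} applies and $\HG$ is explosive if and only if it is min‑summable, i.e.\ $\sum_n M_n<\infty$ almost surely, where $M_n$ denotes the smallest life‑length in generation $n$ (on the extinction event this sum has finitely many non‑zero terms, hence is trivially finite; and explosion can occur only on the survival event, which is non‑null because $\E{D}=\infty$). Equivalently, $\HG$ is explosive if and only if $\sum_n M_n<\infty$ a.s.\ \emph{on survival}.

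Next I would isolate an elementary lemma. Let $(X_i^n)_{i,n\ge 1}$ be i.i.d.\ with law $G$, put $k_n(c):=\lfloor\e^{c/\alpha^n}\rfloor$ for $c>0$, and $S(c):=\sum_{n\ge 1}\min(X_1^n,\ldots,X_{k_n(c)}^n)$. The key identity is $k_n(\alpha c)=k_{n-1}(c)$: relabelling rows, it shows that $S(\alpha c)$ is distributed as an independent, a.s.\ finite summand (the $n=0$ term, a minimum of $\lfloor\e^c\rfloor$ i.i.d.\ copies of $G$) plus an independent copy of $S(c)$, whence $\P{S(\alpha c)<\infty}=\P{S(c)<\infty}$, and by iteration $\P{S(\alpha^j c)<\infty}=\P{S(c)<\infty}$ for all $j\ge 0$. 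Since $c\mapsto S(c)$ is pathwise non‑increasing (a larger $c$ means more terms in each minimum), comparing an arbitrary $c$ with a suitable $\alpha^j c'$ forces $p:=\P{S(c)<\infty}$ to be the same for every $c>0$; and $\{S(c)<\infty\}$ is a tail event of the independent rows, so $p\in\{0,1\}$ by Kolmogorov's zero‑one law.

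To connect the two, condition on the genealogy, which is independent of all life‑lengths, and realise the generation‑$n$ life‑lengths as the first $Z_n$ entries of row $n$ of the array above, $Z_n$ being the size of generation $n$. By \cite{Dav78} (cf.\ the discussion around \eqref{eq::power}), on survival $\alpha^n\log(Z_n+1)$ converges a.s.\ to some $W\in(0,\infty)$; the argument below only uses that $\alpha^n\log(Z_n+1)$ is eventually bounded away from $0$ and from $\infty$. Hence, for all large $n$, $k_n(W/3)\le Z_n\le k_n(2W)$, so $\min(X_1^n,\ldots,X_{k_n(2W)}^n)\le M_n\le\min(X_1^n,\ldots,X_{k_n(W/3)}^n)$; summing over $n$, the tail of $\sum_n M_n$ is squeezed between the tails of $S(2W)$ and $S(W/3)$. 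Since $\P{S(2W)<\infty}=\P{S(W/3)<\infty}=p$ irrespective of the value of $W$, and both are zero‑one events, it follows that $\P{\sum_n M_n<\infty\mid\text{survival}}=p$. Combining with the first paragraph: $\HG$ is explosive if and only if $p=1$, i.e.\ if and only if $\P{S(1)<\infty}=1$, and $\P{S(1)<\infty}$ is exactly the probability appearing in \eqref{eq::MIN_SUMMABLE_EQUIVALENT}.

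The genuinely non‑routine point is the lemma in the second paragraph: Davies' theorem delivers the generation sizes on the double‑exponential scale $\e^{W/\alpha^n}$ with an \emph{unknown, random} constant $W$, and one must observe that convergence of the min‑sum is completely insensitive to $W$, so that $\e^{W/\alpha^n}$ may be replaced by the canonical $\e^{1/\alpha^n}$. The rest is bookkeeping: the harmless discrepancy between the ``usual'' and ``delayed'' processes, the convention for min‑summability on the extinction event, the floors in $\e^{c/\alpha^n}$, and decoupling the random tree from the i.i.d.\ life‑lengths by conditioning plus the zero‑one law. One small point worth checking is that the form of Davies' result used here is available already under the bare two‑sided tail bound \eqref{eq::HEAVY_TAIL} and not only under exact regular variation; this causes no trouble, since all that is needed is that $\alpha^n\log(Z_n+1)$ stay bounded away from $0$ and $\infty$ on survival, which follows from the classical infinite‑mean Galton--Watson theory under \eqref{eq::HEAVY_TAIL} alone.
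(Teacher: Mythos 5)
Your proposal is correct and follows essentially the same route as the paper: plumpness of $D$ plus the Amini et al.\ equivalence (Theorem \ref{th::AGE_PLUMP}) reduces explosiveness to min-summability, Davies' growth result supplies the random constant $W$, and the convergence of the min-sum is shown to be insensitive to that constant. Your scaling lemma ($\P{S(\alpha c)<\infty}=\P{S(c)<\infty}$ via $k_n(\alpha c)=k_{n-1}(c)$, monotonicity, and Kolmogorov's zero--one law) is just a cleaner packaging of the paper's argument, which conditions on $W\in[l,l+1)$ and shifts the index by $K_l$.
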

\noindent
In the models that are to come, by an explosive process we shall mean that $N(t)$, $N^f(t), N^b(t), N_f(t)$ or $N_b(t)$,  as it is defined there, becomes infinite for some $t < \infty$ with positive probability. Those quantities are to be understood as a measure for the size of the corresponding branching process in one sense or another. 

\subsection{Forward Age-Dependent Branching Processes with Contagious Periods}
As the classical age-dependent branching process is a fairly crude way to model an epidemic spread, we try to improve on it by introducing contagious periods. 
\\
Let $h$ be a probability generating function and both $G$ and $C$ be distribution functions of non-negative random variables.
We assume that every individual $i$ \emph{may} possibly infect a random number $D_i$ (with probability generating function $h$) of others that we shall call her children. Though, infection is only possible as long as she herself is contagious: we denote the length of this period by $\tau_i^C \sim C$. Hence, a person $j$ is infected by her parent $m(j)$ if her infection time $X_j \sim G$ is less than or equal to the contagious period $\tau^C(m(j))$ of her mother. All random variables are independent of each other. However, the children of a particular parent are not completely independent in the sense that the mother has the same contagious period for all of them. We shall say that an individual is alive or born if she is infected and emphasize  that not everybody will eventually be infected. 
\\
The variable $t$ serves as a running parameter for the time and we let the process start at $t=0$ with only one infected person, the \emph{root}. The root will infect (or give birth) to  an \emph{effective} offspring in distribution equal to
\be D^f = \s{i=1}{D} \indicator{X_i \leq \tau^C}, \label{eq::CP_EFF} \ee 
where $D$ has probability generating function $h$ and $\tau^C$ has distribution $C$. All her infected offspring shall, at their respective times of birth, infect again a random number of individuals in distribution equal to $D^f$, and so on. Thus, we have a recurrent relation to describe the process: at the birth (or infection) of an individual, the process develops from it as an \emph{independent} copy of the original process, in which the individual serves as a root. Hence, we have an age-dependent branching process 
that we denote by $\HGCf$. 
\\
By  $N^f(t)$ we shall denote the number of individuals in the \emph{coming generation} at time $t$. 
Here, we say that an individual in the branching process is belonging to the coming generation if and only if her mother has been already born but she is not yet, but she will eventually.
\begin{remark}
Note that we re-obtain the classical branching process $\HG$ by putting $\P{C = \infty} = 1$, that is, by a slight abuse of notation: $\HG = (h,G,C = \infty)_f$.
\label{rem::HG_HGinfty}
\end{remark}

\subsubsection{Results}
Consider an explosive process $(h,G)$. A natural question to ask is under what conditions will adding a contagious period $C$ stop this process from being explosive? 
\\
One possible way to answer this question is observing that a process is explosive if and only if with positive probability the branching process has a finite ray. That means the following: an infinite sequence of vertices (that are consecutive descendants of each other) together with their life-times, lying on a path starting at the root. The length of a ray is then the (possibly infinite) sum of life-times on the edges in this path.
Now, if $\HGCf$ is conservative, but $\HG$ not, then $C$ must be such that it kills with probability 1 at least one edge on every finite ray. We will employ shortly this observation to show that $\HGCf$ is explosive if there are $\beta,\theta > 0$ such that $C(t) \leq \beta t$ for all $t \in [0,\theta]$, which is the content of Theorem \ref{thm::CP_ray}:
\begin{theorem}
Let $h$ be a probability generating function. Let $G$ be the distribution function of a non-negative random variable. Let $C$ be the distribution function of a non-negative random variable such that for some  $\beta,\theta > 0$:
\begin{itemize}
\item $C(t) \leq \beta t$ for all $0 \leq t \leq \theta$;
\item $C(t) < 1$ for all $t > 0$.
\end{itemize} 
If the process $\HG$ is explosive, then so is $\HGCf$.
\label{thm::CP_ray} 
\end{theorem}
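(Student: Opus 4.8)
The plan is to exploit the finite-ray characterisation of explosion recalled just before the statement: a process is explosive if and only if, with positive probability, its genealogical tree contains a \emph{finite ray}, i.e.\ an infinite self-avoiding path from the root along which the life-lengths of the traversed individuals have a finite sum. Since $\HG$ is explosive, such a ray is present with positive probability in the unthinned tree, so it remains to show that, with positive probability, at least one such ray is not destroyed by the contagious-period constraint that defines $\HGCf$.

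To make this precise I would realise $\HGCf$ by \emph{thinning}. First generate the classical tree $\mathcal{T}$ of $\HG$ with i.i.d.\ life-lengths $(X_i)_i$ on its vertices; then, independently, attach i.i.d.\ contagious periods $(\tau^C_i)_i\sim C$ to the vertices; finally delete every vertex $j$ together with its entire descendant subtree whenever $X_j>\tau^C_{m(j)}$, where $m(j)$ is the mother of $j$. The tree that remains is distributed as the genealogy-with-life-lengths of $\HGCf$ (siblings sharing their mother's contagious period, exactly as the model requires), and it is a subtree of $\mathcal{T}$. On the positive-probability event $E$ that $\mathcal{T}$ has a finite ray, I would fix a measurable choice of such a ray $v_0,v_1,v_2,\dots$, for instance by always descending from $v_{k-1}$ into its lowest-labelled child whose own subtree still contains a finite ray (an explosion event for that subtree, hence measurable); then $\sum_{k\ge1}X_{v_k}<\infty$.

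Conditioning on $\mathcal{T}$, the edge $(v_{k-1},v_k)$ survives the thinning exactly when $X_{v_k}\le\tau^C_{v_{k-1}}$, which depends only on $\tau^C_{v_{k-1}}$. As the $v_k$ are pairwise distinct and the $\tau^C_i$ are i.i.d.\ and independent of $\mathcal{T}$, the edge-survival events along the ray are conditionally independent given $\mathcal{T}$, with
\[
\P{X_{v_k}\le\tau^C_{v_{k-1}}\mid\mathcal{T}}\ \ge\ 1-C(X_{v_k}).
\]
Because $X_{v_k}\to0$, all but finitely many indices satisfy $X_{v_k}\le\theta$ and $X_{v_k}<1/\beta$; for those the first hypothesis on $C$ gives a survival probability at least $1-\beta X_{v_k}$, while each of the remaining finitely many edges survives with probability at least $1-C(X_{v_k})>0$ by the second hypothesis. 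Hence the conditional probability that the whole ray survives is bounded below by a finite positive product times $\prod_{k\ge k_0}(1-\beta X_{v_k})$, and this infinite product is strictly positive precisely because $\sum_k X_{v_k}<\infty$. Integrating over $E$, which has positive probability, yields $\P{\HGCf\ \text{has a finite ray}}>0$, so $\HGCf$ is explosive.

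The hard part, and the only place the hypotheses on $C$ enter, is this last estimate: converting \emph{the ray has finite total length} into \emph{the ray survives with positive probability} needs the flatness bound $C(t)\le\beta t$ near the origin so that the infinite product over the short edges converges to a positive limit, while $C(t)<1$ for all $t$ disposes of the finitely many long edges near the root. The remaining points requiring care are that the selected ray be a functional of $\mathcal{T}$ alone — so that the conditional-independence step is legitimate — and that the thinning construction genuinely reproduces the law of $\HGCf$, including the dependence among siblings.
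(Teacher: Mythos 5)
Your proposal is correct and follows essentially the same route as the paper: both arguments reduce explosion of $\HGCf$ to the survival of a finite ray of the unthinned tree, condition on the tree to get the product bound $\prod_k\bigl(1-C(X_{v_k})\bigr)$, and use $C(t)\le\beta t$ near $0$ together with $\sum_k X_{v_k}<\infty$ (plus $C(t)<1$ for the finitely many long edges) to show this product is positive. Your write-up is in fact somewhat more careful than the paper's about the measurable selection of the ray and the conditional independence of the edge-survival events.
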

\noindent
Another way of addressing the explosiveness question is by employing analytical techniques to study a fixed point equation describing $\HGCf$, see for more details Section 3.1. By that approach we will establish that for any $\alpha \in (0,1)$ and $C$ such that $C(0) < 1$, the process $\HaGCf$ is explosive if and only if $\HaG$ is explosive: 

\noindent
\begin{theorem}
Let $\alpha \in (0,1)$. Let $G$ be the distribution function of a non-negative random variable. Let $C$ be the distribution function of a non-negative random variable such that  $C(\theta) < 1$ for some $\theta >0$. If the process $\HaG$  is explosive, then so is  $(\Ha,G,C)^f$.   
\label{thm::HaGCf}
\end{theorem}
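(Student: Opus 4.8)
The plan is to realize inside $\HaGCf$ an embedded classical age-dependent branching process of the form $(\tilde h, G_\theta)$ to which Proposition~\ref{thm::ALPHA_GROWTH} applies with the \emph{same} exponent $\alpha$, and then transport explosiveness across the embedding via the min-summability criterion. This is a probabilistic alternative to the fixed-point analysis mentioned before the statement and, in contrast with Theorem~\ref{thm::CP_ray}, it uses the power-law form of $h$ in an essential way.

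First I would extract what the hypotheses give. Since $\HaG$ is explosive, Proposition~\ref{thm::ALPHA_GROWTH} yields $\sum_{n\ge 1}\min(X_1^n,\dots,X_{\lceil\e^{1/\alpha^n}\rceil}^n)<\infty$ almost surely, where the $X_i^n$ are i.i.d.\ $\sim G$; in particular the $n$-th term tends to $0$ a.s., which forces $G(\vep)>0$ for every $\vep>0$. From $C(\theta)<1$ we get $q:=\P{\tau^C>\theta}>0$. Put $p:=G(\theta)\,q\in(0,1]$, which is strictly positive. Write $k_n=\lceil\e^{1/\alpha^n}\rceil$.

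Next I would build the embedded tree $\mathcal{T}'$. Work on the positive-probability event $\{\tau^C(\mathrm{root})>\theta\}$. Put the root into $\mathcal{T}'$, and for $v\in\mathcal{T}'$ put a potential child $w$ of $v$ into $\mathcal{T}'$ precisely when its infection-time satisfies $X_w\le\theta$ and its own contagious period satisfies $\tau^C(w)>\theta$. Since each $v\in\mathcal{T}'$ has $\tau^C(v)>\theta\ge X_w$, every such $w$ is genuinely infected in $\HaGCf$, so $\mathcal{T}'$ is a subtree of the forward process in which every individual has the same birth-time as in $\HaGCf$. Given $D_v\sim\Ha$ potential children of $v$ with i.i.d.\ marks $(X_i,\tau^C_i)$ independent of $D_v$ and of all other individuals, the number retained in $\mathcal{T}'$ is a $p$-thinning of $D_v$, independent across $v$, and the retained children carry i.i.d.\ life-lengths with law $G_\theta(t):=G(t\wedge\theta)/G(\theta)$; crucially the threshold $\theta$ in the retention rule does not involve the actual value of $\tau^C(v)$, so no residual dependence among siblings survives. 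Hence, conditionally on $\{\tau^C(\mathrm{root})>\theta\}$, $\mathcal{T}'$ is an honest age-dependent branching process $(\tilde h,G_\theta)$ (in the delayed convention), where $\tilde h(s)=\Ha(1-p+ps)=1-(p(1-s))^{\alpha}$ is the probability generating function of the $p$-thinned $\Ha$-variable; a routine Chernoff estimate shows this thinning still obeys a heavy-tailed power law with the \emph{same} exponent $\alpha$.

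It then remains to see that $(\tilde h,G_\theta)$ is explosive. By Proposition~\ref{thm::ALPHA_GROWTH} again this is equivalent to $\sum_{n\ge1}\min(Y_1^n,\dots,Y_{k_n}^n)<\infty$ a.s.\ for i.i.d.\ $Y_i^n\sim G_\theta$, with the \emph{same} $k_n$ since the exponent is unchanged. Since $G_\theta\ge G$ pointwise ($G_\theta(t)=G(t)/G(\theta)\ge G(t)$ for $t\le\theta$, and $G_\theta\equiv 1$ for $t>\theta$), a quantile coupling — use one family of uniforms $U_i^n$, independent across $n$, and set $X_i^n=G^{\leftarrow}(U_i^n)$, $Y_i^n=G_\theta^{\leftarrow}(U_i^n)$ — gives $Y_i^n\le X_i^n$ for all $i,n$, whence $\sum_n\min_i Y_i^n\le\sum_n\min_i X_i^n<\infty$ a.s. Thus $(\tilde h,G_\theta)$ is explosive, so with positive conditional probability $\mathcal{T}'$, and hence $\HaGCf$, contains infinitely many individuals born within a finite time; by the equivalence of the usual and delayed conventions together with Remark~\ref{rem::HG_HGinfty} (which identifies $\HaG$ with $(\Ha,G,C=\infty)_f$), this is exactly Definition~\ref{def::AGE_explosiveness}-explosiveness of $\HaGCf$. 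The main obstacle is the construction and correctness of $\mathcal{T}'$: in the contagious-period model siblings are dependent through the mother's shared $\tau^C$, and the construction ``pays'' the factor $q$ to impose $\tau^C>\theta$ on every retained individual, which both guarantees that retained children are certainly infected and fully decouples the thinned branching; the remaining ingredients (thinning preserves the power-law exponent, $G_\theta\ge G$, and the quantile coupling) are routine.
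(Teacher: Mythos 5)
Your proof is correct, but it takes a genuinely different route from the paper. The paper's argument is purely analytic and very short: it takes the nontrivial solution $\phi=1-\eta$ of the fixed point equation for $\HaG$, which satisfies $\eta(t)=\bigl(\int_0^t\eta(t-u)\,\mathrm d G(u)\bigr)^{\alpha}$, and checks that $\Psi=1-A\eta$ is a sub-solution for the operator $\TCf$ on $[0,\theta]$ once $A$ is small enough that $(1-C(\theta))A^{\alpha-1}\geq 1$; Corollary~\ref{cor::CP_WEAKER} then finishes. You instead embed an honest classical process $(\tilde h,G_\theta)$ inside $\HaGCf$ by retaining only children with $X_w\leq\theta$ and $\tau^C(w)>\theta$, observe that this $p$-thinning keeps the power-law exponent $\alpha$ and that $G_\theta\geq G$, and transport explosiveness through the min-summability criterion of Proposition~\ref{thm::ALPHA_GROWTH}. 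Both arguments ultimately rest on the same structural fact — that scaling the offspring tail by a constant ($A^{\alpha-1}\to\infty$ in the paper, $p^{\alpha}$ in your thinning) cannot destroy explosiveness when $\alpha<1$ — but yours pays for its probabilistic transparency with heavier machinery (Davies' growth theorem and the Amini et al.\ equivalence behind Proposition~\ref{thm::ALPHA_GROWTH}), whereas the paper's is self-contained given Corollary~\ref{cor::CP_WEAKER}. Your handling of the sibling dependence (forcing $\tau^C>\theta$ on every retained vertex so that the retention rule never consults the mother's actual contagious period) is exactly the right move and is the one genuinely non-routine point; the observation that explosiveness of $\HaG$ forces $G(\theta)>0$, needed for $p>0$, is also correctly supplied. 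A side benefit of your route is that it exhibits an explicit exploding sub-tree and extends verbatim to $h_\alpha^L$, which the paper reaches only afterwards via Potter's bound.
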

\noindent
We extend this result to the setting where we allow for probability generating functions $\Ha^L,$ conform \eqref{eq::HaL}, yielding the statement of Theorem \ref{thm::HaLGCf}:
\begin{theorem}
Let $\alpha \in (0,1)$. Let $G$ be the distribution function of a non-negative random variable. Let $C$ be the distribution function of a non-negative random variable such that  $C(\theta) < 1$ for some $\theta >0$. Let $L$ be a function varying slowly at infinity.   If the process $(\Ha^L,G)$  is explosive, then so is  $(\Ha^L,G,C)^f$.   
\label{thm::HaLGCf}
\end{theorem}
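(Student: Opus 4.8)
The plan is to locate, inside $\HaLGCf$, a \emph{classical} age-dependent branching process of the same power-law type but with a truncated life-length distribution, to which the min-summability criterion behind Proposition~\ref{thm::ALPHA_GROWTH} and Theorem~\ref{th::AGE_allAlphaL} applies, and then to conclude by a stochastic-domination comparison of life-lengths. First I would note that if $(\HaL,G)$ is explosive then necessarily $G(t)>0$ for every $t>0$: were $G(t_0)=0$, every life-length would exceed $t_0$ almost surely, so at any finite time only finitely many generations --- each a.s.\ finite, since $\Ha^L(1)=1$ --- would have been born, and the process would be conservative. Fix then $\theta>0$ with $C(\theta)<1$ and set $q:=1-C(\theta)\in(0,1]$ and $p:=G(\theta)\,q$; by the observation just made, $p\in(0,1)$.

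\smallskip\noindent\textbf{The embedded process.}
Call an individual of $\HaLGCf$ \emph{active} if its contagious period exceeds $\theta$; an active individual infects each child whose life-length is at most $\theta$, since then that life-length is strictly below $\theta<\tau^C$. I would define a random tree $\mathcal{T}$ by: the root lies in $\mathcal{T}$ iff it is active, and if $i\in\mathcal{T}$ then the children of $i$ in $\mathcal{T}$ are precisely those children $j$ of $i$ with $X_j\le\theta$ and $\tau^C_j>\theta$. Conditionally on the potential offspring size $D_i$, the $\mathcal{T}$-offspring of $i$ is a binomial thinning of $D_i$ with retention probability $\prob(X_j\le\theta,\ \tau^C_j>\theta)=G(\theta)q=p$, and these thinnings are independent across individuals because each child's selection is governed by that child's own marks $(X_j,\tau^C_j)$. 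Hence the $\mathcal{T}$-offspring has probability generating function
\[
\Ha^L\big(1-p(1-s)\big)\;=\;1-(1-s)^{\alpha}\,p^{\alpha}L\big(\tfrac{1}{p(1-s)}\big)\;=\;1-(1-s)^{\alpha}L^{\ast}\big(\tfrac{1}{1-s}\big)\qquad(s\to1),
\]
where $L^{\ast}(x):=p^{\alpha}L(x/p)$ is again slowly varying. Moreover a non-root member of $\mathcal{T}$ has life-length distributed as $X\sim G$ conditioned on $\{X\le\theta\}$ --- the conditioning on $\{\tau^C_j>\theta\}$ being irrelevant by independence --- i.e.\ with distribution function $\widetilde G(t)=\min\{G(t)/G(\theta),\,1\}$. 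Therefore $\mathcal{T}$, equipped with the actual life-lengths $X_j$, is a classical age-dependent branching process $(\Ha^{L^{\ast}},\widetilde G)$ embedded in $\HaLGCf$ with the \emph{same} birth-times; consequently it suffices to prove that $(\Ha^{L^{\ast}},\widetilde G)$ is explosive, with positive probability (the root is active with probability $q>0$, and its own life-length law is immaterial).

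\smallskip\noindent\textbf{Comparison and conclusion.}
By Proposition~\ref{thm::ALPHA_GROWTH} --- in the form, supplied by the reasoning behind Theorem~\ref{th::AGE_allAlphaL}, that is valid for every offspring law of type $\Ha^{L^{\ast}}$ and not merely for strict heavy-tailed power laws --- the process $(\Ha^{L^{\ast}},\widetilde G)$ is explosive iff $\prob\big(\sum_{n\ge1}\min(\widetilde X^n_1,\dots,\widetilde X^n_{\e^{1/\alpha^n}})<\infty\big)=1$ with i.i.d.\ $\widetilde X^n_i\sim\widetilde G$, and likewise $(\HaL,G)$ is explosive iff the same holds with i.i.d.\ $X^n_i\sim G$. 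Since $\widetilde G\ge G$ pointwise, the quantile coupling $\widetilde X^n_i:=\widetilde G^{\leftarrow}(U^n_i)\le G^{\leftarrow}(U^n_i)=:X^n_i$ (with $U^n_i$ i.i.d.\ uniform) gives $\sum_n\min_i\widetilde X^n_i\le\sum_n\min_i X^n_i$; the right side is a.s.\ finite by the hypothesis that $(\HaL,G)$ is explosive, hence so is the left side. Thus $(\Ha^{L^{\ast}},\widetilde G)$ explodes, and therefore so does $\HaLGCf$.

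\smallskip\noindent\textbf{Expected main obstacle.}
The crux, which I expect to be the hard point, is the first assertion of the last paragraph: that criterion \eqref{eq::MIN_SUMMABLE_EQUIVALENT} governs explosiveness for \emph{all} generating functions $\Ha^{L^{\ast}}$, i.e.\ is insensitive to the slowly varying factor $p^{\alpha}L(\cdot/p)$ introduced by the thinning --- this is exactly the content one must extract from (the proof of) Theorem~\ref{th::AGE_allAlphaL}, which already upgrades Proposition~\ref{thm::ALPHA_GROWTH} from $\Ha$ to $\Ha^L$. I would also need to confirm that passing from $G$ to $\widetilde G$ preserves the near-origin flatness that drives explosiveness, which here is immediate because $\widetilde G$ and $G$ agree up to the constant factor $1/G(\theta)$ on $[0,\theta]$, and to recheck the independence bookkeeping of the embedding as indicated above. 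An alternative route avoiding the embedding is to redo, carrying $L$ along, the fixed-point analysis of the generating function of $N^f(t)$ that proves Theorem~\ref{thm::HaGCf}: there the slowly varying factor enters only through sandwich bounds $(1-s)^{\alpha+\vep}\le(1-s)^{\alpha}L(\tfrac1{1-s})\le(1-s)^{\alpha-\vep}$ valid as $s\to1$, which is precisely the mechanism by which Theorem~\ref{th::AGE_allAlphaL} follows from Theorem~\ref{th::AGE_allAlpha}.
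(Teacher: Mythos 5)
Your proof is correct, but it takes a genuinely different route from the paper. The paper's own proof is a three-line reduction: by Potter's theorem $h_{\alpha-\epsilon}\le \Ha^L\le h_{\alpha+\epsilon}$ near $1$, so explosiveness of $(\Ha^L,G)$ gives explosiveness of $(h_{\alpha-\epsilon},G)$, Theorem~\ref{th::AGE_allAlpha} upgrades this to $(h_{\alpha+\epsilon},G)$, Theorem~\ref{thm::HaGCf} (the analytic test-function argument $\psi=A(1-\phi)$ in the forward fixed-point equation) yields explosiveness of $(h_{\alpha+\epsilon},G,C)^f$, and the forward comparison theorem in $h$ brings it back down to $\HaLGCf$. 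You instead bypass the forward fixed-point machinery entirely by a probabilistic embedding: thresholding both the contagious periods and the life-lengths at $\theta$ kills exactly the dependence between siblings (who share their mother's contagious period), leaving a classical, fully independent process $(\Ha^{L^*},\widetilde G)$ sitting inside $\HaLGCf$ with the same birth times, after which min-summability, the $\alpha$-insensitivity of the criterion, and a quantile coupling $\widetilde G\ge G$ finish the job. This is closer in spirit to the paper's Theorem~\ref{thm::CP_ray} and to the backward-process thinning of Theorem~\ref{thm::BCP_explosiveness}, and it is a nice way to see \emph{why} the contagious period is harmless: only the positive-probability events $\{\tau^C>\theta\}$ and $\{X\le\theta\}$ matter, and binomial thinning of a $\Ha^L$ offspring law stays in the same class. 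Two points you correctly flag but should spell out if this were written up: (i) Proposition~\ref{thm::ALPHA_GROWTH} is stated only for offspring laws satisfying \eqref{eq::HEAVY_TAIL}, so its use for $\Ha^{L^*}$ must be routed through Potter's bounds, Theorem~\ref{th::AGE_allAlpha} and the comparison Theorem~\ref{thm::AGE_COMP_h}, exactly as in the proof of Theorem~\ref{th::AGE_allAlphaL}; and (ii) those comparison theorems assume $G(0)=0$, so the (trivial) case $G(0)>0$ should be disposed of separately. Neither is a real gap.
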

\noindent
We emphasize the implication of Theorem \ref{thm::HaLGCf}: a process $(\Ha^L,G)$ can never be stopped from explosion by adding a non-zero contagious period. Hence, from the explosion perspective, the processes $(\Ha^L,G)$ and $(\Ha^L,G,C)^f$ are equivalent if $C(0) \neq 1$. But, the explosion times (see \eqref{eq::CP_FW_EXPL_TIME} and \eqref{eq::CP_BW_EXPL_TIME}) change and so does the survival probability. Thus, for the proportion of infected individuals in the random graph, adding a contagious period will be relevant. 

\subsection{Backward Age-Dependent Branching Processes with Contagious Periods}
The next step in the analysis of the epidemic spread studies how an individual may be infected \emph{herself}, rather than contaminating others. It aims at identifying a possible infection trail towards an individual and is in essence the \emph{backward} in time equivalent of the forward process defined in the previous section.
\\
We denote this process by $\HGCb$, where as usual $h$ is the probability generating function of an offspring distribution and both $G$ and $C$ are distribution functions of non-negative random variables. We now describe this process in more detail. At time $t=0$ the process starts with only the \emph{root}, whose possible infection by $D$ (with probability generating function $h$) others, say her offspring, is under consideration. A child $j$ of the root infects her (provided that $j$ herself will be infected at some point of time) only if the time $X_j \sim G$ needed for $j$ to infect the root falls within the contagious period $\tau^C(j) \sim C$ of $j$. 
\\
Repeated application of this procedure to subsequent descendant gives the following recurrent relation for the branching process. At birth, an individual is contacted (and possibly infected) by an \emph{effective} number of individuals in distribution equal to 
\be D^b = \s{j=1}{D} \indicator{X_j \leq \tau^C(j)}. \ee
I.e., she is contacted by $D$ individuals, but only $D^b$ might possibly infect her. 
Each of her effective offspring, is then identified as a \emph{possible infector} after a random time with distribution $G_{\text{eff}}$, and the process starts again from it as an independent copy of the original process, with the particular child in the role of a root. $G_{\text{eff}}$ is the distribution function of $(X_1|X_1 \leq \tau^C(1))$. Due to the \emph{independence} between all individuals, we identify this process with a classical branching process, namely $(h_{\text{eff}},G_{\text{eff}})$, where $h_{\text{eff}}$ is the probability generating function of $D^b$.
\\
Note that there is another, equivalent, classical branching processes giving an appropriate description: $(h,G_C)$, with $G_C$  the law of a random variable $Y$ such that:
\begin{equation}
 Y = \left\{ 
  \begin{array}{l l}
    (X_1|X_1 \leq \tau^C) & \quad \text{with probability $\P{X_1 \leq \tau^C}$},\\
    \infty & \quad \text{with probability $\P{X_1 > \tau^C}$}.
  \end{array} \right.
\end{equation}
Indeed, the root will never be infected by $j$ if $\tau^C(j) < X_j$ and in that case we may as well say that it takes an infinite amount of time for the disease to pass.

\subsubsection{Results}
Also with this process we wonder how adding a contagious period might stop $\HG$ from being explosive. As this process is somehow easier than the forward process, we are able to prove a result similar to Theorem \ref{thm::HaGCf}, capturing even a broader class of probability generating functions: 
\begin{theorem}
Let $h$ be the probability generating function of an offspring distribution $D$ that is plump, conform \eqref{eq::AGE_PLUMP}. Let $G$ be the distribution function of a non-negative random variable. Let $C$ be the distribution function of a non-negative random variable such that $C(T) < 1$ for some $T > 0$. If $\HG$ is explosive, then so is $\HGCb$.
\label{thm::BCP_explosiveness}
\end{theorem}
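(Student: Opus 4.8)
The plan is to reduce the backward process to an ordinary age-dependent branching process and then feed it to the min-summability criterion of Amini et al.\ (Theorem~\ref{th::AGE_PLUMP}). By the discussion preceding the statement, $\HGCb$ is distributionally the classical process $(h_{\text{eff}},G_{\text{eff}})$, where $h_{\text{eff}}$ is the probability generating function of $D^b=\sum_{j=1}^{D}\indicator{X_j\le\tau^C(j)}$ and $G_{\text{eff}}$ is the law of $(X_1\mid X_1\le\tau^C)$. Put $p:=\P{X_1\le\tau^C}$. Since $C(T)<1$ we have $\P{\tau^C>T}>0$; and since $\HG$ is explosive we must have $G(t)>0$ for every $t>0$ (if $G(t_0)=0$, every individual is born at least $t_0$ after its parent, so only finitely many individuals are alive at any fixed time, contradicting explosiveness). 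Hence $p\ge G(T)\,\P{\tau^C>T}>0$, so the reduction is non-degenerate and $D^b\equalsd\mathrm{Bin}(D,p)$ is a $p$-thinning of $D$.

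I would then record two elementary facts. First, near the origin the effective life-lengths are at most a constant factor larger than those governed by $G$: for $0\le t\le T$,
\[
G_{\text{eff}}(t)\;=\;\frac{\P{X_1\le t,\ X_1\le\tau^C}}{p}\;\ge\;\frac{\P{X_1\le t,\ \tau^C>T}}{p}\;=\;\frac{(1-C(T))\,G(t)}{p}\;\ge\;(1-C(T))\,G(t).
\]
Second, $h_{\text{eff}}$ is again plump: a Chernoff bound gives $\P{D^b\ge pk/2}\ge\tfrac{1}{2}\,\P{D\ge k}$ for all large $k$, and inserting this into \eqref{eq::AGE_PLUMP} shows that $D^b$ satisfies the plumpness inequality with essentially the same exponent as $D$, so binomial thinning preserves plumpness.

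Now I would invoke Theorem~\ref{th::AGE_PLUMP}: a plump process explodes if and only if it is min-summable. As $\HG$ is explosive it is min-summable, and it remains to transfer this to $(h_{\text{eff}},G_{\text{eff}})$. Two effects compete: the estimate above makes the per-generation minima of $(h_{\text{eff}},G_{\text{eff}})$ smaller than those of $(h,G)$ (up to a constant near $0$), which helps; while the $p$-thinning removes individuals, so the effective generation sizes $Z_n^{\text{eff}}$ are smaller than $Z_n$, which hurts. The harm is mild, though: by the second fact the effective offspring law has the same plumpness exponent, so the growth estimates of \cite{AmDeGrNe13} (cf.\ \cite{Dav78}) give $Z_n^{\text{eff}}$ the same doubly-exponential order as $Z_n$ on the positive-probability survival event; and min-summability sees the generation sizes only through their logarithms, while the relevant series $\sum_n G^{\leftarrow}(\e^{-x_n})$ has unchanged convergence under $x_n\mapsto ax_n$ and $x_n\mapsto x_n+b$ for fixed $a,b>0$ once $x_n$ grows geometrically (a short interleaving-of-indices argument, in the spirit of the proof of Theorem~\ref{th::AGE_allAlpha}). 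Putting these together transfers min-summability from $\HG$ to $(h_{\text{eff}},G_{\text{eff}})$, which is therefore explosive, and hence so is $\HGCb$.

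The main obstacle, and the step requiring the most care, is this last one: quantifying how much the $p$-thinning can slow the growth of the generation sizes and checking that this slow-down is harmless for min-summability. An alternative that bypasses the generation-size bookkeeping is to run the explicit exploding-path algorithm of \cite{AmDeGrNe13} directly on the effective tree --- its only inputs are plumpness of the offspring law (which we have) and a summable budget sequence, and the inequality $G_{\text{eff}}(t)\ge(1-C(T))\,G(t)$ near $0$ can only make each greedy step easier --- but this requires re-opening that construction rather than quoting it.
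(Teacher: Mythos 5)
Your reduction to a classical process is sound, and the preparatory facts you record are all correct: $p\ge G(T)(1-C(T))>0$ (using that explosiveness of $\HG$ forces $G(t)>0$ for $t>0$), $G_{\text{eff}}(t)\ge(1-C(T))\,G(t)$ on $[0,T]$, and a Chernoff bound does show that the $p$-thinning $D^b$ is again plump, with any exponent $\epsilon'<\epsilon$. The gap is exactly where you flag it. Theorem \ref{th::AGE_PLUMP} tests min-summability of $(h_{\text{eff}},G_{\text{eff}})$ against the sequence $f_{\text{eff}}$ obtained by iterating $F_{D^b}^{\leftarrow}(1-1/\cdot)$, whereas the hypothesis gives convergence of $\sum_n G^{\leftarrow}(1/f(n))$ with $f$ built from $F_D^{\leftarrow}$; you therefore need a bound of the form $f_{\text{eff}}(n)\ge c\,f(n-k)$ for fixed $c,k$, and nothing you cite supplies it. Davies' theorem (Theorem \ref{th::DAVIES}) applies only to offspring laws with genuine power-law tails, not to general plump $D$, and the one-step estimate coming from Chernoff, $F_{D^b}^{\leftarrow}(1-\tfrac{1}{m})\ge\tfrac{p}{2}F_D^{\leftarrow}(1-\tfrac{2}{m})$, loses a multiplicative factor $p/2<1$ at \emph{every} iteration. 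Since a general plump $F_D^{\leftarrow}$ admits no upper bound (it may have arbitrarily large jumps), these per-step losses need not telescope into a bounded index shift; the interleaving-of-indices device from Theorem \ref{th::AGE_allAlpha} does absorb a fixed constant inside the argument of $G^{\leftarrow}$, but it does not by itself compare $f_{\text{eff}}$ to $f$. Your fallback (re-running the greedy algorithm of \cite{AmDeGrNe13} on the effective tree) would presumably close this, but you explicitly do not carry it out.

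The paper sidesteps the whole issue by choosing the \emph{other} identification from Section 1.4: $\HGCb=(h,G_C)$ with $G_C(x)=\int_0^x(1-C(u))\,\mathrm d G(u)$. This keeps the offspring law, hence the function $f$ in Theorem \ref{th::AGE_PLUMP}, literally unchanged; the contagious period enters only through $G_C\ge\theta G$ on $[0,T]$ with $\theta=1-C(T)$, whence $G_C^{\leftarrow}(t)\le G^{\leftarrow}(t/\theta)$, and the single constant $\theta$ is absorbed by one index shift via $\theta f(n+1)>f(n)$, which follows from plumpness. If you switch to that identification, your argument collapses to a few lines and the thinning of the offspring distribution never has to be analysed.
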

\noindent
Again, a \emph{plump} process $\HG$ cannot be stopped from explosion by adding any non-zero contagious period, but the explosion time (see \eqref{eq::IP_FW_EXPL_TIME} and \eqref{eq::IP_BW_EXPL_TIME}) and survival probability do change. The backward process is in fact a binomial thinning of the original process: $\HGCb = (h_{\text{eff}},G_{\text{eff}})$, where only the \emph{right}, i.e., short, edges survive. Evidently, we conclude that the growth of the branching process with offspring probability generating function $h_{\text{eff}}$ is thus still fast enough to maintain explosiveness. 
\\
It remains an open question whether it holds for all probability generating functions $h$ (and $C$ with $C(0)<1$) that $\HGCb$ is explosive if and only if $(h,G)$ is. The theorems here hint in favour of this conjecture. However, we remark that in this particular case the effective offspring of $\HaG$ (with $\alpha \in (0,1)$) satisfies again a power-law with the same degree $\alpha$ ensuring that the growth of generation sizes is of the same order as in the process $\HG$. Not all probability generating functions have this property, for example $h_{1}: s \mapsto s + (1-s)\log(1-s)$ \emph{might} be a candidate for which the conjecture does not hold. Note that $h_{1}$ corresponds to the probability distribution that we obtain by letting $\alpha \uparrow 1$ in \eqref{eq::HEAVY_TAIL} and it thus gives stochastically smaller offspring than  $h_{\alpha}$ for every $\alpha \in (0,1)$.

\subsection{Comparison of Backward and Forward Processes with Contagious Periods}
Due to the strong similarities between the forward and backward process, it is natural to ask whether explosiveness of the one implies explosiveness of the other. In fact, we conjecture that for all probability generating functions $h$ and distribution functions $G$ and $C$ of non-negative random variables the following holds:
\[ \HGCf \mbox{ is explosive } \Leftrightarrow \HGCb \mbox{ is explosive. } \]
Due to the identification $\HGCb = (h,G_C)$, validity of this conjecture would give us a perfect tool to check explosiveness of $\HGCf$ through studying the much better understood process $(h,G_C)$.
\\
The right-to-left implication has been partly shown in the two previous sections. Here we prove the left-to-right implication in its full generality. For that, we need the concept of explosion times:
\begin{definition}
The explosion times $V^f$ and $V^b$ in the forward, respectively, backward contagious process are defined as
\be V^f = \inf \{ t \geq 0 \  | \  N^f(t) = \infty \}, \label{eq::CP_FW_EXPL_TIME} \ee
and, 
\be V^b = \inf \{ t \geq 0 \  | \  N^b(t) = \infty \}. \label{eq::CP_BW_EXPL_TIME} \ee
\end{definition}
\noindent
The implication is included in the following theorem, which states that the forward explosion time is stochastically smaller than that time in the backward process:

\begin{theorem}
Let $h$ be a probability generating function. Let $G$ and $C$ be distribution functions of non-negative random variables. The explosion times $V^f$ and $V^b$ in the forward, respectively, backward process satisfy the relation
\be V^b \overset{d}\leq V^f.  \ee
Hence, if the process $\HGCf$ is explosive, then so is $\HGCb$.
\label{th::CP_COMPARING_BW_FW}
\end{theorem}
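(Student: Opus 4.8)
The plan is to reduce both explosion times to increasing limits of finite‑depth quantities, prove the stochastic domination at each finite depth by induction, and pass to the limit. As recalled just before Theorem~\ref{thm::CP_ray} (and as used in \cite{AmDeGrNe13} and Proposition~\ref{thm::ALPHA_GROWTH}), explosion of either process is equivalent to the existence of a surviving ray of finite total infection‑time; more precisely the explosion time equals the infimum of $\sum_{k\ge1}X_{v_k}$ over all surviving rays $(v_k)_{k\ge0}$ from the root. For $n\ge0$ let $V^{f,n}$ be the minimum of $\sum_{k=1}^n X_{v_k}$ over all root‑to‑depth‑$n$ paths all of whose edges survive in $\HGCf$ (with $V^{f,n}:=\infty$ if there is no such path), and define $V^{b,n}$ analogously for $\HGCb$. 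Since the offspring distribution is an honest $\N$‑valued random variable, both family trees are locally finite, so König's lemma shows that $V^{f,n}$ is non‑decreasing in $n$ with $V^{f,n}\uparrow V^f$, and likewise $V^{b,n}\uparrow V^b$. Hence $\P{V^b\le t}=\lim_n\P{V^{b,n}\le t}$ and $\P{V^f\le t}=\lim_n\P{V^{f,n}\le t}$, and it suffices to show $V^{b,n}\overset{d}\leq V^{f,n}$ for every fixed $n$.

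I would prove this by induction on $n$, the case $n=0$ being trivial since $V^{f,0}=V^{b,0}=0$. Decomposing at the root gives the recursive distributional identities
\[ V^{f,n}\ \overset{d}{=}\ \min_{i\le D\,:\,X_i\le\tau^C}\bigl(X_i+\widetilde V_i^{f,n-1}\bigr),\qquad V^{b,n}\ \overset{d}{=}\ \min_{i\le D\,:\,X_i\le\tau^C_i}\bigl(X_i+\widetilde V_i^{b,n-1}\bigr), \]
where $D\sim h$ is the root's offspring, $(X_i)_{i\le D}$ are i.i.d.\ $\sim G$, $\tau^C\sim C$ is the single contagious period of the root shared by all its children (forward case), $(\tau^C_i)_{i\le D}$ are i.i.d.\ $\sim C$, one per child (backward case), and $(\widetilde V_i^{f,n-1})_i$, $(\widetilde V_i^{b,n-1})_i$ are i.i.d.\ copies of $V^{f,n-1}$, $V^{b,n-1}$ (the truncated explosion times of the independent subtrees hanging off the children) independent of everything else. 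Assuming $V^{b,n-1}\overset{d}\leq V^{f,n-1}$, Strassen's theorem lets me realise i.i.d.\ pairs with $\widetilde V_i^{b,n-1}\le\widetilde V_i^{f,n-1}$ almost surely, while using the same $D$ and the same $(X_i)$ in both identities; writing $W_i:=X_i+\widetilde V_i^{f,n-1}$ this gives $\P{V^{b,n}\le t}\ge\P{\exists\,i\le D:\ X_i\le\tau^C_i\ \text{and}\ W_i\le t}$, which must be compared with $\P{V^{f,n}\le t}=\P{\exists\,i\le D:\ X_i\le\tau^C\ \text{and}\ W_i\le t}$. Conditioning on $D=d$ and on $(X_i,W_i)_{i\le d}$, and setting $A:=\{i\le d:W_i\le t\}$ and $p_i:=\P{\tau^C\ge X_i}$, the backward conditional probability equals $1-\prod_{i\in A}(1-p_i)$ by independence of the $\tau^C_i$, while the forward one equals $\P{\tau^C\ge\min_{i\in A}X_i}=\max_{i\in A}p_i$ (empty product $=1$, empty maximum $=0$). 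Since every factor lies in $[0,1]$,
\[ 1-\prod_{i\in A}(1-p_i)\ \ge\ \max_{i\in A}p_i, \]
and taking expectations over $(X_i,W_i)_i$ and over $D$ closes the induction. (Unconditionally this last step is exactly Jensen's inequality for the convex generating function $h$ applied to the root's contagious period.) The ``hence'' in the statement is then immediate: if $\HGCf$ is explosive then $\P{V^f<\infty}>0$, so $V^b\overset{d}\leq V^f$ forces $\P{V^b<\infty}>0$.

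The main obstacle — and the reason one cannot simply run both processes on a single decorated tree and read off an almost‑sure inequality — is that the contagious period that kills an edge belongs to the \emph{parent} in the forward process, where it is moreover \emph{shared} by all siblings, but to the \emph{child} in the backward process. Along a ray this produces a ``shift by one'' and a genuinely different correlation structure, so the comparison must be made distributionally; the elementary bound $1-\prod_{i\in A}(1-p_i)\ge\max_{i\in A}p_i$ — pooling independent chances beats one shared chance — is precisely what makes the per‑step domination work. The only other point needing care is the König/compactness argument identifying $V^f$ and $V^b$ with the increasing limits of their depth‑$n$ truncations, which is the standard exploding‑path fact already used elsewhere in the thesis.
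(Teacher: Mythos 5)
Your proposal is correct, but it takes a genuinely different route from the paper. The paper argues entirely at the level of the fixed-point equations: it rewrites the forward equation as a single integral against $\mathrm d C$, applies Jensen's inequality to the convex function $h$ to pull that integration inside, simplifies to obtain the pointwise inequality $\phi^f \geq \TCb\, \phi^f$, and then iterates $\TCb$ starting from $\phi^f$ to produce a decreasing limit solving the backward equation; minimality of $\phi^b$ (Theorem \ref{thm::AGE_right}) then gives $\phi^b \leq \phi^f$, i.e.\ $\P{V^b < t} \geq \P{V^f < t}$. You instead work probabilistically: truncate both explosion times at depth $n$, identify the limits via the surviving-ray characterization and K\"onig's lemma, and run an induction using a Strassen coupling of the subtree contributions together with the elementary bound $1-\prod_{i\in A}(1-p_i)\ \geq\ \max_{i\in A}p_i$, which compares one shared contagious period at the root against independent ones per child. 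As you observe, that bound is precisely the probabilistic content of the Jensen step in the paper, so the two proofs rest on the same convexity phenomenon; the paper's version is shorter because it reuses the iterative-solution machinery (monotone iteration, dominated convergence, minimality of the right solution) already built in Sections 2--3, while yours is self-contained modulo the ray characterization of $V^f$ and $V^b$ and makes the mechanism — and the ``shift by one'' asymmetry between parent-owned and child-owned contagious periods — completely explicit. Both establish the full stochastic domination, not merely the implication on explosiveness.
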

\noindent
Note that this theorem provides a way to check conservativeness of a forward process: if the process $\HGCb$ is conservative, then so is $\HGCf$.

\subsection{Forward Age-Dependent Branching Processes with Incubation Periods}
Evidently, incorporating the influence of an incubation period on age-dependent branching processes leads to a more realistic model for the early stages of an epidemic. The better part of the model that we are about to introduce resembles the forward process with contagious periods, though the transmission criteria could not be more different. 
\\
We assume that an individual $i$ may possibly infect a random number $D_i$ (with probability generating function $h$) which we shall call her children. However, in this model, contamination may only take place 
after some incubation period $\tau_I(i) \sim I$ has elapsed. That is, person $j$ will only be infected by her mother $i$ if the infection-time $X_j \sim G$ is larger than or equal to the incubation period $\tau_I(i)$:
\be X_j \geq \tau_I(i). \label{eq::incubation_criteria} \ee
All random variables are independent of each other, though we emphasize that dependence comes in due to the fact that a mother has the same incubation period for all of her children. 
\\
The rest of this process follows exactly the description of the forward process with contagious periods. That is, it is an age-dependent branching process where every individual infects an \emph{effective} number of offspring equal in distribution to
\be D_f = \s{j=1}{D} \indicator{X_j \geq \tau_I}, \ee
where $D$ has probability generating function $h$ and $\tau_I$ has distribution $I$. Each  child $j$ in the \emph{effective} offspring of an individual $i$ in the branching process will be infected a time $(X_j | X_j \geq \tau_I(i))$ after the infection of individual $i$, and so on.
We denote this process by $\HGIf$ and shall study its explosiveness. 

\subsubsection{Results}
From the criterion \eqref{eq::incubation_criteria}, we observe that this time only the \emph{bad}, or lengthy, edges are kept. Hence, we expect that to every explosive process $\HG$ there exist various distributions $I$ such that $\HGIf$ is conservative. An obvious example is $I$ such that $\tau_I = \epsilon$ a.s. for some fixed $\epsilon > 0$. But more is true, we prove that explosiveness of $\HG$ and $(h,I)$ is necessary for $\HGIf$ to be explosive: 
\begin{theorem}
Let $h$ be a probability generating function. Let both $G$ and $I$ be distribution functions of non-negative random variables. If the process $(h,I)$ is conservative, then so is $\HGIf$, regardless of $G$. 
\label{thm::IP_for_necessary}
\end{theorem}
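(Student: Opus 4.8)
The plan is to prove the contrapositive: if $\HGIf$ is explosive, then $(h,I)$ is explosive. I would use the characterization of explosiveness in terms of the genealogical tree that is invoked around Theorem~\ref{thm::CP_ray}: a branching process of this type is explosive if and only if, with positive probability, its genealogical tree contains an infinite ray --- a line of descent $v_0=\text{root},v_1,v_2,\dots$ --- along which the edge-lengths sum to a finite value. Morally the statement is a monotonicity fact: the tree underlying $\HGIf$ is a \emph{subtree} of the tree underlying $(h,I)$, and its edges are \emph{longer}, so $\HGIf$ explodes only if $(h,I)$ already does.

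The heart of the argument is a coupling of $\HGIf$ with $(h,I)$ on a common genealogical tree. First build a Galton--Watson tree $\mathcal{T}$ with offspring probability generating function $h$; to each vertex $v$ attach an i.i.d.\ weight $\tau(v)\sim I$, and to each ordered pair (vertex $v$, child $c$ of $v$) an i.i.d.\ weight $X_{(v,c)}\sim G$, all mutually independent. Reading $\tau(v)$ as the life-length of $v$ and having every child of $v$ born a time $\tau(v)$ after $v$, the pair $\bigl(\mathcal{T},(\tau(v))_v\bigr)$ is precisely the process $(h,I)$. Reading instead $\tau(v)$ as the incubation period of $v$, declaring the edge $(v,c)$ \emph{present} when $X_{(v,c)}\ge\tau(v)$, and placing each present child $c$ a time $X_{(v,c)}$ after its parent, the component of the root in the graph of present edges --- equipped with these edge-lengths --- is precisely $\HGIf$: the effective offspring count of $v$ is $\sum_{c}\indicator{X_{(v,c)}\ge\tau(v)}$, distinct children of $v$ use conditionally independent infection times given the single shared period $\tau(v)$, and a surviving child is born a time $(X\mid X\ge\tau(v))$ after $v$, exactly as in the definition. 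Consequently the genealogical tree of $\HGIf$ is a subtree of $\mathcal{T}$ containing the root, and for every edge $(v,c)$ of that subtree its $\HGIf$-length is $X_{(v,c)}\ge\tau(v)$, i.e.\ at least the $(h,I)$-length $\tau(v)$ of that same edge; summing along root-to-$v$ paths, the birth-time of $v$ in $\HGIf$ is at least its birth-time in $(h,I)$.

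Now assume $\HGIf$ is explosive. Then, with positive probability, there is an infinite ray $v_0,v_1,v_2,\dots$ in the $\HGIf$-tree with $\sum_{k\ge0}X_{(v_k,v_{k+1})}<\infty$. Since this tree sits inside $\mathcal{T}$, the same ray lies in $\mathcal{T}$, and its total length under the $(h,I)$ reading is $\sum_{k\ge0}\tau(v_k)\le\sum_{k\ge0}X_{(v_k,v_{k+1})}<\infty$. Hence, with positive probability, $(h,I)$ contains an infinite ray of finite total length, so $(h,I)$ is explosive; equivalently, if $(h,I)$ is conservative then so is $\HGIf$, for every choice of $G$. I expect the only genuinely delicate point to be the bookkeeping in the coupling --- verifying carefully that the thinned, reweighted subtree of $\mathcal{T}$ reproduces the exact law of $\HGIf$ (the shared incubation period of a mother, the conditional independence of her children's infection times, and the conditional law $(X\mid X\ge\tau(v))$ of a surviving edge) --- together with citing the tree/ray characterization of explosiveness in the precise form needed; once these are pinned down, the inequality chain above is immediate, and the rôle of $G$ is manifestly irrelevant.
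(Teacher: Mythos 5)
Your proposal is correct and is essentially the paper's own argument: the same coupling on a common Galton--Watson tree with vertex weights $\tau\sim I$ and edge weights $X\sim G$, the same termwise bound $X_{(v,c)}\ge\tau(v)$ on surviving edges, and the same conclusion via finite rays. The only (cosmetic) difference is that the paper inserts an intermediate process ($BP^{(3)}$) to shift the incubation weights from parent-indexed to child-indexed edges before identifying it with $(h,I)$, whereas you identify the parent-indexed tree with $(h,I)$ directly --- which is fine, up to the delayed/usual root convention the paper already notes is immaterial for explosiveness.
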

\noindent
Note that this is a very important observation: the incubation period has a \emph{crucial} influence on the explosiveness of a process. Its implications are far reaching: for a conservative process there is \emph{no} finite time $t$ such that the number of individuals in the branching process before time $t$ exceeds $n^{\rho}$ for some $\rho \in (0,1]$ and all $n$. Hence, the number of infected individuals in the random graph mentioned in the introduction grows as a function of $n$, rather than that it is independent of the population size.  
\\
Eventually, we would like to find a criterion in terms of $h,G$ and $I$ that tells us exactly when $\HGIf$ is explosive. We conjecture that:
\[ \HGIf \mbox{ is explosive } \Leftrightarrow  \HG \mbox{ and } (h,I) \mbox{ both explode.} \]
Its validity would provide us with a very transparent way of verifying explosiveness of $\HGIf$. At this point in the analysis we are not ready yet to prove this conjecture. It will turn out useful to first study  the backward process $(\Ha^L,G,I)_b$ and prove a similar conjecture for it under a very mild condition on $G$ and $I$. After establishing that result, we shall see that for most life-time distributions $G$, $(\Ha^L,G,I)_f$ is explosive if and only if $(\Ha^L,G,I)_b$ is. We expect this equivalence to hold between all forward and backward processes, though we prove a slightly less general result to avoid tedious technical difficulties. 

\subsection{Backward Age-Dependent Processes with Incubation Periods}
Here we study how an infection might reach a person backward in time, in the same fashion as the contagious period backward process. We denote this process by $\HGIb$ where $h$ is a probability generating function and both $G$ and $I$ are the distribution functions of non-negative random variables. 
\\
For an individual $i$, we let $D_i$ be the number of persons that might possibly infect her and call those persons her offspring. A child $j$ of $i$ has an infection time $X_j \sim G$  that tells how long it takes before she might pass the disease to $i$. Obviously, $j$ can only infect $i$ if a certain incubation period $\tau_I(j) \sim I$ has passed:
\[ \tau_I(j) \leq X_j. \]
Thus, $i$ will never be infected by $j$ if $X_j < \tau_I(j)$ and in that case it thus takes an infinite amount of time before $j$ passes the disease to $i$. We conclude that the process $\HGIb$ is essentially equal to $(h,G_I)$, where $G_I$ is the distribution of a random variable $Y$ that behaves as

\begin{equation}
 Y = \left\{ 
  \begin{array}{l l}
    (X|X \geq \tau_I) & \quad \text{with probability $\P{X \geq \tau_I}$},\\
    \infty & \quad \text{with probability $\P{X < \tau_I}$}.
  \end{array} \right.
\end{equation}

\subsubsection{Results}
We conjecture that
\[ \HGIb \mbox{ is explosive } \Leftrightarrow \HG \mbox{ and } (h,I) \mbox{ are both explosive.} \]
We will prove one side of the assertion, that is $\HG$ and $(h,I)$ must necessarily be explosive if $\HGIb$ is to explode, i.e., Theorem \ref{thm::IP_back_necessary}.
The other direction is shown for almost all processes where $h = \Ha$ for some $\alpha \in (0,1)$:

\begin{theorem}
Let $h$ be a probability generating function. Let $G$ be the distribution function of a non-negative random variable. Let $I$ be the distribution function of a non-negative random variable. If the process $\HGIb$ is explosive, then $\HG$ and $(h,I)$ both explode.
\label{thm::IP_back_necessary}
\end{theorem}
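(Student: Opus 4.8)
The plan is to use the identification, recorded just above, of the backward process $\HGIb$ with the classical age-dependent branching process $(h,G_I)$, together with the explicit description of $G_I$: a copy of $(X\mid X\ge\tau_I)$ with probability $\P{X\ge\tau_I}$ and the value $\infty$ with the complementary probability. Everything then follows from a single coupling in which $(h,G_I)$, $\HG$ and $(h,I)$ all live on the same Galton--Watson tree.

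Concretely, I would take a random tree $T$ with offspring probability generating function $h$, and attach to each edge $e$ of $T$ an independent pair $(X_e,\tau_e)$, the pairs being i.i.d.\ with $X_e\sim G$, $\tau_e\sim I$ and independent of $T$. Put $Y_e=X_e$ if $X_e\ge\tau_e$ and $Y_e=\infty$ otherwise. Then $(T,(Y_e)_e)$ has the law of $(h,G_I)$, while $(T,(X_e)_e)$ has the law of $\HG$ and $(T,(\tau_e)_e)$ that of $(h,I)$ (up to the harmless shift between the delayed and usual conventions, which does not affect explosiveness). Recall moreover that such a process is explosive exactly when, with positive probability, its weighted tree contains a \emph{finite ray}: an infinite downward path $\rho=(e_1,e_2,\dots)$ from the root (a ray) along which the attached life-times have finite sum. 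This ``finite ray'' reformulation, already used elsewhere in the text, follows from K\"onig's lemma together with the a.s.\ local finiteness of $T$, since $h$ generates an a.s.\ finite random variable.

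Now suppose $\HGIb=(h,G_I)$ is explosive, and fix a realisation in the positive-probability event on which $(T,(Y_e)_e)$ has a finite ray $\rho=(e_1,e_2,\dots)$. Since $\sum_{i\ge1}Y_{e_i}<\infty$, no edge of $\rho$ can carry the value $\infty$, so $X_{e_i}\ge\tau_{e_i}$ and $Y_{e_i}=X_{e_i}$ for every $i$. Hence $\sum_{i\ge1}X_{e_i}=\sum_{i\ge1}Y_{e_i}<\infty$, so $\rho$ is a finite ray for $(T,(X_e)_e)$; and $\sum_{i\ge1}\tau_{e_i}\le\sum_{i\ge1}X_{e_i}<\infty$, so $\rho$ is a finite ray for $(T,(\tau_e)_e)$ as well. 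As this occurs with positive probability, both $\HG$ and $(h,I)$ contain a finite ray with positive probability, i.e.\ both explode.

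I do not anticipate a genuine obstacle here: once the coupling and the finite-ray criterion are in place, the conclusion is immediate from the pointwise inequalities $\tau_e\le X_e=Y_e$ along a surviving ray, and the $\HG$ half is in any case just monotonicity of explosiveness in the life-time distribution, since $G_I$ stochastically dominates $G$. The only points demanding some care are (i) verifying that $(T,(Y_e)_e)$ reproduces the law of $(h,G_I)$, a one-line computation from the definition of $G_I$; and (ii) justifying the finite-ray characterisation of explosiveness via K\"onig's lemma and the a.s.\ local finiteness of $T$, together with its insensitivity to the delayed/usual time shift. If one prefers to avoid the ray criterion altogether, the same argument runs verbatim with ``number of individuals born before a fixed finite time $t$'' in place of finite rays: the ray extracted via K\"onig's lemma witnesses that this number is infinite, at finite $t$, in both $\HG$ and $(h,I)$.
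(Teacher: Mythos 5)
Your argument is correct, but it takes a genuinely different route from the paper's. The paper's proof is a two-line analytic observation: from $G_I(x)=\int_0^x I(u)\,\mathrm d G(u)$ one gets $G_I(x)\le I(x)G(x)$, hence $G_I\le G$ and $G_I\le I$ pointwise near the origin, and the comparison theorem for life-length distributions (Theorem \ref{thm::AGE_COMP_G}, itself proved through the fixed-point equation) immediately yields explosiveness of both $(h,G)$ and $(h,I)$. You instead realise the same dominations pathwise, by coupling all three processes on a single Galton--Watson tree via $Y_e=X_e\indicator{X_e\ge\tau_e}+\infty\cdot\indicator{X_e<\tau_e}$, and then argue through the finite-ray characterisation of explosion. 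The two proofs exploit the identical fact --- that the $G_I$-life-length dominates both $X$ and $\tau$ --- but package it differently: the paper's is shorter because the comparison machinery is already in place, while yours is self-contained on the probabilistic side, does not need the hypothesis $G(0)=G^*(0)=0$ appearing in Theorem \ref{thm::AGE_COMP_G}, and yields the slightly stronger pathwise statement that every exploding ray of the backward process is simultaneously an exploding ray of $(h,G)$ and of $(h,I)$. The only debt you incur is the finite-ray criterion itself (K\"onig's lemma on $\mathrm{BP}(t)$ together with the identification of $\{|\mathrm{BP}(t)|=\infty\}$ with explosion), which the paper establishes in its min-summability discussion and already uses in the proofs of Theorems \ref{thm::CP_ray} and \ref{thm::IP_for_necessary}, so this is a legitimate ingredient to borrow.
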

\noindent
The implications of this theorem become clear once we rephrase it: given a process $\HG$, we may already stop it from being explosive by adding an incubation period $I$ that is just flat enough for $(h,I)$ to be conservative. Recalling the definition of the backward process and in particular the statement \eqref{eq::INF_TIME}, we see that an uniformly picked individual 
on the random graph \emph{may} be exempted from illness if the incubation period is on average long enough.
 
\begin{theorem}
Let $\alpha \in (0,1)$. Let $G$ be the distribution function of a non-negative random variable, such that $G(0) = 0$. Let $I$ be the distribution function of a non-negative random variable, such that \emph{either}
\begin{itemize}
\item there exists $T>0$ such that $I \geq G$ on $[0,T]$, \emph{or},
\item there exists $\delta > 0$ such that $G$ has density $g$ on $[0,\delta)$, $I$ has density $i$ on the \emph{open} interval $(0,\delta)$ and $g \geq i$ on $(0,\delta)$.
\end{itemize}
If both processes $\HaG$ and $(\Ha,I)$ are explosive, then so is $(\Ha,G,I)_b$. 
\label{thm::IP_back_sufficient}
\end{theorem}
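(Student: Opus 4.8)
The plan is to recast the backward process as an \emph{ordinary} age-dependent branching process with a genuine (non-defective) offspring law, and then apply Proposition~\ref{thm::ALPHA_GROWTH}. Set $p:=\P{X\ge\tau_I}$ with $X\sim G$, $\tau_I\sim I$ independent. In the backward process a child $j$ that never manages to infect the root (i.e.\ with $X_j<\tau_I(j)$) may simply be discarded; since by construction all pairs $(X_j,\tau_I(j))$ are i.i.d.\ and independent of the litter size $D$ (p.g.f.\ $\Ha$), the process $(\Ha,G,I)_b$ is exactly the classical process $(h_\star,G_\star)$, where $h_\star$ is the p.g.f.\ of the effective offspring $D_\star:=\sum_{j=1}^{D}\indicator{X_j\ge\tau_I(j)}$ and $G_\star$ is the (proper) law of $(X\mid X\ge\tau_I)$. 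We use this description rather than the equivalent $(\Ha,G_I)$ because $G_I$ is defective while Proposition~\ref{thm::ALPHA_GROWTH} is stated for proper $G$. First I would verify $p>0$: explosiveness of $(\Ha,I)$ forces $I(t)>0$ for all $t>0$, while $G(0)=0$ gives $G(t_0)<1$ for some small $t_0>0$, so $p\ge I(t_0)\,(1-G(t_0))>0$. Next, $D_\star\mid D\sim\mathrm{Bin}(D,p)$, and a Chernoff bound yields $\P{D_\star\ge x}\asymp\P{D\ge x/p}\asymp x^{-\alpha}$, i.e.\ $D_\star$ again satisfies \eqref{eq::HEAVY_TAIL} with the same exponent $\alpha$. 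Hence Proposition~\ref{thm::ALPHA_GROWTH} applies to $(h_\star,G_\star)$, and with $m_n:=\lceil e^{1/\alpha^{n}}\rceil$ it remains to prove, for i.i.d.\ $(Z^n_i)\sim G_\star$,
\[
\P{\sum_{n\ge1}\min(Z_1^n,\dots,Z_{m_n}^n)<\infty}=1 .
\]

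The crux is a lower bound on $G_\star$ near the origin. Since $p\,G_\star(t)=\P{\tau_I\le X,\ X\le t}=\int_{[0,t]}\P{\tau_I\le x}\,dG(x)$, in case~(i) monotonicity ($I\ge G$ on $[0,T]$) gives $p\,G_\star(t)\ge\int_{[0,t]}G\,dG=\tfrac12 G(t)^2$ for $t\le T$; in case~(ii), taking $G,I$ continuous on the relevant interval and using $g\ge i$ on $(0,\delta)$, one gets $p\,G_\star(t)\ge\int_0^t I(x)i(x)\,dx=\tfrac12 I(t)^2$ for $t\le\delta$ (an atom of $I$ at $0$ only helps, since then $G_\star(t)\ge\tfrac{I(0)}{p}G(t)$, which is the easy regime). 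Write $F$ for the pertinent distribution function, $F=G$ in case~(i) and $F=I$ in case~(ii); in both cases the hypotheses make $(\Ha,F)$ itself explosive, and there is $t_1>0$ with $F(t_1)^2/(2p)<1$ and
\[
G_\star(t)\ \ge\ \frac{F(t)^2}{2p}\qquad(0\le t\le t_1).
\]

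It then suffices to show $\Phi:=\sum_n\int_0^1(1-G_\star(t))^{m_n}\,dt<\infty$, because for independent nonnegative $\xi_n$ one has $\sum_n\xi_n<\infty$ a.s.\ iff $\sum_n\E{\xi_n\wedge1}<\infty$, and $\E{\min_{i\le m_n}Z^n_i\wedge1}=\int_0^1(1-G_\star(t))^{m_n}\,dt$. Over $[t_1,1]$ the contribution is at most $\sum_n(1-G_\star(t_1))^{m_n}<\infty$ since $G_\star(t_1)>0$; over $[0,t_1]$ we use $(1-G_\star(t))^{m_n}\le e^{-m_nF(t)^2/(2p)}$ and split at $\sigma_n:=F^{\leftarrow}\!\big(\sqrt{4p\log n/m_n}\big)$, giving $\int_0^{t_1}\le\sigma_n+t_1n^{-2}$, hence $\Phi\le\sum_n\sigma_n+O(1)$. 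Now the doubly exponential growth of $m_n$ takes over: since $\sqrt{m_n}=e^{(1/2)\alpha^{-n}}$ is of order $e^{\alpha^{-(n-c_0)}}$ with $c_0=\log2/\log(1/\alpha)$, and $e^{\alpha^{-(n-c_0)}}/m_{n-c}\to\infty$ doubly exponentially once the integer $c$ exceeds $c_0$, we get $\sqrt{4p\log n/m_n}\le 1/m_{n-c}$ for all large $n$, so $\sigma_n\le F^{\leftarrow}(1/m_{n-c})$ eventually and $\sum_n\sigma_n\le\sum_k F^{\leftarrow}(1/m_k)+O(1)$. Finally $\sum_k F^{\leftarrow}(1/m_k)<\infty$: explosiveness of $(\Ha,F)$ and Proposition~\ref{thm::ALPHA_GROWTH} give $\sum_k\int_0^1(1-F(t))^{m_k}\,dt<\infty$, and bounding $\int_0^1(1-F)^{m_k}\ge\int_0^{F^{\leftarrow}(1/m_k)}(1-F)^{m_k}\ge e^{-2}F^{\leftarrow}(1/m_k)$ finishes it. Chaining these bounds gives $\Phi<\infty$, hence $(h_\star,G_\star)=(\Ha,G,I)_b$ explodes.

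I expect the main obstacle to be the last step: controlling the price of replacing $G$ (or $I$) by a quantity of the size of its square. The point is that squaring a distribution function costs only a \emph{bounded} shift of the index $n$, which the doubly exponential $m_n$ then absorbs --- exactly the mechanism behind Theorem~\ref{th::AGE_allAlpha}. Making this precise requires careful bookkeeping of the constants $c_0,c$ and of the slowly growing factors ($\sqrt{\log n}$, and the bounded factor $4p\le 4$), together with the generalized-inverse manipulations. The ancillary steps --- that $h_\star$ is again power-law with exponent $\alpha$, and the treatment of possible atoms of $G$ or $I$ at the origin --- are routine; I would carry them out by assuming $G$, and $I$ on $(0,\delta)$, to be continuous in the main line and dispatch atoms (which, as noted, only simplify matters) separately.
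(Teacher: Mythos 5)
Your proposal is correct, and its heart coincides with the paper's: in both arguments the decisive observation is that $G_I(t)=\int_0^t I\,\mathrm dG$ (equivalently, your $p\,G_\star(t)$) is bounded below by $\tfrac12 G(t)^2$ in the first case and by $\tfrac12 I(t)^2$ in the second, with a possible atom of $I$ at $0$ dispatched separately via $G_I\ge I(0)G$, so that everything reduces to the claim that squaring a distribution function, or multiplying it by a constant, cannot destroy explosiveness when $h=\Ha$. Where you diverge is in how that reduction is discharged. The paper works directly with the defective distribution $G_I$ and simply invokes its earlier Theorems \ref{th::AGE_POWER} and \ref{th::AGE_CONSTANT} (themselves proved through the criterion of Theorem \ref{th::AGE_PLUMP}) together with the comparison Theorem \ref{thm::AGE_COMP_G}. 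You instead pass to the proper pair $(h_\star,G_\star)$, verify that the binomially thinned offspring is again a power law with the same exponent so that Proposition \ref{thm::ALPHA_GROWTH} applies, and then re-derive the ``squaring costs only a bounded index shift'' fact by hand, using the criterion $\sum_n\E{\xi_n\wedge 1}<\infty$ for independent nonnegative summands and the splitting at $\sigma_n$. Your route is longer but self-contained, handles the defectiveness issue more scrupulously than the paper does, and makes the underlying mechanism (the doubly exponential generation sizes absorbing the square, the constant $p$, and the $\sqrt{\log n}$ factor) explicit; the paper's is shorter because that work is already packaged in its Section 2. Both arguments are sound.
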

\begin{remark}
We excluded on purpose $G(0)>0$. Since then, adding $I$ (with $I(0) = 0$) might stop it from explosion. For instance if $G(\infty) = 1 - G(0)$, then $\HaGIb$ is always conservative.
Indeed, all individuals that have infection-time larger than the incubation period have in fact an infinite life-time.
\end{remark}
\noindent
Note that the assumptions in the last theorem are not that stringent: in fact, the only case that would violate it is when $I-G$ switches sign infinitely often around the origin. We therefore tend to believe that the theorem holds in the most general setting. This would be rather striking, since, as far as explosiveness is concerned, there seems to be no mutual dependence between the processes $\HaG$ and $(\Ha,I)$: it is merely their individual explosiveness that matters.  

\subsection{Comparison of Backward and Forward Processes with Incubation Periods}
Inspired by the results for contagious processes, we again conjecture that:
\[ \HGIf \mbox{ is explosive } \Leftrightarrow \HGIb \mbox{ is explosive.}\]
We will prove the first assertion in its full generality, i.e., the forward process only explodes if the backward process does so.
We prove the other direction under a very mild condition on $G$ for all processes where $h=h_{\alpha}^L$ for some $\alpha \in (0,1)$ and any function $L$ that varies slowly at infinity. For the left-to-right implication, we need the concept of explosion times for incubation processes:
\begin{definition}
The explosion times $V_f$ and $V_b$ in the forward, respectively, backward incubation process are defined as
\be V_f = \inf \{ t \geq 0 \  | \  N_f(t) = \infty \}, \label{eq::IP_FW_EXPL_TIME} \ee
and, 
\be V_b = \inf \{ t \geq 0 \  | \  N_b(t) = \infty \}. \label{eq::IP_BW_EXPL_TIME} \ee
\end{definition}
\noindent

\begin{theorem}
Let $h$ be a probability generating function. Let both $G$ and $I$ (with $I(0)=0$) be distribution functions of non-negative random variables. Then, the explosion times $V_f$ and $V_b$ in the forward, respectively, backward process satisfy the relation
\be V_b \overset{d}\leq V_f.  \ee
Hence, if the process $\HGIf$ is explosive, then so is $\HGIb$.
\label{thm::IP_COMPARING_BW_FW}
\end{theorem}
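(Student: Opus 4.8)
The plan is to construct an explicit coupling between the forward process $\HGIf$ and the backward process $\HGIb$ that, on the event of explosion in the forward process, exhibits an infinite path in the backward process with finite total length realized no later than $V_f$. The key observation is that both processes use the \emph{same} offspring pgf $h$ and the \emph{same} infection-time distribution $G$, but impose the incubation constraint on different edges: in the forward process a child $j$ of $i$ survives iff $X_j \geq \tau_I(i)$ (the \emph{mother's} incubation period), whereas in the backward process $j$ survives iff $X_j \geq \tau_I(j)$ (the child's \emph{own} incubation period). First I would build a single probability space carrying, for each potential individual $i$ in the Ulam--Harris tree, an offspring count $D_i$, an infection-time $X_i \sim G$, and an incubation period $\tau_I(i) \sim I$, all independent as prescribed. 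On this space, realize $\HGIf$ by keeping edge $(i,j)$ iff $X_j \geq \tau_I(i)$, and realize $\HGIb$ by keeping edge $(i,j)$ iff $X_j \geq \tau_I(j)$.

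The heart of the argument is a \emph{re-rooting}/edge-shifting bijection. Consider any ray $\emptyset = v_0, v_1, v_2, \dots$ in the forward process: by definition $X_{v_{k+1}} \geq \tau_I(v_k)$ for every $k$, and the forward explosion time along this ray is $\sum_{k \geq 0} X_{v_{k+1}}$. I want to produce a ray in the backward process whose length is no larger. The natural candidate is to shift the incubation constraint by one generation: along the \emph{same} vertex sequence, the backward survival condition on edge $(v_k, v_{k+1})$ reads $X_{v_{k+1}} \geq \tau_I(v_{k+1})$, which is \emph{not} automatically implied by the forward conditions. So a direct use of the same ray fails, and instead I would argue at the level of distributions: condition on the sigma-field generated by all $(D_i)$ and all $(\tau_I(i))$, leaving only the infection-times $(X_i)$ random. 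Given this data, the forward process keeps edge $(i,j)$ with probability $\Pv(X \geq \tau_I(i) \mid \tau_I(i))$ and the backward process keeps it with probability $\Pv(X \geq \tau_I(j) \mid \tau_I(j))$; crucially, in the backward case the thinning is done \emph{independently per edge via the child's own fresh incubation variable}, so that $\HGIb$ is exactly the classical process $(h, G_I)$ with $G_I$ as defined in the backward section. The forward process is harder precisely because of the shared-mother dependence. The strategy is then to show that the forward explosion time stochastically dominates from above the explosion time of $(h,G_I)$ by exhibiting, inside the forward tree, a sub-tree that is stochastically at least as rich as a $(h,G_I)$ process while the times only shrink.

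Concretely I would proceed as follows. Step 1: Fix a vertex $i$ with mother incubation $\tau_I(i) = a$; its effective forward children are those $j$ with $X_j \geq a$, and the delay to each such child is $(X_j \mid X_j \geq a)$. Step 2: Observe that since $I(0)=0$ is \emph{not} assumed here but $\tau_I \geq 0$ always, we have $(X_j \mid X_j \geq a) \leq (X_j \mid X_j \geq \tau_I(j))$ in the stochastic order whenever $a \leq \tau_I(j)$ — but this comparison can go the wrong way, which is the main obstacle. Step 3 (the real engine): rather than comparing edge by edge, use the abstract explosion-time machinery — a process explodes iff it possesses a finite ray — together with the fact that, conditionally on $(\tau_I(i))_i$, the forward tree restricted to any fixed ray has edge-lengths $(X_{v_{k+1}} \mid X_{v_{k+1}} \geq \tau_I(v_k))$ which are mutually independent and each stochastically dominated by the unconditional $X$; summing, $V_f$ along that ray dominates $\sum_k X'_k$ with $X'_k$ i.i.d.\ $\sim G$. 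Then show that an explosion in $\HGIf$ forces such a ray to exist with the corresponding backward-consistent relabeling surviving with positive conditional probability, using a Borel--Cantelli / second-moment argument on the number of surviving backward rays inside the forward tree up to time $t$. Step 4: Conclude $V_b \overset{d}{\leq} V_f$, and in particular explosion of $\HGIf$ implies explosion of $\HGIb$.

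The main obstacle, and where I expect to spend the most care, is exactly the mismatch in which endpoint carries the incubation constraint: a forward ray is not directly a backward ray, and the shared-mother dependence in the forward process means the edge-thinnings are not independent. I expect the resolution to be a two-stage coupling — first pass from $\HGIf$ to an auxiliary independent-thinning forward process by the usual "resampling the shared variable per child" domination (which can only delay things, hence only increases the explosion time), and then identify that auxiliary process's backward-shifted version with $(h, G_I) = \HGIb$ up to a stochastic-ordering inequality on the delays. Verifying that this two-stage comparison is monotone in the right direction for the explosion time — i.e.\ that every step either removes edges or lengthens edges — is the crux; everything else is the standard "explosion $\Leftrightarrow$ finite ray" bookkeeping already used for Theorem \ref{th::CP_COMPARING_BW_FW}.
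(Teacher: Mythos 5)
There is a genuine gap, and it sits exactly at the point you yourself flag as ``the crux.'' The whole content of the theorem is the comparison between the forward process, in which all children of $i$ are thinned using the \emph{shared} variable $\tau_I(i)$, and the backward process, which is the classical independent process $(h,G_I)$. Your proposal reduces to the assertion that passing from the shared-variable thinning to an independent per-edge thinning is ``the usual resampling domination'' --- but no such standard coupling exists here, and as stated your direction is backwards: you write that resampling ``can only delay things, hence only increases the explosion time'' of the auxiliary independent process, which would give $V_f \leq V_{\text{aux}}$ and, combined with $V_b \leq V_{\text{aux}}$, yields nothing. What must be proved is that the \emph{independent} version explodes no later than the shared one, and this is precisely the nontrivial step. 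The paper obtains it analytically: writing the forward fixed-point equation as $\phi_f(t)=\int_0^\infty h(\cdots)\,\mathrm d I(x)$ and applying Jensen's inequality to the convex probability generating function $h$, one gets $\phi_f(t)\geq h\bigl(1-G_I(t)+\int_0^t\phi_f(t-u)\,\mathrm d G_I(u)\bigr)=\bigl(\TIb\phi_f\bigr)(t)$; iterating $\TIb$ starting from $\phi_f$ produces a decreasing sequence converging to a solution of the backward equation, which by minimality dominates $\phi_b$, whence $\phi_b\leq\phi_f$ and $V_b\leqd V_f$. Nothing in your sketch substitutes for this convexity step.

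Two further points. First, your Step~3 contains an internal inconsistency: $(X\mid X\geq a)$ stochastically \emph{dominates} $X$ rather than being dominated by it (you then draw the correct conclusion that the forward ray length dominates $\sum_k X_k'$ with $X_k'\sim G$ i.i.d., but this comparison is useless for the theorem, since the backward edge lengths follow $G_I$, which is stochastically \emph{larger} than $G$, so bounding $V_f$ below by the explosion time of $(h,G)$ says nothing about $V_b$). Second, the proposed ``Borel--Cantelli / second-moment argument on the number of surviving backward rays inside the forward tree'' is not carried out even in outline; second-moment arguments for the existence of finite rays are delicate and would themselves require controlling the shared-mother correlations you are trying to avoid. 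If you want a probabilistic rather than analytic proof, the honest target is a size-biasing or conditional-Jensen argument showing that, conditionally on the tree and on all incubation periods, the generating function of the forward coming generation dominates that of the backward one --- which is simply the probabilistic reading of the inequality $\phi_f\geq\TIb\phi_f$ the paper establishes directly.
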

\noindent
Theorem \ref{thm::IP_COMPARING_BW_FW} provides an easy tool to verify conservativeness of a process: if the backward process is conservative, then so is the forward process. The former is in general better understood; explosiveness of $\HGIb$ requires $\HG$ and $(h,I)$ to explode (Theorem \ref{thm::IP_back_necessary}). Hence, explosiveness of both $\HG$ and $(h,I)$ is a necessary condition for $\HGIf$ to explode. Note that we hereby established an alternative proof of Theorem \ref{thm::IP_for_necessary}.
\\
Deducing explosiveness of $(\Ha^L,G,I)_f$ if $(\Ha^L,G,I)_b$ is postulated to explode requires a more elaborate proof. We shall see that this is feasible under a very mild condition on $G$ by showing that, with positive probability, a realization of the forward branching process contains an exploding path (i.e., an infinite path with finite length), if the backward process explodes. 
\\
The proof is inspired by an algorithm, in \cite{AmDeGrNe13}, that is used to prove equivalence of explosiveness and min-summability for \emph{plump} processes. A plump branching process $\HG$ satisfies a certain deterministic growth condition (i.e., at least double exponential) after some initial random development of the branching process. More precisely, there is a function $f$ such that with positive probability $Z_n \geq f(n)$ for all $n$, where $Z_n$ represents the size of generation $n$. Min-summability of the process implies
\be \s{n=1}{\infty} \minun{i \in \{ 1, \cdots, f(n) \}} X_i^n < \infty, \label{eq::IP_COMPARE_MINSUM} \ee  
where $(X_i^n)_{i,n}$ is an i.i.d. sequence with distribution $G$. 
\\
The algorithm in \citep{AmDeGrNe13} works according to the following idea: in step $n$ of the algorithm, let $x_n$ denote the lowest node on the candidate exploding path. Consider only those children of $x_n$ who, on their turn, have an offspring of size at least $f(n+1)$. Call them the \emph{good} children, denote their number by $W_n$ and their i.i.d. life-lengths with $\widehat{X}_1^n , \ldots, \widehat{X}_{W_n}^n \sim G$. Let $x_{n+1}$ be the one among them that has shortest life-length. Amini et al. show that with positive probability the path length is finite, i.e., 
\be \s{n=1}{\infty} \minun{i \in \{ 1, \cdots, W_n \}} \widehat{X}_i^n < \infty, \ee 
under the condition \eqref{eq::IP_COMPARE_MINSUM}. Their proof uses that $W_n$ has approximately the same order of magnitude as $f(n)$ for all $n$.
\\
We give another algorithm that comes up with an exploding path for the \emph{forward} processes. Unfortunately, there is no independence among the children of a particular parent to employ, therefore we cannot hope that a condition as simple as min-summability would be sufficient. We may, however, make use of the fact that the growth in the process follows again almost a deterministic course. Taking this into account, the algorithm picks the \emph{good} children in a similar way and their number grows approximately as fast as the generation sizes in the backward process. However, this time, it is not sufficient to pick the child with smallest infection-time among the \emph{good} children. The complication here is that if we would pick the candidate with smallest infection-time, she might still have an arbitrary large incubation-time: all her children have a life-time larger than this incubation period. Thus, instead, we must pick the child for which the sum of the infection-time and incubation-time is smallest. It turns out feasible to prove convergence of the infection-times along the constructed path by taking into account that the backward process is min-summable, see the proof of Theorem \ref{thm::IP_COMPARISON_AGING} for details:

\begin{theorem}
Let $\alpha \in (0,1)$. Let $G$ be the distribution function of a non-negative random variable. Assume that there exists a $\delta > 0$ such that
\be (X-x|X \geq x) \overset{d}{\leq} X, \label{eq::aging} \ee
for all $x \leq \delta$, where $X$ has distribution $G$. Let $I$ be the distribution function of a non-negative random variable. Let $L$ be a function varying slowly at infinity. If the process $\HaLGIb$ is explosive, then so is $\HaLGIf$. 
\label{thm::IP_COMPARISON_AGING}
\end{theorem}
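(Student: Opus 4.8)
The plan is to exhibit, with positive probability, an \emph{exploding path} in $\HaLGIf$ --- an infinite ray from the root whose infection-times sum to a finite value --- which forces $N_f$ to be infinite at a finite time. The hypothesis enters only through the following. By Theorem~\ref{thm::IP_back_necessary} together with Theorem~\ref{th::AGE_allAlphaL}, explosiveness of $\HaLGIb$ makes both $(h_\alpha,G)$ and $(h_\alpha,I)$ explosive; hence, by Proposition~\ref{thm::ALPHA_GROWTH} and the robustness of explosiveness under changing the exponent and the base of the double exponential (both contained in the proof of Theorem~\ref{th::AGE_allAlpha}), for every $\alpha'\in(\alpha,1)$ and every $c>0$ one has, almost surely,
\be
\s{n=1}{\infty}\,\minun{i\le\ce{\e^{c/(\alpha')^n}}}X^n_i<\infty\qquad\text{and}\qquad\s{n=1}{\infty}\,\minun{i\le\ce{\e^{c/(\alpha')^n}}}T^n_i<\infty,
\ee
where the $X^n_i$ are i.i.d.\ $\sim G$, the $T^n_i$ are i.i.d.\ $\sim I$, all independent. (Also $\HaLGIb$ being explosive forces $\P{X\ge\tau_I}>0$ and $0\in\mathrm{supp}(I)$, which we use below.)

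Next I would establish that the forward genealogy grows doubly-exponentially. A short computation with the probability generating function of the effective offspring $D_f=\s{j=1}{D}\indicator{X_j\ge\tau_I}$ gives, as $s\to1$,
\[
\E{s^{D_f}}=\E{h_\alpha^L\!\left(1-(1-G(\tau_I))(1-s)\right)}=1-(1-s)^\alpha\,\E{(1-G(\tau_I))^\alpha}\,L(1/(1-s))\,(1+o(1)),
\]
and since $\E{(1-G(\tau_I))^\alpha}\ge\P{X\ge\tau_I}>0$, $D_f$ again follows a heavy-tailed power law with the \emph{same} exponent $\alpha$. By Davies' theorem~\cite{Dav78}, applied to the Galton--Watson generation sizes $Z_n$ of $\HaLGIf$, there is $w>0$ with $\P{Z_n\ge\e^{w/\alpha^n}\text{ eventually}}>0$. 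Fix $\alpha'\in(\alpha,1)$ and set $g(n)=\ce{\e^{w/(\alpha')^n}}$; since $(\alpha/\alpha')^n\to0$ we have $g(n)\le\e^{w/\alpha^n}$, so $g$ is dominated by the true growth yet still grows doubly-exponentially in the \emph{larger} base $\alpha'$. A standard restart (as in the proof of the min-summability criterion in \cite{AmDeGrNe13}; here $(h_\alpha,I)$ explosive forces $0\in\mathrm{supp}(I)$, hence $I(\delta)>0$) produces, with positive probability, a root $x_0$ with $\tau_I(x_0)\le\delta$ whose subtree satisfies $Z^{(x_0)}_k\ge g(k)$ for all $k$.

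I would then run the algorithm of \cite{AmDeGrNe13}, modified as indicated before the statement. At step $n$, with $x_n$ the current endpoint and $a_n:=\tau_I(x_n)$, call a child $j$ of $x_n$ \emph{good} if $D_f^{(j)}\ge g(n+1)$, let $W_n$ be the number of good children, and let $x_{n+1}$ be the good child minimising $X_j+\tau_I(j)$. Since $x_n$ has at least $g(n)$ children, each independently good with probability $\ge c\,g(n+1)^{-\alpha}$, we get $\E{W_n}\ge c\,g(n)g(n+1)^{-\alpha}=c\,\e^{(1-\alpha/\alpha')w/(\alpha')^n}(1+o(1))=:2M_n$, with $M_n$ doubly-exponential in base $\alpha'$; a Chernoff bound plus Borel--Cantelli (using $\sum_n\e^{-cM_n}<\infty$) shows that with positive probability $W_n\ge M_n$ for all $n$, so the algorithm never terminates and produces an infinite ray $x_0\to x_1\to\cdots$. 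Its length is $\s{n\ge1}{}X_{x_n}$; writing $U_n:=X_{x_{n+1}}+a_{n+1}-a_n\ge a_{n+1}\ge0$ (the inequality because $X_{x_{n+1}}\ge a_n$) and telescoping, $\s{n\ge0}{}X_{x_{n+1}}\le a_0+\s{n\ge0}{}U_n$, so it remains to prove $\sum_n U_n<\infty$ a.s.

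This is where the aging assumption \eqref{eq::aging} is used. The event ``$j$ is good'' depends only on $\tau_I(j)$ and $j$'s subtree, is a down-set in $\tau_I(j)$, and is independent of the edge length $X_j$; hence conditioning on it leaves $X_j$ with law $G$ --- so $(X_j-a_n\mid X_j\ge a_n,\ j\text{ good})\leqd X$ by \eqref{eq::aging} provided $a_n\le\delta$ --- pushes $\tau_I(j)$ stochastically downwards, and keeps these two conditionally independent. Consequently, conditionally on $W_n$, $U_n=\minun{j\text{ good}}[(X_j-a_n)+\tau_I(j)]\leqd\minun{i\le W_n}(X'_i+T'_i)$ with $(X'_i,T'_i)$ i.i.d.\ $\sim G\otimes I$. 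Taking $s_n=G^{\leftarrow}(M_n^{-1/4})$, $t_n=I^{\leftarrow}(M_n^{-1/4})$ gives $\P{U_n>s_n+t_n\mid\cF_{n-1}}\le(1-G(s_n)I(t_n))^{M_n}\le\e^{-M_n^{1/2}}$, which is summable, so by the conditional Borel--Cantelli lemma $U_n\le s_n+t_n$ eventually; and $a_n\le\delta$ for all $n$ because $a_0\le\delta$ and $a_n\le s_{n-1}+t_{n-1}<\delta$ once $n$ is large. Finally $\sum_n(s_n+t_n)=\sum_n[G^{\leftarrow}(M_n^{-1/4})+I^{\leftarrow}(M_n^{-1/4})]<\infty$ a.s.\ by the two displayed min-summability facts (applied with a suitable $c$, since $M_n$ is doubly-exponential in base $\alpha'$). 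Thus $\sum_n U_n<\infty$, the ray is an exploding path, and $\HaLGIf$ is explosive. I expect the main obstacle to be the bookkeeping around the ``good child'' conditioning --- proving that goodness is independent of $X_j$ and only biases $\tau_I(j)$ downwards, so that, despite the dependence built into the forward process, $U_n$ is dominated by a minimum of i.i.d.\ pairs --- together with arranging the restart so that $a_n\le\delta$ holds for \emph{all} $n$ on a positive-probability event rather than merely eventually; the power-law computation, the appeals to Davies' theorem and to Theorem~\ref{th::AGE_allAlpha}, and the Chernoff/Borel--Cantelli estimates are routine.
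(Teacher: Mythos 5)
Your proposal is essentially the paper's proof: both construct an exploding path by a greedy algorithm in the style of \cite{AmDeGrNe13} that, at each step, restricts attention to the ``good'' children (those whose effective offspring is at least a doubly-exponential threshold) and moves to the one minimising infection-time plus incubation-time; both telescope the path length so that the aging hypothesis \eqref{eq::aging} can be used to dominate each increment by a minimum over many \emph{independent} pairs $X'+T'$; and both close with min-summability inherited from the backward process. The remaining differences are cosmetic: you source min-summability of $G$ and $I$ separately via Theorem~\ref{thm::IP_back_necessary} and control the sum by deterministic quantile bounds plus conditional Borel--Cantelli, where the paper instead proves a stochastic domination of the partial sums by induction (Lemma~\ref{lm::AGING_ST_DOM_SUM}) and converts the backward min-summability via Lemma~\ref{lm::FOR_EQUI_SUMMABILITY}; your appeal to Davies' theorem for the forward process is not actually needed. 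The one point you correctly flag as unresolved --- guaranteeing $a_n\le\delta$ for \emph{every} $n$ rather than merely eventually, so that \eqref{eq::aging} is applicable at each step --- is handled in the paper by a preliminary reduction rather than by the restart you sketch: replace $I$ by a distribution $I^*$ agreeing with $I$ on $[0,\delta/2]$ and satisfying $I^*(\delta)=1$; by the monotone comparison theorem for incubation distributions (Theorem~\ref{thm::IP_FOR_COMP_I}) this does not affect explosiveness of the forward process, and afterwards every incubation period is $\le\delta$ automatically. With that reduction inserted, your argument goes through.
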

\noindent
We remark that the condition $(X-x|X \geq x) \overset{d}{\leq} X$ is met in most cases of interest, namely those for which the density of $G$, $g$ satisfies \emph{either} $g(0) =0$ and $g$ is locally increasing at zero, \emph{or}, for some $\epsilon > 0$, $g \geq \epsilon$ on some interval around zero. Only distribution functions $G$ for which $g$ has infinitely many zeros around the origin fail to meet this condition. Hence, we have a very general theorem:

\begin{theorem}
Let $\alpha \in (0,1)$. Let $G$ be the distribution function of a non-negative random variable with density $g$ around zero, such that, \emph{either},
\begin{itemize}
\item $g(0)=0$ and there exists a $\delta > 0$ such that $g$ is increasing on $[0,\delta]$, \emph{or,}
\item there exist $\epsilon, \delta > 0$ such that $g \geq \epsilon$ on $[0,\delta]$.
\end{itemize}
Further, let $I$ be the distribution function of a non-negative random variable. Let $L$ be a function varying slowly at infinity. If the process $\HaLGIb$ is explosive, then so is $\HaLGIf$. 
\label{thm::IP_COMPARISON_MAIN}
\end{theorem}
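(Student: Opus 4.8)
The obvious plan is to reduce the statement to Theorem~\ref{thm::IP_COMPARISON_AGING} by showing that each of the two density hypotheses forces the "aging" bound \eqref{eq::aging}. First I would reformulate \eqref{eq::aging}: the domination $(X-x\mid X\ge x)\overset{d}{\le}X$ is, for $x$ in the continuity range of $G$, equivalent to the new-better-than-used inequality $\bar G(x+t)\le\bar G(x)\,\bar G(t)$ for all $t\ge 0$, i.e.\ to $\int_0^t\big(r(x+u)-r(u)\big)\,\di u\ge 0$ where $r(s)=g(s)/\bar G(s)$ is the hazard rate. I would also record the inputs that will be needed from the proof of Theorem~\ref{thm::IP_COMPARISON_AGING}: if $\HaLGIb$ explodes then both $(\HaL,G)$ and $(\HaL,I)$ explode (Theorem~\ref{thm::IP_back_necessary}), hence both are min-summable with respect to the doubly-exponential growth $W_n\asymp\exp(W/\alpha^n)$ of the generation sizes, via the explosiveness-vs-min-summability characterisation (Proposition~\ref{thm::ALPHA_GROWTH} and its $h^L_\alpha$-analogue) together with Davies' generation-size estimate.

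Case 1 ($g(0)=0$, $g$ nondecreasing on $[0,\delta]$) is the clean one. Here $r=g/\bar G$ is a quotient of a nondecreasing function by a positive nonincreasing one, so $r$ is nondecreasing on $[0,\delta]$: $G$ has increasing failure rate near the origin. I would then replace $G$ on $(\delta,\infty)$ by the exponential tail $1-\bar G(\delta)\exp(-r(\delta)(\cdot-\delta))$, obtaining $\widetilde G$ whose hazard rate is nondecreasing on all of $[0,\infty)$, hence new-better-than-used, hence satisfying \eqref{eq::aging}. Since the modification touches $G$ only above $\delta$, it changes neither the germ at $0$ of the life-time laws of $\HaLGIf$ and $\HaLGIb$ nor the tail exponent $\alpha$ of the corresponding (thinned) offspring laws, so explosiveness of $(\HaL,G)$, $\HaLGIf$ and $\HaLGIb$ is unaffected; applying Theorem~\ref{thm::IP_COMPARISON_AGING} to $\widetilde G$ concludes Case~1.

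Case 2 ($g\ge\epsilon$ on $[0,\delta]$) cannot be handled this way, because $g\ge\epsilon$ does not make the hazard rate monotone and \eqref{eq::aging} genuinely fails here (for instance for $G(s)=\sqrt s$). Instead I would re-enter the proof of Theorem~\ref{thm::IP_COMPARISON_AGING}, where \eqref{eq::aging} is used only to control, along the infinite path built by the algorithm, the conditional-excess increments $(X_{x_{n+1}}-\tau_I(x_n)\mid X_{x_{n+1}}\ge\tau_I(x_n))$ once $\tau_I(x_n)\le\delta$ — which holds for all large $n$ since $\sum_n\tau_I(x_n)<\infty$ by explosiveness of $(\HaL,I)$. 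For $x,t\le\delta$ one has $\mathbb P\big((X-x\mid X\ge x)\le t\big)=(G(x+t)-G(x))/\bar G(x)\ge\int_x^{x+t}g(s)\,\di s\ge\epsilon t$, so the conditional excess is locally stochastically dominated by a $\mathrm{Uniform}(0,1/\epsilon)$ variable; taking the minimum over the $\approx W_n$ good children, the increment is dominated by the minimum of $\approx W_n$ i.i.d.\ uniforms, whose mean is $O(1/W_n)$ and hence summable because $W_n$ grows doubly exponentially. (In Case~1 the same step works with the uniform replaced by an honest copy of $X$, using min-summability of $(\HaL,G)$ up to a bounded shift of the generation index.) Feeding this back into the construction yields, with positive probability, an infinite path of finite total length in $\HaLGIf$, i.e.\ $\HaLGIf$ explodes.

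The main obstacle is exactly Case~2: since $g\ge\epsilon$ does not yield a monotone hazard rate there is no black-box reduction to Theorem~\ref{thm::IP_COMPARISON_AGING}, and one must re-run its algorithmic proof while checking that the loss incurred in passing from "conditional excess $\overset{d}{\le}X$" to "conditional excess $\overset{d}{\le}\mathrm{Uniform}$" — together with the mismatch between the growth rate $W_n$ of $\HaLGIb$ and the growth rates of $(\HaL,G)$ and $(\HaL,I)$ — is absorbed by the doubly-exponential growth of the generation sizes, so that every minimum over the good children entering the path length still has summable expectation.
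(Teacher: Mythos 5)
Your Case 1 is essentially the paper's own argument: you leave $g$ untouched on a neighbourhood of $0$ and extend it beyond so that the modified law has a monotone hazard rate, hence is new-better-than-used and satisfies \eqref{eq::aging} globally (the paper continues the density by the constant $g(T)$ and then truncates; you continue by an exponential tail with rate $r(\delta)$ --- same idea), and then you invoke Theorem \ref{thm::IP_COMPARISON_AGING} together with the fact that explosiveness depends only on the germ of the life-time law at the origin. That part is fine.

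Case 2 is where you diverge, and where the proposal has a genuine gap. First, your assertion that no black-box reduction to Theorem \ref{thm::IP_COMPARISON_AGING} exists is not what the paper does: it compares $\HaLGIf$ to $(h^L_\alpha,M,I)_f$ with $M$ exponential of rate $\lambda\le\epsilon$. The exponential is memoryless, so it meets \eqref{eq::aging} with equality, and its density $\lambda\e^{-\lambda u}\le\epsilon\le g(u)$ on $[0,\delta]$, so the \emph{forward} comparison theorem (which compares densities, not distribution functions) transfers explosiveness from the exponential model back to $(h^L_\alpha,G,I)_f$; this is a one-line reduction, albeit stated very tersely in the paper. Second, and more seriously, your replacement argument breaks at its key step. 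The quantity the algorithm accumulates at stage $n$ is $\min_{i\le W_n}\bigl((X^{n+1}_i-I^n_{k_n}\mid X^{n+1}_i\ge I^n_{k_n})+I^{n+1}_i\bigr)$, the minimum of the \emph{sum} of conditional excess and child incubation. Dominating each conditional excess by a $\mathrm{Uniform}(0,1/\epsilon)$ variable $U_i$ and noting that $\min_i U^n_i$ has mean $O(1/W_n)$ controls only the uniform part; it says nothing about $\min_i(U^n_i+I^n_{i})$, and since $\min_i(a_i+b_i)\ge\min_i a_i+\min_i b_i$ you cannot decouple the two minima. The paper's Lemma \ref{lm::FOR_EQUI_SUMMABILITY} handles exactly this joint minimum for $\min_i(\widehat X_i+I_i)$, via the pointwise bound $(X_i\mid X_i\ge I_i)\geqd(\widehat X_i+I_i)/2$ and backward min-summability; your uniform is stochastically \emph{larger} than $X$ near the origin (because $G(t)\ge\epsilon t=\P{U\le t}$ for small $t$), so that lemma cannot be quoted and an analogue for $\min_i(U^n_i+I^n_i)$ would have to be proved from scratch (e.g.\ a two-quantile union bound plus Borel--Cantelli, using that min-summability of $(h^L_\alpha,I)$ holds at any doubly-exponential rate). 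As written, the claim that ``the increment is dominated by the minimum of $\approx W_n$ i.i.d.\ uniforms'' is false, because the algorithm must minimise excess \emph{plus} incubation, not excess alone --- a point you yourself emphasise elsewhere in the proposal. (Minor: the bound $\P{(X-x\mid X\ge x)\le t}\ge\epsilon t$ requires $x+t\le\delta$, so the uniform domination is only local and the dominating variable must be extended beyond $\delta$ with some care.)
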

\noindent
Considering the proofs of the last two theorems, the condition on $G$ is only there to avoid tedious technical calculations. We expect therefore that for all $\alpha \in (0,1)$, $L$, $G$ and $I$ there is equivalence in terms of explosiveness between the forward and backward processes. To handle processes with a more general probability generating function $h$, we should again try to find appropriate lower bounds on the generation sizes. Ideally, it might be feasible to use the \emph{same} algorithm and show that its number of \emph{good} children grows with the same order of magnitude as the generations in the backward process. 
\\
Establishment of equivalence between forward and backward processes would imply a substantial simplification of the explosiveness question for forward processes.

\newpage\null\thispagestyle{empty}\newpage

\section{Age-Dependent Branching Processes}
In this section we are concerned with an age-dependent branching process $\HG$, where $h$ is a probability generating function and $G$ the distribution of a non-negative random variable. 
Later on, in Section 3, we will study age-dependent processes $\HGCf$, where an individual $i$ is born if and only if her life-length (or infection-time) $X_i$ is less than or equal to the contagious period $\tau^C(m(i)) \sim C$ of her parent $m(i)$. As pointed out in Remark \ref{rem::HG_HGinfty}, the process $\HG$ is  exactly the same as $(h,G,C=\infty)_{f}$. Hence, we shall make generous use of the results that are derived in Section 3. 
\\
Further, some of our results here are the same as in \citep{Grey74} (our prove technique is based on the ideas presented there).
We define the generating function $F: [0,1] \times [0,\infty] \mapsto [0,1]$ of the total alive population at time t
by $F(s,t) = \E{s^{N(t)}}$. Putting $C = \infty$ in equation \eqref{eq::CP_FPE_MAIN} shows that
\eqn{F(s,t) =h\left(s (1-G(t)) + \int_0^t F(s,t-u)\mathrm d G(u) \right),}
for $(s,t) \in [0,1] \times [0,\infty]$. 
Hence, the function $\phi: [0,\infty] \mapsto [0,1]$, defined for $t \geq 0$ by $\phi(t) = \displaystyle \lim_{s \uparrow 1} F(s,t)$, satisfies
\be 
\phi(t) =h\left( 1-G(t) + \int_0^t \phi(t-u)\mathrm d G(u) \right), \quad t \geq 0.
\label{eq::AGE_gen_func}
\ee

\subsection{Explosiveness}
Recall the concept of an explosive process from Definition \ref{def::AGE_explosiveness}. Note that 
\eqn{\P{N(t) = \infty} = 1 - \s{n=0}{\infty} \P{N(t) = n} = 1 - \phi(t),}
and hence, the process $(h,G)$ is explosive if and only if $\phi(t) < 1$ for some $t > 0$.

\subsection{Fixed point equation}
Let 
\be\mathcal{V} = \{ \Phi \mbox{ }  | \mbox{ }  \Phi: [0,\infty) \mapsto [0,1] \} \label{eq::function_space} \ee
be the space of all functions that map $[0,\infty)$ into $[0,1]$. Define the operator $\TG: \mathcal{V} \mapsto \mathcal{V}$ for $\Phi \in \mathcal{V}$ by
\be \lr{\TG \Phi}(t) =   h\left( 1-G(t) + \int_0^t \Phi(t-u)\mathrm d G(u) \right), \quad t \geq 0.  \ee
We know that $\phi = \TG \phi$, but there may be more functions that satisfy such a relation, therefore we study the fixed point equation
\be \Phi = \TG \Phi, \hspace*{2 cm} \Phi \in \mathcal{V} \label{eq::AGE_FP_usual}, \ee
and in particular the number and behaviour of its solutions. We provide an overview of theorems that describe those solutions, their proofs are found in Section 3, by putting $C=\infty$ there.
\\
Obviously the function $t \mapsto 1$ solves \eqref{eq::AGE_FP_usual} and if it is the only solution to \eqref{eq::AGE_FP_usual} then $\phi=1$ and for all $t \geq 0$, $\P{N(t) = \infty} =0$, so that the process is conservative. 
The following theorem captures that $\phi$ is the \emph{smallest} solution to \eqref{eq::AGE_FP_usual} and we may thus conclude that a process is conservative if and only if $t \mapsto 1$ is the \emph{only} solution to \eqref{eq::AGE_FP_usual}:
\begin{theorem}
The function $\phi: t \mapsto 1- \P{N(t) = \infty}$ is the smallest solution to (\ref{eq::AGE_FP_usual}), in the sense that $\phi(t) \leq \Phi(t)$, for all $t \geq 0$, if $\Phi$ is another solution to (\ref{eq::AGE_FP_usual}).
\label{thm::AGE_right}
\end{theorem}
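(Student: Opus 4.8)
The plan is to verify the two halves of the claim separately: that $\phi$ solves the fixed point equation \eqref{eq::AGE_FP_usual}, and that it lies below every solution. The first half is already in hand, since \eqref{eq::AGE_gen_func} says precisely $\phi=\TG\phi$. For the second half I would approximate $\phi$ from below by the truncated generating functions $F(s,t)=\E{s^{N(t)}}$ with $s<1$: as $s\uparrow1$ we have $s^{N(t)}\uparrow\indicator{N(t)<\infty}$, so monotone convergence gives $\phi(t)=\lim_{s\uparrow1}F(s,t)$, and it therefore suffices to show $F(s,\cdot)\le\Phi$ for every solution $\Phi$ of \eqref{eq::AGE_FP_usual} and every $s<1$.

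Fix $s\in[0,1)$ and introduce the companion operator $\mathcal T_{(s)}\Psi(t)=h\!\big(s(1-G(t))+\int_0^t\Psi(t-u)\,\dd G(u)\big)$; the renewal identity for $F$ displayed at the start of this section says that $F(s,\cdot)$ is a fixed point of $\mathcal T_{(s)}$. Two elementary facts drive the argument: (a) $\mathcal T_{(s)}$ is monotone on $\mathcal V$, because $h$ is nondecreasing and convolution with $G$ is monotone; and (b) $\mathcal T_{(s)}\Psi\le\TG\Psi$ pointwise for every $\Psi$, because $s\le1$ and $h$ is nondecreasing. Now let $\Phi$ be any solution of $\TG\Phi=\Phi$. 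By (b), $\mathcal T_{(s)}\Phi\le\TG\Phi=\Phi$, whence by (a) the iterates $\mathcal T_{(s)}^{k}\Phi$ decrease in $k$; being bounded below by $0$, they converge pointwise to some $L_s\in\mathcal V$, and passing to the limit in $\mathcal T_{(s)}^{k+1}\Phi=\mathcal T_{(s)}(\mathcal T_{(s)}^{k}\Phi)$ (monotone convergence inside the integral, continuity of $h$) shows $L_s=\mathcal T_{(s)}L_s$. Moreover $L_s\le\Phi$.

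It then remains to identify $L_s$ with $F(s,\cdot)$, that is, to show that $\mathcal T_{(s)}$ has a unique fixed point when $s<1$; once that is done, $F(s,\cdot)=L_s\le\Phi$, and letting $s\uparrow1$ yields $\phi\le\Phi$, which with the first half finishes the proof. Uniqueness is a Volterra-type argument: writing out $\Psi=\mathcal T_{(s)}\Psi$ at $t=0$ (with $G(0)=0$) forces $\Psi(0)=h(s)$, and for $t>0$ the equation expresses $\Psi(t)$ through $\{\Psi(r):0\le r<t\}$ only, so two fixed points agreeing on $[0,a)$ must agree at $a$; a continuous-induction argument propagates the agreement to all of $[0,\infty)$. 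I expect this to be the main obstacle, together with the handling of the atom of $G$ at the origin: if $G(\{0\})>0$ one must first pin down $\Psi(0)$ as the relevant root of a scalar fixed-point equation (unique by the convexity/monotonicity used for Galton–Watson extinction probabilities) before running the recursion, and for the degenerate $G=\delta_0$ the operator collapses to $\Psi\mapsto h(\Psi)$ and genuine care (or the standing assumption $G(\{0\})<1$) is required. An alternative route avoiding $s$ altogether is the monotone iteration from below, $\phi_0\equiv0$, $\phi_{k+1}=\TG\phi_k$, which produces the smallest fixed point of $\TG$ directly by the same monotonicity; but then one owes the probabilistic identification of that limit with $\phi$ — essentially that $\{N(t)=\infty\}$ coincides a.s. with the existence of an infinite line of descent of total life-length at most $t$ — whose nontrivial direction needs a König's-lemma argument on the family tree of individuals born by time $t$ plus a Borel–Cantelli step along the siblings branching off such a line, and that is where the real effort would sit.
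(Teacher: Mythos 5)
Your primary route (via $F(s,\cdot)$ for $s<1$) is genuinely different from the paper's, but it has a gap at its load-bearing step: the uniqueness of the fixed point of $\mathcal{T}_{(s)}$. The continuous-induction mechanism you propose does not close. Set $T^*=\sup\{a:\Psi_1=\Psi_2 \text{ on } [0,a)\}$; the step ``agreement on $[0,a)$ forces agreement at $a$'' gets you agreement on $[0,T^*]$, but for $t>T^*$ the equation expresses $\Psi(t)$ through $\int_0^t\Psi(t-u)\,\dd G(u)$, which involves $\Psi$ on all of $(T^*,t)$, where nothing is yet known; unlike an ODE there is no local existence--uniqueness step with which to restart at $T^*$. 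What actually works (when $G(0+)=0$) is a Gronwall/renewal iteration: both arguments of $h$ are bounded by $s+(1-s)G(t)<1$, so on $[0,T]$ the map $h$ is Lipschitz there with constant $h'\bigl(s+(1-s)G(T)\bigr)<\infty$, giving $\Delta\leq K\,(\Delta*G)$ for $\Delta=|\Psi_1-\Psi_2|$, hence $\Delta\leq K^nG^{*n}(T)\to 0$ because $G^{*n}(T)$ decays superexponentially when $G$ has no atom at $0$. Note moreover that the theorem carries no hypothesis excluding an atom of $G$ at $0$; in that case the scalar equation $y=h(c+py)$ entering your recursion can have two roots, so uniqueness for $\mathcal{T}_{(s)}$ is genuinely delicate there, and your route would only prove a weaker statement without the extra bookkeeping you allude to.

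The paper's own proof is exactly the route you relegate to your final sentences: the monotone iteration from below, $\phi_0\equiv 0$, $\phi_{k+1}=\TG\phi_k$, carried out in Section 3 with $C=\infty$. There the limit is the smallest solution by a two-line induction ($\phi_k\leq\psi$ for any nonnegative solution $\psi$), and the probabilistic identification you correctly flag as ``where the real effort would sit'' is Theorem \ref{thm::CP_FPE_IT_RIGHT}: one shows $\phi_k(t)=1-\P{\exists v\in \mathrm{BP}(t)\mbox{ s.t. }|v|=k}$ by induction and passes to the limit using continuity of the measure together with $\bigcap_k\{\exists v:|v|=k\}=\{|\mathrm{BP}(t)|=\infty\}=\{N(t)=\infty\}$ (K\"onig's lemma plus $D\geq 1$). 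That argument is insensitive to atoms of $G$ and avoids the auxiliary parameter $s$ entirely, which is what it buys over your primary route; your route, if completed, would in exchange give the identity $F(s,\cdot)=L_s$ for all $s<1$, which is more than the theorem asks for. As it stands, neither of your two routes is carried to completion: the first fails at the uniqueness step as described, and the second defers exactly the step the paper proves.
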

\noindent
Clearly $\P{N_t = \infty}$ does not decrease with $t$ and hence:
\begin{theorem}
The function $\phi$ is non-increasing. 
\end{theorem}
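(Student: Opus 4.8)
The most convenient self‑contained route is to use that $\phi$ is the \emph{smallest} solution of the fixed point equation \eqref{eq::AGE_FP_usual} (Theorem~\ref{thm::AGE_right}) together with the monotonicity of $\TG$. First I would note that $\TG$ is order preserving on $\mathcal{V}$: if $\Phi_1\leq\Phi_2$ pointwise then, since $h$ is non-decreasing on $[0,1]$ (as a probability generating function it has non-negative coefficients) and $\dd G\geq 0$, also $\TG\Phi_1\leq\TG\Phi_2$. Starting from the constant function $\mathbf{0}$ one has $\TG\mathbf{0}=h(1-G(\cdot))\geq\mathbf{0}$, so $(\TG^n\mathbf{0})_{n\geq0}$ is pointwise non-decreasing and bounded above by $1$; by monotone convergence for the integral against $\dd G$ and continuity of $h$, the increasing limit $\phi_\infty:=\lim_n\TG^n\mathbf{0}$ is again a solution of \eqref{eq::AGE_FP_usual}. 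Since $\mathbf{0}\leq\phi$ gives $\TG^n\mathbf{0}\leq\phi$ for all $n$, hence $\phi_\infty\leq\phi$, while Theorem~\ref{thm::AGE_right} gives $\phi\leq\phi_\infty$, we conclude $\phi=\lim_n\TG^n\mathbf{0}$.

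Next I would prove by induction on $n$ that each $\phi_n:=\TG^n\mathbf{0}$ is non-increasing in $t$. The base case $\phi_0\equiv0$ is immediate. For the inductive step, writing $X\sim G$ and using that $\phi_n\in\mathcal{V}$ is non-increasing,
\be
\phi_{n+1}(t)=h\Big(\,\expec\big[\,\indicator{X>t}+\phi_n(t-X)\,\indicator{X\leq t}\,\big]\Big).
\ee
For each fixed $x\geq0$ the map $t\mapsto\indicator{x>t}+\phi_n(t-x)\indicator{x\leq t}$ equals $1$ on $[0,x)$ and equals $\phi_n(t-x)\leq1$ on $[x,\infty)$, hence is non-increasing in $t$; taking expectations over $X$ and applying the non-decreasing function $h$ shows $\phi_{n+1}$ is non-increasing. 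Letting $n\to\infty$, $\phi=\lim_n\phi_n$ is a pointwise limit of non-increasing functions, hence non-increasing, which is the claim.

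Finally I would record that this also rigorously supports the intuitive reading that $\P{N(t)=\infty}=1-\phi(t)$ is non-decreasing in $t$: informally, once the population has exploded it stays infinite, so the events $\{N(t)=\infty\}$ should increase with $t$. The main obstacle, and the reason I would \emph{not} base the write-up on this coupling, is that it is not quite immediate: the individuals alive at time $s$ may all die before time $t$, so one must argue --- via independence of their disjoint subtrees together with a Borel--Cantelli estimate, and a small separate treatment when $G$ has bounded support --- that their descendants still keep the population infinite at time $t$. The fixed-point argument above sidesteps this completely; there the only point requiring care is the interchange of limits, namely that $\TG$ is continuous along increasing sequences in $\mathcal{V}$, which is precisely monotone convergence plus continuity of $h$.
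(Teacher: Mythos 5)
Your proof is correct and follows essentially the same route as the paper's rigorous argument (the Section~3 theorem ``each $\phi_k$ is non-increasing, hence $\phi$ is'', specialized to $C=\infty$): identify $\phi$ with the increasing iterative limit $\lim_n \TG^n\mathbf{0}$ via minimality and monotonicity of $\TG$, prove monotonicity in $t$ of each iterate by induction, and pass to the pointwise limit. Your formulation of the inductive step — writing the argument of $h$ as $\Ev[\indicator{X>t}+\phi_n(t-X)\indicator{X\leq t}]$ and observing that the integrand is non-increasing in $t$ for each fixed $x$ — is a cleaner version of the computation the paper carries out with Lemma~\ref{lm::estimates}.
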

\noindent
The following theorem shows that the explosiveness of a process is essentially determined by the behaviour of $G$ and $C$ around the origin: 
\begin{theorem}
Let $h$ be a probability generating function. Let $G$ be the distribution function of a non-negative random variable. The process $(h,G)$ is explosive if and only if there exists $T > 0$ and a function $\Psi: [0,T] \to [0,1]$ such that $\Psi \neq 1$ and 
\be \Psi(t) \geq \lr{\TG \Psi}(t), \quad \forall t \in [0,T], \ee 
or, equivalently,
\be \Psi(t) \geq  h \left(1 - G(t) + \int_0^t \Psi(t-u)\mathrm d G(u)   \right), \quad \forall t \in [0,T].  \ee 
\label{th::AGE_simpl_cond}
\end{theorem}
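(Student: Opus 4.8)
The plan is to prove both implications directly from the fixed point description of $\phi$, using only two ingredients: the monotonicity of $\TG$ on $\mathcal V$ (if $\Phi_1\le\Phi_2$ pointwise then $\TG\Phi_1\le\TG\Phi_2$, since $h$ has non-negative Taylor coefficients and integration against $\mathrm dG$ preserves order), and Theorem~\ref{thm::AGE_right}, which identifies $\phi$ as the \emph{smallest} solution of $\Phi=\TG\Phi$. I will also use repeatedly that $h(1)=1$ and that $(\TG\Phi)(t)$ depends only on the values of $\Phi$ on $[0,t]$, so that applying $\TG$ to a function defined only on $[0,T]$ makes sense for $t\le T$. The two displayed inequalities in the statement are the same by the definition of $\TG$, so I work only with the operator form.

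The explosive $\Rightarrow$ local-supersolution direction is immediate. If $(h,G)$ explodes then $\phi(t_0)<1$ for some $t_0>0$; take $T=t_0$ and $\Psi=\phi|_{[0,T]}$. Since $\phi=\TG\phi$ on $[0,\infty)$ and $(\TG\Psi)(t)$ for $t\le T$ only sees $\phi$ on $[0,t]\subseteq[0,T]$, we get $\Psi(t)=(\TG\Psi)(t)$ for all $t\in[0,T]$, in particular $\Psi\ge\TG\Psi$; and $\Psi(T)=\phi(t_0)<1$, so $\Psi\neq1$.

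For the converse the idea is to turn the local supersolution into a global one and then iterate down to a genuine solution. Given $\Psi$ on $[0,T]$ with $\Psi\ge\TG\Psi$ and $\Psi\neq1$, define $\hat\Phi\in\mathcal V$ by $\hat\Phi=\Psi$ on $[0,T]$ and $\hat\Phi\equiv1$ on $(T,\infty)$. For $t\le T$, $(\TG\hat\Phi)(t)=(\TG\Psi)(t)\le\Psi(t)=\hat\Phi(t)$; for $t>T$, since $\hat\Phi\le1$ pointwise and $\TG$ is monotone, $(\TG\hat\Phi)(t)\le(\TG\mathbf 1)(t)=h(1)=1=\hat\Phi(t)$. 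Hence $\hat\Phi\ge\TG\hat\Phi$ on all of $[0,\infty)$ and $\hat\Phi\neq1$. By monotonicity the iterates $\Phi_n:=\TG^n\hat\Phi$ decrease pointwise to some $\Phi_\infty\in\mathcal V$, and passing to the limit in $\Phi_{n+1}(t)=h\big(1-G(t)+\int_0^t\Phi_n(t-u)\,\mathrm dG(u)\big)$ via dominated convergence in the integral and continuity of $h$ yields $\Phi_\infty=\TG\Phi_\infty$. Thus $\Phi_\infty$ solves \eqref{eq::AGE_FP_usual}, so $\phi\le\Phi_\infty$ by Theorem~\ref{thm::AGE_right}; combined with $\Phi_\infty\le\hat\Phi$ and the existence of $t_0\in[0,T]$ with $\hat\Phi(t_0)=\Psi(t_0)<1$, this gives $\phi(t_0)<1$, i.e.\ $\P{N(t_0)=\infty}>0$ and the process is explosive.

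The only genuinely delicate step is the passage to the limit for $\TG^n\hat\Phi$: one must confirm that this decreasing sequence of iterates converges to an \emph{honest} fixed point and not merely to another supersolution, which is exactly where continuity of $h$ and the (dominated/monotone) convergence theorem for $\int_0^t\Phi_n(t-u)\,\mathrm dG(u)$ are needed; measurability of each $\Phi_n$ is inherited from $\hat\Phi$ along the iteration. Everything else is bookkeeping with the monotonicity and the $[0,t]$-locality of $\TG$. The identical argument, with $\TG$ replaced by the operator carrying a contagious period $C$, is what Section~3 establishes in greater generality, specialized here to $C=\infty$.
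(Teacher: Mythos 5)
Your proof is correct and is essentially the argument the paper gives (as Corollary~\ref{cor::CP_WEAKER} with $C=\infty$): necessity by taking $\Psi=\phi$ restricted to $[0,t_0]$, and sufficiency by extending $\Psi$ to a global supersolution equal to $1$ beyond $T$, iterating the monotone operator downward to a genuine fixed point via dominated convergence and continuity of $h$, and invoking the minimality of $\phi$ from Theorem~\ref{thm::AGE_right}. No gaps.
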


\subsection{Comparison theorems}
The observation in Theorem \ref{th::AGE_simpl_cond} raises the question whether comparing life-length distributions around zero may allow us to deduce explosiveness of some process, to the explosiveness of another process. An affirmative answer is captured in the following theorem:
\begin{theorem}
Let $h$ be a probability generating function. Let both $G$ and $G^*$ be distribution functions of  non-negative random variables, such that $G(0)=G^*(0)=0$. Assume there is a $T$ such that $G^* \geq G$ on $[0,T]$. If the process $(h,G)$ is explosive, then so is $(h,G^*)$.
\label{thm::AGE_COMP_G}
\end{theorem}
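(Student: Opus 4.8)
The plan is to reduce the statement to the super-solution criterion of Theorem~\ref{th::AGE_simpl_cond}: it suffices to produce a function $\Psi \neq 1$ on some interval $[0,T']$ with $\Psi \geq T_{(h,G^*)}\Psi$ on $[0,T']$. I would build $\Psi$ directly from the minimal solution $\phi$ of the fixed point equation associated with $(h,G)$, using that $\phi$ is non-increasing (the theorem following Theorem~\ref{thm::AGE_right}) and that $G^{*}$ lies above $G$ near the origin, so that replacing $G$ by $G^{*}$ can only decrease the argument fed into the (monotone) function $h$.

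\textbf{A non-increasing super-solution for $(h,G)$ on $[0,T]$.} Since $(h,G)$ is explosive, $\phi\not\equiv1$. I would first record that explosiveness forces $\phi(t)<1$ for \emph{every} $t>0$: on the positive-probability event that the tree contains a finite ray $(v_0,v_1,\dots)$ (with $\sum_i X_{v_i}<\infty$), the life-lengths tend to $0$, so a vertex $v_m$ far out along the ray carries a sub-branching-process that is an i.i.d.\ copy of the whole process and already has infinitely many births within any prescribed time; hence $\phi(t)<1$ for all $t>0$, in particular $\phi(T)<1$. Then $\Psi:=\phi|_{[0,T]}$ is non-increasing, satisfies $\Psi\neq1$, and $\Psi=\TG\Psi\geq\TG\Psi$ on $[0,T]$ (for $t\leq T$ the value $\lr{\TG\Psi}(t)$ uses only $\Psi$ on $[0,t]\subseteq[0,T]$).

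\textbf{Transferring the super-solution to $(h,G^{*})$.} Because $h$ is non-decreasing, it is enough to check, for $t\in[0,T]$,
\[ 1-G^{*}(t)+\int_{(0,t]}\Psi(t-u)\,\dd G^{*}(u)\ \leq\ 1-G(t)+\int_{(0,t]}\Psi(t-u)\,\dd G(u). \]
Using $G(0)=G^{*}(0)=0$ this is equivalent to $\int_{(0,t]}(1-\Psi(t-u))\,\dd G^{*}(u)\geq\int_{(0,t]}(1-\Psi(t-u))\,\dd G(u)$. The integrand $u\mapsto1-\Psi(t-u)$ is non-negative and non-increasing on $(0,t]$ since $\Psi$ is non-increasing; writing it as $\int_{0}^{\infty}\indicator{1-\Psi(t-u)>y}\,\dd y$, the super-level sets $\{u\leq t:\,1-\Psi(t-u)>y\}$ are intervals $(0,a_y]$ with $a_y\leq t\leq T$, so the inequality reduces to $G^{*}(a_y)\geq G(a_y)$, which is the hypothesis. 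Hence $\lr{T_{(h,G^{*})}\Psi}(t)\leq\lr{\TG\Psi}(t)\leq\Psi(t)$ on $[0,T]$ with $\Psi\neq1$, and Theorem~\ref{th::AGE_simpl_cond} gives that $(h,G^{*})$ is explosive.

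\textbf{Alternative route and main obstacle.} The same conclusion follows from a coupling argument: realise $\HG$ and $(h,G^{*})$ on a common family tree with life-lengths $X_v=G^{\leftarrow}(U_v)$ and $X_v^{*}=(G^{*})^{\leftarrow}(U_v)$ from shared uniforms $U_v$, so $X_v^{*}\leq X_v$ whenever $X_v\leq T$; on the positive-probability event that $(h,G)$ has a finite ray $(v_i)$, one has $X_{v_i}\to0$, hence $X_{v_i}^{*}\leq X_{v_i}$ for all large $i$ and $\sum_i X_{v_i}^{*}<\infty$, so $(h,G^{*})$ has a finite ray too and is explosive. The genuinely delicate point, common to both routes, is the input that explosiveness of $(h,G)$ forces the explosion time to have infimum $0$ (equivalently $\phi(T)<1$, equivalently a non-trivial non-increasing super-solution exists already on $[0,T]$); this is where the self-similarity of the branching process must be invoked, and I expect it to be the crux. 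The remaining ingredients — monotonicity of $h$ and of $\TG$, the layer-cake estimate, and the harmless atom bookkeeping at jump points of $G,G^{*}$ (harmless since $G(0)=G^{*}(0)=0$) — are routine.
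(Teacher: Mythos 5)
Your main argument is correct and is, at its core, the paper's own proof: the paper derives this theorem from its contagious-period analogue (set $C=\infty$), also taking $\Psi=\phi$ (the minimal, non-increasing solution for $(h,G)$) as the test function and establishing the key inequality $1-G(t)+\int_0^t\Psi(t-u)\,\dd G(u)\ \geq\ 1-G^*(t)+\int_0^t\Psi(t-u)\,\dd G^*(u)$ by partial integration, which is the same computation as your layer-cake estimate (both just exchange the monotonicity of $u\mapsto\Psi(t-u)$ for the pointwise ordering $G^*\geq G$). The one place you genuinely diverge is the input $\phi(T)<1$: the paper has this available as a separate theorem proved analytically (if $\phi\equiv1$ on $[0,t_0]$ and $\phi<1$ beyond, then $\phi^*(\tau):=\phi(\tau+t_0)$ is a strictly smaller solution of the fixed-point equation, contradicting minimality), whereas you argue probabilistically via a finite ray and self-similarity. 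Your sketch of that step is slightly informal — the index $m$ at which the tail of the ray becomes short is realization-dependent, so one should fix a generation $n$ with $\P{E_n}>0$ where $E_n$ is the event that some generation-$n$ subtree contains a ray of length $<t$, and then use that these subtrees are i.i.d. copies of the whole tree given the first $n$ generations — but the idea is sound and you correctly flag it as the crux. Your coupling alternative ($X_v^*=(G^*)^{\leftarrow}(U_v)\leq G^{\leftarrow}(U_v)=X_v$ once $X_v\leq T$, applied along a finite ray whose edge lengths tend to $0$) is a genuinely different, purely probabilistic route not taken in the paper; it is shorter and bypasses the fixed-point machinery entirely, at the cost of leaning directly on the ray characterization of explosion.
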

\noindent
Finally, comparing offspring distributions leads to the following theorem:
\begin{theorem}
Let $h$ and $h^*$ be probability generating functions  such that for some $\theta < 1$ we have $h^*(s) \leq h(s)$ for all $s \in [\theta,1]$. Let $G$ be the distribution function of a non-negative random variable such that $G(0) = 0$. If the process $(h,G)$ is explosive, then so is	 $(h^*,G)$.
\label{thm::AGE_COMP_h}
\end{theorem}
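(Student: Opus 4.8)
The plan is to verify the criterion of Theorem~\ref{th::AGE_simpl_cond} for the process $(h^*,G)$, i.e.\ to produce a time $T^*>0$ and a function $\Psi\colon[0,T^*]\to[0,1]$ with $\Psi\not\equiv1$ and $\Psi(t)\ge h^*\bigl(1-G(t)+\int_0^t\Psi(t-u)\,\mathrm d G(u)\bigr)$ for all $t\in[0,T^*]$. The natural candidate is the generating function $\phi$ of the explosive process $(h,G)$ itself, restricted to a short interval. By \eqref{eq::AGE_gen_func} we have $\phi=\TG\phi$, so, writing $a(t):=1-G(t)+\int_0^t\phi(t-u)\,\mathrm d G(u)$, it holds that $\phi(t)=h(a(t))$ for all $t\ge0$.

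The key step is a range estimate for $a(t)$. Since $0\le\phi\le1$,
\[
  1-G(t)\;\le\;a(t)\;=\;1-G(t)+\int_0^t\phi(t-u)\,\mathrm d G(u)\;\le\;1-G(t)+G(t)\;=\;1 .
\]
As $G(0)=0$ and $G$ is right-continuous we may fix $T^*\in(0,\infty)$ with $G(T^*)\le1-\theta$, and then $a(t)\in[1-G(T^*),1]\subseteq[\theta,1]$ for every $t\in[0,T^*]$. On that range the hypothesis $h^*\le h$ applies, so, using that $\phi|_{[0,T^*]}(t-u)=\phi(t-u)$ whenever $0\le u\le t\le T^*$,
\[
  \bigl(T_{(h^*,G)}\,\phi|_{[0,T^*]}\bigr)(t)\;=\;h^*(a(t))\;\le\;h(a(t))\;=\;\phi(t),\qquad t\in[0,T^*] .
\]
Thus $\Psi:=\phi|_{[0,T^*]}$ satisfies $\Psi\ge T_{(h^*,G)}\Psi$ on $[0,T^*]$, and Theorem~\ref{th::AGE_simpl_cond} applied to $(h^*,G)$ finishes the proof \emph{provided} $\Psi\not\equiv1$.

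The non-triviality of $\Psi$ is the only delicate point, and it is the main obstacle: the fixed point equation \eqref{eq::AGE_FP_usual} by itself permits a smallest solution $\phi$ that equals $1$ on some nondegenerate $[0,a]$, so restricting $\phi$ to a forcibly small $[0,T^*]$ (small because $G$ might rise steeply at the origin) could leave us with the trivial function. To exclude this I would establish the sharper statement that an explosive $(h,G)$ with $G(0)=0$ has $\phi(t)=1-\mathbb P(N(t)=\infty)<1$ for \emph{every} $t>0$: explosiveness gives, with positive probability, a finite ray; since the subtree below any vertex is an independent copy of the process and the tail of the sum of edge lengths along a finite ray can be made smaller than any prescribed $\varepsilon>0$ by descending deep enough in the tree, one finds with positive probability a finite ray of total length $<\varepsilon$, and---because $G(0)=0$---infinitely many of the vertices branching off that ray, having independent lifelengths, are still alive at any time $t>\varepsilon$, so $\mathbb P(N(t)=\infty)>0$. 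With $\phi<1$ on $(0,T^*]$ in hand, $\Psi\not\equiv1$, and the construction above yields that $(h^*,G)$ is explosive. Everything else---the range estimate for $a(t)$ and the one-line inequality $h^*(a)\le h(a)$ on $[\theta,1]$---is routine.
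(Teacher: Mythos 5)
Your proposal is correct and follows essentially the same route as the paper: the paper also takes the (restriction of the) smallest fixed point $\phi$ of the explosive process $(h,G)$ as the test function, uses $G(0)=0$ to confine the argument of $h$ to $[\theta,1]$ on a short interval $[0,T]$, applies $h^*\le h$ there, and invokes the sub-solution criterion (Theorem~\ref{th::AGE_simpl_cond}, i.e.\ Corollary~\ref{cor::CP_WEAKER} with $C=\infty$). The one point where you diverge is the non-triviality of $\Psi=\phi|_{[0,T]}$: the paper disposes of this separately and analytically, showing that for an explosive process $\phi<1$ on all of $(0,\infty)$ by observing that otherwise $\phi^*(\tau):=\phi(\tau+t_0)$ would be a solution of the fixed point equation strictly below $\phi$ somewhere, contradicting minimality; your probabilistic restart argument (truncate a finite ray deep enough so its tail length is below $\varepsilon$ and use the branching property) reaches the same conclusion and is sound, though as written it is a sketch that would need the same care the paper takes in Theorem~\ref{thm::CP_FPE_IT_RIGHT} to pass from ``finite ray of length $<t$'' to $N(t)=\infty$.
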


\subsection{Examples}
In Grey's paper it is shown that the theorems above are rich enough to decide on the explosiveness question for some particular examples of processes. Object of study there is a process for which $h_{\alpha}$ ($\alpha \in (0,1)$) is defined by $h_{\alpha}(s) = 1 - (1-s)^{\alpha}$, for $s \in [0,1]$, together with a variety of life-length distributions that are extremely flat around the origin:
\begin{example}
Define the function $G_{\ell,\beta}$ for small $t \geq 0$ by $G_{\ell,\beta}(t) = \exp \lr{-\frac{ \mathcal{\ell} }{t^{\beta}}}$. By picking a suitable test-function in Theorem \ref{th::AGE_simpl_cond}, it follows that the process $(h,G_{\ell,\beta})$ is explosive for all $\ell,\beta > 0$.
\end{example}
\begin{example}
\noindent
Define the function $\widehat{G}_{k,\gamma}$ for small $t \geq 0$ by $\widehat{G}_{k,\gamma}(t) = \exp\lr{ -\exp\lr{\frac{k}{t^{\gamma}}}}$. This function is much flatter around zero than the one studied in the first example. In fact, Grey showed that for given $\alpha \in [0,1]$, $\gamma = 1$ and $k$ such that $\alpha e^k > 1$, the process $(h,\widehat{G}_{k,1})$ is conservative. 
\label{ex::GREY}
\end{example}
\noindent These two examples indicate that the borderline, in terms of life-length distributions that together with $h$ are just not flat enough to constitute a conservative process, must lie somewhere between $G_{l,\beta}$  and $\widehat{G}_{k,1}$ (where $l,\beta$ and $k$ take the values as above). We will shortly improve this boundary and shall also show that the treshold is independent of $\alpha$. But, before we proceed, we need to study the notion of min-summability.

\subsection{Min-summability}
If a process is explosive, then for some $t>0$  a realization of the branching process contains with positive probability a convergent ray smaller than $t$ (an infinite path starting from the root where the $i+1$-th vertex is the child of the $i$-th vertex for all $i \in \N$ ). To understand this, we introduce the notation BP($t$) for the collection of \emph{all dead and alive} individuals (vertices) in the branching process at time $t$ and let $|v|$ denote the generation of an individual $v \in$ BP($t$). Indeed, if there exists $k \in \mathbb{N}$ such that for all $v \in $ BP($t$), $|v| \leq k$, then, as $D < \infty$ almost surely, we must have $|$BP($t$)$|<\infty$. Therefore,
\eqn{\bigcap_{k=1}^{\infty} \{\exists v \in \mbox{BP}(t) \mbox{ s.t. } |v| = k \} = \{ |\mbox{BP}(t)| = \infty \}.}
Obviously, $\{ |\mbox{BP}(t)| = \infty \} = \{N(t) = \infty \}$, since $D \geq 1$. It follows that the life-length of individual number $n$ on a convergent ray is larger than or equal to the minimum over all life-lengths in generation $n$. In other words, a necessary condition for explosiveness is that the process is \emph{min-summable}: 
\begin{definition}
An age-dependent branching process with generation sizes $(Z_n)_{n \geq 0}$ and life-lengths $(X_{i_n}^n)_{i_n \in \{1, \cdots, Z_n \}, n \geq 0}$ is said to be \emph{min-summable}, if conditioned on survival,
\begin{equation}
\sum_{n=1}^{\infty} \min(X_1^n , \cdots , X_{Z_n}^n) < \infty
\label{eq::MIN_SUMMABLE}
\end{equation}
holds almost surely.
\end{definition}
\begin{remark}
Since, conditioned on survival, min-summability is a tail-event, by Kolmogorov's 0-1 law, (\ref{eq::MIN_SUMMABLE}) happens with probability either 1 or 0. It thus suffices to demonstrate that (\ref{eq::MIN_SUMMABLE}) holds with positive probability if one wants to show that a given process is \emph{min-summable}.
\end{remark}
\noindent
Amini et al. show, in \citep{AmDeGrNe13}, that a process, having a so-called \emph{plump} offspring distribution as in \eqref{eq::AGE_PLUMP}, is explosive if and only if it is min-summable. Recall that we gave an intuitive sketch of their proof in Section 1.8. 
Here we cite a simplified version of their theorem: 
\begin{theorem}
(Amini et al.) Let $h$ be the probability generating function of a distribution $D$ that is plump. Let $G$ be the distribution function of a non-negative random variable. Then, the process $\HG$ is explosive if and only if it is min-summable. 
 A necessary and sufficient condition for the latter is that for the inverse of $G$, $G^{\leftarrow}$, it holds that
\begin{equation}
\sum_{n=1}^{\infty} G^{\leftarrow} \left(\frac{1}{f(n)}\right) < \infty,
\end{equation}
where $f: \mathbb{N} \to (0,\infty)$ is defined by
\begin{equation*}
f(0) = m_0 \mbox{ and } f(n+1) = F^{\leftarrow}_D(1 - \frac{1}{f(n)}), n \geq 0,
\end{equation*}
with $m_0 > 1$ large enough such that (\ref{eq::AGE_PLUMP}) holds for all $m \geq m_0$.
\label{th::AGE_PLUMP}
\end{theorem}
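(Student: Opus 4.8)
The statement contains two separate equivalences, and I would treat them in turn. The first is that, for plump $D$, explosiveness of $\HG$ is equivalent to min-summability. One direction is soft and needs no plumpness: if $\HG$ is explosive there is $t<\infty$ with $\P{N(t)=\infty}>0$, and on this event the tree $\mathrm{BP}(t)$ of individuals born before $t$ is infinite but locally finite (each vertex has $D<\infty$ children a.s.), so by K\"onig's lemma it contains an infinite path $v_0,v_1,v_2,\dots$; every $v_n$ lies in $\mathrm{BP}(t)$, so all partial sums of life-lengths along the path are $\le t$ and hence the total path length is $\le t$, while the life-length of $v_n$ is at least $\min(X_1^n,\dots,X_{Z_n}^n)$ (note $Z_n<\infty$ a.s., even though $\mathbb{E}[Z_n]=\infty$); summing yields $\sum_n\min(X_1^n,\dots,X_{Z_n}^n)\le t<\infty$. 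Since min-summability is a tail event (probability $0$ or $1$ conditional on survival, as in the preceding Remark) and explosion implies survival, this forces min-summability almost surely.

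The converse — min-summable implies explosive — is the substantive content, and it is here that plumpness enters. The plan is to run the greedy construction of \cite{AmDeGrNe13}. First one shows that plumpness forces a doubly-exponential lower bound on the growth of the tree: with positive probability $Z_n\ge f(n)$ for all $n$, with $f$ essentially the iteration in the statement (with, if needed, a fixed margin $\theta>1$, i.e.\ $F^{\leftarrow}_D(1-\theta/f(n))$, which is harmless below). Then, starting from the root, one builds a candidate ray: having chosen $x_n$ with at least $f(n)$ children, call a child \emph{good} if it itself has at least $f(n+1)$ children, let $W_n$ be their number — which is of order $f(n)$ — and let $x_{n+1}$ be the good child of $x_n$ with the shortest life-length, carrying along a reservoir of banked surplus good children so the construction never gets stuck. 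The length of the resulting ray is $\sum_n\min_{i\le W_n}\widehat X_i^n$ with the $\widehat X_i^n$ i.i.d.\ $\sim G$; since min-summability controls $\sum_n\min_{i\le Z_n}X_i^n$ with $Z_n$ of the same doubly-exponential order as $W_n$, a comparison shows this sum is finite with positive probability, so the ray is an exploding path and $\HG$ explodes.

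For the second equivalence — min-summability versus $\sum_nG^{\leftarrow}(1/f(n))<\infty$ — the bridge is the identity $\P{\min(X_1,\dots,X_k)>t}=(1-G(t))^k$, which shows $\min(X_1,\dots,X_k)$ concentrates at scale $G^{\leftarrow}(1/k)$: indeed $\P{\min_{i\le k}X_i>G^{\leftarrow}(1/k)}=(1-1/k)^k\to e^{-1}$, so $\mathbb{E}[\min_{i\le k}X_i\wedge 1]\ge c\,G^{\leftarrow}(1/k)$, while the complementary tail integral $\int_{G^{\leftarrow}(1/k)}^1(1-G(t))^k\,\mathrm{d}t$ is controlled by a dyadic decomposition of $[G^{\leftarrow}(1/k),1]$ according to the value of $G$. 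Using that a plump process has $Z_n$ trapped between two doubly-exponential functions of the same order as $f(n)$ with positive probability, I would reduce min-summability to the deterministic statement $\sum_n\min_{i\le f(n)}X_i^n<\infty$ a.s.; the ``if'' half follows from a first-moment bound plus Borel--Cantelli applied to $\{\min_{i\le f(n)}X_i^n>1\}$, and the ``only if'' half from Kolmogorov's three-series theorem applied to the independent truncated summands $\min_{i\le f(n)}X_i^n\wedge G^{\leftarrow}(1/f(n))$, whose expectations sum to a constant times $\sum_nG^{\leftarrow}(1/f(n))$. The doubly-exponential growth of $f$ is exploited throughout: replacing $f(n)$ by $f(n)/2^j$ only shifts the index by $O(\log j)$, so the geometric weights $e^{-2^j}$ produced by the dyadic decomposition make every auxiliary sum over $j$ converge.

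The main obstacle is the growth estimate: establishing that a plump Galton--Watson process has $Z_n\ge f(n)$ for all $n$ with positive probability, and — what the greedy construction really needs — that the number $W_n$ of good children stays of order $f(n)$ simultaneously for every $n$ along the constructed ray. This rests on a Chernoff bound for the Binomial number of good children at each step, a union bound over $n$ whose error terms are summable precisely because $f$ grows doubly exponentially, and the reservoir bookkeeping that absorbs the downward fluctuations of $W_n$. Matching the order of $W_n$ to that of $Z_n$ — so that min-summability of the ambient tree really does transfer to the constructed ray — is the delicate point; everything after it is first- and second-moment estimates of the kind sketched above.
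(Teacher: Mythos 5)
This theorem is not proved in the thesis: it is cited verbatim from Amini et al.\ \citep{AmDeGrNe13}, and the only material the thesis offers is the informal description of their greedy ``good children'' algorithm in the introduction and in Section~1.8. Your sketch (K\"onig's lemma for the easy direction, the doubly-exponential growth bound $Z_n\ge f(n)$ plus the greedy selection of good children for the converse, and concentration of $\min_{i\le k}X_i$ at scale $G^{\leftarrow}(1/k)$ with Borel--Cantelli/three-series for the summability criterion) faithfully reconstructs exactly that cited strategy, so it is consistent with the approach the paper relies on.
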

\noindent
\begin{remark}
We emphasize that for a plump distribution explosiveness and min-summability of a process are equivalent. This is the essence of the theorem, which furthermore provides us with a way of checking min-summability, though proving min-summability by other means establishes explosiveness as well. 
\end{remark}
\noindent
By virtue of Theorem \ref{th::AGE_PLUMP}, it may be feasible to prove explosiveness of a process by studying its generation sizes. A very detailed description of the growth in the corresponding Galton-Watson process is given in \citep{Dav78} for a $\HG$ process, where $h$ is the probability generating function of a distribution $D$ that satisfies \eqref{eq::HEAVY_TAIL}. Note that $\Ha$ is of this form for all $\alpha \in (0,1)$, see below. 

\begin{theorem}
(Davies) Let $D$ be a random variable that satisfies (\ref{eq::HEAVY_TAIL}) for some  $c,C > 0, \alpha \in (0,1)$  and all sufficiently large $x$. Consider the ordinary Galton-Watson process where each individual produces an offspring equal to an independent copy of $D$. Denote the generation-sizes by $Z_n$ for each $n \in \mathbb{N}$.  Then, there exists a random variable $W$ such that 
\begin{equation}
\alpha^n \log (Z_n + 1) \overset{a.s.}{\rightarrow} W, \mbox{ as } n \to \infty.
\label{eq::DAVIES_GROWTH}
\end{equation}
Further, if $J$ denotes the distribution function of $W$, then
\begin{equation}
\lim_{x \to \infty} \left\{ \frac{-\log ( 1 - J(x) )}{x} \right\} = 1,
\label{eq::DAVIES_W}
\end{equation}
and,
\begin{equation*}
J(0) = \mathbb{P}( W = 0) = q,
\end{equation*}
where $q$ is the extinction-probability of the process.
\label{th::DAVIES} 
\end{theorem}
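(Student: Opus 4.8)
The statement is Davies' theorem from \cite{Dav78}; I would reconstruct the proof by treating the almost-sure convergence and the tail/atom description of $W$ as two largely independent tasks, feeding both from a single analytic input extracted from \eqref{eq::HEAVY_TAIL}. That input is the two-sided estimate
\[
a\,u^{\alpha}\ \le\ 1-h(1-u)\ \le\ b\,u^{\alpha},\qquad 0<u\le u_0,
\]
for some $0<a\le b<\infty$ and $u_0>0$, which follows from the identity $1-h(s)=(1-s)\sum_{j\ge 0}s^{j}\,\P{D>j}$ together with a Tauberian comparison of $\sum_j s^{j}j^{-\alpha}$ with $(1-s)^{\alpha-1}$. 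Probabilistically this says that a sum $S_k:=\sum_{i=1}^{k}D_i$ of i.i.d.\ copies of $D$ is of order $k^{1/\alpha}$ in a strong sense: $\P{\max_{i\le k}D_i\le (k/A)^{1/\alpha}}\le \e^{-cA}$ (from $(1-\P{D>x})^{k}$), while a truncation-plus-Markov bound gives $\P{S_k> k^{1/\alpha}\e^{t}}\le C\e^{-ct}$, both uniformly in $k\ge k_0$ and $t\ge 0$ for suitable $c,C>0$.

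\emph{Almost-sure convergence.} Write $M_n=\alpha^{n}\log(Z_n+1)$. On the extinction event $Z_n=0$ for all large $n$, so $M_n\to 0$; on the complement $Z_n\to\infty$, since \eqref{eq::HEAVY_TAIL} with $\alpha\in(0,1)$ excludes the degenerate law $D\equiv 1$. Conditioning on $\mathcal{F}_n$ and on $\{Z_n=k\ge k_0\}$ we have $Z_{n+1}=S_k$, so the bounds above give
\[
\tfrac1\alpha\log(Z_n+1)-R_n^{-}\ \le\ \log(Z_{n+1}+1)\ \le\ \tfrac1\alpha\log(Z_n+1)+R_n^{+},
\]
with $R_n^{\pm}\ge 0$ obeying $\P{R_n^{\pm}>t\mid\mathcal{F}_n}\le C\e^{-ct}$ uniformly. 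Multiplying by $\alpha^{n+1}$ telescopes the main terms and leaves $|M_{n+1}-M_n|\le \alpha^{n+1}\max(R_n^{+},R_n^{-})+o(\alpha^{n})$, the error term absorbing the passage from $\log Z_n$ to $\log(Z_n+1)$. Applying the conditional Borel--Cantelli lemma to the events $\{\max(R_n^{+},R_n^{-})>n\}$ shows this maximum is $\le n$ eventually, a.s.; hence $\sum_n|M_{n+1}-M_n|<\infty$ a.s.\ and $M_n$ converges a.s.\ to some $W\ge 0$, which is \eqref{eq::DAVIES_GROWTH}.

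\emph{Tail of $W$ and the atom at $0$.} Fix $\mu>0$, put $u_n=\e^{-\mu\alpha^{-n}}$, and note $\E{(1-u_n)^{Z_n}}=h^{(n)}(1-u_n)=1-\gamma^{(n)}(u_n)$, where $\gamma(u)=1-h(1-u)$ and $\gamma^{(n)}$ is its $n$-fold iterate. Iterating $a u^{\alpha}\le\gamma(u)\le b u^{\alpha}$ (legitimate for $\mu$ large, so the orbit stays below $u_0$) and summing the geometric exponents $1+\alpha+\cdots+\alpha^{n-1}$ yields
\[
a^{1/(1-\alpha)}\e^{-\mu}(1+o(1))\ \le\ \gamma^{(n)}(u_n)\ \le\ b^{1/(1-\alpha)}\e^{-\mu}(1+o(1)),\qquad n\to\infty .
\]
On the other hand $(1-u_n)^{Z_n}=\exp(-u_nZ_n+o(1))$ and, since $\log(Z_n+1)\sim W\alpha^{-n}$ by the previous step, $\log(u_nZ_n)=\alpha^{-n}(W-\mu+o(1))$, so the integrand tends a.s.\ to $\indicator{W<\mu}$; dominated convergence gives $\E{(1-u_n)^{Z_n}}\to J(\mu)$. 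Comparing, $1-J(\mu)\asymp \e^{-\mu}$ for large $\mu$, which is \eqref{eq::DAVIES_W}. Finally $\{\text{extinction}\}\subseteq\{W=0\}$ trivially; conversely, on non-extinction the lower bound $\max_{i\le k}D_i\ge(k/A)^{1/\alpha}$ chained along generations forces $\liminf_n\alpha^{n}\log(Z_n+1)>0$, so $\{W=0\}=\{\text{extinction}\}$ up to a null set and $J(0)=\P{W=0}=q$.

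\emph{Main obstacle.} The load-bearing step is the almost-sure convergence, and inside it the uniform-in-$k$ sub-exponential control of the fluctuation of $\log(Z_{n+1}+1)$ about $\alpha^{-1}\log(Z_n+1)$. Heavy-tailedness ($\alpha<1$) is exactly what forces $S_k$ to be comparable to $\max_{i\le k}D_i\asymp k^{1/\alpha}$, but one must make sure the tail constants in \emph{both} directions do not deteriorate as $k\to\infty$, because only then are the $\alpha^{n+1}$-weighted increments a.s.\ summable. By contrast, the exponential tail of $W$ is a soft consequence once a.s.\ (hence distributional) convergence is available, and the extinction/non-extinction bookkeeping is routine.
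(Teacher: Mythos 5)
This theorem is not proved in the thesis: it is quoted verbatim from Davies \cite{Dav78} (note the attribution ``(Davies)'' in the statement), so there is no in-paper proof to compare your reconstruction against. Judged on its own terms, your sketch is sound and follows what is essentially the standard route to Davies' result: extract the two-sided bound $au^{\alpha}\le 1-h(1-u)\le bu^{\alpha}$ from \eqref{eq::HEAVY_TAIL} via $1-h(s)=(1-s)\sum_j s^j\P{D>j}$, use it to get conditionally uniform exponential control of $\log(Z_{n+1}+1)-\alpha^{-1}\log(Z_n+1)$ (upper tail by truncation at $k^{1/\alpha}\e^{t}$ plus Markov, giving decay $\e^{-\alpha t}$; lower tail doubly exponentially via the maximum), telescope with weights $\alpha^{n+1}$ and apply Borel--Cantelli, and then read off the tail of $W$ from $\E{(1-u_n)^{Z_n}}=1-\gamma^{(n)}(u_n)$ with $u_n=\e^{-\mu\alpha^{-n}}$. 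You correctly identify the uniformity in $k$ of the fluctuation bounds as the load-bearing step.

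The one place where your argument is thinner than it should be is the identification $\{W=0\}=\{\text{extinction}\}$. The phrase ``the lower bound chained along generations forces $\liminf_n\alpha^n\log(Z_n+1)>0$'' is not quite a proof: the telescoping only gives $W\ge M_{n_0}-\sum_{n\ge n_0}\alpha^{n+1}R_n^{-}$, and both sides are random, so you need a quantitative statement such as $\P{W=0\mid \mathcal F_{n_0}}\le \P{\sum_{j\ge0}\alpha^{j+1}R^{-}_{n_0+j}\ge \log(Z_{n_0}+1)}\le C(Z_{n_0}+1)^{-c}$, uniformly, followed by $Z_{n_0}\to\infty$ on survival and L\'evy's $0$--$1$ law to conclude $\indicator{W=0}=0$ a.s.\ on survival. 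With that patch the reconstruction is complete at the level of detail appropriate for a cited classical result.
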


\noindent
Thus, loosely speaking, $Z_n + 1 \simeq e^{\frac{W}{\alpha^n}}$ for large $n$ and $W$ has exponential-tail behaviour (i.e., $\P{W = \infty} = 0$).

\subsection{Results}
In this section we shall first extend the examples studied by Grey. We then prove Theorems  \ref{th::AGE_allAlpha}, \ref{th::AGE_allAlphaL} and \ref{thm::ALPHA_GROWTH}. After that, we present some more results dealing with the explosiveness of age-dependent branching processes. 

\subsubsection{Re-analysis on the examples}
With the tools from the previous section at hand we find ourselves in a position to re-analyse the examples put forward by Grey. Let us first demonstrate that $h_{\alpha}$ ($\alpha \in (0,1)$) is the probability generating function of a plump distribution $D$. To this end, write, for $s \in [0,1]$,
\begin{eqnarray*}
h_{\alpha}(s) &=& 1 - (1-s)^{\alpha} \\
&=& 1- \sum_{n=0}^{\infty} {\alpha \choose n} (-s)^{\alpha} \\
&=& \alpha \sum_{n=1}^{\infty} \frac{(n-1-\alpha) \cdots (1-\alpha)}{n!} s^n,
\end{eqnarray*}
the last quantity equals
\[ \frac{\alpha}{\Gamma(1-\alpha)} \sum_{n=1}^{\infty} \frac{\Gamma(n-\alpha)}{\Gamma(n+1)} s^n,\]
where $\Gamma$ is the gamma-function defined for $t \geq 0$ by $\Gamma(t):= \int_0^\infty \e^{-x} x^{t-1} \mathrm d x$. For large $n$, $\frac{\Gamma(n-\alpha)}{\Gamma(n+1)} = (n - \alpha)^{-\alpha-1}(1 + \mathcal{O}(\frac{1}{n - \alpha}))$, see e.g. \cite{GrRy07}, so that
\begin{eqnarray*}
\mathbb{P}(D \geq k) &\geq&  \frac{\alpha}{\Gamma(1-\alpha)} \int_k^{\infty} \left( (x-\alpha)^{-\alpha - 1} + \mathcal{O}(x-\alpha)^{-\alpha - 2} \right) \mathrm d x \\
&=& \frac{(k-\alpha)^{-\alpha}}{\Gamma(1-\alpha)} + \mathcal{O}(k-\alpha)^{-\alpha - 1}, 
\end{eqnarray*}
and similarly,
\begin{equation*}
\mathbb{P}(D \geq k) \leq \frac{(k-\alpha)^{-\alpha}}{\Gamma(1-\alpha)} + \mathcal{O}(k-\alpha)^{-\alpha - 1}.
\end{equation*}
Hence, for large $k$,
\begin{equation*}
\mathbb{P}(D \geq k) = L(k) \frac{(k-\alpha)^{-\alpha}}{\Gamma(1-\alpha)},
\end{equation*}
with $\displaystyle \lim_{k \to \infty} L(k) = 1$, from which we conclude that for those $k$,
\eqn{\frac{c_{\alpha}}{k^{\alpha}} \leq \mathbb{P}(D \geq k) \leq \frac{C_{\alpha}}{k^{\alpha}}, }
for some $0 < c_{\alpha} \leq C_{\alpha}$ and we may assume $c_{\alpha} < 1$. For $F_D^{\leftarrow}$ we thus have the estimates
\eqn{ c_{\alpha} y^{1/ \alpha} \leq F_D^{\leftarrow}(1-\frac{1}{y}) \leq  C_{\alpha} y^{1/ \alpha},
\label{eq::AGE_ALPHA_D_INV}}
for large $y$. Thus, $D$ is plump. 
\\
We next investigate min-summability of $(h,\widehat{G}_{k,\gamma})$. Take $m_0 > c_{\alpha}^{1/(\alpha-1)}$ so large that (\ref{eq::AGE_ALPHA_D_INV}) holds for all $y \geq m_0$ and define $f$ as in Theorem \ref{th::AGE_PLUMP}. More precisely,  $f(0) = m_0$ and 
\[ f(n) \geq c_{\alpha}^{(1-\alpha^{-n})/(\alpha-1)} m_0^{1/\alpha^n} = c_{\alpha}^{1/(\alpha-1)} \left( \frac{m_0}{c_{\alpha}^{1/(\alpha-1)}} \right)^{\frac{1}{\alpha^n}}.\]
Put $m = \frac{m_0}{c_{\alpha}^{1 / \lr{\alpha-1}}} > 1$, then 
\be f(n) \geq m^{1/ \alpha^n}  . \label{eq::AGE_EXAMPLE_lower} \ee
Similarly, for some $ \widehat{m} < \infty$,  
\be f(n) \leq \widehat{m}^{1/{\alpha^n}}, \label{eq::AGE_EXAMPLE_upper} \ee
 for all $n$. We are now ready to continue with Example \ref{ex::GREY}. The inverse function $\widehat{G}_{k,\gamma}^{\leftarrow}$ is for small $t$ given by  \[\widehat{G}_{k,\gamma}^{\leftarrow}(t) = \left(\frac{k}{\log(\log(1/t))}\right)^{1/{\gamma}}.\] 
Now,
\[ \sum_{n=1}^{\infty} \widehat{G}_{k,\gamma}^{-1} \left(\widehat{m}^{{-1}/{\alpha^n}}\right) \leq \sum_{n=1}^{\infty} \widehat{G}_{k,\gamma}^{-1} \left( \frac{1}{f(n)} \right)  \leq  \sum_{n=1}^{\infty} \widehat{G}_{k,\gamma}^{-1} \left(m^{{-1}/{\alpha^n}}\right).  \]
Both estimators are of the form
\[\s{n=1}{\infty} \left(\frac{k}{\log(1/{\alpha}) + \frac{\log(\log(m))}{n}}\right)^{1/{\gamma}} \frac{1}{n^{1/ \gamma}}, \]
and
\[\s{n=1}{\infty} \left(\frac{k}{\log(1/{\alpha}) + \frac{\log(\log(\widehat{m}))}{n}}\right)^{1/{\gamma}} \frac{1}{n^{1/ \gamma}}, \]
both are finite if and only if $\gamma < 1$. Hence (Theorem \ref{th::AGE_PLUMP}), the process is explosive if and only if $\gamma < 1$, \emph{independent} of $\alpha$ or $k$.

\subsubsection{Theorems}
 We use the growth characterization put forward by Davies (Theorem \ref{th::DAVIES}) to prove Proposition \ref{thm::ALPHA_GROWTH} that
characterizes all processes $\HG$, where $h$ is the probability generating function of a distribution $D$ that has the form (\ref{eq::HEAVY_TAIL}):

\begin{proof}[Proof of Proposition \ref{thm::ALPHA_GROWTH}]
Let the process be explosive. Fix $\epsilon \in (0,1)$. An appeal to Theorem \ref{th::DAVIES} gives us $W$ such that 
$\alpha^n $log$ (Z_n + 1) \overset{a.s.}{\rightarrow} W$,  as $n \to \infty$, where $Z_n$ represents the size of generation $n$. Let $N$ be a random variable such that $Z_n \leq e^{\frac{W + \epsilon}{\alpha^n}}$ for all $n \geq N$. Then since $N < \infty$ by the a.s. convergence in Theorem \ref{th::DAVIES} (denoting probability conditional on survival by $\mathbb{P}_s$),
\be\ba
\mathbb{P}_s &\Big( \sum_{k=1}^{\infty} \min(X_1^k , \cdots , X^k_{Z_k}) < \infty \Big)\! =\! \sum_{n=0}^{\infty} \!\mathbb{P}_s(N \!=\! n)\mathbb{P}_s \Big( \sum_{k=1}^{\infty} \min(X_1^k , \cdots , X^k_{Z_k}) < \infty \big| N \!=\!n \Big) \\
&\qquad\qquad\qquad\qquad\leq \sum_{n=0}^{\infty} \mathbb{P}_s(N \!=\! n) \mathbb{P}_s \Big( \sum_{k=n}^{\infty} \min(X_1^k , \cdots , X_{\e^{\frac{W+\epsilon}{\alpha^k}}}^k) < \infty \big| N \!=\!n \Big), \\
\ea\ee
since the latter minimum is taken over a larger group of i.i.d. random variables.  From here we proceed by further conditioning on $W$ (which is finite almost surely, as can be seen from \eqref{eq::DAVIES_W}), to rewrite the last term as 
\[ \sum_{n=0}^{\infty} \mathbb{P}_s(N \!=\! n)  \sum_{l=0}^{\infty} \mathbb{P}_s(W \!\in\! [l,l+1)|N \!=\! n)  \mathbb{P}_s \Big( \sum_{k=n}^{\infty} \min(X_1^k \!, \cdots ,\! X_{e^{\frac{W+\epsilon}{\alpha^k}}}^k)\! < \!\infty | W\! \in\! [l,l+1), N\! =n\! \Big), \]
which we can bound by 
\begin{equation}
\sum_{n=0}^{\infty} \mathbb{P}_s(N \!=\! n)  \sum_{l=0}^{\infty} \mathbb{P}_s(W \!\in\! [l,l+1)|N \!=\! n)  \mathbb{P}_s \Big( \sum_{k=n}^{\infty} \min(X_1^k \!, \cdots ,\! X_{e^{\frac{l+1+\epsilon}{\alpha^k}}}^k)\! < \!\infty | W\! \in\! [l,l+1), N\! =n\! \Big),
\label{eq::MS_1}
\end{equation} 
since in each individual term $W \leq l + 1$. To proceed further, we note that for all $l$ there exists $K_l$ such that $e^{1/\alpha^{K_l}} \geq e^{l + 1+\epsilon}$ and thus for all $k \geq 0$,  
\[ e^{1/\alpha^{K_l + k}} \geq e^{(l + 1+\epsilon) / \alpha^k},\] which tells us that 
\[ \sum_{k=0}^{\infty} \mbox{min}(X_1^k , \cdots , X_{e^{\frac{l+1+\epsilon}{\alpha^k}}}^k) \overset{d}\geq \sum_{k=K_l}^{\infty} \mbox{min}(X_1^k , \cdots , X_{e^{\frac{1}{\alpha^k}}}^k). \]  
But, because $K_l < \infty$, 
\[ \sum_{k=0}^{K_l} \mbox{min}(X_1^k , \cdots , X_{e^{\frac{1}{\alpha^k}}}^k) <\infty \mbox{ a.s., } \] 
and thus
\[ \P{ \sum_{k=n}^{\infty} \mbox{min}(X_1^k , \cdots , X_{e^{\frac{l+1+\epsilon}{\alpha^k}}}^k) < \infty | W\! \in\! [l,l+1), N\! =n\! } \leq \P{ \sum_{k=0}^{\infty} \mbox{min}(X_1^k , \cdots , X_{e^{\frac{1}{\alpha^k}}}^k) < \infty }. \]
Note that the sum in the left probability is independent of both $W$ and $N$, since the life-lengths are independent of the generation sizes.
Hence, (\ref{eq::MS_1}) is bounded by
\begin{multline*}
\sum_{n=0}^{\infty} \mathbb{P}_s(N\! =\! n)  \sum_{l=0}^{\infty} \mathbb{P}_s(W \!\in\! [l,l+1)|N \!=\! n)  \mathbb{P} \Big( \sum_{k=0}^{\infty} \mbox{min}(X_1^k \!, \cdots ,\! X_{\e^{\frac{1}{\alpha^k}}}^k)\! <\! \infty  \Big) \\
 = \mathbb{P} \Big( \sum_{k=0}^{\infty} \mbox{min}(X_1^k , \cdots , X_{e^{\frac{1}{\alpha^k}}}^k) < \infty \Big),
\end{multline*}
 We conclude that 
\[ \mathbb{P} \Big( \sum_{k=0}^{\infty} \mbox{min}(X_1^k , \cdots , X_{e^{\frac{1}{\alpha^k}}}^k) < \infty  \Big) \geq \mathbb{P} \Big( \Big. \sum_{k=1}^{\infty} \mbox{min}(X_1^k , \cdots , X^k_{Z_k}) < \infty  \Big| \mbox{survival} \Big) > 0, \]
which is all we need since Kolmogorov's $0-1$ law says that the event under consideration happens with probability either $0$ or $1$.
\\
To prove the other direction, we note that (\ref{eq::DAVIES_W}) entails that $\mathbb{P} ( W > 1) > 0$ which we will use shortly. Fix $\epsilon < 1$ and let this time $N$ be such that $Z_k \geq e^{\frac{W - \epsilon}{\alpha^k}}$ for all $k \geq N$.
\begin{multline*}
\mathbb{P} \left( \left. \sum_{k=0}^{\infty} \mbox{min}(X_1^k , \cdots , X^k_{Z_k}) < \infty \right| \mbox{survival}  \right) \P{\mbox{survival}} \\ \geq \sum_{n=0}^{\infty} \mathbb{P}(N = n) \mathbb{P}(W > 1 | N = n)\mathbb{P} \left( \left. \sum_{k=n}^{\infty} \mbox{min}(X_1^k , \cdots , X^k_{Z_k}) < \infty \right| W > 1, N =n \right), \\
\end{multline*}
because $W > 1$ already implies survival.
Since $Z_k$ dominates $e^{\frac{W - \epsilon}{\alpha^k}}$ whenever $k \geq N$, the last term may be bounded from below by
\begin{multline*}
\sum_{n=0}^{\infty} \mathbb{P}(N = n) \mathbb{P}(W > 1 | N = n)\mathbb{P} \left( \left. \sum_{k=n}^{\infty} \mbox{min}(X_1^k , \cdots , X^k_{e^{\frac{W - \epsilon}{\alpha^k}}}) < \infty \right| W > 1, N =n \right) \\
= \sum_{n=0}^{\infty} \mathbb{P}(N = n) \mathbb{P}(W > 1 | N = n)\mathbb{P} \left( \sum_{k=0}^{\infty} \mbox{min}(X_1^k , \cdots , X^k_{e^{\frac{1}{\alpha^k}}}) < \infty \right). 
\end{multline*} 
Hence, 
\[ \mathbb{P} \left( \left. \sum_{k=0}^{\infty} \mbox{min}(X_1^k , \cdots , X^k_{Z_k}) < \infty \right| \mbox{survival} \right) \geq \frac{\mathbb{P}(W > 1)}{\P{\mbox{survival}}} \mathbb{P} \left( \sum_{k=0}^{\infty} \mbox{min}(X_1^k , \cdots , X^k_{e^{\frac{1}{\alpha^k}}}) < \infty \right), \]
which concludes the proof as the right-side is strictly positive. 
\end{proof}

\noindent
We will now prepare for the proof of Theorem \ref{th::AGE_allAlpha} that heavily relies on the result of Proposition \ref{thm::ALPHA_GROWTH}.
\begin{lemma}
Let $\alpha \in (0,1)$. Let $G$ be the distribution function of a non-negative random variable. The process $\HGa$ is explosive if and only if $(h_{\alpha^{1/p}},G)$ is explosive for all $p \in \mathbb{N}$.
\label{lm::AGE_EXPL_IND_ALPHA}
\end{lemma}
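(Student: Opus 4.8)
The plan is to reduce everything to Proposition~\ref{thm::ALPHA_GROWTH}. As computed just above, for every $\beta\in(0,1)$ the function $h_\beta$ is the probability generating function of a distribution satisfying \eqref{eq::HEAVY_TAIL} with exponent $\beta$, so the proposition applies to $h_\alpha$ and to each $h_{\alpha^{1/p}}$. Fixing one i.i.d.\ array $(X_i^n)_{i,n\ge 0}$ with law $G$ and writing $a_n^{(\beta)}:=\min\bigl(X_1^n,\dots,X_{\e^{1/\beta^n}}^n\bigr)$, the proposition says that $(h_\beta,G)$ is explosive if and only if $\sum_{n\ge1}a_n^{(\beta)}<\infty$ almost surely; moreover the rows of the array are independent, so the $a_n^{(\beta)}$ are independent in $n$, and a minimum over a larger i.i.d.\ sample is stochastically smaller. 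With this translation the ``if'' direction of the lemma is immediate: take $p=1$, so $h_{\alpha^{1/p}}=h_\alpha$. Note that the comparison theorem for offspring distributions (Theorem~\ref{thm::AGE_COMP_h}) is of no help for the ``only if'' direction, since $\alpha^{1/p}\ge\alpha$ forces $h_{\alpha^{1/p}}\ge h_\alpha$ near $1$, i.e.\ the inequality points the wrong way; the min-summability characterisation is genuinely needed.

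For the ``only if'' direction I would assume $(h_\alpha,G)$ explosive, fix $p\ge2$, and set $\beta:=\alpha^{1/p}\in(\alpha,1)$, so $\beta^p=\alpha$. Group the levels of the $\beta$-series into consecutive blocks of length $p$, block $k$ being $\{(k-1)p+1,\dots,kp\}$. For $n$ in block $k$ one has $n/p>k-1$, hence $\beta^{-n}=\alpha^{-n/p}>\alpha^{-(k-1)}$, so $a_n^{(\beta)}$ is a minimum over at least $\e^{1/\alpha^{k-1}}$ i.i.d.\ $G$-variables and is therefore stochastically dominated by an independent copy of $a_{k-1}^{(\alpha)}$. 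Since the $a_n^{(\beta)}$ are independent in $n$, termwise stochastic domination upgrades to domination of the whole series:
\be
\sum_{n\ge1}a_n^{(\beta)}\;=\;\sum_{k\ge1}\ \sum_{n=(k-1)p+1}^{kp}a_n^{(\beta)}\ \leqd\ \sum_{j=1}^{p}\ \sum_{k\ge1}Y_{k,j},
\ee
where, for each $j$, $(Y_{k,j})_{k\ge1}$ are independent with $Y_{k,j}\overset{d}{=}a_{k-1}^{(\alpha)}$. For each fixed $j$, $\sum_{k\ge1}Y_{k,j}\overset{d}{=}\sum_{m\ge0}a_m^{(\alpha)}=a_0^{(\alpha)}+\sum_{m\ge1}a_m^{(\alpha)}$, which is almost surely finite because $a_0^{(\alpha)}<\infty$ a.s.\ and, by Proposition~\ref{thm::ALPHA_GROWTH} together with the explosiveness of $(h_\alpha,G)$, $\sum_{m\ge1}a_m^{(\alpha)}<\infty$ a.s. Hence the right-hand side above is a finite sum of almost surely finite nonnegative random variables, so $\sum_{n\ge1}a_n^{(\beta)}<\infty$ a.s., and Proposition~\ref{thm::ALPHA_GROWTH} then gives that $(h_{\alpha^{1/p}},G)$ is explosive.

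The step I expect to need the most care is the coupling in the middle paragraph: one must align the levels of the two minimum-series so that each $\beta$-level term sits over a sample of size at least that of the corresponding $\alpha$-level term (the elementary inequality $\e^{1/\beta^n}\ge\e^{1/\alpha^{\lceil n/p\rceil-1}}$), and then invoke the routine but slightly technical fact that stochastic order is preserved under convolution of independent families and under monotone limits, so that termwise domination of the independent summands yields domination of the (infinite) sums. Everything else is a direct appeal to Proposition~\ref{thm::ALPHA_GROWTH}.
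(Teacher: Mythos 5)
Your proof is correct and follows essentially the same route as the paper: the nontrivial direction is exactly the paper's argument of converting explosiveness to min-summability via Proposition~\ref{thm::ALPHA_GROWTH} and then regrouping the $\beta$-level minima into blocks of length $p$, each dominated by an independent copy of the corresponding $\alpha$-level minimum, so the series is bounded by $p$ independent copies of an a.s.\ finite sum. The only (immaterial) differences are that you dispatch the easy direction by taking $p=1$ where the paper invokes Theorem~\ref{thm::AGE_COMP_h}, and you phrase the blockwise bound as stochastic domination where the paper uses the pointwise inequality $\min(X_1^k,\dots,X^k_{f_p(k)})\le\min(X_1^k,\dots,X^k_{f(l)})$ on a single array.
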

\begin{proof}
If $p \in \mathbb{N}$, then $\alpha^{1/p} \geq \alpha$. Hence, the comparison theorems confirm that $(h_{\alpha^{1/p}},G)$ being explosive implies that $\HGa$ is explosive. Indeed, since for $s$ close to $1$, $h_{\alpha^{1/p}}(s) \geq h_{\alpha}(s)$, and Theorem \ref{thm::AGE_COMP_h} applies. 
\\
To prove the other direction, we assume that $\HGa$ is explosive. We introduce the notation 
\[f(n) = e^{{1}/{\alpha^n}},\]
and 
\[f_p(n) = e^{1/{\alpha^{{n}/{p}}}},\] for $n \in \mathbb{N}.$
Then, almost surely,
\[ \sum_{k=0}^{\infty} \mbox{min}(X_1^k , \cdots , X^k_{f(k)}) < \infty.\] 
 We shall show that this implies that 
\[\sum_{k=0}^{\infty} \mbox{min}(X_1^k , \cdots , X^k_{f_p(k)}) < \infty,\]
 almost surely. 
To this end, we note that $f_p(pn) = f(n)$ for all  $n \in \mathbb{N}$, which tells us that we may split the last sum into individuals parts,
\[ \sum_{k=0}^{\infty} \mbox{min}(X_1^k , \ldots , X^k_{f_p(k)}) = \sum_{l=0}^{\infty} \sum_{k=lp}^{(l+1)p - 1} \mbox{min}(X_1^k , \ldots , X^k_{f_p(k)}), \]
and bound each sub term from above. Indeed, whenever $k \in \{ lp, \ldots, (l+1)p -1 \}$,
\begin{eqnarray*}
\mbox{min}(X_1^k , \cdots , X^k_{f_p(k)}) &\leq& \mbox{min}(X_1^k , \ldots , X^k_{f_p(lp)}) \\
&=& \mbox{min}(X_1^k , \cdots , X^k_{f(l)}),
\end{eqnarray*}
by the monotonicity of $f_p$.  
Hence,
\[\ba \sum_{k=0}^{\infty} \mbox{min}(X_1^k , \cdots , X^k_{f_p(k)}) &\leq \sum_{l=0}^{\infty} \sum_{k=lp}^{(l+1)p - 1} \mbox{min}(X_1^k , \cdots , X^k_{f(l)})
\\ & \overset{d} = \sum_{i=0}^{p} M_i , \ea \]
which is almost surely finite , where $(M_i)_{i=1}^p$ are i.i.d. copies of $\sum_{k=0}^{\infty} \mbox{min}(X_1^k , \cdots , X^k_{f(k)})$, a sequence that is almost surely finite.
\end{proof} 

\noindent
Now, Lemma \ref{lm::AGE_EXPL_IND_ALPHA} together with the comparison theorems is all we need to prove Theorem \ref{th::AGE_allAlpha}:
\begin{proof}[Proof of Theorem \ref{th::AGE_allAlpha}]
Let $1 > \beta > \alpha > 0$ and assume first that $\HaG$ is explosive. Fix now $p$ so large that $\alpha^{{1}/{p}} \geq \beta$. Note that $(h_{\alpha^{{1}/{p}}},G)$ is explosive by Lemma \ref{lm::AGE_EXPL_IND_ALPHA} and hence, by the comparison theorems, $(h_{\beta},G)$ is explosive. Indeed, since for $s$ close to $1$, $h_{\alpha^{1/p}}(s) \geq h_{\beta}(s)$, and Theorem \ref{thm::AGE_COMP_h} applies.
\\
Next, assume that $(h_{\beta},G)$ is explosive. We have, for $s$ close to $1$, $h_{\beta}(s)\geq h_{\alpha}(s)$ and comparison Theorem  \ref{thm::AGE_COMP_h} establishes explosiveness of $\HaG$.   
\end{proof}
\noindent Theorem \ref{th::AGE_allAlphaL} follows now easily:
\begin{proof}[Proof of Theorem \ref{th::AGE_allAlphaL}.]
By Potter's theorem (see, e.g. \cite{BiGoTe87}), for all $\epsilon > 0$ and $s$ near 1,
\[ (1-s)^{\epsilon} \leq L(\frac{1}{1-s}) \leq (1-s)^{-\epsilon}, \]
hence,
\[ 1 - (1-s)^{\alpha - \epsilon} \leq h^L_{\alpha}(s) \leq 1-(1-s)^{\alpha + \epsilon}, \label{eq::AGE_POTTER} \]
and the comparison theorems finish the proof.
\end{proof}

\noindent
We finish this section by listing two more properties of age-dependent branching process.  We shall refer to them at multiple occasions throughout this thesis.

\begin{theorem}
Let $h$ be a probability generating function for an offspring distribution $D$ that satisfies \eqref{eq::HEAVY_TAIL}
for some  $c,C > 0, \alpha \in (0,1)$  and all sufficiently large $x$. Let $G$ be the distribution of a non-negative random variable. If the process $\HG$ is explosive, then $(h,G^n)$ is explosive for all $n > 0$, where the function $G^n$ is defined for $x \geq 0$ by $G^n (x) = (G(x))^n$. 
\label{th::AGE_POWER}
\end{theorem}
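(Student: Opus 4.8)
The plan is to dispose of degenerate cases and then translate everything into the min-summability criterion of Proposition~\ref{thm::ALPHA_GROWTH} together with a stochastic domination. If $G(0)>0$, the life-length has an atom at $0$, and the zero-length individuals form a branching process with offspring probability generating function $s\mapsto h\bigl(1-G(0)+G(0)s\bigr)$, which has infinite mean because $h'(1)=\infty$ by \eqref{eq::HEAVY_TAIL}; it therefore survives with positive probability and produces infinitely many individuals born at time $0$, so $\HG$ -- and, since $G^n(0)=G(0)^n>0$, also $(h,G^n)$ -- is explosive. So I may assume $G(0)=0$. For $0<n\le 1$ one has $G^n(x)=G(x)^n\ge G(x)$ pointwise with $G^n(0)=0$, so Theorem~\ref{thm::AGE_COMP_G} gives the conclusion at once. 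For $n>1$ I pick an integer $N\ge n$; then $G^n\ge G^N$ on $[0,\infty)$ and both vanish at the origin, so a further application of Theorem~\ref{thm::AGE_COMP_G} reduces the theorem to showing that $(h,G^N)$ is explosive whenever $\HG$ is, for an \emph{integer} $N\ge 2$.

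At this point I would invoke Proposition~\ref{thm::ALPHA_GROWTH}. Let $N_k$ denote $\e^{1/\alpha^k}$ rounded to an integer. Explosiveness of $\HG$ is then equivalent to $\sum_{k\ge1}\min(X_1^k,\dots,X_{N_k}^k)<\infty$ a.s.\ for i.i.d.\ $X_i^k\sim G$, and I must prove the analogous statement with $G$ replaced by $G^N$. The crucial estimate is: for all integers $m,N\ge 1$,
\be
\min(Y_1,\dots,Y_{m^N})\ \leqd\ \sum_{j=1}^{N}Z_j ,
\ee
where $Y_1,\dots,Y_{m^N}$ are i.i.d.\ with distribution $G^N$ and $Z_1,\dots,Z_N$ are independent, each distributed as the minimum of $m$ independent $G$-variables. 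To prove it, note that $\sum_j Z_j\ge\max_j Z_j$, whose distribution function at $t$ is $\bigl(1-(1-G(t))^m\bigr)^N$, while $\mathbb{P}\bigl(\min(Y_1,\dots,Y_{m^N})>t\bigr)=\bigl(1-G(t)^N\bigr)^{m^N}$; so it suffices to verify $(1-u^N)^{m^N}\le 1-\bigl(1-(1-u)^m\bigr)^N$ for every $u=G(t)\in[0,1]$. This is precisely Harris' (FKG) inequality for the \emph{increasing} events $A_{\vec l}=\{\max_{j\le N}X'_{l_j,j}>t\}$, indexed by $\vec l\in\{1,\dots,m\}^N$ and built from an i.i.d.\ array $(X'_{l,j})_{l\le m,\,j\le N}\sim G$: indeed $\prod_{\vec l}\mathbb{P}(A_{\vec l})=(1-u^N)^{m^N}$, while $\bigcap_{\vec l}A_{\vec l}=\{\max_j\min_l X'_{l,j}>t\}$ has probability $1-\bigl(1-(1-u)^m\bigr)^N$.

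With the estimate established, I would apply it with $m=m_k:=\lfloor N_k^{1/N}\rfloor$, so that $m_k^N\le N_k$ and hence $\min(Y_1^k,\dots,Y_{N_k}^k)\leqd\sum_{j=1}^N Z_j^k$ with the $Z_j^k$ distributed as the minimum of $m_k$ independent $G$-variables. Since $m_k\ge\lfloor\e^{1/(N\alpha^k)}\rfloor$, choosing an integer $K$ with $\alpha^K<1/N$ forces $m_k\ge N_{k-K}$ for all large $k$, whence $Z_j^k\leqd\min(X_1^{k-K},\dots,X_{N_{k-K}}^{k-K})$. Realising each of these one-dimensional dominations as a pathwise coupling and taking products over the independent indices $k$ and $j$ -- the device already used in the proof of Lemma~\ref{lm::AGE_EXPL_IND_ALPHA} -- yields, for a finite random constant $C$,
\be
\sum_{k\ge 1}\min(Y_1^k,\dots,Y_{N_k}^k)\ \le\ C+\sum_{j=1}^{N}\ \sum_{l\ge 0}\min(X_1^{l},\dots,X_{N_l}^{l}) ,
\ee
which is a.s.\ finite since each of the $N$ inner sums is, by hypothesis and Proposition~\ref{thm::ALPHA_GROWTH}. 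Proposition~\ref{thm::ALPHA_GROWTH} then gives that $(h,G^N)$ is explosive, and the reductions above finish the proof.

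I expect the crucial estimate to be the main obstacle, and in particular the appearance of $m^N$ rather than merely $m$ copies on the left-hand side: the maximum of $N$ i.i.d.\ variables is \emph{not} stochastically dominated by a bounded multiple of one copy, and it is exactly the minimum over $m^N$ independent rows -- made transparent by Harris'/FKG positive correlation -- that brings the bound back to the correct scale $G^{\leftarrow}(N_k^{-1/N})\asymp G^{\leftarrow}(\e^{-1/\alpha^{k-K}})$. The rest is routine bookkeeping with the doubly-exponential sequence $\e^{1/\alpha^k}$ and with the comparison Theorem~\ref{thm::AGE_COMP_G}.
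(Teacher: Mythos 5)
Your proof is correct, but it follows a genuinely different route from the paper's. The paper never touches the raw probabilistic min-summability condition: it sets $H=G^n$, uses the identity $H^{\leftarrow}(t)=G^{\leftarrow}(t^{1/n})$, and works entirely with the deterministic quantile sum $\sum_k H^{\leftarrow}(1/f(k))$ from Theorem \ref{th::AGE_PLUMP}. The two-sided bounds $m^{1/\alpha^k}\le f(k)\le \widehat m^{1/\alpha^k}$ give a fixed shift $K$ with $f(K+l)^{1/n}\ge f(l)$, so $H^{\leftarrow}(1/f(K+l))\le G^{\leftarrow}(1/f(l))$ and the sum converges; this treats all real $n>0$ at once, with no coupling, no correlation inequality, and no case distinction at $G(0)$. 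You instead stay with Proposition \ref{thm::ALPHA_GROWTH} and prove the stochastic domination $\min(Y_1,\dots,Y_{m^N})\leqd \sum_{j\le N}Z_j$ via Harris' inequality; I checked this and it is right, including the identification $\bigcap_{\vec l}A_{\vec l}=\{\max_j\min_l X'_{l,j}>t\}$ and the resulting inequality $(1-u^N)^{m^N}\le 1-\bigl(1-(1-u)^m\bigr)^N$, and the subsequent block coupling over independent generations is the same device as in Lemma \ref{lm::AGE_EXPL_IND_ALPHA}. Both arguments exploit the same underlying fact --- the doubly exponential growth of the generation sizes lets a fixed index shift absorb an $N$-th root --- but yours pays for avoiding the quantile reformulation with extra preliminary reductions (the $G(0)>0$ case, $n\le 1$ via Theorem \ref{thm::AGE_COMP_G}, rounding $n$ up to an integer) and the factor-$N$ sum in the domination, while gaining a self-contained correlation inequality that is interesting in its own right.
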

\begin{proof}
Put $H=G^n$ and note that, for small $t$, $H^{\leftarrow}(t) = G^{\leftarrow}(t^{1/n})$, where we recall the notation for inverse functions. We will employ Theorem \ref{th::AGE_PLUMP} to show that $(h,G^n)$ is explosive. That same theorem, together with the re-analysis on Grey's example, entails estimates on $f$ (equations \eqref{eq::AGE_EXAMPLE_lower} and \eqref{eq::AGE_EXAMPLE_upper}), defined by \[f(0) = m_0 \mbox{ and } f(n+1) = F^{-1}_D(1 - \frac{1}{f(n)}), n \geq 0, \] 
combined with $G^{\leftarrow}$. Most importantly, there exists $\widehat{m} \geq m > 0$ such that 
\eqn{m^{{1}/{\alpha^k}} \leq f(k) \leq \widehat{m}^{{1}/{\alpha^k}},
\label{eq::AGE_GN_fn} }
for all $k \geq 0$, and
\eqn{\sum_{k=1}^{\infty} G^{\leftarrow} \left(\frac{1}{f(k)}\right) < \infty. 
\label{eq::AGE_GN_sum}}
We are done if we demonstrate that $\sum_{k=1}^{\infty} H^{\leftarrow} \left(\frac{1}{f(k)}\right) < \infty$. Because there is $K$ such that 
\[(m^{\frac{1}{n}})^{\frac{1}{\alpha^K}} \geq \widehat{m},\] 
we note that, for all $l \geq 0$, 
\[f(K+l)^{\frac{1}{n}} \geq (m^{\frac{1}{n}})^{1/\alpha^{K+l}} \geq \widehat{m}^{\frac{1}{\alpha^l}}.\]
Hence,
for all $l \geq 0$,
\[  H^{\leftarrow} \left(\frac{1}{f(K+l)}\right) =  G^{\leftarrow} \left(\frac{1}{f(K+l)^{\frac{1}{n}}}\right) \leq G^{\leftarrow} \left(\frac{1}{\widehat{m}^{\frac{1}{\alpha^l}}}\right) \leq G^{\leftarrow} \left(\frac{1}{f(l)}\right), \]
due to the upper bound (\ref{eq::AGE_GN_fn}). Evidently,
\begin{eqnarray*}
\sum_{k=0}^{\infty} H^{\leftarrow} \left(\frac{1}{f(k)}\right) &=& \s{k=0}{K-1} H^{\leftarrow} \left(\frac{1}{f(k)}\right) + \s{l=0}{\infty} H^{\leftarrow} \left(\frac{1}{f(K+l)}\right) \\
&\leq& \s{k=0}{K-1} H^{\leftarrow} \left(\frac{1}{f(k)}\right) + \s{l=0}{\infty} G^{\leftarrow} \left(\frac{1}{f(l)}\right) < \infty,
\end{eqnarray*}
as follows from (\ref{eq::AGE_GN_sum}). This establishes the result.
\end{proof}
\begin{remark}
Note that $G^n$ is the distribution function of $\max\{X_1, \ldots, X_n\}$.
\end{remark}
\begin{definition}
Since the explosiveness of a process is completely determined by its behaviour around the origin, we shall mean by  $cG$ a \emph{distribution} that is around the origin equal to $cG$ and is extended in an arbitrary way so to make it a proper distribution. 
\end{definition}
\begin{theorem}
Let $\alpha \in (0,1)$. Let $G$ be the distribution of a non-negative random variable. If the process $(\Ha,G)$ is explosive, then so is $(\Ha,cG)$ for all $c>0$.
\label{th::AGE_CONSTANT}
\end{theorem}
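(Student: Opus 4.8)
The plan is to imitate the proof of Theorem~\ref{th::AGE_POWER} and reduce the statement to the criterion of Theorem~\ref{th::AGE_PLUMP}. Since $\Ha$ is the probability generating function of a plump distribution $D$ (as established in the re-analysis of Grey's examples), Theorem~\ref{th::AGE_PLUMP} applies to both $(\Ha,G)$ and $(\Ha,cG)$, and the governing function $f$ — given by $f(0)=m_0$ and $f(n+1)=F^{\leftarrow}_D(1-1/f(n))$ — depends only on $D$, hence on $\alpha$, and \emph{not} on the life-length distribution. Thus one and the same $f$ controls both processes, and the re-analysis moreover provides constants $1<m\le\widehat m<\infty$ with $m^{1/\alpha^k}\le f(k)\le\widehat m^{1/\alpha^k}$ for all $k$. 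Since $(\Ha,G)$ is explosive, $\sum_{k} G^{\leftarrow}(1/f(k))<\infty$, and it suffices to prove $\sum_{k} (cG)^{\leftarrow}(1/f(k))<\infty$. The elementary ingredient is the set identity $\{t:cG(t)\ge y\}=\{t:G(t)\ge y/c\}$ near the origin, which gives $(cG)^{\leftarrow}(y)=G^{\leftarrow}(y/c)$ for all small $y$; as $f(k)\to\infty$, only finitely many summands fall outside this regime, so the arbitrary extension of $cG$ plays no role.

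If $c\ge 1$, then $1/(cf(k))\le 1/f(k)$, and monotonicity of $G^{\leftarrow}$ gives $\sum_k (cG)^{\leftarrow}(1/f(k))=\sum_k G^{\leftarrow}(1/(cf(k)))\le\sum_k G^{\leftarrow}(1/f(k))<\infty$; alternatively, $cG\ge G$ near the origin and Theorem~\ref{thm::AGE_COMP_G} applies verbatim. The genuine case is $c<1$, where $cG$ is stochastically \emph{larger} than $G$ near zero and the argument of $G^{\leftarrow}$ moves the wrong way. The point is that dividing $f(k)$ by a constant merely shifts the index by a bounded amount, which the double-exponential growth of $f$ swallows: since $m>1$, pick $j\in\mathbb N$ with $\alpha^{j}\log\widehat m<\log m$, so that for all large $k$
\[
 c\,f(k)\ \ge\ c\,m^{1/\alpha^{k}}\ \ge\ \widehat m^{\alpha^{j}/\alpha^{k}}\ =\ \widehat m^{1/\alpha^{k-j}}\ \ge\ f(k-j),
\]
the middle step holding once $\alpha^{-k}\bigl(\log m-\alpha^{j}\log\widehat m\bigr)\ge-\log c$. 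Hence $G^{\leftarrow}(1/(cf(k)))\le G^{\leftarrow}(1/f(k-j))$ for $k$ large, and splitting off finitely many initial terms and reindexing,
\[
 \sum_{k\ge 0}(cG)^{\leftarrow}\!\Bigl(\tfrac{1}{f(k)}\Bigr)\ \le\ \sum_{k=0}^{K-1}G^{\leftarrow}\!\Bigl(\tfrac{1}{cf(k)}\Bigr)+\sum_{l\ge K-j}G^{\leftarrow}\!\Bigl(\tfrac{1}{f(l)}\Bigr)\ <\ \infty,
\]
because $(\Ha,G)$ is explosive. Theorem~\ref{th::AGE_PLUMP} then yields explosiveness of $(\Ha,cG)$.

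The main obstacle is precisely the bookkeeping in the case $c<1$: one must confirm that $m>1$ genuinely holds — this comes from choosing $m_0>c_\alpha^{1/(\alpha-1)}$ with $c_\alpha<1$ in the re-analysis of Grey's examples — so that a bounded shift $j$ with $\alpha^{j}\log\widehat m<\log m$ exists, and one must stay aware that $(cG)^{\leftarrow}$ is only explicitly described near the origin (which is all that is needed, since $1/f(k)\to 0$). As a self-contained alternative not relying on Theorem~\ref{th::AGE_PLUMP}, one can instead invoke Proposition~\ref{thm::ALPHA_GROWTH}: couple $Y\sim cG$ with $X\sim G$ through an independent $\mathrm{Bernoulli}(c)$ mask, observe that among $\mathrm{e}^{1/\alpha^{n}}$ masked copies the number equal to a genuine $G$-sample is $\mathrm{Binomial}(\mathrm{e}^{1/\alpha^{n}},c)$ and hence is at least $\mathrm{e}^{1/\alpha^{n-1}}$ for all large $n$ (Chernoff bound together with Borel--Cantelli), and then compare the layerwise minima; this again reduces to a bounded index shift and the explosiveness of $(\Ha,G)$ via Proposition~\ref{thm::ALPHA_GROWTH}.
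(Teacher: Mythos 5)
Your argument is correct, but it is genuinely different from the paper's. The paper treats only the case $0<c<1$ (referring $c\ge 1$ to the comparison theorems, as you also note) and works entirely with the fixed point equation: writing $\eta=1-\phi$ for the explosive process $(\Ha,G)$, so that $\eta(t)=\bigl(\int_0^t\eta(t-u)\,\mathrm dG(u)\bigr)^{\alpha}$, it observes that the rescaled function $\widehat\eta=A\eta$ with $A=c^{\alpha/(1-\alpha)}$ satisfies the analogous equation for $cG$ \emph{exactly}, because $1-h_\alpha(1-x)=x^\alpha$ is homogeneous. This is shorter, needs no growth estimates, and even yields the quantitative bound $1-\phi_{cG}\ge c^{\alpha/(1-\alpha)}(1-\phi_G)$; on the other hand it is tied to the precise algebraic form of $h_\alpha$. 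Your route instead runs through Theorem~\ref{th::AGE_PLUMP}: you use that the governing function $f$ depends only on $D$, that $(cG)^{\leftarrow}(y)=G^{\leftarrow}(y/c)$ for small $y$, and that the double-exponential envelope $m^{1/\alpha^k}\le f(k)\le\widehat m^{1/\alpha^k}$ with $m>1$ lets the constant $c$ be absorbed into a bounded index shift $j$ — precisely the mechanism of the paper's proof of Theorem~\ref{th::AGE_POWER}. The bookkeeping is right: the inequality $c\,f(k)\ge f(k-j)$ for large $k$ holds as you verify, the finitely many initial terms are each finite, and the reindexed tail converges because $(\Ha,G)$ is explosive. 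What your approach buys is robustness: it uses nothing about $h_\alpha$ beyond plumpness and the power-law envelope, so it extends verbatim to $h_\alpha^L$ and to any plump offspring with a double-exponential growth bound, at the cost of invoking the heavier Amini et al. machinery. Your closing alternative via Proposition~\ref{thm::ALPHA_GROWTH} (Bernoulli thinning plus Chernoff and Borel--Cantelli, again reducing to a bounded index shift) is also sound, though only sketched.
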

\begin{proof}
We consider only $0 < c < 1$, because the case $c \geq 1$ follows from the comparison theorems. There exists $\phi: [0,\infty] \mapsto [0,1]$ such that, for $t \geq 0$, 
\[\phi(t) = h\left( 1-G(t) + \int_0^t \phi(t-u) \mathrm d G(u)\right), \quad t \geq 0,\]
and for $t > 0$, $\phi(t) < 1$.
Put $\eta = 1 - \phi \neq 0$, then for $t \geq 0$,
\[ \eta(t) = \lr{\int_0^t \eta(t-u) \mathrm d G(u)}^{\alpha}. \] 
We need to find a function $\widehat{\eta} \neq 0$ such that, for $t \geq 0$
\[ \widehat{\eta}(t) = \lr{ \int_0^t \widehat{\eta}(t-u) \  c \ \mathrm d G(u) }^{\alpha}. \]
If $\widehat{\eta} = A \eta$, where $A = c^{\alpha/(1-\alpha)}$, then
\[ \lr{ \int_0^t \widehat{\eta}(t-u) c \mathrm d G(u) }^{\alpha}= A^{\alpha} c^{\alpha} \lr{\int_0^t \eta^{\alpha}(t-u) \mathrm d G(u)}^{\alpha}  =  A^{\alpha} c^{\alpha} \eta(t). \]
But $\eta = A^{-1} \widehat{\eta}$ and thus $A^{\alpha} c^{\alpha} \eta = A^{\alpha - 1} c^{\alpha} \widehat{\eta}= \widehat{\eta},$ hence
\[ \lr{\int_0^t \widehat{\eta}(t-u) c \mathrm d G(u)}^{\alpha} = \widehat{\eta}(t). \]
Explosiveness follows because, for $t > 0$, $\eta(t) \geq \widehat{\eta}(t) = A \eta(t) >0$.
\end{proof}

\newpage

\section{Forward Age-Dependent Branching Processes with Contagious Periods}
It is with the more elaborate model $\HGCf$ that this section will be principally concerned. Here, $h$ is a probability generating function and both $G$ and $C$ are distribution functions of non-negative random variables. 
\\
Define $F: [0,1] \times [0,\infty] \mapsto [0,1]$ by $F(s,t) = \E{s^{N^f(t)}}$. Again, we will derive a fixed point equation for $F$, which captures essential information regarding the explosiveness of $\HGCf$.
\\
By the branching property, conditioned on vertex $i$ being born, each sub tree starting at time $X_i$ has the same distribution as the original branching process, evaluated at time $t-X_i$ and these sub trees are i.i.d..
We have
\eqn{ N^f(t) = \sum_{i=1}^D \lr{ \indicator{ t<X_i< C} + \indicator{ X_i < \min(C, t)} N^{(i)}(t-X_i) },}
where $C$ is the contagious period of the root and $(N^{(i)})_i$ are i.i.d. copies of $N^f(t)$.
Then,
\[ F(s,t) =\sum_{k=1}^{\infty} \P{D=k} \int_{0}^\infty \Ev\left[ s^{\indicator{ t<X_i<x} + \indicator{ X_i < \min(t,x) } N^{(i)}(t-X_i)}\right]^k \mathrm d C(x).\]
Let us split the integral at $t$ to have
\[ \ba F(s,t) &=\sum_{k=1}^{\infty} \P{D=k} \int_0^t \Ev\left[ s^{\indicator{ X_i < x } N^{(i)}(t-X_i)}\right]^k \mathrm d C(x)\\
&\quad +\sum_{k=1}^{\infty} \P{D=k} \int_t^\infty \Ev\left[ s^{\indicator{t<X_i<x}+\indicator{ X_i < t} N^{(i)}(t-X_i)}\right]^k \mathrm d C(x)
.\ea\]
Now we integrate also over $X_i$, paying particular attention to the fact that if the indicators are not satisfied then we have a factor $1$, giving rise to the last terms in each line:
 \[ \ba F(s,t) &=\sum_{k=1}^{\infty} \P{D\!=\!k} \int_0^t \left(\int_0^x \Ev\left[ s^{N(t-u)}\right]\mathrm d G(u) + 1-G(x)\right)^k \mathrm d C(x)\\
&\quad +\sum_{k=1}^{\infty} \P{D\!=\!k} \int_t^\infty \left( \int_0^t \Ev\left[ s^{ N(\!t-u\!)} \right] \mathrm d G(u)\! + \!s (G(x)\!-\!G(t)) \!+\! (1\!-\!G(x))\right)^k \mathrm d C(x). \ea\]
Now using the generating function $h$,
\[ \ba F(s,t)&=  \int_0^t h\left(\int_0^x \Ev\left[ s^{N(t-u)}\right]\mathrm d G(u) + 1-G(x)\right) \mathrm d C(x) \\
&\quad +\int_t^\infty h \left( \int_0^t \Ev\left[ s^{ N(t-u)}\right] \mathrm d G(u) + s (G(x)-G(t)) + (1-G(x))\right) \mathrm d C(x)
.\ea\]
Writing $\Ev\left[ s^{ N(t-u)}\right]=F(s,t-u),$ we then have
  \[ \ba F(s,t) &=\int_0^t h \left(\int_0^x F(s,t-u)\mathrm d G(u) + 1-G(x)\right) \mathrm d C(x) \\
&\quad +\int_t^\infty  h \left( \int_0^t F(s, t-u) \mathrm d G(u) + s (G(x)-G(t)) + (1-G(x))\right) \mathrm d C(x).
\ea\]
Define $\phi^f: [0,\infty] \mapsto [0,1]$ for $t \geq 0$ by $\phi^f(t) = \displaystyle \lim_{s \uparrow 1} F(s,t)$, then
\be \ba \phi^f(t) &=\int_0^t h \left(\int_0^x \phi^f(t-u)\mathrm d G(u) + 1-G(x)\right) \mathrm d C(x)\\
&\quad + h \left( \int_0^t \phi^f(t-u) \mathrm d G(u) + 1-G(t)\right) (1-C(t)), \quad t \geq 0.\ea\ee

\subsection{Fixed point equation}
For essentially the same reason as set out in the previous section, we shall study the fixed point equation
\be \Phi = \TCf \Phi, \quad \Phi \in \mathcal{V}, \label{eq::CP_FPE_MAIN} \ee
where $\mathcal{V}$ is defined in \eqref{eq::function_space} and
$\TCf$ is for $\Phi \in \mathcal{V}$ defined as 
\be \ba (\TCf \Phi)(t) &= \int_0^t h \left( \int_0^x \Phi(t-u)\mathrm d G(u) + 1 - G(x) \right) \mathrm d C(x) \\
 & \quad +  h \left( \int_0^t \Phi(t-u)\mathrm d G(u) + 1 - G(t)\right)(1-C(t)), \quad t \geq 0. \ea \ee
We will construct an iterative solution $\phi$ to (\ref{eq::CP_FPE_MAIN}) and show that this solution satisfies
\begin{enumeratei}
\item  $\phi$ is the smallest solution to (\ref{eq::CP_FPE_MAIN}),
\item  $\phi(t) = 1 -  \P{ N^f(t) = \infty }$ for all $ t \in [0,\infty)$,
\end{enumeratei}
so that $\phi$ is in fact equal to $\phi^f$ (which embraces that the existence of any solution to (\ref{eq::CP_FPE_MAIN}), not identically equal to 1, guarantees that the process is explosive). Hereafter, we interchangeably use $\phi$ and $\phi^f$. Note that $\phi$ characterizes the distribution of the explosion time, see \eqref{eq::CP_FW_EXPL_TIME}, since for $t \geq 0$, $\phi(t) = \P{V^f \geq t}$. We proceed by arguing that a much weaker condition is in fact sufficient to prove that a process is explosive. Then we turn back to the iterative solution and note that property (ii) implies that $\phi$ should be non-increasing, which we also prove by other means. We further investigate the behaviour of $\phi$ by showing that for an explosive process $\phi < 1$ on all of $(0,\infty)$ and that for such a process, conservative survival does not happen. Some of the proofs in the coming section lean on similar results in \cite{Grey74}, where an age-dependent process with \emph{independent} edges is studied.

\subsubsection{An iterative solution}
 
In this section we show that the distribution function of the explosion time can be obtained as a sequential limit. To construct an iterative solution to (\ref{eq::CP_FPE_MAIN}), we define $\phi_0 \equiv 0$ and for each $k \geq 0$, we recursively define $\phi_k$, for $t \geq 0$ by
\be
\ba
\label{eq::it}
\phi_{k+1}(t) &= \lr{\TCf \phi_k}(t) \\
&= \int_0^t h\left( \int_0^x \phi_k(t-u)\mathrm d G(u) + 1 - G(x) \right)\mathrm d C(x) \\
 & \quad + h\left( \int_0^t \phi_k(t-u)\mathrm d G(u) + 1 - G(t)\right)(1-C(t)).
\ea
\ee
\begin{theorem}\label{thm::CP_FPE_IT_SMALLEST} 
$\phi$ defined for $t \geq 0$ by $\phi(t) = \displaystyle \lim_{k \to \infty} \phi_k(t) $ is the smallest solution to (\ref{eq::CP_FPE_MAIN}), in the sense that if $\psi$ is any other non-negative solution, then $\phi(t) \leq \psi(t) $ for all $t \geq 0$.
\end{theorem}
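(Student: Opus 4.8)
The plan is a monotone-iteration (Tarski/Picard-type) argument for the order-preserving operator $\TCf$ on the poset $\mathcal V$ equipped with the pointwise order. The first ingredient is that $\TCf$ is \emph{monotone}: if $\Phi_1,\Phi_2\in\mathcal V$ with $\Phi_1\le\Phi_2$ pointwise, then $\TCf\Phi_1\le\TCf\Phi_2$ pointwise. This is immediate from the definition: $h$ is non-decreasing on $[0,1]$ since its power-series coefficients are non-negative; the arguments $\int_0^x\Phi(t-u)\,\mathrm dG(u)+1-G(x)$ and $\int_0^t\Phi(t-u)\,\mathrm dG(u)+1-G(t)$ are non-decreasing in $\Phi$ and always lie in $[0,1]$; and the outer integrations against $\mathrm dC$ and the weight $1-C(t)$ have non-negative mass, hence preserve order. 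The same estimate shows $\TCf(\mathcal V)\subseteq\mathcal V$.

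Next I would show the iterates form a non-decreasing sequence with a limit in $\mathcal V$, and that the limit is a fixed point. Since $\phi_0\equiv0$ and $\phi_1=\TCf\phi_0\ge0$, we have $\phi_0\le\phi_1$, and by monotonicity plus induction $\phi_k\le\phi_{k+1}$ for all $k$; each $\phi_k$ is a bounded Borel function (obtained from $G,C,h$ by measurable operations, with $(t,u)\mapsto\phi_k(t-u)$ jointly measurable), so $\phi(t):=\lim_k\phi_k(t)$ exists for every $t\ge0$ and $\phi\in\mathcal V$. To see $\phi=\TCf\phi$, fix $t$: because $\phi_k\uparrow\phi$ and $0\le\phi_k\le1$, monotone (or dominated) convergence gives $\int_0^x\phi_k(t-u)\,\mathrm dG(u)\to\int_0^x\phi(t-u)\,\mathrm dG(u)$ for each $x$, and similarly for the $\int_0^t$-term; since $h$ is continuous on all of $[0,1]$ (continuity at the endpoint $1$ being Abel's theorem), the integrands in the definition of $\TCf$ converge pointwise and stay bounded by $1$, whence $(\TCf\phi_k)(t)\to(\TCf\phi)(t)$. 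But $(\TCf\phi_k)(t)=\phi_{k+1}(t)\to\phi(t)$, so $\phi=\TCf\phi$.

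For minimality, let $\psi\ge0$ be any solution of $\Phi=\TCf\Phi$ (automatically $\psi\in\mathcal V$, since the right-hand side is). Then $\phi_0\equiv0\le\psi$, and if $\phi_k\le\psi$ then $\phi_{k+1}=\TCf\phi_k\le\TCf\psi=\psi$ by monotonicity; letting $k\to\infty$ gives $\phi\le\psi$, which is the asserted minimality.

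I expect the only delicate point to be the limit interchange establishing $\phi=\TCf\phi$: it rests on the continuity of the probability generating function $h$ up to and including $s=1$, so that arguments approaching $1$ are handled correctly, and on the joint measurability of $(t,u)\mapsto\phi_k(t-u)$, which makes the Stieltjes integrals and the convergence statements legitimate. Everything else is routine bookkeeping, and the scheme is precisely the one used in \cite{Grey74} with $C=\infty$ replaced by a general contagious-period distribution.
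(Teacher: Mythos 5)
Your proposal is correct and follows essentially the same route as the paper's proof: monotone iteration from $\phi_0\equiv 0$, domination of the iterates by any non-negative solution to get both boundedness and minimality, and dominated convergence together with continuity of $h$ to pass to the limit in the fixed-point equation. Packaging the induction steps as monotonicity of the operator $\TCf$ is merely a cleaner phrasing of the inequalities the paper writes out explicitly.
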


\begin{proof}[Proof of Theorem \ref{thm::CP_FPE_IT_SMALLEST}]
The aim here is to show that both sides of (\ref{eq::it}) converge (for $k \to \infty$) point wise to their respective counterparts in (\ref{eq::CP_FPE_MAIN}), which proves that $\phi$ solves (\ref{eq::CP_FPE_MAIN}). First, notice that for fixed $t \geq 0$, $(\phi_k(t))_{k \in \mathbb{N}}$ is an non-decreasing sequence, bounded from above by 1. Indeed, $\phi_1(t) \geq 0$ and if we assume that, for some $k$, $\phi_{k}(t) \geq \phi_{k-1}(t)$, then 

\begin{equation}
\begin{split}
\phi_{k+1}(t) & =  \int_0^t h \lr{ \int_0^x \phi_k(t-u)\mathrm d G(u) + 1 - G(x)}\mathrm d C(x) \\ 
& \quad +  h \lr{ \int_0^t \phi_k(t-u)\mathrm d G(u) + 1 - G(t)}(1-C(t)) \\
& \geq   \int_0^t h \lr{ \int_0^x \phi_{k-1}(t-u)\mathrm d G(u) + 1 - G(x)}\mathrm d C(x) \\
& \quad + h \lr{ \int_0^t \phi_{k-1}(t-u)\mathrm d G(u) + 1 - G(t)}(1-C(t)) \\
& =  \phi_k(t).
\end{split}
\end{equation}
This induction argument assures us that $(\phi_k(t))_{k \in \mathbb{N}}$ is indeed non-decreasing for fixed $t \geq 0$. To see that $(\phi_k(t))_{k \in \mathbb{N}}$ is bounded, we prove that it is point wise dominated by any positive solution $\psi$ to (\ref{eq::CP_FPE_MAIN}). For $\phi_0 \equiv 0$ this is evident and if it holds for some $k\geq 0$, then

\be \ba
\phi_{k+1}(t) &= \int_0^t h \lr{ \int_0^x \phi_k(t-u)\mathrm d G(u) + 1 - G(x)}\mathrm d C(x) \\
& \quad + h \lr{ \int_0^t \phi_k(t-u)\mathrm d G(u) + 1 - G(t)}(1-C(t)) \\
&\leq  \int_0^t h \lr{ \int_0^x \psi(t-u)\mathrm d G(u) + 1 - G(x)}\mathrm d C(x) \\
& \quad + h \lr{ \int_0^t \psi(t-u)\mathrm d G(u) + 1 - G(t)}(1-C(t)) \\
&= \psi(t). \ea \ee
It now merely remains to verify that $\psi \equiv 1$ solves (\ref{eq::CP_FPE_MAIN}). Since $(\phi_k(t))_{k \in \mathbb{N}}$ is bounded and non-increasing, we have established that, for all fixed $t \geq 0$, $\displaystyle \lim_{k \to \infty} \phi_k(t) = \phi(t)$ exists.  
\\
It remains to show that the right hand side (RHS) of (\ref{eq::it}) converges to the RHS of (\ref{eq::CP_FPE_MAIN}). For fixed $x \in [0,t]$, the sequence 
\[\left( \int_0^x \phi_k(t-u)\mathrm d G(u) +  1 - G(x) \right)_{k \in \mathbb{N}}\] is non-decreasing and bounded from above by one. Also, \[\phi_k(t-u) \leq \phi(t-u),\] for all $u \in [0,t]$ and as $\phi \in L^1(\mathbb{R}^+,G)$, it follows upon an appeal to Lebesque's Dominated Convergence theorem (LDC) that
\begin{equation*}
\int_0^x \phi_k(t-u)\mathrm d G(u) \to \int_0^x \phi(t-u)\mathrm d G(u), \vspace*{1 cm} \mbox{ as } k \to \infty.
\end{equation*}
Invoking the continuity of $h$ entails that, for all $x \in [0,t]$,
\begin{equation*}
h \left( \int_0^x \phi_k(t-u)\mathrm d G(u) +  1 - G(x) \right) \to h \left(\int_0^x \phi(t-u)\mathrm d G(u) +  1 - G(x)\right), \vspace*{1 cm} \mbox{ as } k \to \infty.
\end{equation*}
As $h \leq 1$ on $[0,1]$ and $t \mapsto 1  \in L^1(\mathbb{R}^+,C)$, another application of LDC establishes the proof of convergence. 
\end{proof}

\begin{theorem}\label{thm::CP_FPE_IT_RIGHT} 
$\phi$ defined for $t \geq 0$ by $\phi(t) = \displaystyle \lim_{k \to \infty} \phi_k(t) $ is the right solution to (\ref{eq::CP_FPE_MAIN}), that is, $\phi(t) = 1 -  \P{ N^f(t) = \infty }$ for all $ t \in [0,\infty)$. 
\end{theorem}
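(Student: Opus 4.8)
The plan is to give each iterate $\phi_k$ an exact probabilistic meaning and then pass to the limit. For $k\geq 0$ let $R_k(t)$ denote the event that \emph{no individual of the $k$-th generation of the effective family tree has been born by time $t$} --- here the root is generation $0$, and the children of an individual are its \emph{effective} offspring, i.e.\ those whose infection-time does not exceed the contagious period of their mother. I would first prove, by induction on $k$, that
\be \phi_k(t) = \P{R_k(t)}, \qquad t\geq 0. \label{eq::strategy-ident} \ee
For $k=0$ the root is born at time $0\leq t$, so $R_0(t)=\emptyset$ and $\P{R_0(t)}=0=\phi_0(t)$. For the inductive step, condition on the root's contagious period $\tau^C$ and on the infection-times of its potential children and invoke the branching property: conditionally on being born at time $X_i$, each effective child $i$ starts an independent copy of $\HGCf$, so a generation-$(k+1)$ individual lying below child $i$ has been born by time $t$ exactly when the corresponding generation-$k$ individual of that subtree has been born by time $t-X_i$. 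Taking the product over the effective children, inserting the induction hypothesis inside $h$, and finally integrating against $C$ with the case split $\tau^C\leq t$ versus $\tau^C>t$ reproduces the two terms of \eqref{eq::it} verbatim. This inductive identity is the main computation; it is bookkeeping entirely parallel to the derivation of the fixed point equation given at the start of this section.

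Granting \eqref{eq::strategy-ident}, the events $R_k(t)$ increase with $k$, because a generation-$(k+1)$ individual is an effective child of a generation-$k$ individual and is therefore born strictly later. Hence
\be \phi(t)=\lim_{k\to\infty}\phi_k(t)=\P{\bigcup_{k\geq 0}R_k(t)}, \label{eq::strategy-limit} \ee
so $\phi(t)$ is the probability that only finitely many generations are represented among the individuals that have been born by time $t$. Since $\phi^f(t)=\lim_{s\uparrow 1}F(s,t)=\P{N^f(t)<\infty}=1-\P{N^f(t)=\infty}$ by monotone convergence, the theorem will follow once I show that $\bigcup_{k\geq 0}R_k(t)$ coincides, up to a null set, with $\{N^f(t)<\infty\}$.

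One inclusion is easy: if every individual born by time $t$ lies in a generation $\leq K$, then each coming-generation individual sits in a generation $\leq K+1$ and descends from one of the finitely many born-by-time-$t$ individuals of generations $\leq K$, each of which has almost surely only finitely many effective children; hence $N^f(t)<\infty$. (This inclusion also re-derives $\phi\leq\phi^f$, which alternatively follows from the minimality in Theorem \ref{thm::CP_FPE_IT_SMALLEST} once one checks, by letting $s\uparrow 1$ in the recursion for $F$ and using continuity of $h$ together with dominated convergence, that $\phi^f$ solves \eqref{eq::CP_FPE_MAIN}.) For the reverse inclusion I would work on the complement: if born-by-time-$t$ individuals occur in arbitrarily high generations, then the tree of such individuals is infinite, rooted, and locally finite, so by K\"onig's lemma it contains an infinite self-avoiding ray, along which the infection-times sum to at most $t$; from such a convergent ray one deduces $N^f(t)=\infty$ exactly as in the argument of Section~2.5 identifying $\{|\mbox{BP}(t)|=\infty\}$ with $\{N(t)=\infty\}$, now performed for effective offspring and coming-generation individuals.

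The main obstacle is this last step --- upgrading an infinite born-by-time-$t$ ray to the conclusion $N^f(t)=\infty$, i.e.\ the forward analogue of the identity $\{|\mbox{BP}(t)|=\infty\}=\{N(t)=\infty\}$. The remaining ingredients --- the inductive identity \eqref{eq::strategy-ident}, the monotone convergence giving \eqref{eq::strategy-limit}, the passage $s\uparrow 1$ through the recursion for $F$, and the dominated-convergence manipulations inside the integrals --- are routine.
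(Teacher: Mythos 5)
Your strategy is exactly the paper's proof: the identity $\phi_k(t)=\P{R_k(t)}$ is (in complemented form) the paper's inductive claim $\phi_k(t)=1-\P{\exists v\in\mbox{BP}(t)\mbox{ s.t. }|v|=k}$, followed by the same nested-events limit, the same K\"onig-type argument identifying the limit event with $\{|\mbox{BP}(t)|=\infty\}$, and the same final identification with $\{N^f(t)=\infty\}$ via an infinite ray of convergent birth times. The step you flag as the main obstacle is handled in the paper at the same level of brevity, so nothing essential is missing relative to its argument.
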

\begin{proof}[Proof of Theorem \ref{thm::CP_FPE_IT_RIGHT}]
We show by induction that for all $k \geq 1$,
\be
\phi_k(t) = 1 - \mathbb{P} \{ \exists v \in \mbox{BP}(t) \mbox{ s.t. } |v| = k  \},
\ee
where the notation BP($t$) is used for the collection of \emph{all dead or alive} individuals (vertices) in the branching process born before time $t$ and $|v|$ denotes the generation of an individual $v \in$ BP($t$).
We first start analysing $\phi_1(t) = (\TCf \phi_0)(t)$. 
\begin{eqnarray*}
h(1-G(x)) &=& \E{ \P{ X > x }^D }\\
&=& \P{ X_1 > x, \cdots,X_D > x  },
\end{eqnarray*}
where $(X_i)_i$ are i.i.d. random variables with distribution $G$. As follows from \eqref{eq::CP_FPE_MAIN}, for $t \geq 0$,
\begin{eqnarray*}
\phi_1(t) &=& (\TCf \phi_0)(t) = \int_0^t h \left(  1 - G(x) \right) \mathrm d C(x) +  h \left( 1 - G(t)\right)(1-C(t)) \\
&=& \int_0^t \P{  X_1 > x, \cdots,X_D > x  } \mathrm d C(x) + \P{  X_1 > t, \cdots,X_D > t  } \P{ C>t  } \\
&=& \P{  X_1 > C, \cdots,X_D > C, C \leq t  } + \P{ X_1 > t, \cdots,X_D > t, C > t },
\end{eqnarray*}
which we may write as
\[ \phi_1(t) = 1 - \mathbb{P} \{ \exists v \in \mbox{BP}(t) \mbox{ s.t. } |v| = 1  \}.\]
We proceed by assuming that for some $k > 0$,
\begin{equation*}
\phi_k(t) = 1 - \P{  \exists v \in \mbox{BP}(t) \mbox{ s.t. } |v| = k  }.
\end{equation*}
Then,
\be \ba \label{eq:CP:it:prob} \phi_{k+1}(t) &= (\TCf \phi_k)(t) \\
&= \int_0^t h \left( 1 - \int_0^x \P{  \exists v \in \mbox{BP}(t-u) \mbox{ s.t. } |v| = k  } \mathrm d G(u) \right) \mathrm d C(x) \\
 & \quad + h \left(1 - \int_0^t \P{  \exists v \in \mbox{BP}(t-u) \mbox{ s.t. } |v| = k  } \mathrm d G(u)  \right) \P{  C>t  }.\ea \ee
We start by analysing the terms in brackets, for $x \leq t$,
\be \ba
1 - \int_0^x \mathbb{P} \{ \exists v \! \in \! \mbox{BP}(t-u) \mbox{ s.t. } |v|\! = \!k  \} \mathrm d G(u) &= 1\! - \!\mathbb{P} \{ \exists v \!\in\! \mbox{BP}(t-U) \mbox{ s.t. } |v|\! = k\! , U \!\leq \!x \} \\
&= 1 - \P{E(t,x)} \\
&= \P{\overline{E(t,x)}},
\ea \ee
where $U$ is a random variable with law $G$,
\[ E(t,x) = \{ \exists v \in \mbox{BP}(t-U) \mbox{ s.t. } |v| = k , U \leq x \}, \] 
and $\bar{A}$ is the complement of an event $A$.
Now, let $\left(U^{(i)} \right)_i$ be an i.i.d. sequence of random variables with distribution $G$, and define the independent identically distributed events $(E_j(t,x))_j$, for $j \geq 1$, as
\[ E_j(t,x) = \{ \exists v \in \mbox{BP}^{(j)}(t-U^{(j)}) \mbox{ s.t. } |v| = k , U^{(j)} \leq x \}. \]
Then
\[ h \left( \P{ \overline{ E(t,x)} } \right) = \P{  \overline{E_1(t,x)} , \ldots, \overline{E_D(t,x)} },\]
and we deduce that the first term in the RHS of (\ref{eq:CP:it:prob}) equals
\[ \int_0^t h\left( \mathbb{P} \left(  \overline{E(t,x)} \right) \right) \mathrm d C(x) =
\P{  \not\exists v \in \mbox{BP}(t) \mbox{ s.t. } |v| = k+1 , C \leq t  }.\]
The second line in \eqref{eq:CP:it:prob} becomes
\[
\ba
h \left( 1 \!-\! \int_0^t \mathbb{P} \left( \exists v\! \in\! \mbox{BP}(t-u) \mbox{ s.t. } |v|\! = \!k  \right) \mathrm d G(u)  \right) \mathbb{P} \left( C \!>\! t \right) &=
\P{   \overline{E_1}(t,t) , \ldots ,  \overline{E_D}(t,t)  , C > t }, \ea \]
or,
\[ \P{   \not\exists v \in \mbox{BP}(t) \mbox{ s.t. } |v| = k+1 , C > t  }.\]
All together,
\begin{eqnarray*}
\phi_{k+1}(t) &=& \P{  \not\exists v \in \mbox{BP}(t) \mbox{ s.t. } |v| = k+1  } \\
&=& 1 - \P{ \exists v \in \mbox{BP}(t) \mbox{ s.t. } |v| = k+1 }.
\end{eqnarray*}
To finish, note that
\begin{equation*}
\{\exists v \in \mbox{BP}(t) \mbox{ s.t. } |v| = k+1 \} \subseteq \{\exists v \in \mbox{BP}(t) \mbox{ s.t. } |v| = k \},
\end{equation*}
which constitute a nested sequence of events and therefore by the continuity of the measure $\P{\cdot}$,
\begin{equation*}
\displaystyle \lim_{k \to \infty} \phi_k(t) = \lim_{k \to \infty} \mathbb{P} \{\exists v \in \mbox{BP}(t) \mbox{ s.t. } |v| = k \} = \mathbb{P} \left( \bigcap \limits_{k=1}^{\infty} \{\exists v \in \mbox{BP}(t) \mbox{ s.t. } |v| = k \} \right).
\end{equation*}
Now, if there exists $k \in \mathbb{N}$ such that for all $v \in $ BP($t$), $|v| \leq k$, then as $D < \infty$ a.s., we must have $|$BP($t$)$|<\infty$, and hence,
\begin{equation*}
\bigcap_{k=1}^{\infty} \{\exists v \in \mbox{BP}(t) \mbox{ s.t. } |v| = k \} = \{ |\mbox{BP}(t)| = \infty \}.
\end{equation*}
It remains to verify that $\{ |\mbox{BP}(t)| = \infty \} = \{ N_f(t) = \infty \}$. To do so, observe that, if with  non-zero probability, for all $k$, $\exists v \in $ BP($t$) such that $|v| = k$, then there exists an infinite ray in the branching process that has a convergent sum of birth times along it. Thus $N_f(t) = \infty$, since with non-zero probability a branching process starting at a vertex in this ray does not die out. On the other hand if $N^f(t) = \infty$, then $|$BP($t$)$| = \infty$. We conclude that 
\begin{equation*}
1 - \displaystyle \lim_{k \to \infty}  \phi_k(t) = \mathbb{P} \left( \cap_{k=1}^{\infty} \{\exists v \in \mbox{BP}(t) \mbox{ s.t. } |v| = k \} \right) = \mathbb{P} \left( N^f(t) = \infty \right).
\end{equation*}
\end{proof}
\noindent
Thus the iterative solution is both the right solution and the smallest solution to (\ref{eq::CP_FPE_MAIN}), leading to a much weaker criterion for a process to be explosive, which is captured in the next corollary:

\begin{corollary}
\label{cor::CP_WEAKER}
Let $h$ be a probability generating function. Let both $G$ and $C$ be distribution functions of non-negative random variables. The process $\HGCf$ is explosive if and only if there exists $T > 0$ and a function $\Psi: [0,T] \to [0,1]$ such that $\Psi \neq 1$ and 
\be \ba
\label{eq::CP_WEAKER}
\Psi(t) &\geq \lr{\TCf \Psi}(t) \\
&= \int_0^t h\left( \int_0^x \Psi(t-u)\mathrm d G(u) + 1 - G(x)\right)\mathrm d C(x) \\
 & \quad + h\left( \int_0^t \Psi(t-u)\mathrm d G(u) + 1 - G(t)\right)\left(1-C(t)\right), \hspace*{1 cm} \forall t \in [0,T]. 
\ea \ee
\end{corollary}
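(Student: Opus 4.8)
The plan is to read the statement off Theorems~\ref{thm::CP_FPE_IT_SMALLEST} and~\ref{thm::CP_FPE_IT_RIGHT}. Together these assert that $\phi := \lim_{k\to\infty}\phi_k$, with $\phi_0\equiv 0$ and $\phi_{k+1}=\TCf\phi_k$, is simultaneously the smallest solution of $\Phi=\TCf\Phi$ in $\mathcal{V}$ and satisfies $\phi(t)=1-\P{N^f(t)=\infty}$ for every $t\ge 0$; hence $\HGCf$ is explosive if and only if $\phi(t)<1$ for some $t>0$. For the ``only if'' direction one simply exhibits $\phi$ itself: if the process is explosive, pick $t_0>0$ with $\phi(t_0)<1$, put $T=t_0$ and $\Psi=\phi|_{[0,T]}$. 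On $[0,T]$ the operator $\TCf$ evaluates its argument only on $[0,T]$ (the inner integrals run over $u\in[0,x]\subseteq[0,t]$, so that $t-u\in[0,t]\subseteq[0,T]$), so $\Psi$ is a legitimate function on $[0,T]$, it satisfies $\Psi=\TCf\Psi$ there, in particular $\Psi\ge\TCf\Psi$, and $\Psi\ne 1$.

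For the ``if'' direction, suppose $T>0$ and $\Psi\colon[0,T]\to[0,1]$ with $\Psi\ne 1$ and $\Psi\ge\TCf\Psi$ on $[0,T]$ are given. The one ingredient needed is the monotonicity of $\TCf$, which is exactly the fact already used in the proof of Theorem~\ref{thm::CP_FPE_IT_SMALLEST}: since $h$ is a probability generating function it is non-decreasing on $[0,1]$, and $\Phi_1\le\Phi_2$ pointwise forces $\int_0^x\Phi_1(t-u)\,\mathrm dG(u)\le\int_0^x\Phi_2(t-u)\,\mathrm dG(u)$, whence, after adding $1-G(x)$ (resp.\ $1-G(t)$) and applying $h$, $\TCf\Phi_1\le\TCf\Phi_2$ on $[0,T]$. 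I would then prove by induction on $k$ that $\phi_k\le\Psi$ on $[0,T]$: the base case $\phi_0\equiv 0\le\Psi$ is immediate, and if $\phi_k\le\Psi$ on $[0,T]$ then $\phi_{k+1}=\TCf\phi_k\le\TCf\Psi\le\Psi$ there, using monotonicity and the hypothesis. Letting $k\to\infty$ yields $\phi\le\Psi$ on $[0,T]$. Since $\Psi\ne 1$ there is $t_0\in[0,T]$ with $\phi(t_0)\le\Psi(t_0)<1$, so $\P{N^f(t_0)=\infty}=1-\phi(t_0)>0$ and $\HGCf$ is explosive.

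The argument is essentially soft once the two preceding theorems are available, so I do not expect a genuine obstacle; the points that need a little care are (a) checking that $\TCf$, restricted to $[0,T]$, is a well-defined and monotone operator, so that both the ``only if'' reduction and the induction make sense, and (b) the minor boundary issue of whether the point $t_0$ with $\Psi(t_0)<1$ can be taken strictly positive: if $\Psi(t)<1$ for some $t\in(0,T]$ this is automatic, and otherwise $\Psi\equiv 1$ on $(0,T]$ while $\Psi(0)<1$, in which case one invokes that $\phi$ is non-increasing (which follows from Theorem~\ref{thm::CP_FPE_IT_RIGHT}, since $|\mathrm{BP}(t)|$ is non-decreasing in $t$) to conclude $\phi(t)\le\phi(0)<1$ for all $t>0$ and hence explosiveness.
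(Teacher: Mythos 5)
Your proposal is correct, and the "only if" half coincides with the paper's (exhibit $\phi$ itself). The "if" half, however, takes a genuinely different route. The paper extends $\Psi$ to a function $\psi_0$ on all of $[0,\infty)$ by setting $\psi_0\equiv 1$ on $(T,\infty)$, iterates $\TCf$ \emph{downward} from $\psi_0$, verifies (via dominated convergence) that the decreasing limit $\psi$ is an actual solution of the fixed point equation with $\psi<1$ somewhere, and then invokes the minimality of $\phi$ from Theorem~\ref{thm::CP_FPE_IT_SMALLEST}. You instead iterate \emph{upward} from $\phi_0\equiv 0$ and show by induction, using only the monotonicity of $\TCf$ and the locality of the operator on $[0,T]$, that $\phi_k\le\Psi$ on $[0,T]$ for every $k$, whence $\phi\le\Psi$ there; explosiveness then follows from the probabilistic identification $\phi(t)=1-\P{N^f(t)=\infty}$ of Theorem~\ref{thm::CP_FPE_IT_RIGHT}. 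Your version is slightly leaner: it needs no extension of $\Psi$, no passage to the limit inside the operator, and no appeal to dominated convergence, since it never has to certify that any new limit is a fixed point. The paper's version buys a little more in exchange: it produces a genuine solution $\psi$ of the fixed point equation sandwiched between $\phi$ and $\Psi$, which is in the spirit of its general theme of classifying solutions. Your handling of the boundary case $t_0=0$ via monotonicity of $\phi$ is a detail the paper glosses over and is correctly resolved.
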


\begin{proof}
Necessity is obvious as $\phi$ itself satisfies (\ref{eq::CP_WEAKER}) with equality. To prove sufficiency we define $\psi_0: [0,\infty) \to [0,1]$  by
\begin{equation}
 \psi_0(t) = \left\{ 
  \begin{array}{l l}
    \Psi(t) & \quad \text{if $t \in [0,T]$},\\
    1 & \quad \text{if $t > T$},
  \end{array} \right.
\end{equation}
and, for each $k \geq 1$, $\psi_k$ is recursively defined by $\psi_k = \TCf \psi_{k-1}$. We will show that, for each fixed $t \in [0,\infty)$, $(\psi_k(t))_{k \in \mathbb{N}}$ is a decreasing sequence, bounded from below by 0 and from above by $\psi_0(t)$. To see this, fix first $t \in [0,T]$ and note that $\psi_0(t-u) \leq \Psi(t-u)$ for any $u \in [0,x]$, if $x \in [0,t]$, so that in fact,  $\psi_1(t) \leq \Psi(t)$. The upper bound, in the region where $t > T$, follows easily from 
\begin{equation*}
0 \leq \int_0^x \psi_0(t-u)\mathrm d G(u) + 1 - G(x) \leq \int_0^x \mathrm d G(u) + 1 - G(x)= 1,
\end{equation*}
since this implies, for $t > T$,
\begin{equation*}
\psi_1(t) \leq \int_0^t h(1)\mathrm d C(x) + h(1)\left(1 - C(x)\right) = 1 = \psi_0(t).
\end{equation*}
We are now in a position to prove by induction that, for any $k \in \mathbb{N}$ and $t \in [0,\infty)$, the inequality $\psi_{k+1}(t) \leq \psi_k(t)$ holds. To establish this, assume  that for some $k \in \mathbb{N}$ we have $\psi_{k}(t) \leq \psi_{k-1}(t)$, then, for all $t \in [0,\infty)$,
\[ \ba
\psi_{k+1}(t) &= \int_0^t h \left( \int_0^x \psi_k(t-u)\mathrm d G(u) + 1 - G(x) \right) \mathrm d C(x) \\
&+ h \left( \int_0^t \psi_k(t-u)\mathrm d G(u) + 1 - G(t))(1-C(t)\right), \ea \]
which is smaller than or equal to
\[ \ba & \int_0^t h \left( \int_0^x \psi_{k-1}(t-u)\mathrm d G(u) + 1 - G(x)\right)\mathrm d C(x) \\ &+ h \left( \int_0^t \psi_{k-1}(t-u)\mathrm d G(u) + 1 - G(t)\right)(1-C(t)) = \psi_k(t). \ea\]
We conclude that, for all $t \in [0,\infty)$, $\displaystyle \psi(t) = \lim_{k \to \infty} \psi_k(t)$  exists and that by invoking LDC it follows that $\psi$ is in fact a solution to (\ref{eq::CP_FPE_MAIN}). The process is indeed explosive, since $\psi$ is dominated by $\Psi$ on $[0,T]$, and the latter is strictly smaller than $1$ somewhere on its domain of definition. 
\end{proof}
\noindent
In the remainder of this section, it turns out useful to have the following lemma at hand: 
\begin{lemma}
For any function $\psi: [0,\infty) \to [0,1]$ it holds true, for all $0 \leq x \leq t \leq t'$,
\begin{enumeratei}
\item  \[\left( \int_0^x \psi ( t'-u)\mathrm d G(u) + 1 - G(x) \right) -  \left( \int_0^t \psi(t'-u)\mathrm d G(u) + 1 - G(t) \right) \geq 0,\]
\item  if, moreover, $\psi$ is non-increasing, \[h \left( \int_0^x \psi ( t'-u)\mathrm d G(u) + 1 - G(x) \right) -  h \left( \int_0^t \psi(t'-u)\mathrm d G(u) + 1 - G(t) \right) \geq 0,\] 
\item furthermore, for those non-increasing $\psi$, \[h\left( \int_0^t \psi(t-u)\mathrm d G(u) + 1 - G(t) \right) \geq  h\left( \int_0^{t'} \psi(t'-u)\mathrm d G(u) + 1 - G(t') \right).\]
\end{enumeratei}
\label{lm::estimates}
\end{lemma}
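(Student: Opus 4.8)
The three claims all reduce to elementary monotonicity arguments; nothing beyond the bounds $0 \le \psi \le 1$, the fact that the Stieltjes measure $\mathrm d G$ is non-negative, and the fact that a probability generating function $h$ is non-decreasing on $[0,1]$ and maps $[0,1]$ into $[0,1]$, will be needed. I would open with the preliminary observation that for any $0 \le y \le t'$ and any $\psi$ with values in $[0,1]$ one has $\int_0^y \psi(t'-u)\,\mathrm d G(u) + 1 - G(y) \in [\,1-G(y),\,1\,] \subseteq [0,1]$ (the integral is between $0$ and $G(y)$), so that every argument of $h$ appearing in the statement lies in $[0,1]$ and $h$ is legitimately applied and monotone there.

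For (i), I would subtract the two parenthesised expressions and combine the $\mathrm d G$-integrals. Writing $G(t)-G(x) = \int_x^t \mathrm d G(u)$, the difference equals $-\int_x^t \psi(t'-u)\,\mathrm d G(u) + \bigl(G(t)-G(x)\bigr) = \int_x^t \bigl(1-\psi(t'-u)\bigr)\,\mathrm d G(u)$, which is non-negative because $1-\psi(t'-u)\ge 0$ and $\mathrm d G\ge 0$. (Note only $\psi\le 1$ is used here, not monotonicity.)

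For (ii), both arguments of $h$ lie in $[0,1]$ by the preliminary remark, and by (i) the first is at least the second; since $h$ is non-decreasing on $[0,1]$, the inequality for the $h$-values follows at once. (Monotonicity of $\psi$ is not actually required for (ii); it is listed only because the hypothesis carries over from (iii).) For (iii), since $h$ is non-decreasing it suffices to compare arguments, i.e.\ to show $\int_0^t \psi(t-u)\,\mathrm d G(u) - \int_0^{t'} \psi(t'-u)\,\mathrm d G(u) \ge G(t)-G(t')$. I would split $\int_0^{t'} = \int_0^t + \int_t^{t'}$, so the left-hand side becomes $\int_0^t \bigl(\psi(t-u)-\psi(t'-u)\bigr)\,\mathrm d G(u) - \int_t^{t'}\psi(t'-u)\,\mathrm d G(u)$. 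The first integrand is $\ge 0$ because $\psi$ is non-increasing and $t'-u \ge t-u$ for $u\le t$; the second integral is at most $\int_t^{t'}\mathrm d G(u) = G(t')-G(t)$ because $\psi\le 1$. Hence the left-hand side is $\ge -\bigl(G(t')-G(t)\bigr) = G(t)-G(t')$, which is exactly what is needed.

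The only obstacle, and it is minor, is bookkeeping: keeping the arguments of $h$ inside $[0,1]$ so that monotonicity of $h$ can be invoked (dispatched by the preliminary remark), and handling the Stieltjes integrals cleanly if $G$ has atoms — but since $G$ is a distribution function the identity $G(b)-G(a)=\int_a^b \mathrm d G$ holds in all cases, so no real difficulty arises.
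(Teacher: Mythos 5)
Your proof is correct and follows exactly the route the paper intends: the paper's own proof of this lemma is the one-liner ``Elementary calculations and the fact that $\psi \leq 1$ together with monotonicity of $h$, $G$ and $C$,'' and your write-up simply supplies those elementary calculations (combining the Stieltjes integrals for (i), invoking monotonicity of $h$ for (ii), and splitting $\int_0^{t'}$ at $t$ for (iii)). Your side remark that (ii) needs only $\psi \leq 1$ and not monotonicity of $\psi$ is accurate.
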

\begin{proof}
Elementary calculations and the fact that $\psi \leq 1$ together with monotonicity of $h$, $G$ and $C$.
\end{proof}

\begin{theorem}
For each $k \in \mathbb{N}$, $\phi_k$ is a non-increasing function and hence $\phi$ is. 
\end{theorem}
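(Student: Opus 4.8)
The plan is to prove the claim by induction on $k$ using the recursion \eqref{eq::it} together with the monotonicity estimates of Lemma \ref{lm::estimates}, and then to pass to the limit. The base case $k=0$ is trivial: $\phi_0\equiv 0$ is non-increasing. For the inductive step, assume $\phi_k$ is non-increasing and fix $0\le t\le t'$; the aim is to show $\phi_{k+1}(t')\le\phi_{k+1}(t)$. Once this is done, the final assertion follows because $\phi(t)=\lim_{k\to\infty}\phi_k(t)$ pointwise by Theorem \ref{thm::CP_FPE_IT_SMALLEST}, and a pointwise limit of non-increasing functions is non-increasing.

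For the inductive step I would split the $\mathrm dC$-integral defining $\phi_{k+1}(t')$ at the point $t$, writing $\int_0^{t'}=\int_0^{t}+\int_{t}^{t'}$, so that $\phi_{k+1}(t')$ appears as the sum of three terms: an integral of $h\big(\int_0^x\phi_k(t'-u)\,\mathrm dG(u)+1-G(x)\big)$ over $x\in[0,t]$ against $\mathrm dC$, the same integrand over $x\in[t,t']$, and the boundary term $h\big(\int_0^{t'}\phi_k(t'-u)\,\mathrm dG(u)+1-G(t')\big)\,(1-C(t'))$. The $[0,t]$-piece is the routine part: for $x\le t$ and $u\le x$ one has $t'-u\ge t-u$, hence $\phi_k(t'-u)\le\phi_k(t-u)$ by the inductive hypothesis; monotonicity of $h$ and $G$ then gives $h\big(\int_0^x\phi_k(t'-u)\,\mathrm dG(u)+1-G(x)\big)\le h\big(\int_0^x\phi_k(t-u)\,\mathrm dG(u)+1-G(x)\big)$, and integrating against $\mathrm dC$ bounds this piece by the corresponding first term of $\phi_{k+1}(t)$.

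The crux is to dominate the other two terms by the single boundary term $h(B(t))\,(1-C(t))$ of $\phi_{k+1}(t)$, where $B(t):=\int_0^t\phi_k(t-u)\,\mathrm dG(u)+1-G(t)$. Part (iii) of Lemma \ref{lm::estimates} (applicable since $\phi_k$ is non-increasing) bounds the boundary term of $\phi_{k+1}(t')$ by $h(B(t))\,(1-C(t'))$. For the $[t,t']$-piece, parts (i)--(ii) of Lemma \ref{lm::estimates} say that $y\mapsto h\big(\int_0^y\phi_k(t'-u)\,\mathrm dG(u)+1-G(y)\big)$ is non-increasing on $[0,t']$, so for $x\in[t,t']$ its value is at most its value at $y=t$, namely $h\big(\int_0^t\phi_k(t'-u)\,\mathrm dG(u)+1-G(t)\big)$, which is itself $\le h(B(t))$ because $\phi_k(t'-u)\le\phi_k(t-u)$ for $u\le t$; hence this piece is $\le h(B(t))\,(C(t')-C(t))$. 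Adding the two bounds, the $C$-masses recombine to give $h(B(t))\big((C(t')-C(t))+(1-C(t'))\big)=h(B(t))\,(1-C(t))$, which is exactly the boundary term of $\phi_{k+1}(t)$. Combined with the $[0,t]$-estimate this yields $\phi_{k+1}(t')\le\phi_{k+1}(t)$, closing the induction.

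I expect the only delicate point to be the bookkeeping in this crux step: one must see that widening the $\mathrm dC$-integral from $[0,t]$ to $[0,t']$ creates the extra mass $C(t')-C(t)$, which has to be matched precisely against the loss in the boundary factor $(1-C(t))\to(1-C(t'))$, and that both newly appearing contributions are controlled by the \emph{same} constant $h(B(t))$ thanks to the non-increasing-argument property recorded in Lemma \ref{lm::estimates}. Everything else is a mechanical application of the inductive hypothesis and the monotonicity of $h$, $G$ and $C$.
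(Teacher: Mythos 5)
Your proposal is correct and follows essentially the same route as the paper: induction on $k$, splitting the $\mathrm dC$-integral at $t$, controlling the $[0,t]$ piece by the inductive hypothesis, and dominating the $[t,t']$ piece plus the boundary term by $h(B(t))(1-C(t))$ via Lemma \ref{lm::estimates}(ii)--(iii), with the $C$-masses recombining exactly as in the paper. The only difference is cosmetic (you bound $\phi_{k+1}(t')$ from above rather than bounding the difference $\phi_{k+1}(t)-\phi_{k+1}(t')$ from below).
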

\begin{proof}
We prove the result by induction on $k$. Fix $0 \leq t \leq t'$ and note that the hypothesis obviously holds for $\phi_0 \equiv 0$. Hence, we may assume that, for some $k \in \mathbb{N}$, $\phi_k$ is non-increasing, but then, $\phi_{k+1}(t) - \phi_{k+1}(t')$ equals 
\[ \ba  & \int_0^t \left( h \lr{ \int_0^x \phi_k(t-u)\mathrm d G(u) + 1 - G(x)} - h\lr{ \int_0^x \phi_k(t'-u)\mathrm d G(u) + 1 - G(x)} \right) \mathrm d C(x) \\
 & \quad - \int_t^{t'}  h \lr{ \int_0^x \phi_k(t'-u)\mathrm d G(u) + 1 - G(x) } \mathrm d C(x) \\
 & \quad + h \lr{ \int_0^t \phi_k(t-u)\mathrm d G(u) + 1 - G(t)}(1-C(t)) \\ 
 & \quad - h \lr{ \int_0^{t'} \phi_k(t'-u)\mathrm d G(u) + 1 - G(t')}(1-C(t')).\ea \]
Since $\phi_k$ is assumed to be non-increasing, we may bound this from below, 
\[ \ba \phi_{k+1}(t) - \phi_{k+1}(t') \geq 0 &- \int_t^{t'}  h \lr{ \int_0^t \phi_k(t'-u)\mathrm d G(u) + 1 - G(t)} \mathrm d C(x) \\
 &  + h \lr{  \int_0^t \phi_k(t-u)\mathrm d G(u) + 1 - G(t)}(1-C(t)) \\
 &  - h \lr{  \int_0^{t'} \phi_k(t'-u)\mathrm d G(u) + 1 - G(t')}(1-C(t')).  \ea \]
Explicit calculation of the first outer-integral yields 
\[ \ba \phi_{k+1}(t) - \phi_{k+1}(t') \geq &-  h \lr{  \int_0^t \phi_k(t'-u)\mathrm d G(u) + 1 - G(t)}  \left( C(t') - C(t) \right) \\
 &  + h \lr{  \int_0^t \phi_k(t-u)\mathrm d G(u) + 1 - G(t)}(1-C(t)) \\
 &  - h \lr{  \int_0^{t'} \phi_k(t'-u)\mathrm d G(u) + 1 - G(t')}(1-C(t')). \ea \]
We employ again that $\phi_k$ is non-increasing (to bound the first integral from below), so that we eventually have the following lower bound:
\[ \ba \phi_{k+1}(t) - \phi_{k+1}(t') & \geq h \lr{  \int_0^t \phi_k(t-u)\mathrm d G(u) + 1 - G(t)}(1-C(t')) \\
 & \quad - h \lr{  \int_0^{t'} \phi_k(t'-u)\mathrm d G(u) + 1 - G(t')}(1-C(t')),  \ea \]
which is positive by Lemma \ref{lm::estimates}(iii). Thus $\phi_{k+1}$ is non-increasing and the proof is complete. 
\end{proof}

\begin{theorem}
If there exists $\tau > 0$ such that $\phi(\tau) < 1$, then $\phi(t) < 1$ for all $t \in (0,\infty)$.
\end{theorem}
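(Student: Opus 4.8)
The plan is to translate the statement into the probabilistic picture. By Theorem~\ref{thm::CP_FPE_IT_RIGHT} and the identity $\{N^f(t)=\infty\}=\{|\mathrm{BP}(t)|=\infty\}$ established in its proof, we have $\phi(t)=1-\P{|\mathrm{BP}(t)|=\infty}$, where $\mathrm{BP}(t)$ is the set of individuals born before time $t$. Hence the hypothesis $\phi(\tau)<1$ says exactly that $\P{A_\tau}>0$ with $A_\tau:=\{|\mathrm{BP}(\tau)|=\infty\}$, and the assertion is equivalent to $\P{|\mathrm{BP}(t)|=\infty}>0$ for every $t\in(0,\infty)$. So I fix such a $t$ and aim to exhibit a convergent ray that reaches infinity within local time $t$ somewhere in the tree.

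On $A_\tau$ the set $\mathrm{BP}(\tau)$ is a subtree containing the root $\rho$ and is finitely branching (each individual has $D_i<\infty$ children), so by König's lemma it contains an infinite ray $\rho=v_0,v_1,v_2,\dots$; picking, say, the leftmost such ray in the Ulam--Harris labelling makes it measurable. Writing $X_i$ for the infection-time on the edge $(v_{i-1},v_i)$ and $\sigma_k=X_1+\dots+X_k$ for the birth-time of $v_k$, membership $v_k\in\mathrm{BP}(\tau)$ forces $\sigma_k\le\tau$ for all $k$, hence $\sigma:=\sum_{i\ge1}X_i\le\tau<\infty$ and $\sigma-\sigma_K=\sum_{i>K}X_i\downarrow 0$. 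Since $t>0$ there is therefore a random, a.s.\ finite index $K$ with $\sigma-\sigma_K<t$. Now I invoke the branching property recalled at the beginning of this section: conditioned on an individual $w$ being born, the subtree rooted at $w$, with time measured from the birth of $w$, is an independent copy of $\HGCf$. In the subtree rooted at $v_K$ the vertices $v_{K+j}$ are born at local times $\sigma_{K+j}-\sigma_K\le\sigma-\sigma_K<t$ for all $j\ge 0$, so $|\mathrm{BP}^{(v_K)}(t)|=\infty$ on $A_\tau$, where $\mathrm{BP}^{(w)}$ denotes the process restricted to the descendants of $w$. Ranging over the countably many potential labels $w$ gives
\[
A_\tau\subseteq\bigcup_{w}\bigl(\{w\text{ born}\}\cap\{|\mathrm{BP}^{(w)}(t)|=\infty\}\bigr).
\]

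By countable subadditivity some label $w_0$ satisfies $\P{w_0\text{ born},\,|\mathrm{BP}^{(w_0)}(t)|=\infty}>0$, and one more application of the branching property yields $\P{w_0\text{ born},\,|\mathrm{BP}^{(w_0)}(t)|=\infty}=\P{w_0\text{ born}}\cdot\P{|\mathrm{BP}(t)|=\infty}$; since the left-hand side is positive, $\P{|\mathrm{BP}(t)|=\infty}>0$, i.e.\ $\phi(t)<1$, which is what we wanted (for $t\ge\tau$ one could instead simply quote that $\phi$ is non-increasing). The step I expect to need the most care is the bookkeeping in the branching property: making precise that, conditioned on a fixed Ulam--Harris vertex $w$ being alive, the descendant subtree of $w$ is an independent copy of the whole process despite the within-sibship dependence created by the shared maternal contagious period. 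This is ultimately harmless, because $w$'s own contagious period, its offspring size, and the infection-times of its children are fresh randomness, independent of $\{w\text{ born}\}$ and of the history up to $w$'s birth; the König's-lemma and tail-sum parts are routine.
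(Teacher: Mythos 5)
Your proof is correct, but it takes a genuinely different route from the paper's. The paper argues analytically inside the fixed-point framework: assuming for contradiction that $\phi\equiv 1$ on $(0,t_0]$ and $\phi<1$ beyond $t_0$ (using that $\phi$ is non-increasing), it manipulates $\phi=\TCf\phi$ to show that the shifted function $\phi^*(\tau)=\phi(\tau+t_0)$ is again a solution of \eqref{eq::CP_FPE_MAIN}; since $\phi^*(t_0/2)<1=\phi(t_0/2)$, this contradicts the minimality of $\phi$ from Theorem \ref{thm::CP_FPE_IT_SMALLEST}. You instead work on the probabilistic side via the identification $\phi(t)=1-\P{N^f(t)=\infty}$ from Theorem \ref{thm::CP_FPE_IT_RIGHT}: on the explosion event you extract an infinite ray with summable edge-lengths by K\"onig's lemma, note that its tail sums vanish, relocate the root to a vertex far enough along the ray that the remaining ray fits inside local time $t$, and conclude by countable subadditivity over Ulam--Harris labels together with the branching property. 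Both arguments are sound and of comparable length. The paper's version buys uniformity with the rest of the section (everything runs through the operator $\TCf$ and minimality, and the identical argument is reused for the incubation model), and it needs only minimality rather than the probabilistic interpretation of $\phi$. Yours is more transparent about the mechanism --- it explains \emph{why} the explosion time cannot be bounded away from zero --- and you correctly isolate the one delicate point, namely that the within-sibship dependence through the shared maternal contagious period does not spoil the independence of $\{w_0\text{ born}\}$ from the descendant subtree of $w_0$: the former involves only the ancestors' contagious periods and the randomness along the path to $w_0$, the latter only fresh randomness attached to $w_0$ and its descendants.
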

\begin{proof}
Suppose on the contrary that there exists $t > 0$ such that $\phi(t) = 1$, but then, as $\phi$ is non-increasing, there must exist $t_0 > 0$ such that
\begin{equation}
  \begin{array}{l l}
    \phi(t) = 1 & \quad \text{for $t \leq t_0$,}\\
    \phi(t) < 1 & \quad \text{for $t > t_0$.}
  \end{array}
\end{equation}
Then, for any $t > t_0$, upon splitting the integrals at $t - t_0$ and noting that $\phi(t-u) = 1$ for $u \in [t-t_0,t]$,
\[ \ba 
\phi(t) &= \int_0^{t-t_0} h \left( \int_0^x \phi(t-u) \mathrm d G(u) + 1 - G(x) \right)\mathrm d C(x) \\
		& \quad + \int_{t-t_0}^t h \left( \int_0^{t-t_0} \phi(t-u) \mathrm d G(u) + \int_{t-t_0}^x \mathrm d G(u) + 1 - G(x)\right)\mathrm d C(x) \\
 		& \quad + h \left(\int_0^{t-t_0} \phi(t-u)\mathrm d G(u) + \int_{t-t_0}^t \mathrm d G(u) + 1 - G(t)\right)(1-C(t)). \ea \]
There is no dependence on $x$ any-more inside the $h(\cdot)$ term in the second integral, hence the above equals
\[ \ba  \phi(t) &=  \int_0^{t-t_0} h \left( \int_0^x \phi(t-u) \mathrm d G(u) + 1 - G(x) \right)\mathrm d C(x) \\
		& \quad + h \left(\int_0^{t-t_0} \phi(t-u)\mathrm d G(u) + 1 - G(t-t_0)\right)(C(t)-C(t-t_0)) \\
 		& \quad + h \left(\int_0^{t-t_0} \phi(t-u)\mathrm d G(u) + 1 - G(t - t_0)\right)(1-C(t)). \ea \]
We note that terms in the second and third line partly cancel each other, therefore we may write
\[ \ba
\phi(t) &=  \int_0^{t-t_0} h \left( \int_0^x \phi(t-u) \mathrm d G(u) + 1 - G(x) \right)\mathrm d C(x) \\
 		& \quad + h \left(\int_0^{t-t_0} \phi(t-u)\mathrm d G(u) +  1 - G(t-t_0)\right)(1-C(t-t_0)). \ea \]
\noindent
Define $\phi^*: [0,\infty) \to [0,1]$ by
\begin{equation}
\phi^*(\tau) = \phi(\tau + t_0), \hspace*{1 cm} \tau \in [0,\infty),
\end{equation}
to see that, for $\tau \in [0,\infty)$, 
\[ \ba
\phi^*(\tau) &= \int_0^{\tau} h \left( \int_0^x \phi(\tau + t_0 -u) \mathrm d G(u) + 1 - G(x) \right)\mathrm d C(x) \\
  & \quad + h \left(\int_0^{\tau} \phi(\tau + t_0-u)\mathrm d G(u) +  1 - G(\tau)\right)(1-C(\tau)), \ea \]
or,
\[ \ba \phi^*(\tau) &= \int_0^{\tau} h \left( \int_0^x \phi^*(\tau -u) \mathrm d G(u) + 1 - G(x) \right)\mathrm d C(x) \\
 		& \quad + h \left(\int_0^{\tau} \phi^*(\tau -u)\mathrm d G(u) +  1 - G(\tau)\right)(1-C(\tau)). \ea \]
Hence, in fact, $\phi^*$ is a solution to (\ref{eq::CP_FPE_MAIN}), leading to a contradiction since 
$\phi^*(\frac{t_0}{2})  <  \phi(\frac{t_0}{2})$
violates the minimality of $\phi$. We conclude that $\phi(t) < 1$ for all $t>0$.
\end{proof}

\subsection{Comparison theorems}
In this section we show that explosiveness is essentially determined by the behaviour of both $G$ and $C$ around $0$ and $h$ around $1$: 
\begin{theorem}
Let $h$ and $h^*$ be probability generating functions such that  for some $\theta < 1$, $h^*(s) \leq h(s)$ for all $s \in [\theta,1]$. Let $G$ be the distribution function of a non-negative random variable such that $G(0) = 0$. Let $C$ be the distribution function of a non-negative random variable. If the process $\HGCf$ explosive, then so is $(h^*,G,C)^f$.
\end{theorem}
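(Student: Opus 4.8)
This is the forward-contagious analogue of Theorem~\ref{thm::AGE_COMP_h}, and I would prove it the same way: produce a test function for the operator attached to $h^*$ and invoke Corollary~\ref{cor::CP_WEAKER}. The only thing to watch is that the comparison $h^*\le h$ is only assumed on $[\theta,1]$, so I must work on a small time interval on which every argument fed into $h$ inside $\TCf$ stays in $[\theta,1]$; this is exactly where the hypothesis $G(0)=0$ enters.

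\textbf{Step 1: what explosiveness gives.} By Theorems~\ref{thm::CP_FPE_IT_SMALLEST} and~\ref{thm::CP_FPE_IT_RIGHT}, the iterative limit $\phi^f$ is the smallest solution of $\Phi=\TCf\Phi$ and satisfies $\phi^f(t)=1-\P{N^f(t)=\infty}$. Explosiveness of $\HGCf$ therefore yields some $\tau>0$ with $\phi^f(\tau)<1$, and by the last theorem of the preceding subsection (``if $\phi(\tau)<1$ for some $\tau>0$ then $\phi<1$ on all of $(0,\infty)$'') we get $\phi^f(t)<1$ for every $t>0$. In particular $\phi^f$ is not identically $1$ on any interval $[0,T']$ with $T'>0$.

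\textbf{Step 2: choosing the interval.} Since $G$ is a distribution function with $G(0)=0$, right-continuity gives $G(t)\downarrow 0$ as $t\downarrow 0$, so fix $T'>0$ with $1-G(T')\ge\theta$. For any $t\in[0,T']$ and $x\in[0,t]$, using $0\le\phi^f\le 1$ and monotonicity of $G$,
\[
\theta\ \le\ 1-G(T')\ \le\ 1-G(t)\ \le\ 1-G(x)\ \le\ \int_0^x\phi^f(t-u)\,\mathrm d G(u)+1-G(x)\ \le\ 1,
\]
and likewise $\theta\le\int_0^t\phi^f(t-u)\,\mathrm d G(u)+1-G(t)\le 1$. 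Hence on $[0,T']$ every argument of $h$ occurring in $(\TCf\phi^f)(t)$ lies in $[\theta,1]$, where $h^*\le h$. Writing $T^f_{(h^*,G,C)}$ for the operator of the starred process (same formula with $h^*$ in place of $h$), and using that the $C$-integral and the factor $1-C(t)\ge 0$ preserve inequalities,
\[
\big(T^f_{(h^*,G,C)}\phi^f\big)(t)\ \le\ (\TCf\phi^f)(t)\ =\ \phi^f(t),\qquad t\in[0,T'].
\]

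\textbf{Step 3: conclude.} The restriction $\Psi:=\phi^f|_{[0,T']}$ is a map $[0,T']\to[0,1]$ with $\Psi\neq 1$ (Step 1) and $\Psi(t)\ge\big(T^f_{(h^*,G,C)}\Psi\big)(t)$ for all $t\in[0,T']$ (Step 2; note the right-hand side only uses $\Psi$ on $[0,t]$, so the restriction causes no problem). Applying Corollary~\ref{cor::CP_WEAKER} to the process $(h^*,G,C)^f$ (with $h^*$ in the role of $h$ and $T=T'$) shows that $(h^*,G,C)^f$ is explosive. The only genuinely delicate point is Step~2, the choice of $T'$ keeping all arguments inside $[\theta,1]$; this uses $G(0)=0$ essentially — if $G(0)>0$ then $1-G(x)$ can fall below $\theta$ for arbitrarily small $x$ and the comparison breaks down.
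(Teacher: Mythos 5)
Your proof is correct and follows essentially the same route as the paper: use $G(0)=0$ to pick $T$ with $1-G(T)\ge\theta$ so that every argument of $h$ in $\TCf\phi^f$ lies in $[\theta,1]$, deduce $\phi^f\ge T^f_{(h^*,G,C)}\phi^f$ on $[0,T]$, and apply Corollary~\ref{cor::CP_WEAKER} with $\Psi=\phi^f$. Your Step~1, verifying via the ``$\phi<1$ on all of $(0,\infty)$'' theorem that $\phi^f$ is not identically $1$ on the possibly small interval $[0,T]$, makes explicit a point the paper leaves implicit.
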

\begin{proof}
Consider the explosive process $\HGCf$ and denote by $\phi^f$ the corresponding smallest solution to (\ref{eq::CP_FPE_MAIN}). We observe that for all $t \geq 0$,
\begin{equation*}
1 \geq \int_0^t \phi^f(t-u) \mathrm d G(u) + 1 - G(t) \geq 1 - G(t), 
\end{equation*}
where the right hand side tends to $1$ when $t$ goes to $0$. Because $G(0) = 0$, there exists  $T>0$ such that, for $t \in [0,T]$, 
\begin{equation*}
\int_0^t \phi^f(t-u) \mathrm d G(u) + 1 - G(t) \geq \theta. 
\end{equation*}
Thus, for such $t$, 
\begin{equation*}
h \left( \int_0^t \phi^f(t-u) \mathrm d G(u) + 1 - G(t) \right) \geq h^* \left( \int_0^t \phi^f(t-u) \mathrm d G(u) + 1 - G(t) \right), 
\end{equation*}
since by assumption $h^*(s) \leq h(s)$ for all $s \in [\theta,1]$.
We establish that, for any $t \in [0,T]$,
\be \ba
\phi^f(t) &\geq \int_0^t h^* \left( \int_0^x \phi^f(t-u) \mathrm d G(u) + 1 - G(x) \right)\mathrm d C(x) \\ 
& \quad + h^* \left( \int_0^t \phi^f(t-u) \mathrm d G(u) + 1 - G(t)\right)(1-C(t)) \\
& = \lr{T^f_{(h^*,G,C)} \phi^f}(t).
\ea \ee
An appeal to Corollary \ref{cor::CP_WEAKER}, with $\Psi = \phi^f$, completes the proof.
\end{proof}
\begin{theorem}
Let $h$ be a probability generating function. Let both $G$ and $G^*$ be distribution functions of non-negative random variables such that $G(0)=G^*(0)=0$. Let $C$ be the distribution function of a non-negative random variable. Assume that there is a $T$ such that $G^* \geq G$ on $[0,T]$. If the process $\HGCf$ is explosive, then so is $(h,G^*,C)^f$. 
\end{theorem}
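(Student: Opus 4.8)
The plan is to run exactly the same argument as in the preceding $h$-comparison theorem: I want to exhibit a function $\Psi$ on a short interval $[0,T]$ with $\Psi\neq 1$ that is a subsolution for the operator $T^f_{(h,G^*,C)}$, i.e.\ $\Psi\geq T^f_{(h,G^*,C)}\Psi$ on $[0,T]$, and then invoke Corollary~\ref{cor::CP_WEAKER} (applied to the triple $(h,G^*,C)$) to conclude that $(h,G^*,C)^f$ is explosive. The natural candidate is $\Psi=\phi^f$, the smallest solution of the fixed point equation \eqref{eq::CP_FPE_MAIN} attached to the explosive process $\HGCf$. From the results of this section I may assume that $\phi^f=\TCf\phi^f$, that $\phi^f$ is non-increasing, and that, because $\HGCf$ is explosive, $\phi^f(t)<1$ for every $t\in(0,\infty)$; in particular the restriction of $\phi^f$ to any $[0,T]$ is not identically $1$.

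The crux is the pointwise comparison of the arguments to which $h$ is applied. I would prove that for all $0\leq x\leq t\leq T$,
\be
\int_0^x \phi^f(t-u)\,\mathrm d G^*(u) + 1 - G^*(x)\ \leq\ \int_0^x \phi^f(t-u)\,\mathrm d G(u) + 1 - G(x).
\label{eq::Gstar_compare}
\ee
Write $\Delta=G^*-G$, which by hypothesis is non-negative on $[0,T]$ with $\Delta(0)=G^*(0)-G(0)=0$. The right-hand minus the left-hand side of \eqref{eq::Gstar_compare} equals $\Delta(x)-\int_0^x \phi^f(t-u)\,\mathrm d\Delta(u)$. Since $\phi^f$ is non-increasing, $u\mapsto\phi^f(t-u)$ is non-decreasing on $[0,x]$, so an integration by parts for Stieltjes integrals gives $\int_0^x \phi^f(t-u)\,\mathrm d\Delta(u)=\phi^f(t-x)\Delta(x)-\int_0^x \Delta(u^-)\,\mathrm d_u\phi^f(t-u)$, and the last integral is non-negative (it integrates the non-negative $\Delta$ against the non-negative measure $\mathrm d_u\phi^f(t-u)$). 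Hence the difference is at least $\Delta(x)\bigl(1-\phi^f(t-x)\bigr)\geq 0$, which is \eqref{eq::Gstar_compare}; this is also the only place the hypothesis $G(0)=G^*(0)=0$ is used, via $\Delta(0)=0$.

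Granting \eqref{eq::Gstar_compare}, monotonicity of the probability generating function $h$ yields, for every $x\in[0,t]$ with $t\leq T$,
\be
h\!\left(\int_0^x \phi^f(t-u)\,\mathrm d G^*(u) + 1 - G^*(x)\right)\ \leq\ h\!\left(\int_0^x \phi^f(t-u)\,\mathrm d G(u) + 1 - G(x)\right).
\ee
Integrating this against $\mathrm d C(x)$ over $[0,t]$ and adding the $x=t$ term multiplied by $1-C(t)$ gives $\bigl(T^f_{(h,G^*,C)}\phi^f\bigr)(t)\leq\bigl(\TCf\phi^f\bigr)(t)=\phi^f(t)$ for all $t\in[0,T]$. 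Thus $\Psi=\phi^f$ restricted to $[0,T]$ is a $[0,1]$-valued function, not identically $1$, satisfying the subsolution inequality of Corollary~\ref{cor::CP_WEAKER} for $(h,G^*,C)$, and the proof is complete.

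I expect the only genuine obstacle to be the bookkeeping in the Stieltjes integration by parts, since $\phi^f$ need not be continuous and $G,G^*$ may carry atoms: one must be a little careful with one-sided limits and with the left endpoint $0$. As with Lemma~\ref{lm::estimates}, however, these are elementary manipulations that do not affect the displayed inequalities, and apart from this the proof is a verbatim adaptation of the $h$-comparison argument already given.
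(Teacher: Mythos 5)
Your proposal is correct and follows essentially the same route as the paper: take $\Psi=\phi^f$, establish the pointwise inequality between the arguments of $h$ for $G$ and $G^*$ via Stieltjes integration by parts (the paper integrates each term by parts separately and uses $G^*\geq G$ with $\mathrm d\phi_t\geq 0$, whereas you work with the difference $\Delta=G^*-G$, which is the same computation), and conclude with Corollary~\ref{cor::CP_WEAKER}. Your explicit remark that $\phi^f<1$ on $(0,\infty)$ guarantees $\Psi\neq 1$ is a detail the paper leaves implicit.
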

\begin{proof}
Fix a $t \leq T$ and write $\phi_t(u)=\phi^f(t-u)$ for $u \leq t$, where $\phi^f$ is the smallest solution to (\ref{eq::CP_FPE_MAIN}). Partial integration gives,  since $G(0)=0$,
\begin{equation*}
\int_0^x \phi_t(u)\mathrm d G(u) = G(x)\phi_t(x) - \int_0^x G(u) \mathrm d \phi_t(u),
\end{equation*}   
for all $x \leq t$. We thus have
\begin{eqnarray}
\int_0^x \phi_t(u)\mathrm d G(u) + 1-G(x) &=& 1- G(x)(1-\phi_t(x)) - \int_0^x G(u) \mathrm d \phi_t(u) \\
&\geq& 1- G^*(x)(1-\phi_t(x)) - \int_0^x G^*(u) \mathrm d \phi_t(u),
\end{eqnarray}
since $G^*\geq G$ on $[0,T]$ and $\phi_t$ is a non-decreasing function (hence $\mathrm d \phi_t$ is non-negative).
Writing now the RHS back in the same manner, we obtain
\begin{equation*}
 \int_0^x \phi_t(u)\mathrm d G(u) + 1-G(x) \ge  \int_0^x \phi_t(u) \mathrm d G^*(u) + 1-G^*(x), \hspace*{2 cm} x \leq t.
\end{equation*}
Then, for all $t \leq T$, 
\[ \ba 
\phi^f(t) &= \lr{ \TCf \phi^f } (t) \\
 & \geq \lr{T^f_{(h^*,G,C)} \phi^f}(t),
\ea \]
and Corollary \ref{cor::CP_WEAKER}, where we use $\Psi = \phi^f$ as a test function, captures that $(h,G^*,C)^f$ is explosive. 
\end{proof}

\begin{theorem}
Let $h$ be a probability generating function. Let $G$ be the distribution function of a non-negative random variable. Let both $C$ and $C^*$ be distribution functions of non-negative random variables such that $C(0)=C^*(0)=0$. Assume that there is a $T$ such that $C^* \leq C$ on $[0,T]$. If the process $\HGCf$ is explosive, then so is $(h,G^*,C)^f$. 
\end{theorem}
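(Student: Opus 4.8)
The plan is to feed the smallest solution $\phi^f$ of the fixed point equation \eqref{eq::CP_FPE_MAIN} for $\HGCf$ into Corollary \ref{cor::CP_WEAKER} as a test function, now for the operator $T^f_{(h,G,C^*)}$, and so conclude that $(h,G,C^*)^f$ is explosive. First I would record that, since $\phi^f(t) = 1 - \P{N^f(t) = \infty}$ and $\HGCf$ is explosive, $\phi^f < 1$ somewhere, whence (by the theorem above) $\phi^f < 1$ on all of $(0,\infty)$, so $\phi^f|_{[0,T]} \neq 1$. By Corollary \ref{cor::CP_WEAKER} it then suffices to show $\phi^f(t) \geq \lr{T^f_{(h,G,C^*)}\phi^f}(t)$ for every $t \in [0,T]$; since $\phi^f = \TCf \phi^f$, this is the same as $\lr{T^f_{(h,G,C^*)}\phi^f}(t) \leq \lr{\TCf \phi^f}(t)$.

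The crux is a monotonicity remark. Fixing $t \in [0,T]$ and setting $A_t(x) = \int_0^x \phi^f(t-u)\,\mathrm d G(u) + 1 - G(x)$ for $x \in [0,t]$, one has $A_t(x) - A_t(x') = \int_x^{x'}\bigl(1 - \phi^f(t-u)\bigr)\,\mathrm d G(u) \geq 0$ for $0 \leq x \leq x' \leq t$, because $\phi^f \leq 1$; hence $A_t$ is non-increasing on $[0,t]$, takes values in $[0,1]$, and (as $h$ is non-decreasing on $[0,1]$) $g_t(x) := h\bigl(A_t(x\wedge t)\bigr)$ is a bounded non-increasing function on $[0,\infty)$ that is constant on $[t,\infty)$. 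I would then check, straight from the definition of the operator and using $C(0) = C^*(0) = 0$, that $\lr{\TCf \phi^f}(t) = \Ev\bigl[g_t(\tau^C)\bigr]$ and $\lr{T^f_{(h,G,C^*)}\phi^f}(t) = \Ev\bigl[g_t(\tau^{C^*})\bigr]$, with $\tau^C \sim C$ and $\tau^{C^*}\sim C^*$.

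To finish, I would exploit that $g_t$ is flat beyond $t$: $g_t(\tau^C) = g_t(\tau^C\wedge t)$ and $g_t(\tau^{C^*}) = g_t(\tau^{C^*}\wedge t)$ pointwise, so only the laws of the truncated variables matter. Since $C^* \leq C$ on $[0,T] \supseteq [0,t]$, for $y \in [0,t)$ one has $\P{\tau^{C^*}\wedge t > y} = 1 - C^*(y) \geq 1 - C(y) = \P{\tau^C\wedge t > y}$, while both sides vanish for $y \geq t$; thus $\tau^C\wedge t \overset{d}{\leq} \tau^{C^*}\wedge t$. Applying this stochastic order to the bounded non-increasing $g_t$ gives $\Ev[g_t(\tau^{C^*}\wedge t)] \leq \Ev[g_t(\tau^C\wedge t)]$, i.e. $\lr{T^f_{(h,G,C^*)}\phi^f}(t) \leq \lr{\TCf \phi^f}(t) = \phi^f(t)$ for all $t \in [0,T]$, and Corollary \ref{cor::CP_WEAKER} (with $\Psi = \phi^f|_{[0,T]}$) closes the argument. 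I expect the only genuine subtlety to be getting the direction of the stochastic order right --- $C^* \leq C$ means \emph{longer} contagious periods, hence more surviving children and ``more explosiveness'' --- together with the fact that $C^* \leq C$ is known only on $[0,T]$, which is exactly why truncating at $t \leq T$ (harmless, as $g_t$ is constant past $t$) is the right move. If one prefers to bypass the probabilistic language, the inequality $\int_0^t g_t\,\mathrm d C \geq \int_0^t g_t\,\mathrm d C^*$ plus boundary terms follows by integration by parts exactly as in the proof of the previous theorem, using $g_t$ non-increasing and $(C - C^*)(0) = 0$, $C - C^* \geq 0$ on $[0,T]$.
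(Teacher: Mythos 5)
Your proof is correct and takes essentially the same route as the paper: both arguments feed $\phi^f$ into Corollary \ref{cor::CP_WEAKER} and rest on the monotonicity of $x \mapsto h\bigl(\int_0^x \phi^f(t-u)\,\mathrm d G(u) + 1 - G(x)\bigr)$, the paper realizing the comparison of $C$ and $C^*$ by the partial-integration computation (using $\mathrm d h_t \leq 0$) that you offer as your closing alternative, while your main text packages the same inequality as stochastic domination of the truncated contagious periods. Your explicit verification that $\phi^f \neq 1$ on $[0,T]$ (via the fact that $\phi^f<1$ on all of $(0,\infty)$ for an explosive process) is a small point the paper leaves implicit.
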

\begin{proof}
Fix $t < T$. Let \[h_t(x) = h\lr{\int_0^x \phi_t(u)\mathrm d G(u) + 1-G(x)},\] for $x \leq t$, where $\phi_t$ is defined as in the previous proof.  Note that $h_t$ now is a \emph{decreasing} function. 
We write, since $C(0)=0$,
\[\int_0^t h_t(x) \mathrm d C(x)= C(t) h_t(t) - \int_0^t C(x) \mathrm d h_t(x). \] 
Hence,
\[ \begin{aligned} \phi^f(t) &= \int_0^t h_t(x) \mathrm d C(x) + h_t(t)(1-C(t)) \\
&= h_t(t) C(t) - \int_0^t C(x) \mathrm d h_t(x) + h_t(t) (1-C(t)) \\
&= h_t(t) - \int_0^t C(x) \mathrm d h_t(x)\\
&\ge  h_t(t) - \int_0^t C^*(x) \mathrm d h_t(x), \end{aligned}\]
where in the last line we used that $\mathrm d h_t$ is non-positive. From here the statement follows by writing the RHS back and using Corollary \ref{cor::CP_WEAKER} with $\Psi = \phi^f$, since for $t \leq T$,
\[  
\phi^f(t) \geq \lr{T^f_{(h,G,C^*)} \phi^f}(t).
 \]
\end{proof}

\subsection{No conservative survival}
Now that we have the machinery at hand to decide on explosiveness questions it is time to ask whether an explosive process might survive conservatively with positive probability. The next theorem implies that this cannot be the case:

\begin{theorem}
\label{thm::CP_EXP_PROB}
Let $h$ be a probability generating function. Let $G$ be the distribution function of a non-negative random variable. Let $C$ be the distribution function of a non-negative random variable.
Consider the Galton-Watson process describing the generation sizes of the time-continuous branching process $\HGCf$ and denote the probability generating function of the corresponding offspring distribution by $h_{\text{eff}}$. Denote also $\eta_{\infty} = \displaystyle \lim_{t \to \infty} \mathbb{P}(N_t = \infty) $. We have
\begin{equation}
h_{\text{eff}}(s) =  \int_0^{\infty} h \lr{1 - (1-s) G(x)} \mathrm d C(x),\quad |s| \leq 1,
\end{equation}
and
\begin{equation}
1 - \eta_{\infty} = \int_0^{\infty} h \left( 1 - \eta_{\infty} G(x) \right) \mathrm d C(x).
\label{eq::CP_CONSS2}
\end{equation}
Further, the probability of conservative survival must equal zero in an explosive process.
\end{theorem}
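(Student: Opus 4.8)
The plan is to prove the three assertions in turn: the two displayed identities are computational, and the no-conservative-survival statement then follows from them together with the elementary fixed-point theory of probability generating functions. First I would identify $h_{\text{eff}}$. By the branching property, the effective offspring of the root is distributed as $D^f=\sum_{i=1}^{D}\indicator{X_i\le\tau^C}$, where $(X_i)_i$ is i.i.d.\ $\sim G$, independent of $D$ (with probability generating function $h$) and of $\tau^C\sim C$. Conditioning on $\tau^C=x$, the indicators $\indicator{X_i\le x}$ are i.i.d.\ Bernoulli$(G(x))$ and independent of $D$, so
\[
\E{s^{D^f}\mid\tau^C=x}=\E{\bigl(G(x)s+1-G(x)\bigr)^{D}}=h\bigl(1-(1-s)G(x)\bigr),
\]
and integrating over $\tau^C$ gives $h_{\text{eff}}(s)=\int_0^\infty h\bigl(1-(1-s)G(x)\bigr)\,\mathrm{d}C(x)$ for $|s|\le 1$.

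Next I would derive the equation for $\eta_\infty$ by letting $t\to\infty$ in the fixed point equation $\phi^f=\TCf\phi^f$, i.e.\ \eqref{eq::CP_FPE_MAIN}. By Theorems~\ref{thm::CP_FPE_IT_SMALLEST}--\ref{thm::CP_FPE_IT_RIGHT} we have $\phi^f(t)=1-\P{N^f(t)=\infty}$, and $\phi^f$ is non-increasing (as shown above), so $\phi_\infty:=\lim_{t\to\infty}\phi^f(t)=1-\eta_\infty$ exists. In the identity
\[
\phi^f(t)=\int_0^t h\!\Bigl(\int_0^x\phi^f(t-u)\,\mathrm{d}G(u)+1-G(x)\Bigr)\mathrm{d}C(x)+h\!\Bigl(\int_0^t\phi^f(t-u)\,\mathrm{d}G(u)+1-G(t)\Bigr)(1-C(t)),
\]
the second term is bounded by $1-C(t)\to 0$ since $C$ is proper. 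For the first term, fix $x$: as $\phi^f$ is non-increasing and bounded, $\phi^f(t-u)\to\phi_\infty$ for each $u$, so bounded convergence in $u$ against $\mathrm{d}G$ gives $\int_0^x\phi^f(t-u)\,\mathrm{d}G(u)\to\phi_\infty G(x)$, and continuity of $h$ makes the integrand converge pointwise to $h\bigl(\phi_\infty G(x)+1-G(x)\bigr)=h\bigl(1-\eta_\infty G(x)\bigr)$; being dominated by $1\in L^1(\mathrm{d}C)$, dominated convergence yields $\phi_\infty=\int_0^\infty h\bigl(1-\eta_\infty G(x)\bigr)\,\mathrm{d}C(x)$, which is the stated identity after substituting $\phi_\infty=1-\eta_\infty$.

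Finally, for the no-conservative-survival claim, observe that the two displayed formulas say precisely that $s=1-\eta_\infty$ solves $s=h_{\text{eff}}(s)$, so $1-\eta_\infty$ is a fixed point of the probability generating function $h_{\text{eff}}$ on $[0,1]$. If $\HGCf$ is explosive, then $\P{N^f(t_0)=\infty}>0$ for some $t_0$, and since $\eta(t):=\P{N^f(t)=\infty}$ is non-decreasing, $\eta_\infty\ge\eta(t_0)>0$, hence $1-\eta_\infty<1$. Moreover $h_{\text{eff}}$ is not the identity (otherwise $D^f\equiv 1$, the process reduces to a single renewal ray and cannot explode), so $h_{\text{eff}}(s)-s$ is convex with at most two zeros in $[0,1]$; one is $s=1$, and since there is one strictly below $1$ it must be the extinction probability $q<1$. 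Therefore $1-\eta_\infty=q$, i.e.\ $\eta_\infty=1-q$. On the other hand, the events $\{|\mathrm{BP}(t)|=\infty\}=\{N^f(t)=\infty\}$ (see the proof of Theorem~\ref{thm::CP_FPE_IT_RIGHT}) increase in $t$ to the explosion event, so by continuity from below $\P{\text{explosion}}=\lim_{t\to\infty}\eta(t)=\eta_\infty=1-q$. Since explosion forces survival of the associated Galton--Watson process, $\P{\text{conservative survival}}=\P{\text{survival}}-\P{\text{explosion}}=(1-q)-(1-q)=0$.

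The main obstacle is the passage to the limit $t\to\infty$ in the fixed point equation: one must simultaneously handle the growing integration range $[0,t]$ and the $t$-dependent integrand, which is exactly where the monotonicity and boundedness of $\phi^f$, the continuity of $h$, and the properness of $C$ (so that $C(\infty)=1$ and $1-C(t)\to 0$) are used. A secondary, more delicate point is being sure in the last step that $\eta_\infty$ is genuinely the \emph{total} probability of explosion rather than some auxiliary limit --- this is supplied by the identification $\{|\mathrm{BP}(t)|=\infty\}=\{N^f(t)=\infty\}$ together with continuity from below --- and checking that the degenerate case $h_{\text{eff}}=\mathrm{id}$ is automatically excluded once the process is explosive.
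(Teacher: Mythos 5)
Your proposal is correct and follows essentially the same route as the paper: compute $h_{\text{eff}}$ by conditioning on $\tau^C$, pass to the limit $t\to\infty$ in the fixed point equation to get \eqref{eq::CP_CONSS2}, and conclude that $1-\eta_\infty$ is a fixed point of $h_{\text{eff}}$, hence equal to the extinction probability, so explosion and survival have the same probability. The only (harmless) difference is technical: where the paper runs a uniform-continuity $\epsilon/3$ estimate to justify the limit in \eqref{eq::CP_CONSS2}, you use pointwise convergence of the inner integral plus dominated convergence against $\mathrm dC$, and you spell out the final bookkeeping (identifying $\eta_\infty$ with the total explosion probability via continuity from below) more explicitly than the paper does.
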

\begin{proof}
Write $\phi = 1 - \eta$. Since $\phi \geq 0$ is non-increasing, the limit $\eta_{\infty} = 1 -\displaystyle \lim_{t \to \infty} \phi(t)$ exists. 
Let $\epsilon > 0$ be given. As $h$ is uniformly continuous on $[0,1]$, there exists a $\delta >0$ such that $|x-y| < \delta$ implies $|h(x) - h(y)| < \frac{\epsilon}{3}$. To exploit these observations, fix $t > T$ both so large that $|1 - C(T)| \leq \frac{\epsilon}{3}$ and $|\eta(t-u) - \eta_{\infty}| < \delta$ for all $u \in [0,T]$. Write for those $u$, $\eta(t-u) = \eta_{\infty} + \delta(u)$ with $|\delta(u)| \leq \delta$, to establish by splitting the integrals at $t$ and $T$, 
\be \ba
& \left| \int_0^{\infty} h \left( 1 - \eta_{\infty} G(x) \right) \mathrm d C(x) - \int_0^{t} h \left( 1 - \int_0^{x} \eta(t-u) \mathrm d G(u) \right) \mathrm d C(x) \right| \\
& \leq \left|\int_0^{T}\left( h \left( 1 - \eta_{\infty} G(x) \right) -  h \left( 1 - \eta_{\infty} G(x) -\int_0^{x} \delta(u) \mathrm d G(u) \right) \right)\mathrm d C(x) \right| \\
& \quad + |1 - C(T)| + |C(t) - C(T)|,
 \ea \ee
as $\int_0^{x} \delta(u) g(u) \mathrm{d}u \leq \delta$, we may bound this from above by
\be \ba
\left|\int_0^{T} \frac{\epsilon}{3} \mathrm d C(x)\right| + \frac{\epsilon}{3} + \frac{\epsilon}{3} =  \epsilon.
 \ea \ee 
As $\epsilon$ was arbitrary, the second assertion \eqref{eq::CP_CONSS2} follows.
In the corresponding Galton Watson process, the offspring that will be born is given by
\begin{equation*}
D^f = \sum_{i=1}^D \indicator{X_i < C}, 
\end{equation*}
where $D$ has distribution $G$ and $(X_i)_i$ constitute an i.i.d. sequence with distribution $G$. 
The random variable $D^f$ has probability generating function $h_{\text{eff}}$, given for $|s| \leq 1$ by $h_{\text{eff}}(s) = \mathbb{E}s^{D^f}$, that is
\begin{eqnarray*}
h_{\text{eff}}(s) &=& \mathbb{E} s^{\sum_{i=0}^D \indicator{X_i < C}} \\
&=&  \sum_{k=0}^{\infty} \mathbb{P}(D = k) \mathbb{E} s^{\sum_{i=0}^k \indicator{X_i < C}}  \\
&=& \sum_{k=0}^{\infty}  \mathbb{P}(D = k)\int_0^{\infty} \mathbb{E} s^{\sum_{i=0}^k \indicator{X_i < x}} \mathrm d C(x) \\
&=& \sum_{k=0}^{\infty} \mathbb{P}(D = k) \int_0^{\infty}\left( \mathbb{E}  s^{ \indicator{X < x}} \right)^k \mathrm d C(x) \\
  &=&\int_0^{\infty} \sum_{k=0}^{\infty} \mathbb{P}(D = k) \left( \mathbb{E}  s^{ \indicator{X < x}} \right)^k \mathrm d C(x)  \\ 
 &=& \int_0^{\infty} h(1 - (1-s) G(x)) \mathrm d C(x).
\end{eqnarray*}
It is well known that the survival probability $q$  in an ordinary branching process equals the largest $s \in [0,1]$ such that $1-s = h_{\text{eff}}(1-s)$ and as there are at most two candidates ($0$ and a possibly larger number), $q$ must equal $\eta_{\infty}$ since the latter satisfies \eqref{eq::CP_CONSS2}. This completes the proof.
\end{proof}
\noindent As a corollary we have that an explosive process \emph{without} contagious period explodes almost surely:
\begin{corollary}
If we consider the quantity $\eta_{\infty} = \displaystyle \lim_{t \to \infty} \mathbb{P}(N(t) = \infty) $ in the process $(h,G)$ that is postulated to be explosive, then,
\[ 1 - \eta_{\infty} = h(1-\eta_{\infty}). \]
\label{cor::CP_CONSS}
\end{corollary}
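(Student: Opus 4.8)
The plan is to obtain this as a direct specialization of Theorem~\ref{thm::CP_EXP_PROB} to the degenerate contagious period. By Remark~\ref{rem::HG_HGinfty} the process $(h,G)$ is exactly $(h,G,C)^f$ for the improper distribution $C$ with $\P{C=\infty}=1$, and it is explosive by hypothesis, so Theorem~\ref{thm::CP_EXP_PROB} applies. First I would identify the effective offspring p.g.f.\ $h_{\text{eff}}$ in this case: since $X_i<\infty$ almost surely, $D^f=\sum_{i=1}^D\indicator{X_i<C}=\sum_{i=1}^D\indicator{X_i<\infty}=D$ a.s., whence $h_{\text{eff}}=h$. The theorem guarantees that $\eta_\infty=\lim_{t\to\infty}\P{N(t)=\infty}$ exists (it is the limit of a quantity non-decreasing in $t$, equivalently $\phi$ is non-increasing) and that it satisfies \eqref{eq::CP_CONSS2}; substituting $h_{\text{eff}}=h$ there yields precisely $1-\eta_\infty=h(1-\eta_\infty)$.

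The only place a reader might stumble is the convention in the integral form of \eqref{eq::CP_CONSS2}, $1-\eta_\infty=\int_0^\infty h(1-\eta_\infty G(x))\,\mathrm d C(x)$, when the mass of $C$ escapes to infinity: there the integrand must be read at $x=\infty$ as $h(1-\eta_\infty G(\infty))=h(1-\eta_\infty)$, using $G(\infty)=1$. To sidestep this entirely one can instead apply Theorem~\ref{thm::CP_EXP_PROB} to $(h,G,C_n)^f$ with $C_n$ the point mass at $n$, obtaining $1-\eta_\infty^{(n)}=h(1-\eta_\infty^{(n)}G(n))$, and then let $n\to\infty$, using that $(h,G,C_n)^f$ increases to $(h,G)$ (so $\eta_\infty^{(n)}\to\eta_\infty$) together with $G(n)\to1$ and the continuity of $h$.

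Alternatively, the statement can be proved from scratch without invoking Theorem~\ref{thm::CP_EXP_PROB}: let $t\to\infty$ in the fixed point equation \eqref{eq::AGE_gen_func}, $\phi(t)=h\bigl(1-G(t)+\int_0^t\phi(t-u)\,\mathrm d G(u)\bigr)$. Since $\phi$ is bounded and non-increasing, $\phi(\infty):=\lim_{t\to\infty}\phi(t)$ exists; dominated convergence (the integrand is bounded by $1$, which is $G$-integrable) gives $\int_0^t\phi(t-u)\,\mathrm d G(u)\to\phi(\infty)$, while $1-G(t)\to0$, and continuity of $h$ then yields $\phi(\infty)=h(\phi(\infty))$, i.e.\ $1-\eta_\infty=h(1-\eta_\infty)$. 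I do not expect any genuine obstacle here beyond the bookkeeping around the atom of $C$ at infinity; everything else is an immediate consequence of results already established, and in particular the characterization of $\eta_\infty$ in Theorem~\ref{thm::CP_EXP_PROB} does all the work.
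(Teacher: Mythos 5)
Your proposal is correct and its main route is exactly the paper's: the paper's (very terse) proof simply says the result follows "by setting $\mathbb{P}(\tau^C<\infty)=0$" in Theorem~\ref{thm::CP_EXP_PROB}, which is precisely your specialization $C=\infty$, $h_{\text{eff}}=h$; your care with the atom of $C$ at infinity and the approximation by point masses $C_n$ only makes this more rigorous than the original. The self-contained alternative via letting $t\to\infty$ in \eqref{eq::AGE_gen_func} with dominated convergence is also sound, but the first argument already suffices and matches the paper.
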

\begin{proof}
There are two ways to see this, either by setting $\mathbb{P} \left( \tau^C < \infty \right) = 0$ or recalling from the theory of ordinary branching processes that the extinction probablity $\eta$ is such that $\eta = h(\eta)$ and noting that $\eta_{\infty} = 1$.
\end{proof}
\begin{remark}
Conditional on surival, explosion is a tail event and by Kolmogorov's 0-1 law  it happens with probability either $0$ or $1$, as we just deduced by more elaborate means.
\end{remark}

\subsection{The explosiveness question}
If we are given an explosive process $(h,G)$, then a natural question to ask is under what conditions will adding a contagious period $C$ stop this process from being explosive? As pointed out in Section 1.3.1, one way to answer the explosiveness question is by observing that $C$ must be such that it kills with probability $1$ an edge on every finite ray. We employ that observation to prove Theorem \ref{thm::CP_ray}.
\\
Another way of addressing the explosiveness question is by employing the fixed point equation describing $\HGCf$ and using that the corresponding equation to $\HG$ has a solution not identically 1. By this approach we will establish that for any $\alpha \in (0,1)$ , $C$ such that $C \neq 0$, and any function $L$ slowly varying at infinity, the process $\HaLGCf$ is explosive if and only if $\HaLG$ is explosive, i.e., Theorem \ref{thm::HaGCf}. We extend this result to Theorem \ref{thm::HaLGCf}.
\\
To apply the first method, we need to quantify those rays a bit further. To this end, let $(T_i)_i$ be the associated life-times on the random ray that has shortest length (if all rays in a particular realization are infinite, then take the left-most ray). We see that a sufficient condition for $\HGCf$ to be explosive reads
\[ \mathbb{P}\left( \forall i: C_i \geq T_i , \sum_{i = 1}^{\infty} T_i < \infty  \right) > 0 ,\]
where $(C_i)_i$ is an i.i.d. sequence of random variables with distribution $C$, which we employ to prove Theorem \ref{thm::CP_ray}:
\begin{proof}[Proof of Theorem \ref{thm::CP_ray}]
First, note that 
\[ \mathbb{P} \left( \sum_{i = 1}^{\infty} T_i < \infty \right) = \P{N_t = \infty}. \]
Hence, since $\HG$ is explosive, $\mathbb{P} \left( \sum_{i = 1}^{\infty} T_i < \infty \right) = 1$, as follows from Corollary \ref{cor::CP_CONSS}. Thus,
\[ \mathbb{P}\left( \forall i: C_i \geq T_i , \sum_{i = 1}^{\infty} T_i < \infty \right) = \P{\forall i: C_i \geq T_i}.  \]
To handle this probability, we condition on all the possible values that $(T_i)_i$ may take,
\be \ba
\mathbb{P}\left( \forall i: C_i \geq T_i  \right) &= \E{\mathbb{P}\left( \left. \forall i: C_i \geq T_i  \right|(T_i)_i  \right)} \\
&= \mathbb{E}\left[  \prod_{i=1}^{\infty} \left( 1 - C(T_i) \right) \right].
\ea \ee
It remains to verify that this is strictly positive (note that here we need $C(t) < 1$ for all $t > 0$). But, $\prod_{i=1}^{\infty} \left( 1 - C(T_i) \right) > 0$ if and only if $\sum_{i = 1}^{\infty} C(T_i) < \infty$. The latter is true because $\sum_{i = 1}^{\infty} T_i < \infty $ a.s., so that there is a random variable $N$ ($N < \infty$ a.s.) such that $T_i < \theta$ for all $i \geq N$, implying that, since $C(t) \leq \beta t$ for $t < \theta$,
\[\sum_{i = 1}^{\infty} C(T_i) \leq \sum_{i = 1}^{N} C(T_i) + \beta \sum_{i = N+1}^{\infty} T_i <\infty, \] with probability 1. Hence,  $\prod_{i=1}^{\infty} \left( 1 - C(T_i) \right) > 0$ almost surely, and the result follows.
\end{proof}
\noindent
Recall $\Ha$ from \eqref{eq::Ha}. Before proving that $\HaLGCf$ is explosive if and only if $\HaLG$ is explosive for any $C \neq 0$ and $L$ slowly varying at infinity, we first show that $(h_{\alpha},M,C)$ is always explosive, where $M$ is the distribution function of an exponential random variable with parameter $\lambda > 0$. The calculation establishing this will serve as an illustrative application of Corollary \ref{cor::CP_WEAKER}, as it involves explicitly finding a suitable test function. Let us start by arguing in a heuristic way that $(h_{\alpha},M,C)$ should be explosive. For this process we have that the probability generating function $h_{\text{eff}}$, of the offspring that is eventually born, is given by (according to Theorem \ref{thm::CP_EXP_PROB})
\[ h_{\text{eff}}(s) = 1 - \left(1-s\right)^{\alpha} \int_0^{\infty} M(x)^{\alpha} \mathrm d C(x), \hspace*{1 cm} |s| \leq 1. \] 
Thus $\delta = \int_0^{\infty} M(x)^{\alpha} \mathrm d C(x) > 0$,  $h'_{\text{eff}}(1) = \infty$ and, 
\begin{equation}
\ba
s - h_{\text{eff}}(s) &= (s-1) + (1-s)^{\alpha} \delta\\ 
&= (1-s)^{\alpha}(\delta - (1-s)^{1 - \alpha}) \\
&\geq  (1-s)^{\alpha}\frac{\delta}{2}, 
\ea
\label{eq::CP_EXP_ESTIMATE}
\end{equation}
if $s$ close to $1$. Hence,
\begin{equation*}
\int_{1-\epsilon}^{1} \frac{\mathrm d s}{s- h_{\text{eff}}(s)} \leq \frac{2}{\delta} \int_{1-\epsilon}^{1} \frac{\mathrm d s}{(1-s)^{\alpha}} < \infty,
\end{equation*}
for $\epsilon$ so small that the inequality in (\ref{eq::CP_EXP_ESTIMATE}) holds. It is well-known, see for instance \cite{Har63}, that this implies that the process $(h_{\text{eff}},G)$ is explosive, which gives a hint on the explosiveness of $(h_{\alpha},M,C)$. Note that we only gave an heuristic proof as there is no independence in the process to employ. That this heuristic argument  actually rests on firm grounds is the content of the next theorem:

\begin{theorem}
Let $\alpha \in (0,1)$. Let $M$ be the law of an exponential random variable with parameter $\lambda > 0$. Let $C$ be the distribution function of a random variable  such that $C(\theta) < 1$ for some $\theta >0$. Then, the process $\HaGCf$ is explosive. 
\label{thm::CP_Markov}
\end{theorem}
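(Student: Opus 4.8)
The plan is to turn the heuristic preceding the statement into a rigorous argument by exhibiting an explicit test function in Corollary~\ref{cor::CP_WEAKER}. Thus it suffices to find $T>0$ and a function $\Psi\colon[0,T]\to[0,1]$ with $\Psi\not\equiv1$ and $\Psi(t)\ge(T^f_{(\Ha,M,C)}\Psi)(t)$ for all $t\in[0,T]$. I would write $\Psi=1-\eta$ with $\eta\ge0$. Since $M(0)=0$ one has $\int_0^x\Psi(t-u)\,\mathrm dM(u)+1-M(x)=1-\int_0^x\eta(t-u)\,\mathrm dM(u)$, and because $\Ha(1-y)=1-y^{\alpha}$, the operator applied to $\Psi$ collapses (after cancellation of the constant pieces) to $(T^f_{(\Ha,M,C)}\Psi)(t)=1-R_\eta(t)$, where
\[
R_\eta(t)=\int_0^t\Big(\int_0^x\eta(t-u)\,\mathrm dM(u)\Big)^{\!\alpha}\mathrm dC(x)+\Big(\int_0^t\eta(t-u)\,\mathrm dM(u)\Big)^{\!\alpha}\big(1-C(t)\big)\ \ge\ 0.
\]
Hence the inequality $\Psi\ge T^f_{(\Ha,M,C)}\Psi$ is simply $\eta(t)\le R_\eta(t)$.

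Next I would discard the first (non-negative) term of $R_\eta$ and absorb the contagious period into a constant: for $t\le\theta$ one has $1-C(t)\ge1-C(\theta)=:\kappa>0$, so it is enough to produce $\eta\not\equiv0$ with $0\le\eta\le1$ on some interval $[0,T]$ with $T\le\theta$ and
\[
\eta(t)\ \le\ \kappa\Big(\int_0^t\eta(t-u)\,\mathrm dM(u)\Big)^{\!\alpha},\qquad t\in[0,T].
\]
For this I would use the power-law ansatz $\eta(t)=c\,t^{\beta}$ with $\beta=\alpha/(1-\alpha)$; this exponent is the one forced by scaling, since integrating a power against $\mathrm dM$ raises its exponent by $1$ near the origin and then the power $\alpha$ multiplies it by $\alpha$, so a near fixed point needs $\beta=\alpha(\beta+1)$ --- the same exponent that produces the constant $A=c^{\alpha/(1-\alpha)}$ in Theorem~\ref{th::AGE_CONSTANT}. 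Using $\mathrm dM(u)=\lambda e^{-\lambda u}\,\mathrm du\ge\lambda e^{-\lambda t}\,\mathrm du$ on $[0,t]$ and $e^{-\lambda t}\ge\tfrac12$ for $t\le\tfrac{\ln2}{\lambda}$, one gets $\int_0^t\eta(t-u)\,\mathrm dM(u)\ge\frac{c\lambda}{2(\beta+1)}t^{\beta+1}$, and since $\alpha(\beta+1)=\beta$ the displayed inequality reduces to $c^{1-\alpha}\le\kappa\big(\tfrac{\lambda}{2(\beta+1)}\big)^{\alpha}$, which holds once $c>0$ is small enough. Fixing such a $c$ and taking $T=\min\{\theta,\tfrac{\ln2}{\lambda},c^{-1/\beta}\}$ also ensures $0\le\eta\le1$ on $[0,T]$ and $\Psi\not\equiv1$, so Corollary~\ref{cor::CP_WEAKER} yields explosiveness.

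I do not expect a genuine analytic obstacle: after the substitution $\Psi=1-\eta$ every estimate is a one-line bound ($e^{\lambda v}\ge1$, $e^{-\lambda t}\ge\tfrac12$, monotonicity of $h_\alpha$), and the point $t=0$ is harmless because $\beta>0$ makes both sides vanish there. The only real content is (i) noticing that the contagious period can be swept into the constant $\kappa=1-C(\theta)$, so nothing of $C$ survives except positivity, and (ii) guessing the self-similar test function $\eta(t)=c\,t^{\alpha/(1-\alpha)}$. One should just double-check the collapse of the fixed-point operator in the first paragraph carefully, as it is the crux; everything after it is routine. The same computation would in fact work verbatim whenever $G$ has a density bounded away from $0$ near the origin, which foreshadows the more general Theorem~\ref{thm::HaGCf}.
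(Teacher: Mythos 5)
Your proposal is correct and follows essentially the same route as the paper: reduce to a test function for Corollary~\ref{cor::CP_WEAKER}, observe that the operator collapses to $1-R_\eta$ under $\Psi=1-\eta$, discard the first integral, and keep only the $(1-C(t))$ term with $1-C(t)\ge 1-C(\theta)$. The only (immaterial) difference is that the paper takes a strictly supercritical exponent $\beta>\alpha/(1-\alpha)$ with unit coefficient and wins by shrinking $t$, whereas you take the critical exponent $\beta=\alpha/(1-\alpha)$ with a small coefficient $c$ and win by shrinking $c$ --- which is in fact the same trick the paper uses later in the proof of Theorem~\ref{thm::HaGCf}.
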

\begin{proof}
Let $g$ be the derivative of $M$, i.e., $g(u) = \lambda e^{-\lambda u}$ for $u \geq 0$. Pick $\beta$ so large that $\frac{\beta}{\beta + 1} > \alpha$ and set $\psi$ for $t \geq 0$ equal to $\psi(t) = 1 - t^{\beta}$. By Corollary \ref{cor::CP_WEAKER} it suffices to show that there exists $T>0$ such that
\be
\ba
\psi(t) &\geq \int_0^t h \left( \int_0^x \psi(t-u) \mathrm d G(u) + 1 - G(x)\right) \mathrm d C(x) \\ 
& \quad + h \left(  \int_0^t \psi(t-u) \mathrm d G(u) + 1 - G(t)\right)(1-C(t)), \hspace*{2 cm} t \in [0,T].
\ea
\ee
However, this is equivalent to,
\be
\eta(t) \leq  \int_0^t  \left( \int_0^x \eta(t-u) \mathrm d G(u) \right)^{\alpha} \mathrm d C(x) + \left( \int_0^t \eta(t-u) \mathrm d G(u) \right)^{\alpha} \left( 1 - C(t) \right), \quad t \in [0,T], \label{eq::CP_Markov}
\ee
if we set $\eta = 1 - \psi$. In fact, the last term on the RHS already dominates the LHS. Indeed, for $t \leq \theta$, $1 - C(t) > 0$, and thus
\begin{equation*}
\int_0^t \eta(t-u) \mathrm d G(u) \geq \lambda \  \exp(-\lambda t) \frac{t^{\beta + 1}}{\beta + 1}.
\end{equation*}
Hence,
\begin{equation}
\left( \int_0^t \eta(t-u) \mathrm d G(u) \right)^{\alpha} ( 1 - C(t) ) \geq A t^{\alpha (\beta + 1)},
\label{eq::CP_EXP_INT_ESTIMATE}
\end{equation}
where \[A = \lr{\frac{\lambda e^{-\lambda \theta} }{\beta + 1}}^{\alpha} ( 1 - C(\theta) ) > 0\] is a constant. Now, there is a $T \leq \theta$ such that the term in the RHS of (\ref{eq::CP_EXP_INT_ESTIMATE}) is larger than or equal to $\eta(t) = t^{\beta}$ for all $t \leq T$, since $\alpha (\beta+1) < \beta$.
\end{proof}

\noindent By picking suitable test functions it is possible to generalize this result further into Theorem \ref{thm::HaGCf}:
\begin{proof}[Proof of Theorem \ref{thm::HaGCf}]
Since $(h,G)$ is explosive, there is a $\phi: [0,\infty) \to [0,1]$, with $\phi(t) < 1$ for $t > 0$, such that 
\[ \phi(t) = \Ha \left( \int_0^t \phi(t-u) \mathrm d G(u) + 1 - G(t)\right), \hspace*{1 cm} t \geq 0. \]
Write $\phi = 1 - \eta$, then, using that $\Ha(s) = 1 - (1-s)^{\alpha}$ for $s \in [0,1]$, 
\[ \eta(t) =  \left( \int_0^t \eta(t-u)\mathrm d G(u) \right)^{\alpha} .\]
Let $1 > A > 0$ be a constant and put $\psi = A \eta$, then, for $t \leq \theta$,
\begin{eqnarray*}
 \left( \int_0^t \psi(t-u)\mathrm d G(u) \right)^{\alpha} (1-C(t)) &\geq& (1-C(\theta)) A^{\alpha}  \left( \int_0^t \eta(t-u)\mathrm d G(u) \right)^{\alpha} \\
 &=& (1-C(\theta)) A^{\alpha}  \eta(t) \\
 &=& (1-C(\theta)) A^{\alpha - 1} \psi(t) \\
 &\geq& \psi(t), 
\end{eqnarray*}
if $A$ is sufficiently small. Since $1 > \psi > 0$ on $(0,\theta]$,  Corollary \ref{cor::CP_WEAKER}  proves the result (see the proof of Theorem \ref{thm::CP_Markov} and in particular equation \eqref{eq::CP_Markov}).
\end{proof}
\noindent An appeal to Potter's theorem extends this result to the most general case:

\begin{proof}[Proof of Theorem \ref{thm::HaLGCf}]
Assume that $\HaLG$ is explosive. In the proof of Theorem \ref{th::AGE_allAlphaL}, equation \eqref{eq::AGE_POTTER} points out that  $(h_{\alpha - \epsilon},G)$ is explosive if we set $0 <\epsilon < \alpha$. Hence, for suitably small $\epsilon > 0$, $(h_{\alpha + \epsilon},G)$ is explosive by Theorem \ref{th::AGE_allAlpha}. Theorem \ref{thm::HaGCf} entails that $(h_{\alpha + \epsilon},G,C)^f$ is explosive and it thus follows from another appeal to equation \eqref{eq::AGE_POTTER} together with the comparison theorems that $\HaLGCf$ is explosive.  
\end{proof}

\newpage\null\thispagestyle{empty}\newpage

\section{Backward Age-Dependent Branching Processes with Contagious Periods}
In the backward process we study how an infection reaches to a person backward in time. It is denoted by  $\HGCb$, with $h$ a probability generating function and both $G$ and $C$ distribution functions of non-negative random variables. This process is described in more detail in Section 1.4. There it is pointed out that we might identify the process $\HGCb$ with either $(h_{\text{eff}},G_{\text{eff}})$ or
 $(h,G_C)$ and here we choose to make the identification $\HGCb = (h,G_C)$.
For $x < \infty$, $G_C(x)$ is defined as 
\be
\ba
G_C(x) &=  \P{X \leq x , X \leq C} \\
 & = \int_0^x (1-C(u))\mathrm d G(u).
\ea
\ee

\subsection{Fixed point equation}
The function $\phi^b:[0,\infty) \mapsto [0,1]$, given for $t \geq 0$ by $\phi^b(t) = 1 -\P{N^b(t) = \infty}$, is the smallest solution to 
\be
\Phi = \TCb \Phi ,\quad \Phi \in \mathcal{V},
\ee
where $\mathcal{V}$ is defined in \eqref{eq::function_space}. The operator $\TCb = T_{(h,G_C)}$ is given for $t \geq 0$ by 
\be
\lr{\TCb \Phi}(t) =  h\left(\int_0^t \Phi(t-u) \mathrm d G_C + 1-G_C(t)  \right),
\ee
see \eqref{eq::AGE_FP_usual}. Due to the identification $(h,G_C) = \HGCb$, all machinery of Section 1 carries through. 

\subsection{The explosiveness question}
Here we show that $\HGCb$ is explosive if and only if $\HG$ is explosive, under the condition that $C(0) \neq 1$ and $h$ is the probability generating function of a plump distribution $D$, i.e., Theorem \ref{thm::BCP_explosiveness}:

\begin{proof}[Proof of Theorem \ref{thm::BCP_explosiveness}]
To employ Theorem \ref{th::AGE_PLUMP} we make some preliminary observations. To start with, we note that there exists $\theta \in (0,1)$ such that $1-C(t) \geq \theta$ for all $t \leq T$. Moreover, as $D$ is plump, there exists $\epsilon >0$ such that 
\be F^{\leftarrow}_D(1 - \frac{1}{m}) \geq m^{1 + \epsilon} \label{eq::CP_BACK_f_n_plump}, \ee
for all sufficiently large $m$.
We define $f: \mathbb{N} \to [0,\infty)$ as
\[ f(0) = m_0 \mbox{ and }f(n+1) = F^{-1}_D(1 - \frac{1}{f(n)}), n \geq 0, \]
with  $m_0^{\epsilon} > \frac{1}{\theta} > 1$ large enough for the plumpness inequality to hold for every $m \geq m_0$. Then, by \eqref{eq::CP_BACK_f_n_plump},
\be f(n+1) \geq f(n)^{\epsilon}f(n) \geq f(0)^{\epsilon}f(n) > \frac{1}{\theta} f(n), \label{eq::CP_BACK_f_n_increasing} \ee
because $f(\cdot)$ is strictly increasing as follows from the first inequality and an induction argument. Now, for $t \leq T$,
\[ G_C(t) = \int_0^t (1-C(x))\mathrm d G(x) \geq \int_0^t  \theta \mathrm d G(x) = \theta G(t), \]
and hence,
\be
G^{\leftarrow}_C(t) \leq G^{\leftarrow}\lr{\frac{t}{\theta}},
\label{eq::CP_BACK_plump1}
\ee
if $G^{\leftarrow}(\frac{t}{\theta}) \leq T$ or $t \leq \theta G(T)$. Let $N < \infty$ be so large that $\frac{1}{f(n)} \leq \theta G(T)$ for all $n \geq N$, which is possible because $f$ is strictly increasing, see \eqref{eq::CP_BACK_f_n_increasing}. Then,
\be \ba 
\s{n=0}{\infty} G^{\leftarrow}_C \lr{\frac{1}{f(n)}} & = \s{n=0}{N} G^{\leftarrow}_C \lr{\frac{1}{f(n)}} + \s{n=N+1}{\infty} G^{\leftarrow}_C \lr{\frac{1}{f(n)}} \\
& \leq \s{n=0}{N} G^{\leftarrow}_C \lr{\frac{1}{f(n)}} + \s{n=N+1}{\infty} G^{\leftarrow} \lr{\frac{1}{\theta f(n)}} \\
& < \infty.
\ea \ee
The first inequality is due to \eqref{eq::CP_BACK_plump1}.
The last inequality follows from the fact that $\theta f(n+1)> f(n)$, for all $n$ (recall  \eqref{eq::CP_BACK_f_n_increasing}),
\[ \s{n=N+1}{\infty} G^{\leftarrow} \lr{\frac{1}{\theta f(n)}} = \s{n=N}{\infty} G^{\leftarrow} \lr{\frac{1}{\theta f(n+1)}} \leq \s{n=N}{\infty} G^{\leftarrow} \lr{\frac{1}{f(n)}} < \infty,\]
because $\HG$ is explosive.
\end{proof}

\newpage

\section{Comparison of Backward and Forward Processes with Contagious Periods}
In this section we prove Theorem \ref{th::CP_COMPARING_BW_FW}, which states that the explosion time in the forward process stochastically dominates the explosion time in the backward process:
\begin{proof}[Proof of Theorem \ref{th::CP_COMPARING_BW_FW}]
We show that, for all $t \geq 0$, $\phi^f(t) \geq \phi^b(t)$, where $\phi^b$ and $\phi^f$ are the smallest functions such that
\be \phi^b = \TCb \phi^b, \label{eq::CP_COMPARING_phib} \ee
alternatively, for $t \geq 0$, 
\eqn{\phi^b(t) =h\left(\int_0^t \phi^b(t-u) \mathrm d G_C(u) + 1-G_C(t)  \right).}
And, 
\be \phi^f = \TCf \phi^f, \label{eq::CP_COMPARING_phif}\ee
which translates, for $t \geq 0$, into
\be \ba  \phi^f(t) &= \int_0^t h \left( \int_0^x \phi^f(t-u)\mathrm d G(u) + 1 - G(x) \right) \mathrm d C(x) \\
 & \quad +  h \left( \int_0^t \phi^f(t-u)\mathrm d G(u) + 1 - G(t)\right)(1-C(t)). \label{eq::CP_COMPARING_phi_f} \ea \ee
We rewrite \eqref{eq::CP_COMPARING_phi_f} as

\be \ba  \phi^f(t) &= \int_0^{\infty} h \left( \indicator{[0,t]}(x) \lr{\int_0^x \phi^f(t-u)\mathrm d G(u) + 1 - G(x)} \right. \\ & \quad + \left.  \indicator{[t,\infty]}(x) \lr{\int_0^t \phi^f(t-u)\mathrm d G(u) + 1 - G(t)} \right) \mathrm d C(x),  \ea \ee
and, employing the convexity of $h$ using Jensen's inequality,
\be \ba \phi^f(t) & \geq h \lr{\int_0^t \int_0^x \phi^f(t-u) \mathrm dG(u) \mathrm d C(x) + \int_0^t (1-G(x)) \mathrm d C(x) \right. \\ & \quad + \left.(1-C(t)) \left( \int_0^t \phi^f(t-u)\mathrm d G(u) + 1 - G(t)\right) } . \ea \ee
Now, the double integral may be written as
\[ \int_0^t \int_u^t \phi^f(t-u)\mathrm d C(x) \mathrm d G(u) =  \int_0^t \phi^f(t-u) (C(t) - C(u))\mathrm d G(u),\]
and,
\[ \int_0^t (1-G(x)) \mathrm d C(x) + (1-G(t))(1-C(t)) = 1 - \int_0^t  (1-C(x))\mathrm d G(x).  \]
Hence,
\be \phi^f(t) \geq h \lr{\int_0^t \phi^f(t-u) (1 - C(u))\mathrm d G(u) + 1 - \int_0^t  (1-C(u))\mathrm d G(u)}, \ee
or,
\be \phi^f(t) \geq h \lr{\int_0^t \phi^f(t-u) \mathrm d G_C(u) +1 - G_C(t)}. \ee
We will now derive an iterative solution for $(h,G_C)$ and show that it is dominated by $\phi^f$. Set $\phi_0 = \phi^f$ and $\phi_{k+1} = \TCb \phi_k$, i.e., for $t \geq 0$,
\[\phi_{k+1}(t) = h \lr{\int_0^t \phi_{k}(t-u)\mathrm d G_C(u) +1 - G_C(t)}.\]
Then, for all $t$,
$\phi_1(t) \leq \phi_0(t)$, and, by induction similarly as in the proof of Theorem \ref{thm::CP_FPE_IT_SMALLEST}, $\phi_{k+1}(t) \leq \phi_{k}(t)$, for all $k$.
Hence, for all $t \geq 0$, $\phi(t):= \displaystyle \lim_{k \to \infty} \phi_{k}(t)$ exists and $\phi(t)\leq \phi^f(t)$. Thus, by an appeal to LDC,
\[ \phi(t) = h \lr{\int_0^t \phi(t-u)\mathrm d G_C(u) +1 - G_C(t)}, \]
or,
\[ \phi = \TCb \phi. \]
Since the \emph{right} solution for the backward process $\phi^b$ is always the smallest solution by Theorem \ref{thm::AGE_right}, we have
\[ \phi^b(t) \leq \phi(t) \leq \phi^f(t),\]
for all $t \geq 0$, and thus
\[ \P{V^b < t} \geq \P{V^f < t},\]
for $t \geq 0$. We obtained the last inequality after recalling the probabilistic interpretation of $\phi^f$ (and $\phi^b$), that is, for $t \geq 0$,
\[ \ba \phi^f(t) &= 1 - \P{N^f(t) = \infty} \\
&= \P{V^f > t}.
\ea \]
\end{proof}

\newpage
\section{Forward Age-Dependent Branching Processes with Incubation Periods}
In this section we study the influence of an incubation period on age-dependent branching processes. 
That is, we investigate the explosiveness of the process $\HGIf$, where $h$ is a probability generating function and both $G$ and $I$ are distribution functions of non-negative random variables. 
Similarly to the contagious period case, we have for $N_f(t)$ the number of individuals in the coming generation at time $t$:
\be N_f(t) = \sum_{i=1}^D \indicator{ t \leq X_i,\tau^I \leq X_i} + \indicator{\tau^I \leq X_i < t} N^{(i)}_f(t-X_i), \ee
where $\left( N^{(i)}_f(t) \right)_i$ are i.i.d. copies of $N_f(t)$ for each $t \geq 0$.
Define $F: [0,1] \times [0,\infty) \to [0,1]$ for $(s,t) \in [0,1] \times  [0,\infty)$ by $F(s,t) = \E{s^{N_f(t)}} $, then for such $(s,t)$,
\[ F(s,t) =\sum_{k=1}^{\infty} \P{D=k} \int_{0}^\infty \lr{ \Ev\left[ s^{\indicator{ t \leq X_1, x \leq X_1} + \indicator{x \leq X_1 < t } N^{(1)}(t-X_1)}\right]}^k  \mathrm d I(x).\]
Considering the indicator functions, it is best to split the integral at $t$,
\[ \ba F(s,t) &=  \int_0^t h\lr{\Ev\left[ s^{\indicator{ t \leq X_1  }  + \indicator{x \leq X_1 < t }N_f^{(1)}(t-X_1)}\right]} \mathrm d I(x)\\
&\quad +\int_t^{\infty} h\lr{\Ev\left[ s^{\indicator{ x \leq X_1  }}\right]} \mathrm d I(x)
.\ea\] 
We proceed by integrating over $X_i$ (note that a factor $1$ occurs when an indicator is not satisfied), to obtain
\[ \ba F(s,t) &=  \int_0^t h\lr{G(x) + \int_x^t \Ev\left[ s^{N^{(1)}_f(t-u)}\right]\mathrm d G(u) + s(1-G(t))} \mathrm d I(x)\\
&\quad +\int_t^{\infty} h\lr{G(x) + s(1-G(x))} \mathrm d I(x)
.\ea\] 
Note that $\Ev\left[ s^{N^{(1)}_f(t-u)}\right] = F(s,t-u)$ and thus,
\[ \ba F(s,t) &=  \int_0^t h\lr{G(x) + \int_x^t F(s,t-u)\mathrm d G(u) + s(1-G(t))} \mathrm d I(x)\\
&\quad +\int_t^{\infty} h\lr{G(x) + s(1-G(x))} \mathrm d I(x)
.\ea\]
We see that, for $t \geq 0$, $\phi_f(t) = \displaystyle \lim_{s \uparrow 1} F(s,t) = 1 - \P{N_f(t) = \infty}$ satisfies
\[ \ba \phi_f(t) &=  \int_0^t h\lr{G(x) + \int_x^t \phi_f(t-u)\mathrm d G(u) + 1-G(t)} \mathrm d I(x) + 1-I(t) 
.\ea\]

\subsection{Fixed point equation}
To answer the explosiveness question for the $\HGIf$ process we study the fixed point equation \be \Phi = \TIf \Phi, \hspace*{2 cm} \Phi \in \mathcal{V}, \label{eq::IP_FOR_FPE} \ee
where $\mathcal{V}$ is defined in \eqref{eq::function_space}. The behaviour of this equation resembles exactly the behaviour of the fixed point equation describing the process with contagious periods. That is, the process is conservative if and only if \eqref{eq::IP_FOR_FPE} allows only \emph{one} solution in $\mathcal{V}$.
 The operator  $\TIf: \mathcal{V} \mapsto \mathcal{V}$ is defined for $\Phi \in \mathcal{V}$ by
\be (\TIf \Phi)(t) =  \int_0^t h\lr{G(x) + \int_x^t \Phi(t-u)\mathrm d G(u) + 1-G(t)} \mathrm d I(x) + 1-I(t), \quad t \geq 0.  \ee
We proceed now by a similar approach as in the contagious period counterpart: we find an iterative solution and show that it is both the \emph{right} and smallest solution in $\mathcal{V}$ to \eqref{eq::IP_FOR_FPE}.

\subsubsection{An iterative solution}
Our aim here is to show that the law of the explosion time may be obtained as a sequential limit. To this end, set $\phi_0 =0$, and, for $k \geq 0$, $\phi_{k+1} = \TIf \phi_k$. Indeed, by a similar induction argument as carried out in the last section, we see that
\begin{enumeratei}
\item  $(\phi_k(t))_k$ is a non-decreasing sequence for any fixed $t \geq 0$,
\item  $\phi(t) = \displaystyle \lim_{k \to \infty } \phi_k(t) \leq \psi(t)$ if $\psi$ is any positive solution to (\ref{eq::IP_FOR_FPE}) and in particular $\phi(t) \leq 1$ for all $t \geq 0$. 
\end{enumeratei}
LDC gives us that $\phi$ is indeed a solution. Hence, we can conclude Theorem \ref{thm:IP_FOR_FPE_IT_SMALLEST}:
\begin{theorem}\label{thm:IP_FOR_FPE_IT_SMALLEST} 
$\phi$ defined for $t \geq 0$ by $\phi(t) = \displaystyle \lim_{k \to \infty} \phi_k(t) $ is the smallest solution to (\ref{eq::IP_FOR_FPE}), in the sense that if $\psi$ is any other non-negative solution, then $\phi(t) \leq \psi(t) $, for all $t \geq 0$.
\end{theorem}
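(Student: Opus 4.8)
The plan is to mirror almost verbatim the argument already carried out for the forward contagious-period process in Theorem~\ref{thm::CP_FPE_IT_SMALLEST}, since the operator $\TIf$ has exactly the same structural features: it is built from the monotone, continuous generating function $h$ and from monotone integrators $G$ and $I$, and the integrand is bounded between $0$ and $1$. Concretely, I would proceed in three steps.

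First I would show that for each fixed $t\geq 0$ the sequence $(\phi_k(t))_{k\in\mathbb N}$ is non-decreasing. This is an induction on $k$: the base case $\phi_1\geq\phi_0\equiv 0$ is immediate because $\TIf$ maps into $\mathcal V\subseteq[0,1]$-valued functions, and the inductive step follows from the monotonicity of $h$ and of the Lebesgue--Stieltjes integrals, using that if $\phi_k(s)\geq\phi_{k-1}(s)$ for all $s\leq t$ then the argument of $h$ in the definition of $\phi_{k+1}(t)$ dominates the corresponding argument for $\phi_k(t)$, pointwise in $x\in[0,t]$, hence $\phi_{k+1}(t)\geq\phi_k(t)$.

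Second I would show that $(\phi_k(t))_k$ is bounded above by any non-negative solution $\psi$ of \eqref{eq::IP_FOR_FPE}, again by induction: $\phi_0\equiv 0\leq\psi$ trivially, and if $\phi_k\leq\psi$ then, since $h$, $G$, $I$ are monotone, $\phi_{k+1}=\TIf\phi_k\leq\TIf\psi=\psi$. In particular, taking $\psi\equiv 1$ (which solves \eqref{eq::IP_FOR_FPE} because $\TIf 1 (t)=\int_0^t h(1)\,\mathrm dI(x)+1-I(t)=I(t)+1-I(t)=1$), the sequence is bounded by $1$, so $\phi(t)=\lim_{k\to\infty}\phi_k(t)$ exists for every $t\geq 0$ and satisfies $\phi\leq\psi$ for every non-negative solution $\psi$.

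Third I would verify that $\phi$ is itself a solution of \eqref{eq::IP_FOR_FPE}, i.e.\ that $\TIf\phi_k\to\TIf\phi$ pointwise. Fix $t\geq 0$ and $x\in[0,t]$. The inner integral $\int_x^t\phi_k(t-u)\,\mathrm dG(u)$ increases to $\int_x^t\phi(t-u)\,\mathrm dG(u)$ by monotone (or dominated) convergence, since $\phi_k(t-u)\leq\phi(t-u)\leq 1$ and $\phi\in L^1(\mathbb R^+,G)$; hence, by continuity of $h$, the integrand $h\big(G(x)+\int_x^t\phi_k(t-u)\,\mathrm dG(u)+1-G(t)\big)$ converges to its limit with $\phi$ in place of $\phi_k$, and is bounded by $1\in L^1(\mathbb R^+,I)$, so a final application of dominated convergence gives $\phi_{k+1}(t)=\TIf\phi_k(t)\to\TIf\phi(t)$. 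Combined with $\phi_{k+1}(t)\to\phi(t)$, this yields $\phi=\TIf\phi$. Together with the second step, $\phi$ is then the smallest non-negative solution, as claimed. I do not anticipate a genuine obstacle here; the only point requiring a little care is the justification of the two interchange-of-limit steps, and this is handled exactly as in the proof of Theorem~\ref{thm::CP_FPE_IT_SMALLEST} via LDC together with $\phi\in L^1(\mathbb R^+,G)$ and $h\leq 1$.
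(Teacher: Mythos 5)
Your proposal is correct and is essentially the paper's own argument: the paper proves this theorem precisely by transplanting the induction-plus-dominated-convergence proof of Theorem~\ref{thm::CP_FPE_IT_SMALLEST} to the operator $\TIf$, exactly as you do. Your verification that $\psi\equiv 1$ solves \eqref{eq::IP_FOR_FPE} and that the argument of $h$ stays in $[0,1]$ covers the only points where the incubation-period operator differs from the contagious-period one.
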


\noindent The probabilistic interpretation of this iterative solution is the context of the next theorem:
\begin{theorem}\label{thm::IP_FPE_IT_RIGHT} 
$\phi$ defined for $t \geq 0$ by $\phi(t) = \displaystyle \lim_{k \to \infty } \phi_k(t) $ is the right solution to (\ref{eq::IP_FOR_FPE}), that is, $\phi(t) = 1 -  \P{ N_f(t) = \infty }$, for all $ t \in [0,\infty)$. 
\end{theorem}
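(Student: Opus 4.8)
The plan is to mirror exactly the argument used for the forward contagious process in the proof of Theorem \ref{thm::CP_FPE_IT_RIGHT}: show by induction on $k$ that
\[
\phi_k(t) = 1 - \mathbb{P}\{\exists v \in \mathrm{BP}(t) \text{ s.t. } |v| = k\},
\]
where $\mathrm{BP}(t)$ is the collection of all dead or alive individuals born before time $t$ and $|v|$ is the generation of $v$. For the base case $k=1$, I would unfold $\phi_1 = \TIf \phi_0$ with $\phi_0 \equiv 0$, obtaining $\phi_1(t) = \int_0^t h(G(x) + 1 - G(t))\,\mathrm d I(x) + 1 - I(t)$. Interpreting $h(G(x) + 1 - G(t))$ probabilistically: conditioning on the incubation period $\tau^I = x$ of the root, a child survives into generation $1$ before time $t$ iff its infection time $X_i$ satisfies $x \leq X_i < t$, which has probability $G(t) - G(x)$; hence $h(1 - (G(t)-G(x))) = h(G(x) + 1 - G(t))$ is the probability that \emph{no} child of the root lies in $\mathrm{BP}(t)$ at generation $1$, given $\tau^I = x$. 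Integrating over $x \in [0,t]$ and adding the term $1 - I(t) = \mathbb{P}(\tau^I > t)$, which covers the case where the root's own incubation already exceeds $t$ so that no generation-$1$ individual can be born before $t$, yields $\phi_1(t) = 1 - \mathbb{P}\{\exists v \in \mathrm{BP}(t) : |v|=1\}$.

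For the inductive step, assume $\phi_k(t) = 1 - \mathbb{P}\{\exists v \in \mathrm{BP}(t) : |v| = k\}$ for all $t$. Expanding $\phi_{k+1} = \TIf \phi_k$, the inner argument of $h$ becomes $G(x) + \int_x^t \phi_k(t-u)\,\mathrm d G(u) + 1 - G(t)$. I would rewrite $\int_x^t \phi_k(t-u)\,\mathrm d G(u) = \int_x^t (1 - \mathbb{P}\{\exists w \in \mathrm{BP}(t-u): |w|=k\})\,\mathrm d G(u)$ and combine the three pieces $G(x)$, this integral, and $1 - G(t)$ into $1 - \mathbb{P}(E(t,x))$, where $E(t,x) = \{\exists w \in \mathrm{BP}(t - X_1): |w| = k,\ x \leq X_1 < t\}$ is the event that a designated child (with infection time $X_1 \sim G$, incubation threshold $x$) has, at time $t$, a generation-$k$ descendant of its own — equivalently, a generation-$(k{+}1)$ descendant of the root through that child. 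Then, introducing i.i.d. copies $E_1(t,x),\dots,E_D(t,x)$ over the $D$ children and using $h(\mathbb{P}(\overline{E(t,x)})) = \mathbb{P}(\overline{E_1(t,x)},\dots,\overline{E_D(t,x)})$, the integral $\int_0^t h(\mathbb{P}(\overline{E(t,x)}))\,\mathrm d I(x)$ becomes $\mathbb{P}\{\not\exists v \in \mathrm{BP}(t): |v|=k+1,\ \tau^I \leq t\}$, while the added term $1 - I(t)$ contributes $\mathbb{P}(\tau^I > t)$, on which event trivially no generation-$(k{+}1)$ individual exists before $t$. Summing gives $\phi_{k+1}(t) = 1 - \mathbb{P}\{\exists v \in \mathrm{BP}(t): |v| = k+1\}$.

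To finish, I would observe that $\{\exists v \in \mathrm{BP}(t): |v| = k+1\} \subseteq \{\exists v \in \mathrm{BP}(t): |v| = k\}$, a nested decreasing sequence, so by continuity of $\mathbb{P}$,
\[
\lim_{k\to\infty} \phi_k(t) = 1 - \mathbb{P}\Big(\bigcap_{k=1}^\infty \{\exists v \in \mathrm{BP}(t): |v|=k\}\Big).
\]
Since $D < \infty$ a.s., the intersection event equals $\{|\mathrm{BP}(t)| = \infty\}$, and as in the contagious-period case one checks $\{|\mathrm{BP}(t)| = \infty\} = \{N_f(t) = \infty\}$: if $\mathrm{BP}(t)$ is infinite it contains, by König's lemma, an infinite ray with convergent sum of birth times, and a branching process started at a vertex deep on this ray survives with positive probability, forcing $N_f(t) = \infty$; conversely $N_f(t) = \infty$ trivially implies $|\mathrm{BP}(t)| = \infty$. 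Hence $\phi(t) = 1 - \mathbb{P}(N_f(t) = \infty)$. The main obstacle is purely bookkeeping: correctly tracking how the incubation threshold $x$ enters the survival condition $x \leq X_1 < t$ (rather than the contagious-period condition $X_1 < x$) when rewriting the argument of $h$, and making sure the boundary term $1 - I(t)$ is assigned the right probabilistic meaning at each stage of the induction.
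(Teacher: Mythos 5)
Your proposal is correct and follows essentially the same route as the paper: the paper's own proof establishes the base case $\phi_1(t)=1-\mathbb{P}\{\exists v\in\mathrm{BP}(t):|v|=1\}$ via the identity $h(1-G(t)+G(x))=\mathbb{P}(\forall i: X_i>t \text{ or } X_i\leq x)$, asserts the inductive step, and then refers to the contagious-period proof for the limiting argument with nested events. Your write-up in fact fills in the inductive step (the identification of the argument of $h$ with $1-\mathbb{P}(E(t,x))$) more explicitly than the paper does, and the bookkeeping of the incubation condition $x\leq X_1<t$ and of the boundary term $1-I(t)$ is handled correctly.
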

\begin{proof}
For the notation we refer to the proof of Theorem \ref{thm::CP_FPE_IT_RIGHT}. 
If we fix $t \geq 0$, then, for any $x \leq t$,
\[ h\lr{1 - G(t) + G(x)} = \mathbb{P}\left( \forall i \in \{ 1 , \ldots , D \}: X_i > t \mbox{ or } X_i \leq x \right), \]
and hence,
\be
\ba
\phi_1(t) &= \mathbb{P} \left( \tau^I \leq t, \forall i \in \{ 1 , \ldots , D \}: X_i > t \mbox{ or } X_i \leq \tau^I  \right) + \mathbb{P} \left(\tau^I > t \right) \\
&=1 - \mathbb{P} \lr{ \exists v \in \mbox{BP}(t) \mbox{ s.t. } |v| = 1  },
\ea
\ee
where $\tau^I \sim I$. 
By induction it then follows
\be
\ba
\phi_k(t) &= \mathbb{P}  \left( \tau^I \leq t, \forall i \in \{ 1 , \ldots , D \}: \neg \left( \exists v \in \mbox{BP}^{(i)}(t-X_i) \mbox{ s.t. } |v| = k-1, \tau^I \leq X_i \leq t \right) \right) \\
& \quad + \mathbb{P} \left( \tau^I > t \right) \\
&= 1 - \mathbb{P} \{ \exists v \in \mbox{BP}(t) \mbox{ s.t. } |v| = k  \}.
\ea
\ee
The steps, remaining to establish that $\phi$ is indeed the right solution, are identical to the ones carried out in the proof of Theorem \ref{thm::CP_FPE_IT_RIGHT}. 
\end{proof}

\noindent Just as in the contagious period setting, i.e., Corollary \ref{cor::CP_WEAKER}, the explosiveness question is completely decided by the behaviour of the process $\HGIf$ immediately after the root starts giving birth, which is a corollary to the theorems that we just proved: 
\begin{corollary}
\label{cor::IP_WEAKER}
Let $h$ be a probability generating function. Let both $G$ and $I$ be distribution functions of non-negative random variables. The process $\HGIf$ is explosive  if and only if there exists $T > 0$ and a function $\Psi: [0,T] \to [0,1]$ such that $\Psi \neq 1$ and 
\be 
\label{eq::IP_WEAKER}
\Psi(t) \geq \lr{\TIf \Psi}(t), \hspace*{1 cm} \forall t \in [0,T],
\ee
or, equivalently,
\begin{equation}
\Psi(t) \geq \int_0^t h \left( G(x) + \int_x^t \Psi(t-u)\mathrm d G(u) + 1 - G(t) \right) \mathrm d I(x) + 1 - I(t), \hspace*{1 cm} \forall t \in [0,T].
\end{equation}
\end{corollary}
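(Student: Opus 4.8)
The plan is to mirror the proof of Corollary \ref{cor::CP_WEAKER} almost verbatim, exploiting that $\TIf$ shares the three structural features used there: it is monotone in its argument (being built from the non-decreasing functions $h$, $G$, $I$), it maps $\mathcal{V}$ into $\mathcal{V}$, and $h \le 1$ on $[0,1]$. Necessity is immediate: by Theorems \ref{thm:IP_FOR_FPE_IT_SMALLEST} and \ref{thm::IP_FPE_IT_RIGHT}, the iterative limit $\phi$ is simultaneously the smallest solution of \eqref{eq::IP_FOR_FPE} and satisfies $\phi(t) = 1 - \P{N_f(t) = \infty}$; if $\HGIf$ is explosive then $\phi \ne 1$, and $\Psi = \phi$ is a (sharp) witness since it solves \eqref{eq::IP_WEAKER} with equality.

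For sufficiency, given a witness $\Psi$ on $[0,T]$, define $\psi_0 : [0,\infty) \to [0,1]$ by $\psi_0(t) = \Psi(t)$ for $t \in [0,T]$ and $\psi_0(t) = 1$ for $t > T$, and set $\psi_{k+1} = \TIf \psi_k$ recursively. I would first check $\psi_1 \le \psi_0$ pointwise: for $t \le T$ all arguments $t-u$ with $u \in [0,t]$ again lie in $[0,T]$, so $\psi_1(t) = (\TIf \Psi)(t) \le \Psi(t)$ by hypothesis; for $t > T$ the bound $G(x) + \int_x^t \psi_0(t-u)\,\mathrm d G(u) + 1 - G(t) \le G(x) + (G(t)-G(x)) + 1 - G(t) = 1$ gives $\psi_1(t) \le \int_0^t h(1)\,\mathrm d I(x) + 1 - I(t) = 1 = \psi_0(t)$. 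Monotonicity of $\TIf$ then propagates $\psi_{k+1} \le \psi_k$ by induction, and the sequence, being non-increasing and bounded below by $0$, converges pointwise to some $\psi \in \mathcal{V}$.

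Next I would upgrade pointwise convergence to the fixed-point property: since $\psi_k(t-u) \downarrow \psi(t-u)$ and all integrands are dominated by the $I$-integrable constant $1$, Lebesgue's Dominated Convergence theorem (applied inside the $\mathrm d G$-integral and then the $\mathrm d I$-integral, using continuity of $h$) yields $\psi = \TIf \psi$, i.e.\ $\psi$ solves \eqref{eq::IP_FOR_FPE}. Because $\psi \le \psi_0 = \Psi$ on $[0,T]$ and $\Psi \not\equiv 1$, we have $\psi \ne 1$. Minimality of $\phi$ (Theorem \ref{thm:IP_FOR_FPE_IT_SMALLEST}) forces $\phi \le \psi$, so $\phi(t_0) < 1$ for some $t_0 > 0$, whence $\P{N_f(t_0) = \infty} = 1 - \phi(t_0) > 0$ and $\HGIf$ is explosive.

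The only genuinely delicate point — and hence the step I would write out most carefully — is the base case $\psi_1 \le \psi_0$ across the seam at $t = T$, together with making sure the dominated-convergence step has an honest integrable majorant on the \emph{whole} half-line (here the constant $1$ relative to $\mathrm d I$), so that the passage to the limit commutes with both integrals. Everything else is a transcription of the contagious-period argument with $\TCf$ replaced by $\TIf$.
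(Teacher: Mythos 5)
Your proposal is correct and is exactly the paper's argument: the paper's proof of this corollary simply reads ``Identical to the proof of Corollary \ref{cor::CP_WEAKER}'', and you have transcribed that contagious-period proof with $\TCf$ replaced by $\TIf$, including the two points that actually need checking (the seam at $t=T$ in the base case $\psi_1\le\psi_0$, and the dominated-convergence passage to the limit). Nothing further is needed.
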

\begin{proof}
Identical to the proof of Corollary \ref{cor::CP_WEAKER}.
\end{proof}

\noindent Moreover, the explosion time of an explosive process is never bounded away from $0$:

\begin{theorem}
If there exists $\tau > 0$ such that $\phi(\tau) < 1$, then $\phi < 1$ on $(0,\infty)$.
\end{theorem}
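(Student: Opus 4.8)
The plan is to mimic the structure of the proof of the analogous statement for the contagious-period process (the theorem ``If there exists $\tau>0$ such that $\phi(\tau)<1$, then $\phi(t)<1$ for all $t\in(0,\infty)$'' in Section~3.1.1): argue by contradiction, assuming that $\phi$ equals $1$ on an initial interval $[0,t_0]$ and is strictly below $1$ thereafter, then produce a shifted function $\phi^*(\tau)=\phi(\tau+t_0)$ that also solves the fixed point equation \eqref{eq::IP_FOR_FPE}, contradicting the minimality of $\phi$ established in Theorem \ref{thm:IP_FOR_FPE_IT_SMALLEST}.

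Concretely, first I would use that $\phi$ is non-increasing (which follows, just as in the contagious case, by an induction on the iterates $\phi_k=\TIf\phi_{k-1}$, using monotonicity of $h$, $G$ and $I$; this should be recorded as a preliminary observation or short lemma before this theorem, paralleling the contagious-period development). Given that, if $\phi$ hits $1$ somewhere in $(0,\infty)$ but also drops below $1$ somewhere, monotonicity forces the existence of $t_0>0$ with $\phi(t)=1$ for $t\le t_0$ and $\phi(t)<1$ for $t>t_0$. Next, for $t>t_0$ I would split the integral defining $\TIf\phi$ at $t-t_0$: on $u\in[t-t_0,t]$ we have $\phi(t-u)=1$, so inside the $h(\cdot)$ argument the piece $\int_x^t\phi(t-u)\,\mathrm dG(u)$ simplifies. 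Tracking the three resulting pieces (the outer integral over $x\in[0,t-t_0]$, the middle integral over $x\in[t-t_0,t]$, and the residual $1-I(t)$ term), I expect, as in the contagious case, that the $x$-dependence inside $h$ disappears on the middle range and that terms telescope so that
\[
\phi(t)=\int_0^{t-t_0} h\!\left(G(x)+\int_x^{t-t_0}\phi(t-u)\,\mathrm dG(u)+1-G(t-t_0)\right)\mathrm dI(x)+1-I(t-t_0).
\]
Setting $\phi^*(\tau)=\phi(\tau+t_0)$ and substituting $\tau=t-t_0$ turns this identity into exactly $\phi^*=\TIf\phi^*$, i.e.\ $\phi^*$ is a solution to \eqref{eq::IP_FOR_FPE}. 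But $\phi^*(t_0/2)=\phi(3t_0/2)<1=\phi(t_0/2)$, so $\phi^*$ is pointwise strictly below $\phi$ somewhere, contradicting that $\phi$ is the smallest solution (Theorem \ref{thm:IP_FOR_FPE_IT_SMALLEST}). Hence $\phi$ cannot equal $1$ at any point of $(0,\infty)$, and since $\phi(\tau)<1$ for some $\tau$, we get $\phi<1$ on all of $(0,\infty)$.

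The main obstacle I anticipate is the bookkeeping in the integral-splitting step: one must be careful that the incubation model's term $1-I(t)$ (rather than a $(1-C(t))$ factor multiplying an $h(\cdot)$) still telescopes correctly after the split, and that the inner argument $G(x)+\int_x^t\phi(t-u)\,\mathrm dG(u)+1-G(t)$ collapses to the right shifted form when $\phi\equiv 1$ on $[t-t_0,t]$ — in particular checking that $\int_x^{t-t_0}\phi(t-u)\,\mathrm dG(u)+\big(G(t)-G(t-t_0)\big)+1-G(t)=\int_x^{t-t_0}\phi(t-u)\,\mathrm dG(u)+1-G(t-t_0)$, so that the limits of integration genuinely become $\tau$ and $t-t_0$ rather than $t$. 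Once that algebra is verified, the rest is a verbatim adaptation of the contagious-period argument.
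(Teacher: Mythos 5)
Your proposal is correct and follows essentially the same route as the paper's proof: contradiction via monotonicity of $\phi$, splitting the integral at $t-t_0$ so that the middle piece collapses to $I(t)-I(t-t_0)$ and the inner argument becomes $G(x)+\int_x^{t-t_0}\phi(t-u)\,\mathrm dG(u)+1-G(t-t_0)$, then shifting to $\phi^*(\tau)=\phi(\tau+t_0)$ and contradicting minimality. The algebraic check you flag is exactly the computation the paper carries out, so no gap remains.
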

\begin{proof}
Suppose to the contrary that there exists $t > 0$ such that $\phi(t) = 1$, but then, as $\phi$ is non-increasing, there must exist $t_0 > 0$ such that
\begin{equation}
  \begin{array}{l l}
    \phi(t) = 1 & \quad \text{for $t \leq t_0$,}\\
    \phi(t) < 1 & \quad \text{for $t > t_0$.}
  \end{array}
\end{equation}
Then, for any $t > t_0$,
\begin{equation*}
\begin{split}
\phi(t) &= \int_0^{t-t_0} h \left( G(x) + \int_x^{t-t_0} \phi(t-u)\mathrm d G(u)   + \int_{t-t_0}^t \mathrm d G(u) +  1 - G(t) \right) \mathrm d I(x) \\
& \quad + \int_{t-t_0}^t h \left( G(x) + \int_x^{t} \mathrm d G(u)  +  1 - G(t) \right) \mathrm d I(x) + 1 - I(t) \\
&= \int_0^{t-t_0} h \left( G(x) + \int_x^{t-t_0} \phi(t-u)\mathrm d G(u)  +  1 - G(t-t_0) \right) \mathrm d I(x) + 1  - I(t-t_0).
\end{split}
\end{equation*}
Define $\phi^*: [0,\infty) \to [0,1]$ by
\begin{equation}
\phi^*(\tau) = \phi(\tau + t_0), \hspace*{1 cm} \tau \in [0,\infty),
\end{equation}
so that,
\begin{eqnarray*}
\phi^*(\tau) &=& \int_0^{\tau} h \left( G(x) + \int_x^{\tau} \phi(\tau + t_0-u) \mathrm d G(u) +  1 - G(\tau) \right) \mathrm d I(x)  + 1   - I(\tau) \\
&=& \int_0^{\tau} h \left( G(x) + \int_x^{\tau} \phi^*(\tau -u)  \mathrm d G(u)  +  1 - G(\tau) \right) \mathrm d I(x)  + 1   - I(\tau),
\end{eqnarray*}
in other words $\phi^*$ is a solution to (\ref{eq::IP_FOR_FPE}). But, $\phi^*(\frac{t_0}{2})<\phi (\frac{t_0}{2})$, which contradicts that $\phi$ is the smallest solution to (\ref{eq::IP_FOR_FPE}).\end{proof}

\subsection{Comparison theorems}
Corollary \ref{cor::IP_WEAKER} suggests that the behaviour of a process is completely determined around $0$, which we further investigate by proving some comparison theorems: 

\begin{theorem}
Let both $h$ and $h^*$ be probability generating functions such that for some $\theta < 1$, $h^*(s) \leq h(s)$ for all $s \in [\theta,1]$. Let $G$ be the distribution function of a non-negative random variable such that $G(0) = 0$. Let $I$ be the distribution function of a non-negative random variable. If the process $\HGIf$ is explosive, then so is $(h^*,G,I)_f$.
\end{theorem}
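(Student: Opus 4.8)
The plan is to follow exactly the template of the corresponding comparison theorem for \emph{forward processes with contagious periods} (the one comparing $h$ and $h^*$), substituting Corollary~\ref{cor::IP_WEAKER} for Corollary~\ref{cor::CP_WEAKER}. Let $\phi_f$ denote the smallest solution to the fixed point equation \eqref{eq::IP_FOR_FPE} associated with $\HGIf$. By Theorem~\ref{thm::IP_FPE_IT_RIGHT} we have $\phi_f(t) = 1 - \P{N_f(t) = \infty}$ for all $t \geq 0$, and since $\HGIf$ is explosive, the preceding theorem (explosion time never bounded away from $0$) guarantees $\phi_f(t) < 1$ for every $t > 0$; in particular $\phi_f \not\equiv 1$.

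The one thing to verify carefully is that the argument of $h$ occurring in $\TIf \phi_f$ lands in the interval $[\theta,1]$ on which $h^* \leq h$. For $0 \leq x \leq t$, using $0 \leq \phi_f \leq 1$, one gets the two-sided estimate
\[
1 - G(t) \;\leq\; G(x) + \int_x^t \phi_f(t-u)\,\mathrm d G(u) + 1 - G(t) \;\leq\; G(x) + \bigl(G(t) - G(x)\bigr) + 1 - G(t) \;=\; 1 .
\]
Here is the only place the hypothesis $G(0) = 0$ is used: it forces $G(t) \to 0$ as $t \downarrow 0$, so there is $T > 0$ with $1 - G(T) \geq \theta$, and then for every $t \in [0,T]$ and every $x \in [0,t]$ the above argument lies in $[\theta,1]$. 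Consequently, for $t \in [0,T]$,
\[
\phi_f(t) = (\TIf \phi_f)(t) = \int_0^t h\!\left( G(x) + \int_x^t \phi_f(t-u)\,\mathrm d G(u) + 1 - G(t)\right)\mathrm d I(x) + 1 - I(t) \;\geq\; \bigl(T_f^{(h^*,G,I)}\phi_f\bigr)(t),
\]
since replacing $h$ by the pointwise-smaller $h^*$ inside the integral only decreases the right-hand side (the extra $+\,1-I(t)$ term is common to both operators).

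Finally I would apply Corollary~\ref{cor::IP_WEAKER} to the process $(h^*,G,I)_f$ with the test function $\Psi = \phi_f|_{[0,T]}$: it maps $[0,T]$ into $[0,1]$, it is not identically $1$, and it satisfies $\Psi \geq T_f^{(h^*,G,I)}\Psi$ on $[0,T]$, which is precisely the sufficient condition in that corollary. Hence $(h^*,G,I)_f$ is explosive. I do not expect any genuine obstacle here; the proof is a routine transcription of the contagious-period argument, and the only substantive point — keeping the inner argument of $h$ inside the range $[\theta,1]$ — is handled by the assumption $G(0)=0$ exactly as above.
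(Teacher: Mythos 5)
Your proposal is correct and follows essentially the same route as the paper's own proof: bound the argument of $h$ below by $1-G(t)\geq\theta$ for $t\leq T$ using $G(0)=0$, replace $h$ by $h^*$ to get $\phi_f \geq T_f^{(h^*,G,I)}\phi_f$ on $[0,T]$, and invoke Corollary~\ref{cor::IP_WEAKER} with $\Psi=\phi_f$. Your explicit check that $\phi_f\not\equiv 1$ on $[0,T]$ is a welcome (if minor) extra care the paper leaves implicit.
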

\begin{proof}
Consider the explosive process $\HGIf$ and denote by $\phi_f$ the corresponding smallest solution to (\ref{eq::IP_FOR_FPE}). We observe that, for all $t \geq 0$,
\begin{equation*}
1 \geq G(x) + \int_x^t \phi_f(t-u)\mathrm d G(u) + 1 - G(t) \geq 1 - G(t). 
\end{equation*}
Hence, since $G(0)=0$, there exists  $T>0$ such that, for $t \in [0,T]$, 
\[G(x) + \int_x^t \phi_f(t-u)\mathrm d G(u) + 1 - G(t) \geq \theta, \]
and thus, for those $t$, 
\begin{equation*}
h \lr{G(x) + \int_x^t \phi_f(t-u)\mathrm d G(u) + 1 - G(t)} \geq h^* \lr{G(x) + \int_x^t \phi_f(t-u)\mathrm d G(u) + 1 - G(t)}. 
\end{equation*}
We establish that, for any $t \in [0,T]$,
\be
\phi_f(t) \geq \int_0^t h^* \lr{G(x) + \int_x^t \phi_f(t-u)\mathrm d G(u) + 1 - G(t)} \mathrm d I(x) + 1 - I(t),
\ee
or, 
\be 
\phi_f(t) \geq \lr{ T_f^{(h^*,G,I)} \phi_f }(t).
\ee
An appeal to Corollary \ref{cor::IP_WEAKER}, with $\Psi = \phi_f$, completes the proof.
\end{proof}

\begin{theorem}
Let $h$ be a probability generating function. 
Let both $G$ and $ G^*$ be distribution functions of non-negative random variables such that $G(0)=G^*(0)=0$, with densities $g$ and $g^*$ respectively. Assume that there exists a $T>0$ such that $g^* \geq g$ on $[0,T]$. Let $I$ be the distribution function of a non-negative random variable. If the process $\HGIf$ is explosive, then so is $(h,G^*,I)_f$. 
\end{theorem}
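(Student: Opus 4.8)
The plan is to follow the template of the earlier comparison theorems: show that the smallest solution $\phi_f$ to \eqref{eq::IP_FOR_FPE} associated with $\HGIf$ is itself an admissible sub-solution for the operator $T_f^{(h,G^*,I)}$ on a neighbourhood $[0,T]$ of the origin, and then invoke Corollary \ref{cor::IP_WEAKER}. Since $\HGIf$ is explosive we have $\phi_f\neq 1$, hence (by the theorem that an explosive $\phi$ is strictly below $1$ on all of $(0,\infty)$) $\phi_f<1$ on $(0,\infty)$; also $\phi_f$ takes values in $[0,1]$. These are the only properties of $\phi_f$ that will be used.

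The key algebraic step is to rewrite the argument of $h$ in the fixed point equation. Using $G(0)=0$, so that $G(x)=\int_0^x g(u)\,\dd u$ and $1-G(t)=\int_t^\infty g(u)\,\dd u$, one checks that for $0\le x\le t$,
\[ G(x)+\int_x^t \phi_f(t-u)\,\dd G(u)+1-G(t)=1-\int_x^t\bigl(1-\phi_f(t-u)\bigr)g(u)\,\dd u, \]
and the analogous identity holds with $(G^*,g^*)$ in place of $(G,g)$. Now fix $t\le T$. Then $[x,t]\subseteq[0,T]$ for every $x\in[0,t]$, so $g^*\ge g$ there; since moreover $1-\phi_f(t-u)\ge 0$, the integrand above only increases when $g$ is replaced by $g^*$, whence
\[ G(x)+\int_x^t \phi_f(t-u)\,\dd G(u)+1-G(t)\ \ge\ G^*(x)+\int_x^t \phi_f(t-u)\,\dd G^*(u)+1-G^*(t) \]
for all $0\le x\le t\le T$. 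Applying the monotone function $h$, integrating against $\dd I(x)$ over $[0,t]$, and adding $1-I(t)$ then yields $\phi_f(t)=\bigl(\TIf\phi_f\bigr)(t)\ge\bigl(T_f^{(h,G^*,I)}\phi_f\bigr)(t)$ for every $t\in[0,T]$. An appeal to Corollary \ref{cor::IP_WEAKER} with $\Psi=\phi_f$ restricted to $[0,T]$ (which is not identically $1$, since $\phi_f<1$ on $(0,\infty)$) then gives that $(h,G^*,I)_f$ is explosive.

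There is no real obstacle here: the only points needing a little care are that the rewriting identity relies on $G(0)=G^*(0)=0$ and on the existence of the densities (so that $\dd G=g\,\dd u$, $\dd G^*=g^*\,\dd u$), and that the factor $1-\phi_f(t-u)$ is automatically non-negative, which is why — in contrast to the comparison under a pointwise inequality on the \emph{distribution functions} $G^*\ge G$ — no integration by parts and no monotonicity of $\phi_f$ is needed here. Everything else is elementary calculus together with monotonicity of $h$, and the substantive input, Corollary \ref{cor::IP_WEAKER}, is already available.
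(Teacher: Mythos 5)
Your proof is correct, and the overall strategy is the one the paper uses: take $\Psi=\phi_f$ as the test function, establish the pointwise inequality on the argument of $h$, and conclude via Corollary \ref{cor::IP_WEAKER}. The one place where you genuinely diverge is the algebraic step in the middle. The paper integrates by parts twice, arriving at
\[
G(x)+\int_x^t\phi_t(u)\,\dd G(u)+1-G(t)=1-\int_x^t\int_x^u g(v)\,\dd v\,\dd\phi_t(u),
\]
and then needs $\dd\phi_t\ge 0$, i.e.\ the monotonicity of $\phi_f$, to compare $g$ with $g^*$ inside the double integral. You instead regroup the terms directly as $1-\int_x^t\bigl(1-\phi_f(t-u)\bigr)g(u)\,\dd u$, so the comparison only uses $\phi_f\le 1$ together with $g^*\ge g$ on $[0,T]$. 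Your identity is easily checked ($1-G(t)+G(x)=1-\int_x^t g$ since both distributions vanish at $0$ and have densities), and the rest — monotonicity of $h$, integration against $\dd I$, and the fact that $\phi_f<1$ on $(0,\infty)$ so that $\Psi\not\equiv 1$ on $[0,T]$ — matches the paper. The net effect is a slightly leaner argument that dispenses with one hypothesis-in-use (monotonicity of $\phi_f$) without losing anything; both versions rely equally on the existence of densities and on $G(0)=G^*(0)=0$.
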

\begin{proof}
Let $\phi_f$ be the smallest solution to \eqref{eq::IP_FOR_FPE}, corresponding to the explosive process $\HGIf$. Set for $0 \leq u \leq t \leq T$, $\phi_t(u) = \phi_f(t-u)$, then,  for $x \leq t$, since $\phi_t(t) = 1$,
\be \ba
G(x) + \int_x^t \phi_t(u) \mathrm d G(u) + 1 - G(t) &= G(x)(1-\phi_t(x)) + 1 - \int_x^t G(u)\mathrm d \phi_t(u) \\
&= \int_x^t G(x)\mathrm d \phi_t(u) + 1 - \int_x^t G(u)\mathrm d \phi_t(u),
\label{eq::CP_COMP_G_1}
 \ea \ee
after writing the first term in the right hand side of the first line as an integral. Now,
\[ G(x) - G(u) = \int_u^x g(v) \mathrm dv = - \int_x^u g(v) \mathrm dv,  \]
and we obtain
\be \int_x^t G(x)\mathrm d \phi_t(u) - \int_x^t G(u)\mathrm d \phi_t(u) = - \int_x^t \int_x^u g(v) \mathrm dv \mathrm d \phi_t(u). \label{eq::CP_COMP_G_2} \ee
Combining \eqref{eq::CP_COMP_G_1} and \eqref{eq::CP_COMP_G_2}, we have
\be \ba
G(x) + \int_x^t \phi_t(u)\mathrm d G(u) + 1 - G(t) &= 1 - \int_x^t \int_x^u g(v) \mathrm dv \mathrm d \phi_t(u) \\
& \geq 1 - \int_x^t \int_x^u g^*(v) \mathrm dv \mathrm d \phi_t(u) \\
& = G^*(x) + \int_x^t \phi_t(u)g^*(u) \mathrm du + 1 - G^*(t),
 \ea \ee
by a similar exercise. We find, for $t \leq T$, 
\[ \phi_f(t) = \lr{\TIf \phi_f}(t) \geq \lr{T_f^{(h,G^*,I)}\phi_f}(t), \]
and by Corollary \ref{cor::IP_WEAKER} the process $(h,G^*,I)_f$ must be explosive.
\end{proof}
\begin{remark}
We cannot hope that, for instance, $G^* \geq G$ on some interval would also lead to a comparison theorem. Since, if the life-lengths become shorter, they might become even shorter than the incubation periods. Hence, we must have, for small $t \geq 0$,  \[\P{X \leq t| X \geq \tau^I} \leq \P{X^* \leq t| X^* \geq \tau^I},\] where $X \sim G$, $X^* \sim G^*$ and $\tau^I \sim I$. This is precisely reflected in $g^* \geq g$ on $[0,T]$.
\end{remark}

\begin{theorem}
Let $h$ be a probability generating function. 
Let  $G$ be the distribution function of a non-negative random variable. 
Let both $I$ and $I^*$ be distribution functions of non-negative random variables such that $I(0)=I^*(0)=0$. Assume there exists a $T>0$ such that $I \leq I^*$ on $[0,T]$. If the process $\HGIf$ is explosive, then so is $(h,G,I^*)_f$. 
\label{thm::IP_FOR_COMP_I}
\end{theorem}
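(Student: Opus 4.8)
The plan is to follow the pattern of the two comparison theorems just proved and invoke Corollary~\ref{cor::IP_WEAKER} with the test function $\Psi=\phi_f$, the smallest solution to \eqref{eq::IP_FOR_FPE} attached to the explosive process $\HGIf$. Since $\HGIf$ is explosive, $\phi_f(\tau)<1$ for some $\tau>0$, and by the monotonicity theorem for $\phi_f$ established above this forces $\phi_f<1$ on all of $(0,\infty)$; in particular $\phi_f\neq 1$. Thus it is enough to check that $\phi_f(t)\geq\bigl(T_f^{(h,G,I^*)}\phi_f\bigr)(t)$ for every $t\in[0,T]$.

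Fix $t\leq T$ and write
\[
H(x)=h\!\left(G(x)+\int_x^t\phi_f(t-u)\,\mathrm d G(u)+1-G(t)\right),\qquad x\in[0,t].
\]
First I would record two facts. One: $H(t)=h(1)=1$. Two: $H$ is non-decreasing on $[0,t]$, because when $x$ increases from $x_1$ to $x_2$ the argument of $h$ changes by $\bigl(G(x_2)-G(x_1)\bigr)-\int_{x_1}^{x_2}\phi_f(t-u)\,\mathrm d G(u)\geq 0$ (here one uses $\phi_f\leq 1$), and $h$ is non-decreasing. Hence $\mathrm d H$ is a non-negative measure on $[0,t]$.

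Now, using $I(0)=I^*(0)=0$ and $H(t)=1$, Stieltjes integration by parts yields
\[
\phi_f(t)=\bigl(T_f^{(h,G,I)}\phi_f\bigr)(t)=\int_0^t H(x)\,\mathrm d I(x)+1-I(t)=1-\int_0^t I(x)\,\mathrm d H(x),
\]
and, in exactly the same way, $\bigl(T_f^{(h,G,I^*)}\phi_f\bigr)(t)=1-\int_0^t I^*(x)\,\mathrm d H(x)$. Since $I^*\geq I$ on $[0,T]$ and $\mathrm d H\geq 0$ on $[0,t]$, we get $\int_0^t I(x)\,\mathrm d H(x)\leq\int_0^t I^*(x)\,\mathrm d H(x)$, so that $\phi_f(t)\geq\bigl(T_f^{(h,G,I^*)}\phi_f\bigr)(t)$ for all $t\leq T$. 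An appeal to Corollary~\ref{cor::IP_WEAKER} with $\Psi=\phi_f$ then completes the proof.

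All steps are elementary; the only mildly delicate point is the integration by parts when $G$ (hence $H$) or $I,I^*$ carry atoms, but this is handled exactly as in the $C$- and $G$-comparison proofs above (cf. the manipulation $\int_0^t h_t(x)\,\mathrm d C(x)=C(t)h_t(t)-\int_0^t C(x)\,\mathrm d h_t(x)$ used there), so I do not anticipate a genuine obstacle.
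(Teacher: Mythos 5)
Your proposal is correct and follows essentially the same route as the paper's proof: the same test function $\Psi=\phi_f$, the same auxiliary function ($H$ is exactly the paper's $h_t$), the same monotonicity argument for $\mathrm d H\geq 0$, and the same integration by parts exploiting $H(t)=1$ and $I(0)=I^*(0)=0$ before invoking Corollary~\ref{cor::IP_WEAKER}. No issues.
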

\begin{proof}
Let $\phi_f$ be the smallest solution in $\mathcal{V}$ such that $\phi_f = \TIf \phi_f$. Fix $T \geq t \geq 0$ and define $h_t$ for $x \leq t$ as
\be h_t(x) = h \lr{ G(x) + \int_x^t \phi_f(t-u)\mathrm d G(u) + 1 - G(t) }. \ee
We will  see shortly that $h_t$ is increasing, which establishes the result, since then, for $t \leq T$,
\be \ba
\phi_f(t) &= \lr{\TIf \phi_f}(t) \\
&= \int_0^t h_t(x)  \mathrm d I(x) + 1 - I(t) \\
&= 1 -  \int_0^t I(x) \mathrm d h_t(x),
 \ea \ee
by partial integration, because $h_t(t) = 1$ and $I(0)=0$. Next, note that $\mathrm d h_t$ is non-negative by the claim, and thus
\be \phi_f(t) \geq 1 - \int_0^t I^*(x) \mathrm d h_t(x) = \lr{T_f^{(h,G,I^*)}\phi_f}(t),\ee
and Corollary \ref{cor::IP_WEAKER} ensures that $(h,G,I^*)_f$ is explosive. 
It thus remains to verify the monotonicity of $h_t$. Fix $t \geq x' \geq x$ and define, for $u \leq t$, 
\[ f_t(u) = G(u) + \int_u^t \phi_f(t-v) \mathrm d G(v) + 1 - G(t), \]
to see that 
\be \ba
f_t(x') - f_t(x) &= G(x') - G(x) + \int_{x'}^t \phi_f(t-v) \mathrm d G(v) - \int_{x}^t \phi_f(t-v) \mathrm d G(v) \\
&= G(x') - G(x) - \int_{x}^{x'} \phi_f(t-v) \mathrm d G(v).
\ea \ee
Now, for all $u \leq t$, we know that $0 \leq \phi_f(t-u) \leq 1$, and hence 
\[ f_t(x') - f_t(x) \geq 0, \]
and as $h$ is increasing,
\[ h_t(x') - h_t(x) = h \lr{f_t(x')} - h \lr{f_t(x)} \geq 0,\]
which was the only assertion left in need for a proof. 
\end{proof}

\subsection{The explosiveness question}
Here we investigate what conditions on $G$ and $I$ make a process $\HGIf$ explosive. We conjecture that it is explosive if and only if $\HG$ and $(h,I)$  both explode. In this section we prove one side of the conjecture:  the explosiveness of $\HG$ and $(h,I)$ is  necessary. Then, in the next section, we shall shift our attention  to the backward process and show that the conjecture holds for it under a very mild condition on $G$ and $I$. In the last section, we shall deduce explosiveness in the forward process from explosion in the backward process by presenting an algorithm that finds an exploding path.
\\
We can construct a realization of the $\HGIf$ process in the following way. First pick a realization of $\HG$ and take a tree point of view. That is, consider it as a realization of a Galton-Watson tree with offspring probability generating function $h$ and (life-time) weights on the edges that are independently chosen from some distribution $G$. Then, give each vertex a(n) (incubation) weight according to some distribution $I$. Remove every edge for which its weight is smaller than the incubation on its upper vertex. The connection is now made by identifying the desired realization of $\HGIf$ with the component of the root. 
\\
Hence, it is clear that a realization of $\HGIf$ may only explode if the underlying realization(s) of $\HG$ is/are explosive. Thus, explosiveness of $\HG$ is necessary for $\HGIf$  to be explosive. That it also necessitates the explosiveness of $(h,I)$  is the object of study in  Theorem \ref{thm::IP_for_necessary}: 

\begin{proof}[Proof of Theorem \ref{thm::IP_for_necessary}]
It is equivalent to prove that the explosiveness of $\HGIf$  implies that $(h,I)$ is explosive. 
Consider an ordinary Galton-Watson process with offspring probability generating function $h$ where to each vertex $v$ two random variables $\tau^I_v \sim I$ and $X_v \sim G$ are attached (see Fig. \ref{fig::IP_FOR_NECESSARY_R}). Denote the root by $r$. We say, for a given edge $e = \{u,v \}$, that $u$ is its upper vertex if $v$ is a child of $u$ and denote this ordered edge by $uv$.
\\
The process $\HGIf$ can be thought of as the time-continuous $BP^{(1)}$ equal to the component of the root in the graph that we obtain by letting each edge $uv$ have length
\[ L^{(1)}_{uv} = X_v, \]
and remove it if $\tau^I_u > X_v$, see Fig. \ref{fig::IP_FOR_NECESSARY_BP1}.
\\
We also consider the time-continuous branching process $BP^{(2)}$ where each edge $uv$ has length
\[ L^{(2)}_{uv} = \tau^I_u, \]
see Fig. \ref{fig::IP_FOR_NECESSARY_BP2}.
Hence, to each realization $R$ of the Galton-Watson process enriched with two random variables attached to its vertices correspond realizations $R^{(1)}$ and $R^{(2)}$ of $BP^{(1)}$, respectively $BP^{(2)}$. If $R^{(1)}$ contains a finite ray, then $R^{(2)}$ must contain such a ray as well. Indeed, every surviving edge $uv$ in $R^{(1)}$ has length
\[ L^{(1)}_{uv} = X_v \geq \tau^I_u = L^{(2)}_{uv},\]
and we see that the exact same ray is finite in  $BP^{(2)}$. 
\\
Consider a third time-continuous branching process $BP^{(3)}$, where the root is born at time $\tau^I_r$. We let every other edge $uv$ have length $\tau^I_v$ (see Fig. \ref{fig::IP_FOR_NECESSARY_BP3}). Every realization $R^{(3)}$ of $BP^{(3)}$ can mapped bijectively to a realization $R^{(2)}$ of $BP^{(2)}$, upon noting that both are obtained by shifting the incubation periods on the vertices in a realization $R$ either up or down. Moreover, due to this shift, a ray in one realization is finite if and only if it is finite in the other realization. Thus $BP^{(3)}$ is explosive if and only if $BP^{(2)}$ is explosive. Also, $BP^{(3)}$ is just the process $(h,I)$ with the root now born at time $\tau^I_r$. We conclude that $(h,I)$ is explosive if the process $\HGIf$ explodes.
\end{proof}

 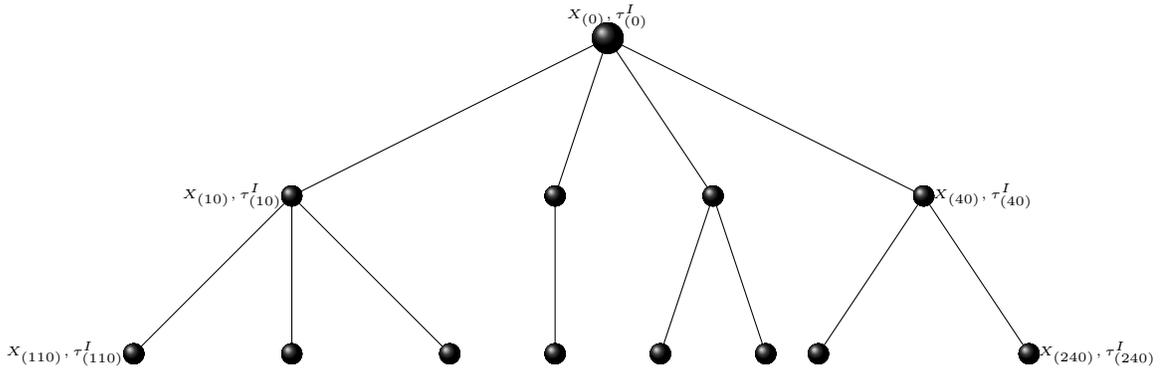
\begin{figure}

\centering
\begin{tikzpicture}[scale=1.4,
dotr/.style={circle,fill=black,minimum size=4pt,inner sep=0pt,
            outer sep=-1pt},
dot/.style={circle,fill=black,minimum size=3pt,inner sep=0pt,
            outer sep=-1pt}           ],
\tikzstyle help lines=[color=gray,very thin]
\tikzstyle en grid=[style=help lines]

\coordinate[label=above:\tiny {$X_{(0)},  \tau_{(0)}^I $}] (r) at (0,0);

\coordinate[label=left:\tiny {$X_{(10)}, \tau_{(10)}^I$}] (r1) at (-3, -1.5);
\coordinate[label=left:\tiny {$X_{(110)}, \tau_{(110)}^I$}] (r11) at (-4.5, -3);
\coordinate[label=left:$ $] (r12) at (-3, -3);
\coordinate[label=below:\tiny {$ $}] (r13) at (-1.5, -3);

\coordinate[label=above:\tiny {$ $}] (r2) at (-0.5, -1.5);
\coordinate[label=above:\tiny {$ $}] (r21) at (-0.5, -3);

\coordinate[label=above:\tiny {$ $}] (r3) at (1, -1.5);
\coordinate[label=above:\tiny {$ $}] (r31) at (0.5, -3);
\coordinate[label=above:\tiny {$ $}] (r32) at (1.5, -3);

\coordinate[label=right:\tiny {$X_{(40)}, \tau_{(40)}^I$}] (r4) at (3, -1.5);
\coordinate[label=below:\tiny {$ $}] (r41) at (2, -3);
\coordinate[label=right:\tiny {$X_{(240)}, \tau_{(240)}^I$}] (r42) at (4, -3);

\draw (r) -- (r1);
\draw (r1) -- (r11);
\draw (r1) -- (r12);
\draw (r1) -- (r13);

\draw (r) -- (r2);
\draw (r2) -- (r21);

\draw (r) -- (r3);
\draw (r3) -- (r31);
\draw (r3) -- (r32);

\draw (r) -- (r4);
\draw (r4) -- (r41);
\draw (r4) -- (r42);
 \shade[ball color=black] (r) circle (0.15);
 \shade[ball color=black] (r1) circle (0.1);
 \shade[ball color=black] (r11) circle (0.1);
 \shade[ball color=black] (r12) circle (0.1);
 \shade[ball color=black] (r13) circle (0.1);
 \shade[ball color=black] (r2) circle (0.1);
 \shade[ball color=black] (r21) circle (0.1);
 \shade[ball color=black] (r3) circle (0.1);
 \shade[ball color=black] (r31) circle (0.1);
 \shade[ball color=black] (r32) circle (0.1);
 \shade[ball color=black] (r4) circle (0.1);
 \shade[ball color=black] (r41) circle (0.1);
 \shade[ball color=black] (r42) circle (0.1);
 \end{tikzpicture}
\caption{A possible realization $R$ of the Galton-Watson process enriched with random variables $X_v \sim G$ and $\tau_v^I \sim I$ attached to each of its vertices $v$. The notation $(ij0)$ means that $i$ is a child of $j$ who has the root $0$ as a mother. Note that not all random variables are drawn and that all edges are assumed to have unit length and are \emph{yet} unweighted.}
\label{fig::IP_FOR_NECESSARY_R}
\end{figure}

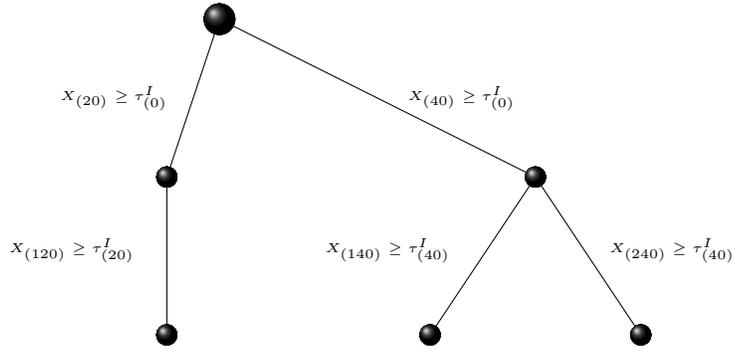
\begin{figure}
\centering
\begin{tikzpicture}[scale=1.4,
dotr/.style={circle,fill=black,minimum size=4pt,inner sep=0pt,
            outer sep=-1pt},
dot/.style={circle,fill=black,minimum size=3pt,inner sep=0pt,
            outer sep=-1pt}           ],
\tikzstyle help lines=[color=gray,very thin]
\tikzstyle en grid=[style=help lines]

\coordinate[label=above:\tiny {$ $}] (k) at (0,0);
\coordinate[label={[black]center:\tiny  $X_{(20)} \geq \tau_{(0)}^I$}] (l) at (-1,-0.75);
\coordinate[label={[black]center:\tiny  $X_{(40)} \geq \tau_{(0)}^I$}] (l) at (2.3,-0.75);

\coordinate[label=left:\tiny {$ $}] (k1) at (-3, -1.5);
\coordinate[label=left:\tiny {$ $}] (k11) at (-4.5, -3);
\coordinate[label=left:$ $] (k12) at (-3, -3);
\coordinate[label=below:\tiny {$ $}] (k13) at (-1.5, -3);

\coordinate[label=above:\tiny {$ $}] (k2) at (-0.5, -1.5);
\coordinate[label=above:\tiny {$ $}] (k21) at (-0.5, -3);
\coordinate[label={center:\tiny  $X_{(120)} \geq \tau_{(20)}^I$}] (l) at (-1.4,-2.2);

\coordinate[label=above:\tiny {$ $}] (k3) at (1, -1.5);
\coordinate[label=above:\tiny {$ $}] (k31) at (.5, -3);
\coordinate[label=above:\tiny {$ $}] (k32) at (1.5, -3);

\coordinate[label=right:\tiny {$ $}] (k4) at (3, -1.5);
\coordinate[label=below:\tiny {$ $}] (k41) at (2, -3);
\coordinate[label=right:\tiny {$ $}] (k42) at (4, -3);
\coordinate[label={center:\tiny  $X_{(140)} \geq \tau_{(40)}^I$}] (l) at (1.6,-2.2);
\coordinate[label={center:\tiny  $X_{(240)} \geq \tau_{(40)}^I$}] (l) at (4.3,-2.2);

\draw (k) -- (k2);
\draw (k2) -- (k21);

\draw (k) -- (k4);
\draw (k4) -- (k41);
\draw (k4) -- (k42);
 \shade[ball color=black] (k) circle (0.15);
 
 \shade[ball color=black] (k2) circle (0.1);
 \shade[ball color=black] (k21) circle (0.1);
 
 \shade[ball color=black] (k4) circle (0.1);
 \shade[ball color=black] (k41) circle (0.1);
 \shade[ball color=black] (k42) circle (0.1);
 \end{tikzpicture}
\caption{A possible realization of $BP^{(1)}$ corresponding to $R$ in Fig. \ref{fig::IP_FOR_NECESSARY_R}. Note that weights are now put on the edges and only the surviving edges are drawn, i.e., those for which the infection time is larger than the incubation time on its upper vertex. }
\label{fig::IP_FOR_NECESSARY_BP1}
\end{figure}

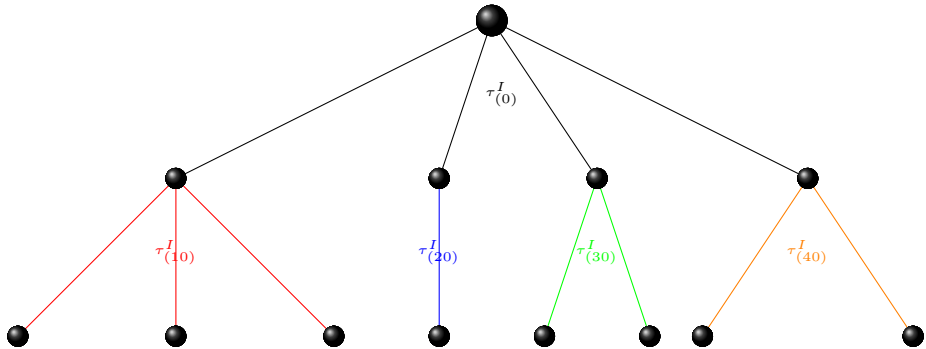
\begin{figure}
\centering
\begin{tikzpicture}[scale=1.4,
dotr/.style={circle,fill=black,minimum size=4pt,inner sep=0pt,
            outer sep=-1pt},
dot/.style={circle,fill=black,minimum size=3pt,inner sep=0pt,
            outer sep=-1pt}           ],
\tikzstyle help lines=[color=gray,very thin]
\tikzstyle en grid=[style=help lines]

\coordinate[label=above:\tiny {$ $}] (r) at (0,0);
\coordinate[label={[black]center:\tiny  $\tau_{(0)}^I$}] (l) at (0.1,-0.7);

\coordinate[label=left:\tiny {$ $}] (r1) at (-3, -1.5);
\coordinate[label=left:\tiny {$ $}] (r11) at (-4.5, -3);
\coordinate[label=left:$ $] (r12) at (-3, -3);
\coordinate[label=below:\tiny {$ $}] (r13) at (-1.5, -3);
\coordinate[label={[red]center:\tiny  $\tau_{(10)}^I$}] (l) at (-3,-2.2);

\coordinate[label=above:\tiny {$ $}] (r2) at (-0.5, -1.5);
\coordinate[label=above:\tiny {$ $}] (r21) at (-0.5, -3);
\coordinate[label={[blue]center:\tiny  $\tau_{(20)}^I$}] (l) at (-0.5,-2.2);

\coordinate[label=above:\tiny {$ $}] (r3) at (1, -1.5);
\coordinate[label=above:\tiny {$ $}] (r31) at (0.5, -3);
\coordinate[label=above:\tiny {$ $}] (r32) at (1.5, -3);
\coordinate[label={[green]center:\tiny  $\tau_{(30)}^I$}] (l) at (1,-2.2);

\coordinate[label=right:\tiny {$ $}] (r4) at (3, -1.5);
\coordinate[label=below:\tiny {$ $}] (r41) at (2, -3);
\coordinate[label=right:\tiny {$ $}] (r42) at (4, -3);
\coordinate[label={[orange]center:\tiny  $\tau_{(40)}^I$}] (l) at (3,-2.2);

\draw (r) -- (r1);
\draw [red](r1) -- (r11);
\draw [red](r1) -- (r12);
\draw [red](r1) -- (r13);

\draw (r) -- (r2);
\draw [blue](r2) -- (r21);

\draw (r) -- (r3);
\draw [green](r3) -- (r31);
\draw [green](r3) -- (r32);

\draw (r) -- (r4);
\draw [orange](r4) -- (r41);
\draw [orange](r4) -- (r42);
 \shade[ball color=black] (r) circle (0.15);
 \shade[ball color=black] (r1) circle (0.1);
 \shade[ball color=black] (r11) circle (0.1);
 \shade[ball color=black] (r12) circle (0.1);
 \shade[ball color=black] (r13) circle (0.1);
 \shade[ball color=black] (r2) circle (0.1);
 \shade[ball color=black] (r21) circle (0.1);
 \shade[ball color=black] (r3) circle (0.1);
 \shade[ball color=black] (r31) circle (0.1);
 \shade[ball color=black] (r32) circle (0.1);
 \shade[ball color=black] (r4) circle (0.1);
 \shade[ball color=black] (r41) circle (0.1);
 \shade[ball color=black] (r42) circle (0.1);
 \end{tikzpicture}
\caption{A possible realization of $BP^{(2)}$ corresponding to $R$ in Fig. \ref{fig::IP_FOR_NECESSARY_R}. All edges have a weight equal to the incubation period of its upper vertex in $R$. Note that same-colour edges have equal weight. }
\label{fig::IP_FOR_NECESSARY_BP2}
\end{figure}

\begin{figure}
\centering
\begin{tikzpicture}[scale=1.4,
dotr/.style={circle,fill=black,minimum size=4pt,inner sep=0pt,
            outer sep=-1pt},
dot/.style={circle,fill=black,minimum size=3pt,inner sep=0pt,
            outer sep=-1pt}           ],
\tikzstyle help lines=[color=gray,very thin]
\tikzstyle en grid=[style=help lines]

\coordinate[label=above:\tiny {$ $}] (rr) at (10,1.5);

\coordinate[label=above:\tiny {$ $}] (k) at (10,0);
\coordinate[label={[black]left:\tiny  $\tau_{(0)}^I$}] (l) at (10,0.7);

\coordinate[label=left:\tiny {$ $}] (k1) at (7, -1.5);
\coordinate[label=left:\tiny {$ $}] (k11) at (5.5, -3);
\coordinate[label=left:$ $] (k12) at (7, -3);
\coordinate[label=below:\tiny {$ $}] (k13) at (8.5, -3);
\coordinate[label={[red]center:\tiny  $\tau_{(10)}^I$}] (l) at (8,-0.7);

\coordinate[label=above:\tiny {$ $}] (k2) at (9.5, -1.5);
\coordinate[label=above:\tiny {$ $}] (k21) at (9.5, -3);
\coordinate[label={[blue]center:\tiny  $\tau_{(20)}^I$}] (l) at (9.4,-0.7);

\coordinate[label=above:\tiny {$ $}] (k3) at (11, -1.5);
\coordinate[label=above:\tiny {$ $}] (k31) at (10.5, -3);
\coordinate[label=above:\tiny {$ $}] (k32) at (11.5, -3);
\coordinate[label={[green]center:\tiny  $\tau_{(30)}^I$}] (l) at (10.8,-0.7);

\coordinate[label=right:\tiny {$ $}] (k4) at (13, -1.5);
\coordinate[label=below:\tiny {$ $}] (k41) at (12, -3);
\coordinate[label=right:\tiny {$ $}] (k42) at (14, -3);
\coordinate[label={[orange]center:\tiny  $\tau_{(40)}^I$}] (l) at (12,-0.7);

\draw (k) -- (rr);

\draw [red](k) -- (k1);
\draw [gray](k1) -- (k11);
\draw [gray](k1) -- (k12);
\draw [gray](k1) -- (k13);

\draw [blue](k) -- (k2);
\draw [gray](k2) -- (k21);

\draw [green](k) -- (k3);
\draw [gray](k3) -- (k31);
\draw [gray](k3) -- (k32);

\draw [orange](k) -- (k4);
\draw [gray](k4) -- (k41);
\draw [gray](k4) -- (k42);
 \shade[ball color=black] (rr) circle (0.15);
 \shade[ball color=black] (k) circle (0.15);
 \shade[ball color=black] (k1) circle (0.1);
 \shade[ball color=black] (k11) circle (0.1);
 \shade[ball color=black] (k12) circle (0.1);
 \shade[ball color=black] (k13) circle (0.1);
 \shade[ball color=black] (k2) circle (0.1);
 \shade[ball color=black] (k21) circle (0.1);
 \shade[ball color=black] (k3) circle (0.1);
 \shade[ball color=black] (k31) circle (0.1);
 \shade[ball color=black] (k32) circle (0.1);
 \shade[ball color=black] (k4) circle (0.1);
 \shade[ball color=black] (k41) circle (0.1);
 \shade[ball color=black] (k42) circle (0.1);
 \end{tikzpicture}
 \caption{A possible realization of $BP^{(3)}$ corresponding to $R$ in Fig. \ref{fig::IP_FOR_NECESSARY_R}. The root is born at time equal to the incubation time of the root in $R$. All other edges have as a weight the incubation time on their lower vertex in $R$. Compare to Fig. \ref{fig::IP_FOR_NECESSARY_BP2}.}
\label{fig::IP_FOR_NECESSARY_BP3}
\end{figure}
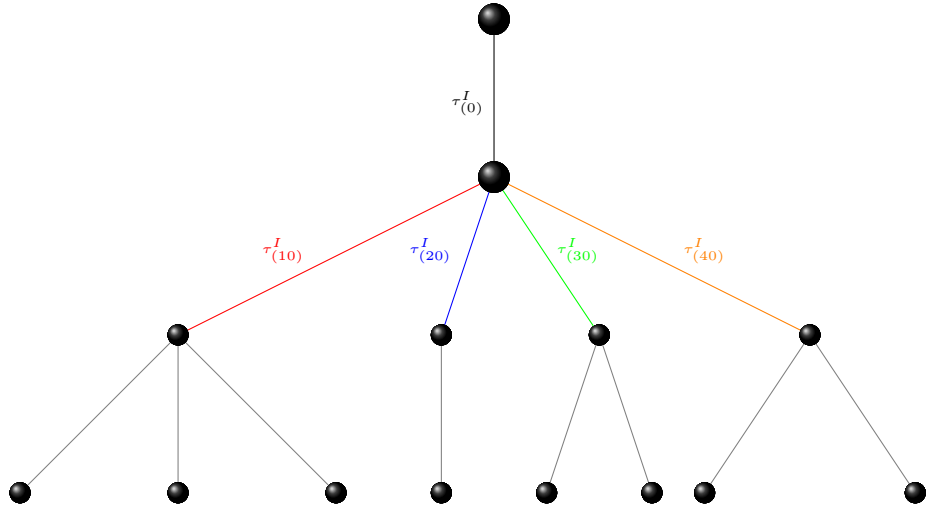

\newpage\null\thispagestyle{empty}\newpage
\newpage\null\thispagestyle{empty}\newpage

\section{Backward Age-Dependent Branching Processes with Incubation Periods}
Object of study is the process $\HGIb = (h,G_I)$, where $h$ is a probability generating function and both $G$ and $I$ are the distribution functions of non-negative random variables. We calculate, 
for $x < \infty$, that 
\be
\ba
G_I(x) &= \P{\tau^I \leq X \leq x} \\
 & = \int_0^x I(u)\mathrm d G(u).
\label{eq::IP_BACK_GI}
\ea
\ee
As always, we start analysing  this process by studying its characteristic fixed point equation.

\subsection{Fixed point equation}
Since $\HGIb = (h,G_I)$, the fixed point equation to study is
\be \Phi = \TIb \Phi , \hspace*{2 cm} \Phi \in \mathcal{V}, \ee
conform \eqref{eq::AGE_FP_usual}. The operator $\TIb = T_{(h,G_I)}$ is defined for $\Phi \in \mathcal{V}$ as
\be \ba 
\lr{T_{(h,G_I)} \Phi }(t)&=  h\left( \int_0^t \Phi(t-u) \mathrm d G_I(u) + 1-G_I(t) \right) \\
&=  h\left(\int_0^t \Phi(t-u) I(u)\mathrm d G(u) +  1- \int_0^t I(u)\mathrm d G(u) \right), \quad t \geq 0.
\ea \ee

\subsection{The explosiveness question}
Here we prove part of the conjecture that
\[ \HGIb \mbox{ is explosive } \Leftrightarrow \HG \mbox{ and } (h,I) \mbox{ are both explosive.} \]
That is, we prove Theorems \ref{thm::IP_back_necessary} and \ref{thm::IP_back_sufficient}:
\begin{proof}[Proof of Theorem \ref{thm::IP_back_necessary}]
We have $\HGIb = (h,G_I)$, where, for $x < \infty$, 
\[ G_I(x) = \int_0^x I(u)\mathrm d G(u) \leq I(x) \int_0^x \mathrm d G(u)=I(x)G(x). \]
Hence, $G_I \leq I$ and $G_I \leq G$ on $[0,\infty)$ and comparison Theorem \ref{thm::AGE_COMP_G} establishes the result.
\end{proof}

\begin{proof}[Proof of Theorem \ref{thm::IP_back_sufficient}]
We shall make use of the fact that $(\Ha,\frac{1}{2}G^2)$ and $(\Ha,\frac{1}{2}I^2)$ are both explosive. On the one hand, if $I\geq G$, then
\[G_I(x) = \int_0^x I(u) \mathrm d G(u) \geq \int_0^x G(u) \mathrm d G(u) = \frac{1}{2}G^2(x). \]
On the other hand, if the second assumption holds, we may restrict to the case where the density of $I$ extends to $[0,\delta)$. Indeed, if $I(0) > 0$, then fix $x \in [0,\delta)$ to have
\[ G_I(x) = \int_0^x I(u)\mathrm d G(u) \geq I(0)G(x). \] Theorem \ref{th::AGE_CONSTANT} proves that $(\Ha,I(0)G)$ is explosive and comparison Theorem \ref{thm::AGE_COMP_G} establishes explosiveness of $(\Ha,G_I)$. 
\\
Hence, we may assume that, for $0 \leq x \leq \delta$,
\[G(x) = \int_0^x g(u) \mathrm d u,\]
and,
\[I(x) = \int_0^x i(u) \mathrm du.\]
Because $g \geq i$ on $(0,\delta)$, we have, for any $0 \leq x \leq \delta$,
\[ \ba
G_I(x) &= \int_0^x I(u) g(u) \mathrm d u \\
& \geq \int_0^x I(u) i(u) \mathrm du \\
& = \frac{1}{2}I^2(x).
\ea \]
It remains to verify that $(\Ha,\frac{1}{2}G^2)$ and $(\Ha,\frac{1}{2}I^2)$ are both explosive, as the proof is then finished upon an appeal to comparison Theorem \ref{thm::AGE_COMP_G}. 
\\
But, Theorem \ref{th::AGE_POWER} tells us that $(\Ha,G^2)$ and $(\Ha,I^2)$ are explosive, and the explosiveness of $(\Ha,\frac{1}{2}G^2)$ and $(\Ha,\frac{1}{2}I^2)$ is precisely the content of Theorem \ref{th::AGE_CONSTANT}.
\end{proof}
 
\newpage

\section{Comparison of Backward and Forward Processes with Incubation Periods}
This section is entirely devoted to the proofs of Theorems \ref{thm::IP_COMPARING_BW_FW}, \ref{thm::IP_COMPARISON_AGING} and \ref{thm::IP_COMPARISON_MAIN}:

\begin{proof}[Proof of Theorem \ref{thm::IP_COMPARING_BW_FW}]
We show that that, for all $t \geq 0$, $\phi_f(t) \geq \phi_b(t)$, where $\phi_f$ is the smallest solution to 
\be \Phi = \TIf \Phi, \hspace*{2 cm} \Phi \in \mathcal{V}, \ee
and $\phi_b$ is the smallest solution to
\be \Phi = \TIb \Phi, \hspace*{2 cm} \Phi \in \mathcal{V}. \ee
By \eqref{eq::IP_FOR_FPE} we have, for $t \geq 0$,
\[ \phi_f(t) =  \int_0^{\infty} h\lr{ \indicator{[0,t]}(x) \lr{G(x) + \int_x^t \phi_f(t-u)\mathrm d G(u) + 1-G(t)} + \indicator{(t,\infty)}(x)}  \mathrm d I(x), \]
which we may bound from below due to the convexity of $h$,
\be \ba \phi_f(t) &\geq   h\lr{ \int_0^{t} \lr{G(x) + \int_x^t \phi_f(t-u)\mathrm d G(u) + 1-G(t)} \mathrm d I(x) + 1 - I(t)} \\
&= h\left( 1- \int_0^t I(u)\mathrm d G(u) + \int_0^t \phi_f(t-u) I(u)\mathrm d G(u) \right) , \ea \ee
upon changing the order of integration in the double integral and carrying out partial integration on the other terms. Hence, recall \eqref{eq::IP_BACK_GI},
\[ \phi_f(t) \geq  h\left( 1-G_I(t) + \int_0^t \phi_f(t-u) \mathrm d G_I(u) \right), \]
and $\phi_f(t) \geq \phi_b(t)$ follows from constructing an iterative solution to 
\[ \Phi = \TIb \Phi, \hspace*{2 cm} \Phi \in \mathcal{V}, \]
in the same fashion as in the proof of Theorem \ref{th::CP_COMPARING_BW_FW}. The proof is complete. Indeed, $\phi_f(t) \geq \phi_b(t)$ implies $V_b \overset{d}\leq V_f$, hence, $V_f < t$ with positive probability implies $V_b < t$ with positive probability. Thus, if the forward process is explosive, then so is the backward process.
\end{proof}

\noindent The following proof is inspired by ideas of a proof presented in \citep{AmDeGrNe13}, see Section 1.8 for details. However, as we cannot make use of  independence, it is very different from the one given there:

\begin{proof}[Proof of Theorem  \ref{thm::IP_COMPARISON_AGING}]
We start with a few definitions. Let $c = \frac{1}{2} \mathbb{P}(X > \delta)$ and $m$ a constant so large that 
\be \frac{1}{8} cm^{1 - \sqrt{\alpha}} \geq e . \label{eq::IP_COMPARISON_m}\ee
Define $f$ by $f(0) = m$, and, 
\be f(n+1) = F^{\leftarrow}_D(1 - \frac{1}{f(n)^{\sqrt{\alpha}}}), \quad n > 0 ,\ee
so that 
\be f(n) = m^{1 / (\sqrt{\alpha})^n}, n \geq 0. \ee
Let $N$ be a constant so large that
\be 1 - 2 \exp( -\exp (1 / (\sqrt{\alpha})^n)) \geq 1 - \frac{1}{n^2}, \quad \forall n \geq N. \label{eq::IP_COMPARISON_N} \ee
Further, let $\lr{X^{n}_i}_{i,n}$ and $\lr{\widehat{X}^{n}_i}_{i,n}$ be i.i.d. sequences with distribution $G$. Let $\lr{I^n_{i}}_{i,n}$ be i.i.d. according to some distribution $I$ (note that we introduced a new notation for the incubation periods). The core of the proof is an algorithm that finds with positive probability a finite ray or exploding path (i.e., a path with \emph{infinitely} many vertices on it, each being a descendant of her predecessor, such that the sum of life-times over the entire path is \emph{finite}). The path starts at a vertex that we denote by $x_N$, with degree at least $f(N)$. 
For $n = N,N+1, \cdots$:

\begin{itemize}
\item Consider node $x_n$, the lowest node in the candidate exploding path. We denote by $D_n$ its number of possible children and by $I^n_{k_n}$ its incubation period.
\item If at least $c f(n)$ children survive, i.e., $D^e_n \geq c f(n)$, order those alive children of $x_n$ by how many children they in turn have, from largest to smallest. Let 
\be W_n = \frac{c f(n)^{1-\sqrt{\alpha}}}{2}  \label{eq::IP_COMPARISON_Wn}. \ee Define the options from $x_n$ to be the first $W_n$ children in the ordering (note that $W_n <  (c f(n))^{1-\sqrt{\alpha}}/2 < D^e_n$).
Label those options in this ordering and denote their life-lengths, respectively incubation periods, by $X^{n+1}_i$ and $I^{n+1}_i$ for $i \in \{1 , \cdots, W_n \}$.
\\
If, however, $D_n^e < c f(n)$, the algorithm terminates in failure. 
\item Put 
\[k_{n+1} = \argmin{i \in \{1 , \cdots, W_n \}} \left( (X^{n+1}_i  |X^{n+1}_i \geq I^n_{k_n}) + I^{n+1}_i  \right), \]
and set $x_{n+1}$ to be child $k_{n+1}$ in the ordering. 
\end{itemize}
We shall now prove that the algorithm produces almost surely an exploding path whenever it does \emph{not} terminate in failure. We begin by making some preparations. First, note that
\[ (X^{n+1}_i  |X^{n+1}_i \geq I^n_{k_n}) = (X^{n+1}_i -I^{n}_{k_n}|X^{n+1}_i \geq I^n_{k_n}) + I^n_{k_n}, \]
which we compare to \eqref{eq::aging} by observing that without loss of generality (w.l.o.g.) we may assume $I(\delta)=1$. Indeed, define $I^*$ as 
\begin{equation}
 I^*(x) = \left\{ 
  \begin{array}{l l}
    I(x) & \quad \text{ if } x \in [0,\delta / 2],\\
    I(\delta / 2) + (x-\delta / 2)\frac{1-I(\delta / 2)}{\delta / 2} & \quad  \text{ if } x \in [\delta / 2,\delta], \\
    1 & \quad  \text{ if } x \in [\delta,\infty). \\
  \end{array} \right.
\end{equation}
Thus $I = I^*$ on $[0,\delta / 2]$, and $I^*(\delta) = 1$, hence, by comparison Theorem \ref{thm::IP_FOR_COMP_I}, $\HGIf$ is explosive if and only if $(h,G,I^*)_f$ is. 
Also, w.l.o.g. (to keep the calculations neat), we may assume that
\be \mathbb{P}(D \geq z) = \frac{1}{z^{\alpha}}, \label{eq::IP_COMPARISON_ASS_growth} \ee
for all sufficiently large $z$. Indeed, by Potter's bound, for all $\epsilon > 0$,
\be \mathbb{P}(D \geq z) = \frac{\widehat{L}(z)}{z^{\alpha}} \geq \frac{1}{z^{\alpha+\epsilon}}, \ee
for large $z$, where $\widehat{L}$ is a function varying slowly at infinity. We anticipate on the fact that the remainder of this proof is independent of the value of $\alpha \in (0,1)$, see below for further details. 
\\
The path produced by this algorithm has length $\mathcal{L}$, 
\[ \ba \mathcal{L} = \s{n=N}{\infty} X^{n+1}_{k_{n+1}} &= \s{n=N}{\infty} ( X^{n+1}_{k_{n+1}}|X^{n+1}_{k_{n+1}} \geq I^n_{k_n}) \\
&= I^N_{k_N} + \s{n=N}{\infty} \lr{ (X^{n+1}_{k_{n+1}} - I^n_{k_n} |X^{n+1}_{k_{n+1}} \geq I^n_{k_n}) + I^{n+1}_{k_{n+1}}} \\
&= I^{N}_{k_N} + \s{n=N}{\infty}  \minun{i \in \{ 1, \cdots, W_n \}}  \left( (X^{n+1}_i -I^{n}_{k_n}|X^{n+1}_i \geq I^n_{k_n}) + I^{n+1}_i  \right), \ea \]
we used that $X^{n+1}_{k_{n+1}} = ( X^{n+1}_{k_{n+1}}|X^{n+1}_{k_{n+1}} \geq I^n_{k_n})$ to establish the second equality, which we rewrote into the telescopic sum in the second line. Lemma \ref{lm::AGING_ST_DOM_SUM} shows that we thus have the following domination: 
\[ \mathcal{L}  \leqd I^{N}_{k_N} + \s{n=N}{\infty} \minun{i \in \{ 1, \cdots, W_n \}} \left( \widehat{X}^{n+1}_{i} +I^{n+1}_{i} \right). \]
Because the backward process is explosive, we know that 
\be \s{n=1}{\infty} \minun{i \in \{1 , \cdots, e^{1 / (\sqrt{\alpha})^n} \}} (X^{n}_i  |X^{n}_i \geq I^n_{i})  < \infty \mbox{ a.s.} \label{eq::IP_COMPARISON_BACK} ,\ee
and Lemma \ref{lm::FOR_EQUI_SUMMABILITY} gives that 
\[ \mathcal{L} \leqd I^{N}_{k_N} + \s{n=N}{\infty} \minun{i \in \{ 1, \cdots, e^{1 / (\sqrt{\alpha})^n} \}} \left( \widehat{X}^{n+1}_{i} +I^{n+1}_{i} \right) < \infty \mbox { a.s. ,} \]
 since \eqref{eq::IP_COMPARISON_m} - \eqref{eq::IP_COMPARISON_Wn} entail that 
\[ W_n \geq   \e^{1 / (\sqrt{\alpha})^n}, \quad \forall n \geq N, \]
hereby finishing the first part of the proof. Note that \eqref{eq::IP_COMPARISON_BACK} justifies the assumption \eqref{eq::IP_COMPARISON_ASS_growth}, as \eqref{eq::IP_COMPARISON_BACK} holds for all $\alpha \in (0,1)$ (recall that explosiveness of the backward process is independent of $\alpha \in (0,1)$).
\\
It remains to verify that with positive probability the algorithm does not terminate in failure. With non-zero probability the root gives birth to $f(N)$ children, hence we may set $x_N$ to be the root. That the remainder of the algorithm does not fail is the context of Lemma \ref{lm::IP_COMPARISON_An}.  
\end{proof}

\begin{lemma}
Let $\lr{X^{n}_i}_{i,n}$ and $\lr{\widehat{X}^{n}_i}_{i,n}$ be i.i.d. sequences with distribution $G$. Let $\lr{I^n_{i}}_{i,n}$ be i.i.d. according to some distribution $I$. 
Define, for $n\geq 0$ and $x \geq 0$,
\[Z^{n+1}_{i} \left( x \right) = \left( X^{n+1}_{i} - x  |X^{n+1}_{i} \geq  x \right)+ I^{n+1}_{i},  \] 
and,
\[k_{n+1} = \argmin{i \in \{ 1, \cdots, W_n \}} Z^{n+1}_{i} \left(I^{n}_{k_n} \right)  = \argmin{i \in \{ 1, \cdots, W_n \}}  \left( (X^{n+1}_i -I^{n}_{k_n}|X^{n+1}_i \geq I^n_{k_n}) + I^{n+1}_i  \right).\] 
Then, for $M \geq 0$,
\be S_M \left( I^0_{k_0}, \cdots, I^{M+1}_{k_{M+1}}  \right) := \s{n=0}{M} \minun{i \in \{ 1, \cdots, W_n \}} Z^{n+1}_{i}\left( I^n_{k_n} \right) \leqd \s{n=0}{M} \minun{i \in \{ 1, \cdots, W_n \}} \left( \widehat{X}^{n+1}_{i} +I^{n+1}_{i} \right). \label{eq::AGING_ST_DOM_SUM_eq1} \ee
Hence, for $N$ in the proof of Theorem \ref{thm::IP_COMPARISON_AGING},
\be \s{n=N}{\infty} \minun{i \in \{ 1, \cdots, W_n \}} Z^{n+1}_{i}\left( I^n_{k_n} \right) \leqd \s{n=N}{\infty} \minun{i \in \{ 1, \cdots, W_n \}} \left( \widehat{X}^{n+1}_{i} +I^{n+1}_{i} \right). \label{eq::AGING_ST_DOM_SUM_eq2} \ee
\label{lm::AGING_ST_DOM_SUM} 
\end{lemma}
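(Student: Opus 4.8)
The plan is to build a single explicit coupling of the two sides, constructed one level at a time, under which the left-hand side is dominated \emph{termwise} (hence after taking minima over the options, summand by summand) by the right-hand side; the asserted stochastic inequalities then follow immediately. The key structural observation is that, after the reduction $I(\delta)=1$ carried out in the proof of Theorem~\ref{thm::IP_COMPARISON_AGING}, every incubation period lies in $[0,\delta]$ almost surely; in particular every selected threshold $\eta_n:=I^n_{k_n}$ satisfies $\eta_n\in[0,\delta]$, so the aging hypothesis \eqref{eq::aging} is applicable with $x=\eta_n$ at \emph{every} level, irrespective of how $\eta_n$ was produced.

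First I would fix the coupling. Let $F$ denote the distribution function $G$, and for $y\in[0,\delta]$ let $F_y$ be the distribution function of $(X-y\mid X\ge y)$ with $X\sim G$; by \eqref{eq::aging}, $F_y\ge F$ pointwise, hence $F_y^{\leftarrow}\le F^{\leftarrow}$. For each level $n\ge0$ and each option $i\in\{1,\dots,W_n\}$ of level $n+1$ take fresh, independent $U_i^{n+1}\sim\mathrm{Unif}(0,1)$ and $I_i^{n+1}\sim I$, jointly independent of the initial threshold $\eta_0=I^0_{k_0}$. Set $\widehat X_i^{n+1}:=F^{\leftarrow}(U_i^{n+1})$ — this is $G$-distributed, i.i.d.\ across $i$ and $n$, and independent of everything at levels $\le n$, in particular of $\eta_n$ — and realize the (conditioned) life-length of option $i$ as $F_{\eta_n}^{\leftarrow}(U_i^{n+1})$, which has the correct conditional law $(X^{n+1}_i\mid X^{n+1}_i\ge\eta_n)-\eta_n$ and, by the quantile comparison above, satisfies $F_{\eta_n}^{\leftarrow}(U_i^{n+1})\le\widehat X_i^{n+1}$. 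Then $Z_i^{n+1}(\eta_n)=F_{\eta_n}^{\leftarrow}(U_i^{n+1})+I_i^{n+1}\le\widehat X_i^{n+1}+I_i^{n+1}$ for all $i$; the quantity $k_{n+1}=\argmin{i\le W_n}Z_i^{n+1}(\eta_n)$ is a measurable function of the variables already constructed, so $\eta_{n+1}=I^{n+1}_{k_{n+1}}\in[0,\delta]$ and the construction continues to level $n+2$. (Measurability of $(y,u)\mapsto F_y^{\leftarrow}(u)$ is routine.)

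Under this coupling, monotonicity of the minimum gives $\minun{i\le W_n}Z_i^{n+1}(\eta_n)\le\minun{i\le W_n}\bigl(\widehat X_i^{n+1}+I_i^{n+1}\bigr)$ for every $n$; summing over $n=0,\dots,M$ yields $S_M(I^0_{k_0},\dots,I^{M+1}_{k_{M+1}})\le\sum_{n=0}^M\minun{i\le W_n}(\widehat X_i^{n+1}+I_i^{n+1})$ pointwise, and since the $(\widehat X_i^{n+1},I_i^{n+1})_{i,n}$ produced have exactly the joint law appearing on the right of \eqref{eq::AGING_ST_DOM_SUM_eq1}, this is \eqref{eq::AGING_ST_DOM_SUM_eq1}. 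For \eqref{eq::AGING_ST_DOM_SUM_eq2} I would run the identical coupling from level $N$ onwards and let $M\to\infty$: both sides are nondecreasing in $M$ with a.s.\ limits in $[0,\infty]$, and $\mathbb{P}(S_\infty>t)=\lim_M\mathbb{P}(S_M>t)\le\lim_M\mathbb{P}(T_M>t)=\mathbb{P}(T_\infty>t)$, where $T_M$ denotes the right-hand partial sums.

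The only genuine difficulty is the recursive entanglement: the threshold fed into level $n+2$ is the incubation period of the \emph{minimizing} option at level $n+1$, hence a size-biased rather than fresh sample, correlated with that level's life-lengths; one cannot simply treat the levels as independent. The resolution — and the point the proof hinges on — is that the only feature of $\eta_{n+1}$ used is its deterministic membership in $[0,\delta]$, so the aging bound \eqref{eq::aging} is available no matter how $\eta_{n+1}$ arose; phrasing this through a quantile coupling driven by fresh uniforms is precisely what keeps each dominating variable $\widehat X_i^{n+1}$ independent of $\eta_n$ with the right marginal, legitimizing the termwise comparison. (A conditioning-based alternative — induct on $M$, peel off level $0$, and use that the tail bound $\sum_{n\ge1}$ is the same for every admissible threshold $\le\delta$ — also works, but is more bookkeeping.)
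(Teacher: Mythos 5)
Your argument is correct, and it takes a genuinely different route from the paper. The paper proves \eqref{eq::AGING_ST_DOM_SUM_eq1} by induction on $M$ entirely at the level of distribution functions: it conditions on the value $x$ of the last selected incubation period $I^{M+1}_{k_{M+1}}$, uses \eqref{eq::aging} to replace the newly appended minimum $\minun{i\le W_{M+1}}Z^{M+2}_i(x)$ by the independent unconditioned minimum (valid uniformly in $x\le\delta$), integrates $x$ out, and then invokes the induction hypothesis together with the independence of the partial sum and the new term; the passage to \eqref{eq::AGING_ST_DOM_SUM_eq2} is by nestedness of the events $\{\cdot\le t\}$. You instead build one explicit quantile coupling, driving both the conditioned life-length $F_{\eta_n}^{\leftarrow}(U_i^{n+1})$ and the dominating variable $\widehat X_i^{n+1}=F^{\leftarrow}(U_i^{n+1})$ by the same fresh uniform, so that \eqref{eq::aging} yields a termwise almost-sure inequality and the lemma follows by taking minima and summing pointwise. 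The two proofs hinge on the same observation — the only property of the recursively selected threshold $\eta_n$ that is ever used is $\eta_n\in[0,\delta]$, so the aging bound applies no matter how $\eta_n$ was produced — but your coupling formulation dispenses with the induction and with the bookkeeping around the conditional independence of $S_M(\ldots,x)$ and the appended minimum, which is the delicate point in the paper's version; the price is the (routine but necessary) verification that the coupled construction reproduces the correct joint law on each side and that $(y,u)\mapsto F_y^{\leftarrow}(u)$ is jointly measurable. Your closing remark that a conditioning-based induction also works is exactly the paper's proof.
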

\begin{remark}
Note that $k_n$ as it is defined here agrees with the definition of $k_n$ in the algorithm presented in the proof of Theorem  \ref{thm::IP_COMPARISON_AGING}.
\end{remark}

\begin{proof}[Proof of Lemma \ref{lm::AGING_ST_DOM_SUM}]
 We shall carry out an induction argument. We start with the case $M=0$. The variables in $\minun{i \in \{ 1, \cdots, W_0 \}} Z^{1}_{i}\left( I^0_{k_0} \right)$ are dependent, but they become independent as soon as we condition on the value of $I^0_{k_0} = x$. Due to assumption \eqref{eq::aging},
\begin{eqnarray*} 
\P{\minun{i \in \{ 1, \cdots, W_0 \}} Z^{1}_{i}\left( I^0_{k_0} \right) \leq t} &=& \int_0^{\delta} \P{\minun{i \in \{ 1, \cdots, W_0 \}} Z^{1}_{i}\left( x \right) \leq t} \mathrm d\P{I^0_{k_0} \leq x} \\
&\geq& \int_0^{\delta} \P{\minun{i \in \{ 1, \cdots, W_0 \}} \left( \widehat{X}^{1}_{i} +I^{1}_{i} \right) \leq t} \mathrm  d\P{I^0_{k_0} \leq x} \\
&=& \P{\minun{i \in \{ 1, \cdots, W_0 \}} \left( \widehat{X}^{1}_{i} +I^{1}_{i} \right) \leq t},
\end{eqnarray*}
recall that we justified the assumption $I(\delta) = 1$. We proceed the induction proof by assuming that the assertion holds for some $M \geq 0$. Consider
\begin{equation} \P{\s{n=0}{M+1} \minun{i \in \{ 1, \cdots, W_n \}} Z^{n+1}_{i}\left( I^n_{k_n} \right) \leq t}. 
\label{eq::AGING_ST_B1}
\end{equation}
Unfortunately, the individual terms are not independent. However, we may bypass this complication by conditioning on the value of $I^{M+1}_{k_{M+1}} = x$, that is, we investigate
\begin{equation}
 \P{ \left. S_M \left( I^0_{k_0}, \cdots, I^{M}_{k_{M}},I^{M+1}_{k_{M+1}} = x  \right) + \minun{i \in \{ 1, \cdots, W_{N+1} \}} Z^{M+2}_{i}\left( x \right) \leq t \  \right| I^{M+1}_{k_{M+1}} = x }. 
\label{eq::AGING_ST_B2}
\end{equation}
Now, again by assumption \eqref{eq::aging}, for all $x \leq \delta$,
\[\minun{i \in \{ 1, \cdots, W_{M+1} \}} Z^{M+2}_{i}\left( x \right) \leqd \minun{i \in \{ 1, \cdots, W_{M+1} \}} \left( \widehat{X}^{M+2}_{i} +I^{M+2}_{i} \right),\] and, moreover, $\minun{i \in \{ 1, \cdots, W_{M+1} \}} Z^{M+2}_{i}\left( x \right)$ is independent of $S_M \left( I^0_{k_0}, \cdots, I^{M}_{k_{M}},x  \right)$. Hence, the probability in (\ref{eq::AGING_ST_B2}) is larger than or equal to
\[ \P{ \left. S_M \left( I^0_{k_0}, \cdots, I^{M}_{k_{M}},x  \right) + \minun{i \in \{ 1, \cdots, W_{N+1} \}} \left( \widehat{X}^{M+2}_{i} +I^{M+2}_{i} \right) \leq t \ \right| I^{M+1}_{k_{M+1}} = x }.  \]
This result holds for all $x \leq \delta$, thus (\ref{eq::AGING_ST_B1}) is larger than or equal to
\[ \P{S_M \left( I^0_{k_0}, \cdots, I^{M+1}_{k_{M+1}}  \right) + \minun{i \in \{ 1, \cdots, W_{N+1} \}} \left( \widehat{X}^{M+2}_{i} +I^{M+2}_{i} \right) \leq t }.  \]
The important observation here is that the sum $S_M$ and the minimum are independent, so that by the induction hypothesis this probability is larger than or equal to
\begin{multline*} \P{\s{n=0}{M} \minun{i \in \{ 1, \cdots, W_n \}} \left( \widehat{X}^{n+1}_{i} +I^{n+1}_{i} \right) + \minun{i \in \{ 1, \cdots, W_{N+1} \}} \left( \widehat{X}^{M+2}_{i} +I^{M+2}_{i} \right) \leq t }, 
\end{multline*} 
which equals
\[ \P{\s{n=0}{M+1} \minun{i \in \{ 1, \cdots, W_n \}} \left(\widehat{X}^{n+1}_{i} +I^{n+1}_{i} \right) \leq t }. \]
Hence, we established \eqref{eq::AGING_ST_DOM_SUM_eq1}. 
To verify \eqref{eq::AGING_ST_DOM_SUM_eq2}, we observe that, for all $M \geq 0$ and $t \geq 0$,
\be \P{\s{n=N}{N+M} \minun{i \in \{ 1, \cdots, W_n \}} Z^{n+1}_{i}\left( I^n_{k_n} \right) \leq t} \geq \P{\s{n=N}{N+M} \minun{i \in \{ 1, \cdots, W_n \}} \left(\widehat{X}^{n+1}_{i} +I^{n+1}_{i} \right) \leq t },  
\label{eq:AGING_ST_DOM_SUM_eq3}
\ee
which follows after realizing that the proof carried out above remains valid if we start with $n=N$ instead of $n=0$. The events inside the probabilities on both sides of \eqref{eq:AGING_ST_DOM_SUM_eq3} constitute a nested sequence and by the continuity of the measure $\P{\cdot}$ we conclude that the inequality remains to hold when $M \to \infty$, hereby establishing the result. 
\end{proof} 

\begin{lemma}
Let $\lr{X^{n}_i}_{i,n}$ and $\lr{\widehat{X}^{n}_i}_{i,n}$ be i.i.d. sequences with distribution $G$. Let $\lr{I^n_{i}}_{i,n}$ be i.i.d. according to some distribution $I$. 
Assume that
\[ \s{n=1}{\infty} \minun{i \in \{1 , \cdots, e^{1 / (\sqrt{\alpha})^n} \}} (X^{n}_i  |X^{n}_i \geq I^n_{i})  < \infty \mbox{ a.s.} .\]
Then, 
\[ \s{n=1}{\infty} \minun{i \in \{1 , \cdots, e^{1 / (\sqrt{\alpha})^n} \}} \left(  \widehat{X}^{n}_i + I^{n}_i  \right) < \infty  \mbox{ a.s.}.\]
\label{lm::FOR_EQUI_SUMMABILITY}
\end{lemma}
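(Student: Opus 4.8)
The plan is to recast both sums as min-summability statements for two age-dependent branching processes via Proposition~\ref{thm::ALPHA_GROWTH}, to relate those two processes through one elementary inequality between distribution functions, and then to let the comparison theorems of Section~2 do the rest.

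First I would fix notation. Let $X\sim G$ and $\tau^I\sim I$ be independent, put $p=\mathbb{P}(X\geq\tau^I)\in(0,1]$ (we may assume $p>0$, since otherwise the left-hand side of the hypothesis is undefined), let $G_|$ be the distribution function of $(X\mid X\geq\tau^I)$, so that $G_|(t)=\tfrac1p\,G_I(t)$ with $G_I(t)=\int_0^t I(u)\,\mathrm dG(u)$ as in \eqref{eq::IP_BACK_GI}, and let $H=G*I$ be the distribution function of $\widehat X+\tau^I$. Since $h_{\sqrt\alpha}$ is the probability generating function of a distribution satisfying \eqref{eq::HEAVY_TAIL} with exponent $\sqrt\alpha\in(0,1)$ (as shown in Section~2), Proposition~\ref{thm::ALPHA_GROWTH} applied with offspring p.g.f.\ $h_{\sqrt\alpha}$ (so that its $\e^{1/\alpha^n}$ becomes $\e^{1/(\sqrt\alpha)^n}$) identifies the hypothesis with ``$(h_{\sqrt\alpha},G_|)$ is explosive'' and the desired conclusion with ``$(h_{\sqrt\alpha},H)$ is explosive''. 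Hence it suffices to prove: if $(h_{\sqrt\alpha},G_|)$ is explosive, then so is $(h_{\sqrt\alpha},H)$.

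The bridge is the pointwise bound
\[
H(t)\;\geq\;G_I(t/2)\;=\;p\,G_|(t/2)\qquad(t\geq0),
\]
which holds because, by independence, $H(t)=\mathbb{P}(\widehat X+\tau^I\leq t)\geq\mathbb{P}(\widehat X\leq t/2)\,\mathbb{P}(\tau^I\leq t/2)=G(t/2)I(t/2)$, while $\{\tau^I\leq X\leq t/2\}\subseteq\{\tau^I\leq t/2\}\cap\{X\leq t/2\}$ gives $G_I(t/2)\leq I(t/2)G(t/2)$. Now assume $(h_{\sqrt\alpha},G_|)$ is explosive. Proposition~\ref{thm::ALPHA_GROWTH} shows $(h_{\sqrt\alpha},t\mapsto G_|(t/2))$ is then explosive too, the associated min-sums differing only by the factor~$2$ (since $G_|(\cdot/2)$ is the law of $2Y$, $Y\sim G_|$); Theorem~\ref{th::AGE_CONSTANT} upgrades this to explosiveness of $(h_{\sqrt\alpha},t\mapsto p\,G_|(t/2))$. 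If $H(0)=0$ — equivalently $G(0)I(0)=0$, which also forces $G_|(0)=0$ — then comparison Theorem~\ref{thm::AGE_COMP_G}, with $G^*=H\geq p\,G_|(\cdot/2)=G$ on $[0,1]$, yields that $(h_{\sqrt\alpha},H)$ is explosive, as wanted. If instead $H(0)>0$ the conclusion is immediate, since $\mathbb{E}\big[\min_{i\leq K_n}(\widehat X^n_i+I^n_i)\wedge1\big]=\int_0^1(1-H(t))^{K_n}\,\mathrm dt\leq(1-H(0))^{K_n}$ with $K_n=\e^{1/(\sqrt\alpha)^n}$, a summable sequence, whence $\sum_n\min_{i\leq K_n}(\widehat X^n_i+I^n_i)<\infty$ almost surely.

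The only genuinely new ingredient is the inequality $H(t)\geq G_I(t/2)$: it is elementary, but it is what makes the argument go, because it says that replacing the backward life-length law $G_I$ by the ``forward-plus-incubation'' law $G*I$ can only flatten the distribution by a bounded time-rescaling, a change to which explosiveness is insensitive. Everything else is bookkeeping, and the places where I would be careful are the translations through Proposition~\ref{thm::ALPHA_GROWTH} (matching the exponent $\sqrt\alpha$, and absorbing the constant $1/p$ between $G_|$ and $G_I$ via Theorem~\ref{th::AGE_CONSTANT}) and the verification of the hypotheses $G(0)=G^*(0)=0$ of the comparison theorem, i.e.\ the harmless atom-at-the-origin case, rather than any real analytic obstacle.
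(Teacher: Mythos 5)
Your proof is correct, but it takes a genuinely different route from the paper's. The paper disposes of the lemma by a direct termwise stochastic domination: since $(X\mid X\geq I)\geq I$ in the natural coupling, $2(X\mid X\geq I)\geq I+(X\mid X\geq I)$, and conditioning on $I=x$ together with $(X\mid X\geq x)\geqd \widehat X$ yields $\widehat X+I\leqd 2(X\mid X\geq I)$; this bound is applied inside each minimum, the partial sums are compared, and a limit along the nested events finishes the argument. You instead translate both min-sums into explosiveness statements for the branching processes $(h_{\sqrt\alpha},G_{|})$ and $(h_{\sqrt\alpha},H)$ via Proposition~\ref{thm::ALPHA_GROWTH} (in your notation), and transfer explosiveness using the pointwise bound $H(t)\geq G(t/2)I(t/2)\geq G_I(t/2)=p\,G_{|}(t/2)$ together with Theorems~\ref{th::AGE_CONSTANT} and~\ref{thm::AGE_COMP_G}, treating the atom-at-zero case separately by the expectation estimate. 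Both arguments pivot on the same ``split $t$ into two halves, one for the incubation period and one for the life-length'' idea, and your bookkeeping (the rescaling by $2$, absorbing the constant $p$, and checking $G(0)=G^*(0)=0$ before invoking the comparison theorem) is done correctly. What the paper's version buys is brevity and generality: the domination $\widehat X+I\leqd 2(X\mid X\geq I)$ is exact, carries no factor $p$, works for arbitrary deterministic index sets in place of $e^{1/(\sqrt\alpha)^n}$, and uses none of the heavy-tail machinery. What your version buys is the conceptual reading that the lemma is precisely an implication between the explosiveness of two auxiliary branching processes with life-length laws $G_{|}$ and $G*I$ --- at the cost of tying the proof to Proposition~\ref{thm::ALPHA_GROWTH} and hence to the specific growth rate $e^{1/(\sqrt\alpha)^n}$ and to $\alpha\in(0,1)$.
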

\begin{proof}
It is easily verified that $(X_i^n|X_i^n \geq I_i^n) \geqd \frac{\widehat{X}_i^n +I_i^n }{2}$. Indeed, 
\[ \P{2 (X_i^n|X_i^n \geq I_i^n) \leq t} \leq \P{I_i^n + (X_i^n|X_i^n \geq I_i^n) \leq t} = \int_0^{\infty} \P{x + (X_i^n|X_i^n \geq x) \leq t} \mathrm d I(x). \]
Since $(X_i^n|X_i^n \geq x)$ stochastically dominates $\widehat{X}_i^n$ for all $x$, the last integral is less than or equal to
\[ \int_0^{\infty} \P{x + \widehat{X}_i^n \leq t} \mathrm d I(x) = \P{I_i^n+\widehat{X}_i^n \leq t}. \]
 Thus, for all $M$,
 \[  \P{ 2 \s{n=1}{M} \minun{i \in \{1 , \cdots, e^{1 / (\sqrt{\alpha})^n} \}}  (X^{n}_i  |X^{n}_i \geq I^n_{i})   \leq t} \leq \P{  \s{n=1}{M} \minun{i \in \{1 , \cdots, e^{1 / (\sqrt{\alpha})^n} \}} \left(  \widehat{X}^{n}_i + I^{n}_i  \right) \leq t}. \]
Now, call the events inside the left probability $E_M$ and notice that $E_{k+1} \subset E_k$ for all $k \geq 1$ (a similar observation holds on the right hand side). We may thus conclude, by continuity of $\P{\cdot}$, that the inequality remains to hold if we let $M \to \infty$. In other words,
\[  \s{n=1}{\infty} \minun{i \in \{1 , \cdots, e^{1 / (\sqrt{\alpha})^n} \}} \left(  \widehat{X}^{n}_i + I^{n}_i  \right) \leqd 2 \s{n=1}{\infty} \minun{i \in \{1 , \cdots, e^{1 / (\sqrt{\alpha})^n} \}} \left( X^{n}_i  |X^{n}_i \geq I^n_{i}  \right). \]
\end{proof}

\begin{lemma}
Consider the events and definitions in the proof of Theorem  \ref{thm::IP_COMPARISON_AGING}. Define 
\[ A_N = \{ D_N \geq f(N) \}, \]
and, for $n \geq N+1$, 
\[A_n = \{ D_n \geq f(n) , D_{n-1}^{e} \geq c f(n-1) \}.\]
Then, for $n \geq N$,
 \[ \ba \mathbb{P}(A_{n+1}|A_n) &\geq 
\left( 1 - \exp \lr{ -\frac{f(n) \mathbb{P}(X \geq \delta)  }{8}} \right)
\left( 1 - \exp \lr{ - \frac{c f(n)^{1 - \sqrt{\alpha}}}{8}} \right)\\
&= \left( 1 - \exp \lr{-\frac{c f(n)   }{4}} \right)
\left( 1 - \exp \lr{- \frac{c f(n)^{1 - \sqrt{\alpha}}}{8}} \right). \ea \]
\label{lm::IP_COMPARISON_events}
\end{lemma}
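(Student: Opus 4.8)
The plan is to bound the two constraints defining $A_{n+1}=\{D_{n+1}\ge f(n+1),\ D^e_n\ge cf(n)\}$ one at a time, conditionally on $A_n$, each time reducing to a multiplicative Chernoff bound for a binomial random variable. I will use the reductions already made in the proof of Theorem~\ref{thm::IP_COMPARISON_AGING}: we may assume $I(\delta)=1$, so that every incubation period $I^n_{k_n}$ satisfies $I^n_{k_n}\le\delta$ almost surely, and $\mathbb{P}(D\ge z)=z^{-\alpha}$ for all large $z$; after possibly enlarging $m$ this holds for all $z\ge f(N)=m$, and then $f(n)=m^{1/(\sqrt{\alpha})^n}$, so that $f(n+1)=f(n)^{1/\sqrt{\alpha}}$ and hence $\mathbb{P}(D\ge f(n+1))=f(n+1)^{-\alpha}=f(n)^{-\sqrt{\alpha}}$.

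\emph{Step 1 (effective offspring of $x_n$).} Condition on $A_n$; in particular $D_n\ge f(n)$. In the underlying Galton--Watson tree the data attached below $x_n$ (the life-lengths and the offspring numbers of its children) is independent of $D_n$ and of $x_n$'s own incubation period $I^n_{k_n}$. A child $j$ of $x_n$ survives precisely when $X^{n+1}_j\ge I^n_{k_n}$, and since $I^n_{k_n}\le\delta$ we have $\mathbb{P}(X\ge I^n_{k_n}\mid I^n_{k_n})\ge\mathbb{P}(X\ge\delta)$. Thus, conditionally on $A_n$, $D^e_n$ stochastically dominates a $\mathrm{Bin}(f(n),\mathbb{P}(X\ge\delta))$ variable, of mean $f(n)\mathbb{P}(X\ge\delta)\ge 2cf(n)$. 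Using the standard Chernoff estimate $\mathbb{P}(\mathrm{Bin}(N,p)\le\tfrac12 Np)\le e^{-Np/8}$ together with $cf(n)\le\tfrac12 f(n)\mathbb{P}(X\ge\delta)$,
\[
\mathbb{P}\bigl(D^e_n\ge cf(n)\mid A_n\bigr)\ \ge\ 1-\exp\!\Bigl(-\tfrac{f(n)\,\mathbb{P}(X\ge\delta)}{8}\Bigr).
\]

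\emph{Step 2 (degree of $x_{n+1}$).} Work conditionally on $A_n\cap\{D^e_n\ge cf(n)\}$. Given which children of $x_n$ survive, their offspring numbers are i.i.d.\ copies of $D$ (the event $\{D^e_n\ge cf(n)\}$ depends only on life-lengths and on $I^n_{k_n}$, not on these offspring numbers). Since $x_{n+1}$ is one of the $W_n$ surviving children of $x_n$ of largest offspring number, $D_{n+1}\ge f(n+1)$ holds as soon as at least $W_n$ of the $D^e_n\ (\ge cf(n))$ survivors have offspring number $\ge f(n+1)$. The count of such survivors stochastically dominates a $\mathrm{Bin}\bigl(cf(n),f(n)^{-\sqrt{\alpha}}\bigr)$ variable, whose mean $cf(n)^{1-\sqrt{\alpha}}$ equals $2W_n$ by \eqref{eq::IP_COMPARISON_Wn}. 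The same Chernoff estimate gives
\[
\mathbb{P}\bigl(D_{n+1}\ge f(n+1)\mid A_n,\ D^e_n\ge cf(n)\bigr)\ \ge\ 1-e^{-W_n/4}\ =\ 1-\exp\!\Bigl(-\tfrac{cf(n)^{1-\sqrt{\alpha}}}{8}\Bigr).
\]
Multiplying the two conditional probabilities yields the claimed lower bound on $\mathbb{P}(A_{n+1}\mid A_n)$; the second displayed form in the statement follows from $\mathbb{P}(X\ge\delta)=2c$, which lets us rewrite $f(n)\mathbb{P}(X\ge\delta)/8=cf(n)/4$.

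\emph{Main obstacle.} The only delicate point is the independence/measurability bookkeeping in the two conditioning steps: one must verify that conditioning on $A_n$ (and then on $\{D^e_n\ge cf(n)\}$) leaves the relevant fresh randomness --- the life-lengths and offspring numbers of $x_n$'s children --- distributed as in the unconditioned tree, despite the selection bias by which $x_n$ became a high-offspring child of $x_{n-1}$. This is where one invokes the branching structure (the subtree below $x_n$ is independent of the history given $D_n$) together with the fact that $A_n$ is measurable with respect to data at or above $x_n$. Once this is in place, the two Chernoff bounds are entirely routine.
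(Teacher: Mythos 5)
Your proof is correct and follows essentially the same route as the paper's: the same factorization of $\mathbb{P}(A_{n+1}\mid A_n)$ into the two conditional probabilities, the same stochastic domination of $D^e_n$ by $\mathrm{Bin}(f(n),\mathbb{P}(X\ge\delta))$ via $I^n_{k_n}\le\delta$, the same count of ``good'' surviving children dominated by $\mathrm{Bin}(cf(n),f(n)^{-\sqrt{\alpha}})$, and the same Chernoff estimates. Your explicit flagging of the independence bookkeeping is a point the paper treats only implicitly, but the argument is otherwise identical.
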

\begin{proof}
We start by writing the probability
\[ \ba
\mathbb{P}(A_{n+1}|A_n) &= \P{ D_{n+1} \geq f(n+1), D_{n}^{e} \geq c f(n) | D_n \geq f(n) , D_{n-1}^{e} \geq c f(n-1) } \\
&= \P{D_{n+1} \geq f(n+1), D_{n}^{e} \geq c f(n) | D_n \geq f(n)}
\ea \]
in a more revealing way:
\be \mathbb{P}(A_{n+1}|A_n) = \P { D_{n}^{e}\! \geq  \!c f(n)  |  D_n \! \geq \! f(n) } 
\cdot \P { D_{n+1} \! \geq \! f(n+1)  | D_{n}^{e} \! \geq c \! f(n),D_n \! \geq \! f(n) } . \label{eq::IP_COMPARISON_events_1} \ee
Let us focus on the first probability in the product on the right of \eqref{eq::IP_COMPARISON_events_1}. As $I^n_{k_n} \leq \delta$ (recall $I(\delta) = 1$),
\[ \Dne = \s{i=1}{\Dn}  \indicator{X_i^{n+1}  \geq I^n_{k_n}} \geq \s{i=1}{\Dn} \indicator{X_i^{n+1}\geq \delta}. \]
Since $D_n \geq f(n)$, we have
\[ \Dne \geq \s{i=1}{f(n)} \indicator{X_i^{n+1}  \geq \delta} \equalsd  \mbox{ Bin}(f(n),\P{X \geq \delta}).\] Applying large deviation theory on the last random variable gives
\[ \ba \P{ \Dne < cf(n)| D_n \geq f(n)} &\leq \P{\mbox{Bin}(f(n),\P{X \geq \delta}) \leq \frac{\P{X \geq \delta}f(n)}{2}} \\ &\leq \exp \lr{-\frac{\P{X \geq \delta}f(n)}{8}}, \ea \]
since $c =  \P{X \geq \delta} /2$.
We bound the second probability in the right hand side of \eqref{eq::IP_COMPARISON_events_1} from below by
\[1 - \P{D_{n+1} < f(n+1)| D_{n}^{e} \geq c f(n),D_n \geq f(n) }. \] 
Consider the amount of good children $G_n$, where a child of $x_n$ is called good if it has at least $f(n+1)$ children. $D_{n+1}< f(n+1)$ only if $G_n < W_n$. We will show that this is very unlikely to happen. First note that $G_n \overset{d}{=}$ Bin($D^e_n,p$), where $p = 1 - F_D(f(n+1)) = f(n)^{-\sqrt{\alpha}}$. Next, \[ W_n = \frac{c f(n)^{1-\sqrt{\alpha}}}{2}  \leq \frac{\mathbb{E}G_n}{2}.\]
Hence, 
\[ \ba \P{D_{n+1} < f(n+1) | D_{n}^{e} \geq c f(n),D_n \geq f(n)  } &\leq \mathbb{P}\left( \left. G_n < \frac{\mathbb{E}G_n}{2} \right| D_{n}^{e} \geq c f(n) \right)\\
& \leq \exp \lr{- \frac{c f(n)^{1 - \sqrt{\alpha}}}{8}}. \ea \]
\end{proof}

\begin{lemma}
Consider the events in Lemma \ref{lm::IP_COMPARISON_events}, together with the definitions of $N$ and $m$ in the proof of Theorem  \ref{thm::IP_COMPARISON_AGING}.
For $N$ as in \eqref{lm::IP_COMPARISON_events} we have
\[\mathbb{P}\left( A_{N+1},A_{N+2}, \cdots|A_N \right) > 0.\]
\label{lm::IP_COMPARISON_An}
\end{lemma}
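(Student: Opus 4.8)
The plan is to write the conditional probability of the infinite intersection as an infinite product of one-step conditional probabilities and to show that this product is strictly positive. Think of the algorithm as revealing the (enriched) Galton--Watson tree step by step, and let $\mathcal{F}_{n}$ be the $\sigma$-algebra generated by everything inspected up to and including the selection of $x_{n+1}$, so that $A_N,\dots,A_{n+1}$ are $\mathcal{F}_{n}$-measurable. The structural point is that, conditionally on $\mathcal{F}_{n-1}$ and on $A_n$ (in particular on $D_n\ge f(n)$ and, since $I(\delta)=1$, on $I^n_{k_n}\le\delta$), the survival indicators of the children of $x_n$ and the offspring counts of those children form a fresh i.i.d.\ family, independent of $\mathcal{F}_{n-1}$; because the derivation of Lemma \ref{lm::IP_COMPARISON_events} uses only this freshness together with $I^n_{k_n}\le\delta$, the estimate obtained there is in fact a bound on $\mathbb{P}(A_{n+1}\mid A_N,\dots,A_n)$. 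The chain rule then gives
\[
\mathbb{P}\!\left(A_{N+1},A_{N+2},\cdots\mid A_N\right)=\prod_{n=N}^{\infty}\mathbb{P}\!\left(A_{n+1}\mid A_N,\dots,A_n\right)\ \ge\ \prod_{n=N}^{\infty}\Big(1-\mathrm e^{-cf(n)/4}\Big)\Big(1-\mathrm e^{-cf(n)^{1-\sqrt{\alpha}}/8}\Big).
\]

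Next I would make the two exponents explicit using $f(n)=m^{1/(\sqrt{\alpha})^n}$ and the calibration of $m$. Since $0<1-\sqrt{\alpha}\le 1$ and $f(n)\ge m\ge 1$, we have $f(n)\ge f(n)^{1-\sqrt{\alpha}}$, and \eqref{eq::IP_COMPARISON_m} together with $c=\tfrac12\mathbb{P}(X>\delta)\le\tfrac12$ forces $m\ge m^{1-\sqrt{\alpha}}\ge 8\mathrm e/c>\mathrm e$ as well as $\tfrac18 c m^{1-\sqrt{\alpha}}\ge\mathrm e$. Splitting off one factor of the base and using that the base exceeds $\mathrm e$ then yields
\[
\tfrac{c}{4}\,f(n)\ \ge\ \mathrm e^{1/(\sqrt{\alpha})^n}\qquad\text{and}\qquad \tfrac{c}{8}\,f(n)^{1-\sqrt{\alpha}}\ \ge\ \mathrm e^{1/(\sqrt{\alpha})^n},
\]
so each of the two complementary probabilities above is at most $\exp\!\big(-\mathrm e^{1/(\sqrt{\alpha})^n}\big)$, whence $\mathbb{P}(A_{n+1}\mid A_N,\dots,A_n)\ge 1-2\exp\!\big(-\exp(1/(\sqrt{\alpha})^n)\big)$.

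Finally I invoke the defining property \eqref{eq::IP_COMPARISON_N} of $N$: for every $n\ge N$ the last quantity is at least $1-1/n^2$. Therefore
\[
\mathbb{P}\!\left(A_{N+1},A_{N+2},\cdots\mid A_N\right)\ \ge\ \prod_{n=N}^{\infty}\Big(1-\frac{1}{n^2}\Big)\ >\ 0,
\]
the product being strictly positive because $\sum_{n\ge N}n^{-2}<\infty$; this is precisely the assertion of the lemma.

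The step I expect to be the main obstacle is the first one: one must describe the algorithm's exploration as generating an increasing filtration, check that the data entering $A_{n+1}$ (which children of $x_n$ survive, how many children each of them has, and hence the "good-children" count $G_n$) are conditionally independent of the whole past given that $x_n$ has been reached with $D_n\ge f(n)$, and note that the proof of Lemma \ref{lm::IP_COMPARISON_events} never used anything about $I^n_{k_n}$ beyond $I^n_{k_n}\le\delta$, so its bound is valid pathwise in that conditioning. Granting this, the rest is the elementary arithmetic of the second paragraph and the summability of $\sum n^{-2}$.
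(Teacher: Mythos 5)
Your proof is correct and follows essentially the same route as the paper: chain-rule decomposition into one-step conditional probabilities, reduction of each factor to the bound of Lemma \ref{lm::IP_COMPARISON_events} via the freshness of the randomness revealed at each step (the paper phrases this as the Markov-type identity $\mathbb{P}(A_j\mid A_{j-1},\dots,A_{j-i})=\mathbb{P}(A_j\mid A_{j-1})$), the calibration of $m$ giving the double-exponential estimate, and the choice of $N$ together with $\sum n^{-2}<\infty$ to conclude positivity of the infinite product. Your explicit filtration argument merely spells out what the paper leaves implicit.
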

\begin{proof}
Due to the choice of $m$, 
\[ \frac{1}{8} cf(n)^{1 - \sqrt{\alpha}} \geq e^{1 / (\sqrt{\alpha})^n} .\]
Hence, we observe that
\[ \mathbb{P}\left( A_{n+1}|A_n \right)  \geq 1 - 2 \exp ( - \exp (1 / (\sqrt{\alpha})^n) ) \geq 1 - \frac{1}{n^2}, \]
for $n \geq N$. Now, write for $k \geq 1$,
\[ \mathbb{P}\left( A_{N+k},A_{N+k-1}, \cdots, A_{N+1}|A_N \right) \]
as 
\[\mathbb{P}\left( A_{N+k}|A_{N+k-1}, \cdots, A_N \right) \cdot \mathbb{P}\left( A_{N+k-1}|A_{N+k-2}, \cdots, A_N \right) \cdots \mathbb{P}\left( A_{N+1}|A_N \right).\]
We employ this observation by noting that, for $i \leq j$,
\[ \mathbb{P}\left( A_{j}|A_{j-1}, A_{j-2}, \cdots, A_{j-i} \right) = \mathbb{P}\left( A_{j}|A_{j-1}\right), \] 
 so that,
\[ \mathbb{P}\left( A_{N+k},A_{N+k-1}, \cdots, A_{N+1}|A_N \right) = \prod_{j = N}^{N +k -1} \mathbb{P}\left( A_{j+1}|A_j \right). \]
The assertion is now established by letting $k$ go to infinity, since $\prod_{j = N}^{\infty} \left( 1 - \frac{1}{n^2} \right) > 0$.
\end{proof}

\noindent We now remark that the condition $(X-x|X \geq x) \overset{d}{\leq} X$ is met in most cases of interest. I.e., the explosiveness follows if either $g(0) > 0$ or $g(0) =0$ and $g$ is non-decreasing on some non-empty interval $[0,\delta]$:
\\

\begin{proof}[Proof of Theorem \ref{thm::IP_COMPARISON_MAIN}]
On the one hand, if $g(0) =0$ and $g$ is locally increasing, then we may compare the process $\HaLGIf$ to  $(h _{\alpha},G^*,I)_f$, where $G^*$ defined below is seen to satisfy the assumption \eqref{eq::aging}.
Let $T>0$ be so that $g$ increases on $[0,T]$, and define $g^*$,  for $x \geq 0$, by
\begin{equation}
 g^*(x) = \left\{ 
  \begin{array}{l l}
    g(x) & \quad \text{ if } x \in [0,T],\\
    g(T) & \quad  \text{ if } x \in [T,T^*], \\
    0 & \quad  \text{ if } x \in [T^*,\infty), \\
  \end{array} \right.
\end{equation}
where $T^*$ is chosen so that $\int_0^{\infty}  g^*(x) \mathrm dx = 1.$
If $X^*$ is a random variable that has $ g^*$ as its density, then, for $x \leq \frac{T}{2}$,
\[ \mathbb{P}\left( X^*-x \leq t|X^* \geq x \right) \geq \P{X^* \leq t}, \]
for all $t \geq 0$. Indeed, if $t + x \leq T^*$, then
\[ \ba 
\mathbb{P}\left( X^*-x \leq t|X^* \geq x \right) &= \frac{1}{1 - G^*(x)} \int_0^{t}  g^*(u+x) \mathrm du  \\
& \geq \frac{1}{1 - G^*(x)} \int_0^{t}  g^*(u) \mathrm du  \\
& \geq G^*(t),
\ea \]
since $g^*$ is non-decreasing on $[0,T^*]$.
\\
If $t + x \geq T^*$, then
\[ \ba 
\mathbb{P}\left( X^*-x \leq t|X^* \geq x \right) &= \frac{1}{1 - G^*(x)} \int_0^{t}  g^*(u+x) \mathrm du  \\
& = \frac{1}{1 - G^*(x)} \lr{1 - G^*(x)}  \\
& =1.
\ea \]
\\
On the other hand, if $g \geq \epsilon > 0$ on $[0, \delta]$, then we may compare the process $\HaLGIf$  to a Markov process which meets assumption \eqref{eq::aging} with equality.
\end{proof}

\newpage

\bibliographystyle{plain}
 \bibliography{literature}

\begin{thebibliography}{10}

\bibitem{AmDeGrNe13}
O.~Amini, L.~Devroye, S.~Griffiths, and N.~Olver.
\newblock On explosions in heavy-tailed branching random walks.
\newblock {\em The Annals of Probability}, 41, 05 2013.

\bibitem{BaAl99}
Albert-L{\'a}szl{\'o} Barab{\'a}si and R{\'e}ka Albert.
\newblock Emergence of scaling in random networks.
\newblock {\em Science}, 286(5439):509--512, 1999.

\bibitem{BaRe13}
Andrew Barbour and Gesine Reinert.
\newblock Approximating the epidemic curve.
\newblock {\em Electron. J. Probab.}, 18:54, 1--30, 2013.

\bibitem{Bh14}
Shankar Bhamidi, Remco van~der Hofstad, and Julia Komjathy.
\newblock The front of the epidemic spread and first passage percolation.
\newblock {\em J. Appl. Probab.}, 51A, 2014.

\bibitem{Big90}
J.D. Biggins.
\newblock The central limit theorem for the supercritical branching random
  walk, and related results.
\newblock {\em Stochastic Processes and their Applications}, 34(2), 1990.

\bibitem{BiGoTe87}
N.~H. Bingham, C.~M. Goldie, and J.~L. Teugels.
\newblock {\em Regular Variation (Encyclopedia of Mathematics and its
  Applications)}.
\newblock Cambridge University Press, June 1987.

\bibitem{Bo01}
Bela Bollobas.
\newblock {\em Random Graphs}.
\newblock Cambridge University Press, 2001.

\bibitem{Dav78}
PL~Davies.
\newblock The simple branching process: a note on convergence when the mean is
  infinite.
\newblock {\em Journal of Applied Probability}, 15(3):466, 1978.

\bibitem{DeHo91}
F.M. Dekking and B.~Host.
\newblock Limit distributions for minimal displacement of branching random
  walks.
\newblock {\em Probability Theory and Related Fields}, 90(3):403--426, 1991.

\bibitem{Ho14}
Van der Hofstad.
\newblock {\em Random Graphs and Complex Networks (in preparation)}.
\newblock 2014.

\bibitem{EbHoMi02}
Holger Ebel, Lutz-Ingo Mielsch, and Stefan Bornholdt.
\newblock Scale-free topology of e-mail networks.
\newblock {\em Physical Review E}, 66(3):035103, 2002.

\bibitem{ErRe59}
P.~Erd{\H{o}}s and A.~R{\'e}nyi.
\newblock On random graphs. {I}.
\newblock {\em Publ. Math. Debrecen}, 6:290--297, 1959.

\bibitem{ErRe60}
P.~Erd{\H{o}}s and A.~R{\'e}nyi.
\newblock On the evolution of random graphs.
\newblock {\em Magyar Tud. Akad. Mat. Kutat\'o Int. K\"ozl.}, 5, 1960.

\bibitem{Gat92}
J.T. Gatto.
\newblock {\em Dumbing Us Down}.
\newblock New Society Publishers, 1992.

\bibitem{Gil59}
Edgar~N Gilbert.
\newblock Random graphs.
\newblock {\em The Annals of Mathematical Statistics}, pages 1141--1144, 1959.

\bibitem{GrRy07}
I.~S. Gradshteyn and I.~M. Ryzhik.
\newblock {\em Table of integrals, series, and products}.
\newblock Elsevier/Academic Press, Amsterdam, 2007.

\bibitem{Grey74}
D.R. Grey.
\newblock Explosiveness of age-dependent branching processes.
\newblock {\em Zeitschrift f\"{u}r Wahrscheinlichkeitstheorie und Verwandte
  Gebiete}, 28(2):129--137, 1974.

\bibitem{Ham74}
J.~M. Hammersley.
\newblock Postulates for subadditive processes.
\newblock {\em The Annals of Probability}, 2(4), 08 1974.

\bibitem{Har63}
T.~E. Harris.
\newblock {\em The Theory of Branching Processes.}
\newblock Springer, 1963.

\bibitem{Kin75}
J.~F.~C. Kingman.
\newblock The first birth problem for an age-dependent branching process.
\newblock {\em The Annals of Probability}, 3(5), 10 1975.

\bibitem{Sev70}
B.A. Sevast'yanov.
\newblock Necessary condition for the regularity of branching processes.
\newblock {\em Mathematical notes of the Academy of Sciences of the USSR},
  7(4):234--238, 1970.

\bibitem{Vat87}
V.~Vatutin.
\newblock Sufficient conditions for regularity of bellman harris branching
  processes.
\newblock {\em Theory of Probability and Its Applications}, 31(1):50--57, 1987.

\bibitem{Whi55}
P.~Whittle.
\newblock The outcome of a stochastic epidemic a note on bailey's paper.
\newblock {\em Biometrika}, 42(1-2):116--122, 1955.

\end{thebibliography}

\end{document}